\def\expandafter\normalsize\expandafter{%
    \normalsize
    \setlength\abovedisplayskip{5pt}
    \setlength\belowdisplayskip{5pt}
    \setlength\abovedisplayshortskip{0pt}
    \setlength\belowdisplayshortskip{5pt}
}
\title{Energy Stable Discontinuous Galerkin Methods for Maxwell's Equations in Nonlinear Optical Media}
\author{Vrushali A. Bokil
\thanks{Department of Mathematics, Oregon State University,
Corvallis, OR 97331 U.S.A.
{\tt bokilv@math.oregonstate.edu}.}%
\and
 Yingda Cheng
\thanks{Department of Mathematics, Michigan State University,
East Lansing, MI 48824 U.S.A.
 {\tt ycheng@math.msu.edu}. Research is supported by NSF grant  DMS-1453661.}
\and
 Yan Jiang
\thanks{Department of Mathematics, Michigan State University,
East Lansing, MI 48824 U.S.A.
 {\tt jiangyan@math.msu.edu}. }
 \and
 Fengyan Li
\thanks{Department of Mathematical Sciences, Rensselaer Polytechnic Institute, Troy, NY 12180 U.S.A.
 {\tt  lif@rpi.edu}. Research is supported by NSF grant  DMS-1318409.}
}
\date{\today}
\begin{document}

\maketitle

\begin{abstract}
The propagation of electromagnetic waves in general media is modeled by the time-dependent Maxwell's partial differential equations (PDEs), coupled with constitutive laws that describe the response of the media. In this work, we focus on nonlinear optical media whose response is modeled by a system of first order nonlinear ordinary differential equations (ODEs), which include a single resonance linear Lorentz dispersion, and the nonlinearity comes from the instantaneous electronic Kerr response and the residual Raman molecular vibrational response. To design efficient, accurate, and stable computational methods, we apply high order discontinuous Galerkin discretizations in space to the hybrid PDE-ODE Maxwell system with several choices of numerical fluxes, and the resulting semi-discrete methods are shown to be energy stable. Under some restrictions on the strength of the nonlinearity, error estimates are also established. When we turn to fully discrete methods, the challenge to achieve provable stability lies in the temporal discretizations of the nonlinear terms. To overcome this, novel strategies are proposed to treat the nonlinearity in our model within the framework of the second-order leap-frog and implicit trapezoidal time integrators. The performance of the overall algorithms are  demonstrated through numerical simulations of kink and antikink waves, and third-harmonic generation in soliton propagation.

\end{abstract}

\begin{keywords} Maxwell's equations, nonlinear dispersion, discontinuous Galerkin method, energy stability, error estimates.
\end{keywords}

%discussions with regard to the fully discrete stability of the nonlinear models
%past research of DG on dispersive media (e.g. Jichun Li’s papers)

\section{Introduction}

%\LL{questions: 1) do we need to include references for NLS, BPM, say in numerical??}

Nonlinear optics is the study of the behavior of light in nonlinear media. This field has developed into a significant branch of physics since the introduction of intense lasers with high peak powers. %, see the Nobel lecture by N. Blombergen \cite{bloembergen_nobel}.
In  nonlinear media, the material response depends nonlinearly on
the optical field, and many interesting physical phenomena, such as  frequency mixing and second/third-harmonic generation %and self-focusing,
%that can be explained only when the nonlinearity  is taken into account, 
have been observed and  harnessed for practical applications.
We refer to classical textbooks
\cite{bloembergen1996nonlinear,boyd2003nonlinear, new2011introduction}
%\cite{shen1984principles, bloembergen1996nonlinear,boyd2003nonlinear, agrawal2007nonlinear, new2011introduction,mills2012nonlinear}
for a more detailed review of the field of nonlinear optics.

Our interest here is in the development of novel numerical schemes for the Maxwell's equations in nonlinear optical media. %Such nonlinear Maxwell models, though being more fundamental, are less considered in nonlinear optics community that has long simulated asymptotic and paraxial wave   models derived from Maxwell's equations.
Relative to the widely used  asymptotic and paraxial wave   models derived from Maxwell's equations, such as  nonlinear Schr\"{o}dinger equation (NLS) and beam propagation method (BPM) \cite{bloembergen1996nonlinear,boyd2003nonlinear}, simulations of the nonlinear Maxwell's system in the time domain are
more computationally intensive. However, these simulations have the advantage of being substantially more robust
%than either NLS or BPM
because they directly solve for fundamental quantities, the electromagnetic fields in space and time. These simulations also avoid the simplifying assumptions that lead to conventional asymptotic and paraxial propagation analyses, and are able to treat interacting waves at different frequencies directly \cite{joseph1997fdtd}.
%It gives more accurate representation of ultra short optical pulses with a duration of a few femto seconds, where the slowly varying envelope approximation is expected to be less accurate, and   also violated for pulses containing only a few carrier oscillations.
 Recent optics and photonics research has focused on phenomena at smaller and smaller length scales or multiple spatial scales. For such phenomena simulating the full Maxwell PDE models is important in adequately capturing useful optical effects \cite{goorjian1992direct,joseph1997fdtd,taflove2013advances}. %The full-wave time domain solvers  are particularly well suited to bring new insight to current nonlinear optics problems.

%Compared with the huge amount of literature of Maxwell simulations in linear optical media \cite{monk2003finite, li2012time, Taflove:2005,hao2008fdtd,cohen2016finite,cohen2013higher}, nonlinear Maxwell models are relatively less tackled by the computational math community. Due to nonlinearity in the media introduced by various effects, such models, while being  more challenging, also bring exciting  opportunities for algorithm development and analysis. In fact, a direct application of  standard numerical techniques for linear Maxwell's equations may generate solutions that violate physical properties of the true nonlinear Maxwell models. Hence, the design of compatible numerical discretizations with consideration of the underlying nonlinearity is vital from the point of view of accuracy and efficiency.

% model considered here
When Maxwell's equations are considered to model the electromagnetic (EM) waves propagating through a nonlinear optical medium, the medium response is described by constitutive laws that relate the electric field $\mb{E}$ and the electric flux density $\mb{D}$ through the polarization $\mb{P}$ of the medium.
%and magnetic field $\mb{H}$ to their respective flux densities  $\mb{D}$ and $\mb{B}$. As commonly assumed in the context of nonlinear optics, the media considered here are non-magnetic, thus $\mb{B}=\mu_0\mb{H}$, with $\mu_0$ as the permeability of free space.
In this work, we focus on a macroscopic phenomenological description of the polarization, which comprises both linear and nonlinear responses. Specifically, the linear response is modeled by a single resonance Lorentz dispersion, while the nonlinear response is cubic and incorporates the instantaneous {\it Kerr} effect and the delayed nonlinear Lorentz dispersion called {\it Raman scattering}.
%The underlying materials are centrosymmetric \cite {boyd2003nonlinear} for which the leading nonlinearity is of cubic order.
Within this description, we will follow the {\it auxiliary differential equation} (ADE) approach, where the linear and nonlinear Lorentz dispersion is represented through a set of ODEs, describing the time evolution of $\mb{P}$ (hence of $\mb{D}$) forced by $\mb{E}$, appended to Maxwell's equations. % \cite{Taflove:2005,hao2008fdtd}.
An alternative representation is via a {\it recursive convolution method}, where $\mb{D}$ is computed from $\mb{E}$ through a time convolution integral
\cite{Taflove:2005}.

In the literature, finite difference time domain (FDTD) based, finite element (FEM) based, pseudospectral based methods, finite volume (FV) based, among others, are available for the integration of the full Maxwell's equations in nonlinear media, along with appended ODEs for the material response. The Yee scheme \cite{yee1966numerical} is an FDTD method for Maxwell's equations that has long been one of the gold standards for numerical simulation of Maxwell's equations in the time domain, especially for linear problems \cite{Taflove:2005}.
%. The Yee based FDTD method is now well accepted in the analysis of linear electromagnetic problems \cite{joseph1997fdtd,Taflove:2005}.
Maxwell's equations in a linear Lorentz medium with a nonlinear Kerr response are investigated in \cite{hile1996numerical, sorensen2005kink},  %Solutions of the 1D Maxwell equations are compared with solutions of the NLS equation.
% In \cite{sorensen2005kink}, kink shaped solutions of Maxwell's equations with a linear Lorentz dispersion and nonlinear Kerr response are studied through numerical investigations. %Here, the existence of a Lagrangian and Hamiltonian formulation of the Maxwell-Lorentz-Kerr model is shown.
while in \cite{gilles2000comparison}, additional effects due to Raman scattering are studied through a 1D FDTD analysis.
%along with a linear Lorentz dispersion, nonlinear Kerr and Raman response is  considered, and 1D FDTD analysis is carried out with a focus on the decay of higher-order solitons under the combined action of dispersion, self-phase modulation, self-steepening, and stimulated Raman scattering.
More references for
%FDTD analysis of Maxwell's equations using the ADE approach with a Kerr nonlinearity, and with and without
linear and nonlinear Lorentz dispersion, can be found in
\cite{%joseph1997fdtd,joseph1994spatial,
bourgeade2005numerical,greene2006general,ramadan2015systematic} for the 1D case, and in \cite{fujii2004high,joseph1993direct,ziolkowski1994nonlinear} for 2D and 3D cases. 
%In \cite{ziolkowski1994nonlinear} a multidimensional, nonlinear finite-difference time-domain (NL-FDTD) simulator, which is constructed from a self-consistent solution of the full-wave vector Maxwell equations and dispersive (Lorentz), nonlinear (finite-time-response Raman and instantaneous Kerr) materials models, is used to study finite-length, corrugated, op-
%tical waveguide output couplers and beam steerers. Multiple-cycle, ultrashort-optical-pulse interactions with
%these corrugated, nonlinear, dispersive waveguides are characterized. An all-optical nonlinear beam-steering
%device is designed, and its output-coupling performance is characterized with this NL-FDTD simulator.
%In \cite{tyrrell2005pseudospectral} a pseudospectral spatial domain (PSSD) approach is presented for linear Lorentz dispersion and nonlinear Kerr response, and in \cite{kinsler2007optical} optical carrier wave shock is studied using the PSSD technique.
%The books \cite{Taflove:2005,taflove2013advances} contain a comprehensive literature review of FDTD, and pseudospectral methods.
Yee based FDTD approaches result in second order schemes which accumulate significant errors over long time modeling of  wave propagation \cite{cohen2013higher,cohen2016finite}. While higher order FDTD methods can alleviate this issue, they can be cumbersome in modeling complex geometries. On the other hand, though the FEMs are well suited for modeling complex geometries, they have not been well developed for nonlinear Maxwell models. FEM analysis for some nonlinear models can be found in \cite{fisher2007efficient}. 
In \cite{tyrrell2005pseudospectral} a pseudospectral spatial domain (PSSD) approach is presented for linear Lorentz dispersion and nonlinear Kerr response, and in \cite{kinsler2007optical} optical carrier wave shock is studied using the PSSD technique. 
FV based methods for nonlinear Kerr media are
%have been 
addressed in \cite{aregba2015godunov,de2000high} in which the Maxwell-Kerr model is
%has been 
approached   as a hyperbolic system and approximated by a Godunov scheme, and a third order Roe solver, respectively, in one and two spatial dimensions.

%% summary of the present work
In this work, we use high order discontinuous Galerkin (DG) methods for the spatial discretization of our nonlinear Maxwell models. This is motivated by various properties of DG methods, including high order accuracy,  excellent dispersive and dissipative properties in standard wave simulations,  flexibility in adaptive implementation and high parallelization, and suitability for complicated geometry (e.g., \cite{cockburn2000development,hesthaven2007nodal}). DG methods differ from classical finite element methods in their use of piecewise smooth approximate functions, while inter-element communication is achieved through the use of numerical fluxes, which are consistent with the physical fluxes and play a vital role in accuracy, stability, energy conservation, and computational efficiency. For the nonlinear Maxwell models that we consider, with the numerical fluxes chosen to be either central or alternating, the solutions to the semi-discrete DG methods satisfy an energy equation just as the exact solutions do, hence the methods are energy stable,  even in the presence of both the Kerr and Raman nonlinear effects. Another dissipative flux, inspired by the upwind flux for Maxwell's equations in a linear nondispersive dielectric \cite{hesthaven2002nodal}, called ``upwind flux'' in this paper, is also considered with the respective energy stability established. For the semi-discrete methods with all three types of numerical fluxes, error estimates are carried out under some additional assumptions on the strength of the nonlinearity in the underlying model.

In addition to the error estimates, the nonlinearity in the model poses challenges  to the design of fully discrete schemes with provable energy stability. As one major contribution, we propose in this work a novel strategy to discretize the nonlinear terms within the commonly used second-order leap-frog and implicit trapezoidal temporal discretizations. The resulting fully discrete methods are proved to be stable. More specifically, the method with the modified leap-frog time discretization is conditionally stable under a CFL condition, which is the {\it same} as the one for Maxwell's equations without Kerr, linear and nonlinear Lorentz dispersion; while the fully implicit method with the modified trapezoidal temporal discretization is unconditionally stable. In both cases, we find it important, at least from the theoretical point of view, to discretize the ODE part of the system {\it implicitly}. To our best knowledge, the temporal discretizations that are adapted to nonlinear models and with  {\it provable stability} are not yet available. In the present work, the methods  and numerical verification are presented for the model in one dimension, and their extension to higher dimension will be explored in a separate paper.
%\LL{/Comment here or in concluding remarks on efficiency of various proposed methods/}

DG methods have grown to be broadly adopted for EM simulations in the past two decades. They have been developed and analyzed for time dependent linear models, including  %time-harmonic Maxwell's equations in free space (e.g., \cite{buffa2007discontinuous, grote2008interior, brenner2008locally, hiptmair2013error}),
Maxwell's equations in free space (e.g.,
\cite{chung2013convergence, cockburn2004locally, hesthaven2002nodal}), dispersive media (e.g., \cite{gedney2012discontinuous, huang2011interior, lanteri2013convergence,lu2004discontinuous}), as well as metamaterials (e.g., \cite{chung2013staggered,li2011development,li2014analysis, li2015optimal}).
%Numerical dispersion and dissipation analyses of DG methods for linear Maxwell models in \cite{alvarez2012spurious,sarmany2007dispersion} examine the role of numerical fluxes in EM wave propagation.
 However, there exists only limited study for DG methods for nonlinear Maxwell models. For example, in \cite{blank2013discontinuous, fezoui2015discontinuous}, Kerr nonlinearity is investigated, where the entire Maxwell PDE-ODE system is cast as a nonlinear hyperbolic conservation law, for which DG methods have long been known for their success. A relaxed version of the Kerr model, called the Kerr-Debye model, was examined in \cite{huangsecond}, where a second-order asymptotic-preserving and positivity-preserving DG scheme is designed and analyzed.

The rest of this paper is organized as follows. In Section \ref{sec:2}, Maxwell's equations in an optical medium with a nonlinear dispersive response are introduced. In Section \ref{sec:semi}, DG spatial discretizations are formulated, where energy stability is established for the resulting semi-discrete schemes. Error estimates are further carried out in Section \ref{sec:error}. In Section \ref{sec:fully}, temporal discretizations are presented within the framework of the second-order leap-frog and trapezoidal method, with a novel treatment of the nonlinear terms in the models aimed at obtaining energy stability for the fully discrete schemes. The performance of the overall algorithms are  demonstrated in Section \ref{sec:numerical} through numerical simulations of the propagating kink and antikink waves, and third harmonic generation in soliton propagation. Finally, concluding remarks are made in Section \ref{sec:conclusion}.

\section{Physical Model: Maxwell's Equations and Polarization}
\label{sec:2}

We begin with the Maxwell's equations, that govern the time evolution of the electric field $\mathbf{E}$ and magnetic field $\mathbf{H}$ in a non-magnetic nonlinear optical medium,
\begin{subequations}\label{eq:max}
\begin{align}\label{eq:max1}
&\ds\dd{t}{\mathbf{B}}+{\bf{\nabla}}\times \mathbf{E} = 0, \ \text{in} \  (0,T)\times \Omega,  \\[1.5ex]
\label{eq:max2}
&\ds\dd{t}{\mathbf{D}}+\mathbf{J_s} -{\bf{\nabla}}\times \mathbf{H} = 0, \  \text{in} \ (0,T)\times \Omega, \\[1.5ex]
\label{eq:max3}
& {\bf{\nabla}}\cdot \mathbf{B}  = 0, \ {\bf{\nabla}}\cdot \mathbf{D} = \rho, \ \text{in} \  (0,T)\times \Omega,
\end{align}
\end{subequations}
along with initial and boundary data in the domain $\Omega\subset\mathbb {R}^d, d=1, 2,3$. The variable $\mathbf{J}_s$ is the source current density, and $\rho$ is the charge density. The electric flux density $\mathbf{D}$ and the magnetic induction $\mathbf{B}$ are related to the electric and magnetic field, respectively, via the constitutive laws
\begin{equation}
\label{eq:constD}
\mathbf{D} = \epsilon_0(\epsilon_\infty\mathbf{E}+\mathbf{P}), \ \ \mathbf{B} = \mu_0\mathbf{H},
\end{equation}
where $\mathbf{P}$ is the polarization.
The dielectric parameters are $\epsilon_{0}$, the electric permittivity
of free space, $\epsilon_{\infty}$, the relative electric permittivity in the limit
of the infinite frequency, and $\mu_0$, the magnetic permeability of free space. We will assume here that all model parameters are constant, and the material is isotropic. The term $\epsilon_0\epsilon_\infty \mb{E}$ captures the linear instantaneous response of the material to the EM fields.

To model the linear and nonlinear dispersion in the material we use the auxiliary differential equation (ADE) approach as presented in \cite{gilles2000comparison, Taflove:2005}. The linear (L) delayed or retarded  response of the material to the EM field is captured in the polarization, $\mathbf{P}$, via a linear single resonance Lorentz response, which, in the form of a second order ODE, is given as,
\begin{equation}
\label{eq:polar:P}
\frac{\partial^2 \mathbf{P}^{\mathrm{L}}_{\mathrm{delay}}}{\partial t^2}+\frac{1}{\tau}\frac{\partial\mathbf{P}^{\mathrm{L}}_{\mathrm{delay}}}{\partial t} +\omega_0^2\mathbf{P}^{\mathrm{L}}_{\mathrm{delay}}=\omega_p^2\mb{E}.
\end{equation}
Here $\omega_0$ and $\omega_p$ are the resonance and plasma frequencies of the medium, respectively, and $\tau^{-1}$ is a damping constant. In addition, $\omega_p^2 = (\epsilon_s-\epsilon_\infty)\omega_0^2$, with $\epsilon_s$ as the relative permittivity at zero frequency.

For pulse widths that are sufficiently short (for e.g., shorter than 1 pico-second (ps) for Silica) \cite{hile1996numerical}, the nonlinear response has an instantaneous as well as a delayed component. For the nonlinear (NL) response of the medium, we will consider a cubic Kerr-type  instantaneous response, and a retarded Raman molecular vibrational response called Raman scattering. The Kerr effect is a phenomenon in which the refractive index of a material changes proportionally to the square of the applied electric field. Raman scattering arises from the electric field induced changes in the internal nuclear vibrations on time scales $\approx$ 1 to 100 femto-seconds (fs) \cite{goorjian1992computational}, and is modeled by a nonlinear single resonance Lorentz delayed response. 
The two nonlinear responses are given as
\[ \mb P^{\mathrm{NL}}=\mb P^{\mathrm{NL}}_{\mathrm{Kerr}}+\mb P^{\mathrm{NL}}_{\mathrm{delay}}=\underbrace{a(1-\theta)\mb E|\mb E|^2}_{\mathrm{Kerr}} + \underbrace{a\theta Q \mb{E}}_{\mathrm{Raman}}.\]
Here $a$ is a third order coupling constant, $\theta$ parameterizes the relative strength of the instantaneous electronic Kerr and retarded Raman molecular vibrational responses, and $Q$ describes the natural molecular vibrations within the dielectric material that has frequency many orders of magnitude less than the optical wave frequency, responding to the field intensity. 
The time evolution of $Q$ is given by the following ODE,
\begin{equation}
\label{eq:polar:Q}
\frac{\partial^2 Q }{\partial t^2} +\frac{1}{\tau_v}\frac{\partial Q}{\partial t} +\omega_v^2Q = \omega_v^2|\mb{E}|^2,
\end{equation}
where $\omega_v$ is the resonance frequency of the vibration, and $\tau_v^{-1}$ a damping constant. This is essentially a model for a simple linear oscillator, but coupled to the nonlinear field intensity $|\mb{E}|^2$.

Taking into account all the effects discussed above, the constitutive law for the electric flux density is given by
\begin{equation}
\mb D = \epsilon_0(\epsilon_\infty\mb E +\mb{P}^{\mathrm{L}}_{\mathrm{delay}}+a(1-\theta)\mb E|\mb E|^2 + a\theta Q \mb{E}).
\label{eq:Li:D2E}
\end{equation}
With this, the mathematical model for EM wave propagation in this nonlinear optical medium will be given as a PDE-ODE system \eqref{eq:max}-\eqref{eq:Li:D2E}.
%that includes  Maxwell's equations \eqref{eq:max} and the constitutive laws given by \eqref{eq:constD}-\eqref{eq:polar:Q}.
%and two second order ODEs \eqref{eq:polar:P} and \eqref{eq:polar:Q}
%, with one for $\mb{P}^{\mathrm{L}}_{\mathrm{delay}}$ forced by the electric field, the other for $Q$ driven by  $|\mb{E}|^2$.

In the present work, the first order form of the second order ODEs, \eqref{eq:polar:P} and \eqref{eq:polar:Q}, will be adopted, and we will focus our investigation on the model in one spatial dimension, as below,
%
%\subsection{A One Dimensional Model}
%
%In the present work, we focus on the following one-dimensional model problem,
\begin{subequations}
\label{eq:1d:sys}
\begin{align}
\mu_0\dd{t}{H}& = \dd{x}{E},\\
\dd{t}{D} &= \dd{x}{H},\\
\dd{t}{P} &= J,\\
\dd{t}{J} &= -\frac{1}{\tau}J -\omega_0^2P+\omega_p^2E,\\
\dd{t}{Q} &= \sigma,\\
\dd{t}{\sigma} &= -\frac{1}{\tau_v}\sigma -\omega_v^2Q +\omega_v^2E^2,
\end{align}
\end{subequations}
with the constitutive law
\begin{equation}
\label{eq:1d:sys:1}
D = \epsilon_0(\epsilon_\infty E +P+a(1-\theta)E^3+a\theta Q E),
\end{equation}
where $P=P^L_{\mathrm{delay}}$.
In this model, we assume uniformity of all the vector fields in the $y$ and $z$ directions. Thus, all derivatives with respect to $y$ and $z$ in the curl and divergence operators are set to zero. All field quantities are represented by a single scalar component. The scalar magnetic field $H$ (hence $B$) represents the 2nd (or the 3rd) component of the vector magnetic field $\mb{H}$, and the scalar electric flux density
 $D$ (hence $E$) represents the 3rd (or the 2nd) component of $\mb{D}$ (hence $\mb{E}$). Gauss's laws \eqref{eq:max3} only involve the $x$ derivatives of the 1st components of $\mb{B}$ and $\mb{D}$, and therefore they are decoupled from the one-dimensional model \eqref{eq:1d:sys}-\eqref{eq:1d:sys:1} and become irrelevant.
 %In the above, the variables $P$ and $J$ model the linear Lorentz dispersion, while the variables $Q$ and $\sigma$ describe the Kerr nonlinearity and nonlinear dispersion. The parameters $\omega_0$ and $\omega_p$ are the resonance frequency and the plasma frequency, respectively, of the linear Lorentz dispersion model, with $\omega_p^2 = (\epsilon_s-\epsilon_\infty)\omega_0^2 := \epsilon_d\omega_0^2$.
Under the assumption of periodic boundary conditions, the energy $\mathcal{E}=\mathcal{E}(t)$ of the system \eqref{eq:1d:sys}, defined as
\begin{equation}
\label{contenergy}
\mathcal{E}=\int_{\Omega}  \left (\frac{\mu_0}{2} H^2 + \frac{\epsilon_0 \epsilon_\infty}{2} E^2 +  \frac{\epsilon_0}{2\omega_p^2} J^2   + \frac{\epsilon_0\omega_0^2}{2 \omega_p^2}   P^2+ \frac{\epsilon_0  a\theta}{4\omega_v^2} \sigma^2 + \frac{\epsilon_0  a\theta}{2}  Q E^2  + \frac{3 \epsilon_0 a (1-\theta)}{4} E^4+\frac{\epsilon_0  a\theta}{4}Q^2\right) dx,
\end{equation}
satisfies the following relation,
\begin{equation}
\label{contdecay}
\frac{d}{dt}\mathcal{E}=-\frac{\epsilon_0}{\omega_p^2 \tau} \int_\Omega J^2 dx-\frac{\epsilon_0  a\theta}{2\omega_v^2 \tau_v}  \int_\Omega \sigma^2 dx \le 0.
\end{equation}
Note that $\mathcal{E}(t)$ is guaranteed non-negative only when $\theta\in[0,\frac{3}{4}]$.
 %(\textcolor{magenta}{Do we still call it an energy if it is negative?}).

%\textcolor{red}{Add reference: stability for nonlinear models, previous work of DG for dispersive media, french group's paper with kerr effects}

\section{Semi-discrete Scheme: Discontinuous Galerkin Method}
\label{sec:semi}

In this section, we introduce a semi-discrete DG method in space for the one dimensional model problem \eqref{eq:1d:sys} - \eqref{eq:1d:sys:1}. For simplicity, periodic boundary conditions are considered in $x$ direction.  (See Sect. \ref{sec:num2} and the appendix for some more general boundary conditions.)
Let $\Omega=[x_L, x_R]$ be the computational domain, for which a mesh, $x_L=x_{1/2}<x_{3/2}<\cdots<x_{N+1/2}=x_R$, is introduced. Let $I_j=[x_{j-1/2},x_{j+1/2}]$ be a mesh element, with $x_j=\frac{1}{2}(x_{j-\frac{1}{2}}+x_{j+\frac{1}{2}})$ as its center, $h_j=x_{j+\frac{1}{2}}-x_{j-\frac{1}{2}}$ as its length, and $h=\max_{1\leq j\leq N} h_j$ as the largest meshsize.  We now define
a finite dimensional discrete space,
\begin{equation}\label{ldg:vhk}
V_h^k=\{v : v|_{I_j} \in P^k(I_j), \, j=1,2,\cdots,N \},
\end{equation}
 which consists of piecewise polynomials of degree up to $k$ with respect to the mesh.
For any $v\in V_h^k$, let $v^+_{j+\frac{1}{2}}$ (resp. $v^-_{j+\frac{1}{2}}$) denote the limit value of $v$ at $x_{j+ \frac{1}{2}}$ from the element $I_{j+1}$ (resp. $I_j$), $[v]_{j+\frac{1}{2}}=v^+_{j+\frac{1}{2}} - v^-_{j+\frac{1}{2}}$  denote its jump, and $\{v\}_{j+\frac{1}{2}}=\frac{1}{2}(v^+_{j+\frac{1}{2}}+v^-_{j+\frac{1}{2}})$ be its average, again at $x_{j+\frac{1}{2}}$. The mesh is assumed to be quasi-uniform, namely, there exists a positive constant $\delta$, such that $\frac{h}{\min_j h_j}<\delta$, as the mesh is refined.

The semi-discrete DG method for the system  \eqref{eq:1d:sys} - \eqref{eq:1d:sys:1}
is formulated as follows: find $H_h(t,\cdot)$, $D_h(t,\cdot)$, $E_h(t,\cdot)$,  $P_h(t,\cdot)$, $J_h(t,\cdot)$, $Q_h(t,\cdot)$, $\sigma_h(t,\cdot)\in V_h^k$, such that  $\forall j,$

\begin{subequations}
\label{eq:1d:sch}
\begin{align}
\mu_0&\int_{I_j}\dd{t}{H_h}\phi dx +\int_{I_j} E_h\dd{x}\phi dx- (\widehat{E_h}\phi^-)_{j+1/2}
+ (\widehat{E_h}\phi^+)_{j-1/2}=0,\quad \forall \phi\in V_h^k, \label{eq:sch1}\\
&\int_{I_j}\dd{t}{D_h} \phi dx +\int_{I_j} H_h\dd{x}\phi dx- (\widetilde{H_h}\phi^-)_{j+1/2}
+ (\widetilde{H_h}\phi^+)_{j-1/2}=0,\quad \forall \phi\in V_h^k, \label{eq:sch2}\\
& \dd{t}{P_h}=J_h,  \label{eq:sch3}\\
%&\int_{I_j}\dd{t}{J_h} \phi dx = -\int_{I_j}\left(\frac{1}{\tau}J_h +\omega_0^2P_h-\omega_p^2E_h\right)\phi dx,\quad \forall \phi\in V_h^k \label{eq:sch4}\\
& \dd{t}{J_h}= -\left(\frac{1}{\tau}J_h +\omega_0^2P_h-\omega_p^2E_h\right), \label{eq:sch4}\\
&\dd{t}{Q_h}=\sigma_h, \label{eq:sch5}\\
&\int_{I_j}\dd{t}{\sigma_h} \phi dx= -\int_{I_j}\left(\frac{1}{\tau_v}\sigma_h +\omega_v^2Q_h -\omega_v^2E_h^2\right)\phi dx,\quad \forall \phi\in V_h^k. \label{eq:sch6}
\end{align}
\end{subequations}

The  constitutive law is imposed via the $L^2$ projection, namely,
\begin{equation}
\label{eq:1d:sch:1}
\int_{I_j} D_h  \phi dx= \int_{I_j} \epsilon_0\left(\epsilon_\infty E_h +a(1-\theta)E_h^3+P_h+a\theta Q_h E_h\right)\phi dx,\quad\forall\phi\in V_h^k.
\end{equation}
%In numerical simulation, such relation can be enforced by the Newton's method to solve for $E_h$ once $D_h, Q_h$ have been obtained.

Both terms $\widehat{E_h}$ and  $\widetilde{H_h}$ are numerical fluxes. In this work, we take either central fluxes,
\begin{equation}
\label{eq:flux:c}
\widehat{E_h} = \{E_h\},\quad \widetilde{H_h} = \{H_h\},
\end{equation}
 one of the following alternating flux pair 
\begin{equation}
\label{eq:flux:a}
\widehat{E_h} = E_h^-, \;\; \widetilde{H_h} =H_h^+;  \qquad \widehat{E_h} = E_h^+, \;\;\widetilde{H_h} =H_h^-,
\end{equation}
or the dissipative flux inspired by the upwind flux for the Maxwell system without Kerr, linear Lorentz and Raman effects,
\begin{equation}
\label{eq:flux:u}
\widehat{E_h} = \{E_h\}+\frac{1}{2}\sqrt{\frac{\mu_0}{\epsilon_0 \epsilon_\infty}}[H_h],\quad \widetilde{H_h} = \{H_h\}+\frac{1}{2}\sqrt{\frac{\epsilon_0 \epsilon_\infty}{\mu_0}}[E_h].
\end{equation}
In the rest of the paper, we will call \eqref{eq:flux:u} as the upwind flux.
It is known that the choice of numerical fluxes is important for the properties of the schemes, such as in numerical stability, accuracy, and even computational efficiency (see Sections \ref{sec:fully} and \ref{sec:numerical} for more discussions). We emphasize that \eqref{eq:sch3}-\eqref{eq:sch5} hold in strong sense.
%\textcolor{red}{add citations for french group, their flux}
In the theorem below, we establish stability of the  semi-discrete DG scheme which is consistent with the energy stability \eqref{contenergy}-\eqref{contdecay} of the PDE-ODE system \eqref{eq:1d:sys}-\eqref{eq:1d:sys:1}. %For simplicity of discussion, the boundary condition is assumed to be periodic.

\begin{theorem} [Semi-discrete stability]
\label{thm:semi}
Under the assumption of periodic boundary conditions, the semi-discrete DG scheme \eqref{eq:1d:sch}-\eqref{eq:1d:sch:1} with central and alternating fluxes, \eqref{eq:flux:c} and \eqref{eq:flux:a}, satisfies
$$
\frac{d}{dt}\mathcal{E}_h=-\frac{\epsilon_0}{\omega_p^2 \tau} \int_\Omega J_h^2 dx-\frac{\epsilon_0  a\theta}{2\omega_v^2 \tau_v}  \int_\Omega \sigma_h^2 dx \le 0,
$$
and the DG scheme with the upwind flux  \eqref{eq:flux:u} satisfies
$$
\frac{d}{dt}\mathcal{E}_h=-\frac{\epsilon_0}{\omega_p^2 \tau} \int_\Omega J_h^2 dx-\frac{\epsilon_0  a\theta}{2\omega_v^2 \tau_v}  \int_\Omega \sigma_h^2 dx-\frac{1}{2}\sqrt{\frac{\mu_0}{\epsilon_0 \epsilon_\infty}}\sum_{j=1}^{N}[H_h]_{j+1/2}^2-\frac{1}{2}\sqrt{\frac{\epsilon_0 \epsilon_\infty}{\mu_0}}\sum_{j=1}^{N}[E_h]_{j+1/2}^2 \le 0,
$$
where
\small{
\begin{equation}
\mathcal{E}_h=\int_{\Omega}  \left (\frac{\mu_0}{2} H_h^2 + \frac{\epsilon_0 \epsilon_\infty}{2} E_h^2 +  \frac{\epsilon_0}{2\omega_p^2} J_h^2   + \frac{\epsilon_0\omega_0^2}{2 \omega_p^2}   P_h^2+ \frac{\epsilon_0  a\theta}{4\omega_v^2} \sigma_h^2 + \frac{\epsilon_0  a\theta}{2}  Q_h E_h^2  + \frac{3 \epsilon_0 a (1-\theta)}{4} E_h^4+\frac{\epsilon_0  a\theta}{4}Q_h^2\right) dx
\label{d-ene}
\end{equation}}
is the discrete energy. Moreover, $\mathcal{E}_h \ge 0$ when $\theta \in [0, \frac{3}{4}].$
\end{theorem}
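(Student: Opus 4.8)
The plan is to reproduce at the discrete level the energy argument that yields \eqref{contdecay}, by testing each evolution equation in \eqref{eq:1d:sch} against the discrete variable whose square, weighted by the matching physical constant, appears in $\mathcal{E}_h$, summing over all elements $I_j$, and using periodicity to close the telescoping interface sums. The only genuine departure from a routine linear Maxwell computation is the nonlinear constitutive law \eqref{eq:1d:sch:1}, which forces us to test the $D_h$-equation \eqref{eq:sch2} against $E_h$ rather than against $D_h$. First I would take $\phi=H_h$ in \eqref{eq:sch1} to produce $\frac{d}{dt}\int_\Omega\frac{\mu_0}{2}H_h^2$, and $\phi=E_h$ in \eqref{eq:sch2}. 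For the latter, since \eqref{eq:1d:sch:1} holds for all $\phi\in V_h^k$ at each $t$ and every discrete field is differentiable in time, I would differentiate the constitutive law in $t$ and then set $\phi=E_h(t)\in V_h^k$; using $\partial_t P_h=J_h$ and $\partial_t Q_h=\sigma_h$, this converts $\int_{I_j}\partial_t D_h\,E_h$ into
\[ \frac{d}{dt}\int_{I_j}\Big(\tfrac{\epsilon_0\epsilon_\infty}{2}E_h^2+\tfrac{3\epsilon_0 a(1-\theta)}{4}E_h^4+\tfrac{\epsilon_0 a\theta}{2}Q_h E_h^2\Big)+\epsilon_0\!\int_{I_j}\!E_h J_h+\tfrac{\epsilon_0 a\theta}{2}\!\int_{I_j}\!\sigma_h E_h^2, \]
where the crucial bookkeeping step is the splitting $\epsilon_0 a\theta\,Q_h E_h\partial_t E_h=\frac{d}{dt}\big(\frac{\epsilon_0 a\theta}{2}Q_h E_h^2\big)-\frac{\epsilon_0 a\theta}{2}\sigma_h E_h^2$, which recovers exactly three of the energy densities.

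Next I would test the ODE blocks against the remaining weights. Multiplying \eqref{eq:sch4} by $\frac{\epsilon_0}{\omega_p^2}J_h$ and \eqref{eq:sch3} by $\frac{\epsilon_0\omega_0^2}{\omega_p^2}P_h$ and adding gives $\frac{d}{dt}\int_\Omega\big(\frac{\epsilon_0}{2\omega_p^2}J_h^2+\frac{\epsilon_0\omega_0^2}{2\omega_p^2}P_h^2\big)$ with the $P_hJ_h$ terms cancelling, leaving the dissipation $-\frac{\epsilon_0}{\omega_p^2\tau}\int_\Omega J_h^2$ and a coupling $+\epsilon_0\int_\Omega E_hJ_h$; likewise testing \eqref{eq:sch6} against $\frac{\epsilon_0 a\theta}{2\omega_v^2}\sigma_h$ and \eqref{eq:sch5} against $\frac{\epsilon_0 a\theta}{2}Q_h$ yields $\frac{d}{dt}\int_\Omega\big(\frac{\epsilon_0 a\theta}{4\omega_v^2}\sigma_h^2+\frac{\epsilon_0 a\theta}{4}Q_h^2\big)$, the dissipation $-\frac{\epsilon_0 a\theta}{2\omega_v^2\tau_v}\int_\Omega\sigma_h^2$, and a coupling $+\frac{\epsilon_0 a\theta}{2}\int_\Omega E_h^2\sigma_h$. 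Adding all contributions, the eight time-derivative terms assemble precisely into $\frac{d}{dt}\mathcal{E}_h$, while the two coupling pairs $\mp\epsilon_0\int_\Omega E_hJ_h$ and $\mp\frac{\epsilon_0 a\theta}{2}\int_\Omega\sigma_h E_h^2$ cancel identically. This exact cancellation of the nonlinear couplings is the heart of the argument, and is precisely what the choice of energy weights is engineered to deliver.

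It then remains to dispose of the spatial terms carried by \eqref{eq:sch1}--\eqref{eq:sch2}. I would combine $\int_{I_j}(E_h\partial_x H_h+H_h\partial_x E_h)=\int_{I_j}\partial_x(E_hH_h)$ with the numerical-flux boundary contributions and, using periodicity, reassemble the global sum interface by interface. A direct computation at $x_{j+1/2}$, together with the product-jump identity $[E_hH_h]=\{E_h\}[H_h]+\{H_h\}[E_h]$, reduces each interface contribution to
\[ \Theta_{j+1/2}=\big(\widehat{E_h}-\{E_h\}\big)[H_h]_{j+1/2}+\big(\widetilde{H_h}-\{H_h\}\big)[E_h]_{j+1/2}. \]
For the central flux \eqref{eq:flux:c} both parentheses vanish; for either alternating pair \eqref{eq:flux:a} they give $\pm\frac12[E_h][H_h]$ and $\mp\frac12[H_h][E_h]$, which cancel, so $\sum_j\Theta_{j+1/2}=0$ and the first identity follows. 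For the upwind flux \eqref{eq:flux:u} the parentheses equal $\frac12\sqrt{\mu_0/(\epsilon_0\epsilon_\infty)}[H_h]$ and $\frac12\sqrt{\epsilon_0\epsilon_\infty/\mu_0}[E_h]$, so $\sum_j\Theta_{j+1/2}$ reproduces exactly the two extra non-negative dissipation sums, giving the second identity.

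Finally, for the sign of $\mathcal{E}_h$ I would observe that all terms are manifestly non-negative (with $a>0$) except the triple $\frac{\epsilon_0 a\theta}{2}Q_hE_h^2+\frac{3\epsilon_0 a(1-\theta)}{4}E_h^4+\frac{\epsilon_0 a\theta}{4}Q_h^2$. Writing $u=E_h^2\ge 0$ and viewing $\frac{\theta}{4}Q_h^2+\frac{\theta}{2}Q_h u+\frac{3(1-\theta)}{4}u^2$ as a quadratic form in $(Q_h,u)$, positive semidefiniteness requires $\theta\ge 0$ and $\frac{\theta}{4}\cdot\frac{3(1-\theta)}{4}-\big(\frac{\theta}{4}\big)^2=\frac{\theta(3-4\theta)}{16}\ge 0$, i.e. $\theta\le\frac34$, whence $\mathcal{E}_h\ge 0$ on $[0,\frac34]$. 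I expect the only real obstacle to lie in the nonlinear bookkeeping of the second paragraph—correctly converting $\int\partial_t D_h\,E_h$ through the time-differentiated constitutive law and arranging the weights so that the $E_hJ_h$ and $\sigma_hE_h^2$ couplings cancel—whereas the flux and positivity steps are then essentially standard.
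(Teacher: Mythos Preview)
Your proposal is correct and follows essentially the same approach as the paper's proof: test \eqref{eq:sch1} with $H_h$, \eqref{eq:sch2} with $E_h$, differentiate the constitutive law \eqref{eq:1d:sch:1} in time to rewrite $\int\partial_tD_h\,E_h$, test the ODE blocks with the matching weights so that the $E_hJ_h$ and $\sigma_hE_h^2$ couplings cancel, and then handle the interface terms via the flux identities (your $\Theta_{j+1/2}$ calculation is equivalent to the paper's identity $\widehat{E_h}[H_h]+\widetilde{H_h}[E_h]=[E_hH_h]$ plus the upwind correction). The positivity argument via the quadratic form in $(Q_h,E_h^2)$ is also the same, and the only cosmetic difference is that the paper keeps $E_h\,\partial_t(Q_hE_h)$ intact and uses $E_h\,\partial_t(Q_hE_h)=\tfrac12\partial_t(Q_hE_h^2)+\tfrac12\sigma_hE_h^2$, whereas you first expand $\partial_t(Q_hE_h)$ and then apply the product rule to $Q_hE_h\partial_tE_h$---these are the same identity.
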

\begin{proof}
Let $\phi=H_h$ in \eqref{eq:sch1}, $\phi=E_h$ in \eqref{eq:sch2} and sum up the two equalities over all elements, we obtain
\begin{equation}
\label{eq:d1}
\int_{\Omega}( \mu_0 H_h  \dd{t}{H_h}+ E_h \dd{t}{D_h}  ) dx +  \sum_{j=1}^{N} \int_{I_j} \dd{x}(E_h H_h)dx +\sum_{j=1}^{N} (\widehat{E_h}[H_h]
+\widetilde{H_h}[E_h])_{j-1/2} =0.
\end{equation}
 %Since $\sum_{j=1}^{N} \int_{I_j} \dd{x}(E_h H_h)dx=-\sum_{j=1}^{N}[E_h H_h]_{j-1/2}$, and
 Note that with both central and alternating fluxes, \eqref{eq:flux:c} and \eqref{eq:flux:a}, we have
\begin{equation}
\label{eq:fluxidentity}
\widehat{E_h}[H_h]+\widetilde{H_h}[E_h]=[E_h H_h],
\end{equation}
and  with the  upwind flux \eqref{eq:flux:u}, we have
\begin{equation}
\label{eq:fluxidentityd}
\widehat{E_h}[H_h]+\widetilde{H_h}[E_h]=[E_h H_h]+\frac{1}{2}\sqrt{\frac{\mu_0}{\epsilon_0 \epsilon_\infty}}[H_h]^2+\frac{1}{2}\sqrt{\frac{\epsilon_0 \epsilon_\infty}{\mu_0}}[E_h]^2,
\end{equation}
while $\sum_{j=1}^{N} \int_{I_j} \dd{x}(E_h H_h)dx=-\sum_{j=1}^{N}[E_h H_h]_{j-1/2}$,
therefore \eqref{eq:d1} becomes
\begin{equation}
\label{eq:d2}
 \int_{\Omega}( \mu_0 H_h  \dd{t}{H_h}+ E_h \dd{t}{D_h}  ) dx=M(E_h, H_h),
\end{equation}
with
\begin{equation}
\label{Mflux}
M(E_h, H_h):=\left\{ \begin{array}{ll}
         0, & \mbox{for   flux choices \eqref{eq:flux:c} and \eqref{eq:flux:a}};\\
        -\frac{1}{2}\sqrt{\frac{\mu_0}{\epsilon_0 \epsilon_\infty}} \sum_{j=1}^{N} [H_h]_{j-1/2}^2-\frac{1}{2}\sqrt{\frac{\epsilon_0 \epsilon_\infty}{\mu_0}}\sum_{j=1}^{N} [E_h]_{j-1/2}^2, & \mbox{for   flux choice \eqref{eq:flux:u} },\end{array} \right.
        \end{equation}
which is non-positive.
Differentiating \eqref{eq:1d:sch:1} with respect to time, and substituting it into the equation \eqref{eq:d2}, we obtain
\begin{equation}
\label{eq:d3}
 \int_{\Omega}( \mu_0 H_h  \dd{t}{H_h}+ \epsilon_0 E_h  (\epsilon_\infty \dd{t}E_h +a(1-\theta) \dd{t} E_h^3+ \dd{t} P_h+a\theta \dd{t} (Q_h E_h) )   ) dx=M(E_h, H_h),
\end{equation}
which, with \eqref{eq:sch3}, is equivalent to
\begin{equation}
\label{eq:d4}
\frac{d}{dt}  \int_{\Omega}  \left (\frac{\mu_0}{2} H_h^2 + \frac{\epsilon_0 \epsilon_\infty}{2} E_h^2 + \frac{3 \epsilon_0 a (1-\theta)}{4} E_h^4\right) dx = - \int_{\Omega} \epsilon_0 E_h(J_h+a\theta \dd{t} (Q_h E_h)) dx+M(E_h, H_h).
\end{equation}

By \eqref{eq:sch4} and \eqref{eq:sch3},
\begin{eqnarray}
\int_{\Omega} J_h \dd{t}{J_h} dx = -\int_{\Omega}\left(\frac{1}{\tau}J_h +\omega_0^2P_h-\omega_p^2E_h\right)J_hdx \notag\\
=  -\int_{\Omega}\left(\frac{1}{\tau}J_h-\omega_p^2E_h\right)J_hdx -\omega_0^2 \int_{\Omega}  P_h \dd{t}{P_h} dx.
\end{eqnarray}
 This gives the relation
\begin{eqnarray}
\label{eq:d5}
\frac{d}{dt}  \int_{\Omega}  \left (   \frac{1}{2} J_h^2   + \frac{\omega_0^2}{2}   P_h^2 \right )  dx  %= -\int_{\Omega}\left(\frac{1}{\tau}J_h-\omega_p^2E_h\right)J_hdx
= -\int_{\Omega}\frac{1}{\tau}J^2_hdx+\omega_p^2 \int_\Omega E_h J_h dx.
\end{eqnarray}

Similarly, we take $\phi=\sigma_h$ in \eqref{eq:sch6},  sum up over all elements,  use \eqref{eq:sch5}, and obtain
\begin{eqnarray}
\int_{\Omega} \sigma_h \dd{t}{\sigma_h} dx = -\int_{\Omega}\left(\frac{1}{\tau_v}\sigma_h +\omega_v^2Q_h -\omega_v^2E_h^2\right) \sigma_hdx\notag \\
= -\int_{\Omega}\left(\frac{1}{\tau_v}\sigma_h  -\omega_v^2E_h^2\right) \sigma_hdx-\omega_v^2 \int_{\Omega}  Q_h \dd{t}{Q_h} dx,
\end{eqnarray}
which yields
\begin{eqnarray}
\label{eq:d6}
\frac{d}{dt}  \int_{\Omega}  \left (   \frac{1}{2} \sigma_h^2   + \frac{\omega_v^2}{2}  Q_h^2 \right )  dx
%=  -\int_{\Omega}\left(\frac{1}{\tau_v}\sigma_h  -\omega_v^2E_h^2\right) \sigma_hdx
=-\int_{\Omega}\frac{1}{\tau_v}\sigma_h^2 dx+w_v^2  \int_{\Omega}E_h^2\sigma_h dx.
\end{eqnarray}
On the other hand,
\begin{equation}
\label{eq:d7}
\int_\Omega E_h \dd{t} (Q_h E_h)dx = \frac{1}{2}\int_\Omega (\dd{t} (Q_h E_h^2)+E_h^2 \dd{t}Q_h) dx = \frac{1}{2}\int_\Omega (\dd{t} (Q_h E_h^2)+E_h^2 \sigma_h) dx.
\end{equation}

Combining the results in  \eqref{eq:d4}, \eqref{eq:d5}, \eqref{eq:d6}, \eqref{eq:d7}, we now have
\begin{eqnarray*}
&&\frac{d}{dt}  \int_{\Omega}  \left (\frac{\mu_0}{2} H_h^2 + \frac{\epsilon_0 \epsilon_\infty}{2} E_h^2 + \frac{3 \epsilon_0 a (1-\theta)}{4} E_h^4\right) dx -M(E_h, H_h) \\
&&=
-\frac{\epsilon_0}{\omega_p^2} \left ( \frac{d}{dt}  \int_{\Omega}  \left (   \frac{1}{2} J_h^2   + \frac{\omega_0^2}{2}   P_h^2 \right )  dx  +\int_{\Omega}\frac{1}{\tau}J^2_hdx   \right )-\epsilon_0  a\theta \int_{\Omega}  E_h  \dd{t} (Q_h E_h) dx\\
%&&=-\frac{\epsilon_0}{\omega_p^2} \left ( \frac{d}{dt}  \int_{\Omega}  \left (   \frac{1}{2} J_h^2   + \frac{\omega_0^2}{2}   P_h^2 \right )  dx  +\int_{\Omega}\frac{1}{\tau}J^2_hdx   \right )- \frac{\epsilon_0  a\theta}{2}\frac{d}{dt}  \int_{\Omega} Q_h E_h^2 dx-\frac{\epsilon_0  a\theta}{2} \int_{\Omega}  E_h^2 \sigma_h dx \\
&&=-\frac{\epsilon_0}{\omega_p^2} \left ( \frac{d}{dt}  \int_{\Omega}  \left (   \frac{1}{2} J_h^2   + \frac{\omega_0^2}{2}   P_h^2 \right )  dx  +\int_{\Omega}\frac{1}{\tau}J^2_hdx   \right )- \frac{\epsilon_0  a\theta}{2}\frac{d}{dt}  \int_{\Omega} Q_h E_h^2 dx\\
&&-\frac{\epsilon_0  a\theta}{2\omega_v^2} \left ( \frac{d}{dt}  \int_{\Omega}  \left (   \frac{1}{2} \sigma_h^2   + \frac{\omega_v^2}{2}  Q_h^2 \right )  dx + \int_{\Omega}\frac{1}{\tau_v}\sigma_h^2 dx  \right).
\end{eqnarray*}
This becomes
%If we  define the discrete energy as
%$$
%\mathcal{E}_h=\int_{\Omega}  \left (\frac{\mu_0}{2} H_h^2 + \frac{\epsilon_0 \epsilon_\infty}{2} E_h^2 +  \frac{\epsilon_0}{2\omega_p^2} J_h^2   + \frac{\epsilon_0\omega_0^2}{2 \omega_p^2}   P_h^2+ \frac{\epsilon_0  a\theta}{4\omega_v^2} \sigma_h^2 + \frac{\epsilon_0  a\theta}{2}  Q_h E_h^2  + \frac{3 \epsilon_0 a (1-\theta)}{4} E_h^4+\frac{\epsilon_0  a\theta}{4}Q_h^2\right) dx,
%$$
%then we have shown
$$
\frac{d}{dt}\mathcal{E}_h=-\frac{\epsilon_0}{\omega_p^2 \tau} \int_\Omega J_h^2 dx-\frac{\epsilon_0  a\theta}{2\omega_v^2 \tau_v}  \int_\Omega \sigma_h^2 dx+M(E_h, H_h),
$$
with the discrete energy $\mathcal{E}_h$  defined in \eqref{d-ene}, which is guaranteed to be  nonnegative as long as $\theta \in [0, \frac{3}{4}]$.
 Since all model parameters are positive, clearly we  have
$
\frac{d}{dt}\mathcal{E}_h \le 0.
$
\end{proof}

We have demonstrated in the theorem above that the DG scheme with appropriate flux choices can successfully maintain the energy stability of the original system on the semi-discrete level.

\section{Semi-discrete Scheme: Error Estimates}
\label{sec:error}

In this section, we will establish the error estimates of the semi-discrete scheme, formulated in Section \ref{sec:semi}, up to a given time $T<\infty$.
The following projections, $\pi_h$ (defined from $L^2(\Omega)$ onto $V_h^k$) and $\pi_h^\pm$ (defined from $H^1(\Omega)$ onto $V_h^k$), will be used in the analysis.
\begin{enumerate}
\item $L^2$ projection $\pi_h$: $\pi_h w \in V_h^k$, such that $\forall j$
\begin{equation}
\int_{I_j} \pi_h w \, v \,dx =\int_{I_j} w \, v \, dx, \qquad \forall v \in P^k(I_j).
\end{equation}
\item Gauss-Radau projection $\pi_h^-$: $\pi_h^- w \in V_h^k$, such that $\forall j$
\begin{equation}
\int_{I_j} \pi_h^- w \, v \,dx =\int_{I_j} w \, v \, dx, \qquad \forall v \in P^{k-1}(I_j),
\label{eq:proj-}
\end{equation}
and $ (\pi_h^- w)^-_{j+\frac12}=w_{j+\frac12}^-$.
\item Gauss-Radau projection $\pi_h^+$: $\pi_h^+ w \in V_h^k$, such that  $\forall j$
\begin{equation}
\int_{I_j} \pi_h^+ w \, v \,dx =\int_{I_j} w \, v \, dx, \qquad \forall v \in P^{k-1}(I_j),
\label{eq:proj+}
\end{equation}
and $(\pi_h^+ w)^+_{j-\frac12}=w_{j-\frac12}^+.$
\end{enumerate}
These projections are commonly used in analyzing DG methods, and the following approximation property and estimate can be easily established \cite{ciarlet2002finite}:
\begin{align}
\|w-\Pi_h w\|^2+h\sum_j ((w-\Pi_hw)_{j+\frac12}^\pm)^2 \leq C_\star h^{2k+2}\|w\|^2_{H^{k+1}},\quad \forall w\in H^{k+1}(\Omega),\label{eq:approx1}
\end{align}
with $\Pi_h=\pi_h, \pi_h^-$ or $\pi_h^+$, and
\begin{align}
\|\Pi_h w\|_\infty\leq
\left\{
\begin{array}{lll}
C_k\|w\|_\infty, &\forall w\in W^{1,\infty}(\Omega), & \text{when}\; \Pi_h=\pi_h^\pm,\\
C_k\|w\|_\infty, &\forall w\in L^{\infty}(\Omega), &\text{when}\; \Pi_h=\pi_h.
\end{array}
\right.
\label{eq:approx3}
\end{align}
Here $w-\Pi_h w$ represents the projection error. In \eqref{eq:approx1}-\eqref{eq:approx3},
$\|\cdot\|$, $\|\cdot\|_\infty$, and $\|\cdot\|_{H^{k+1}}$ stand for the $L^2$-norm, $L^\infty$-norm, and $H^{k+1}$-norm on $\Omega$, respectively. And $\|w\|_{W^{1,\infty}}=(\|w\|_\infty^2+\|\frac{dw}{dx}\|_\infty^2)^{1/2}$ is the $W^{1,\infty}$-norm for $W^{1,\infty}(\Omega)$.
The constant $C_\star$ depends  on $k$ but not on $h$ or $w$.
Throughout the paper, $C_\star$ denotes a generic constant which may depend on $k$ and mesh parameter $\delta$. If we want to emphasize the sole dependence of $k$, this generic constant will be denoted by $C_k$, which usually is computable. $C$ is another generic constant, which is independent of $h$, but may depend on $k$, mesh parameter $\delta$, and some Sobolev norms of the exact solution of \eqref{eq:1d:sys:1} up to time $T$.   There is one more generic constant $C_\text{model}$, which depends on some or all model parameters.
% $\epsilon_0, \epsilon_\infty, \mu_0, a, \theta, \omega_0, \omega_v, \omega_p, \tau, \tau_v$.
 Each type of generic constants may take different values at different occurrences.
In the analysis, the following inverse inequality will also be needed,
\begin{equation}
\label{eq:inv}
h^2\int_{I_j}(v_x)^2 dx + h \left((v_{j-\frac12}^+)^2+(v_{j+\frac12}^-)^2\right) \leq C_\star \int_{I_j}v^2 dx,\qquad \forall v\in V_h^k.
\end{equation}
A direct consequence of \eqref{eq:approx3} is $\|w-\Pi_h w\|_\infty\leq C_k\|w\|_\infty, \forall w\in W^{1,\infty}(\Omega)$ when $\Pi_h=\pi_h^\pm$ (or $\forall w\in L^\infty(\Omega)$ when $\Pi_h=\pi_h$).

We start with decomposing the error in $E$ into two parts,
$E-E_h=\eta_E-\zeta_E$, with $\eta_E=E-\pi_h^EE$, $\zeta_E=E_h-\pi_h^EE$, where $\pi_h^E$ is a projection operator onto $V_h^k$. We later also use $E-\eta_E=\pi_h^EE$. Similarly, one can define the decomposition of errors in other quantities, namely $u-u_h=u-\pi_h^u-(u_h-\pi_h^u)=\eta_u-\zeta_u$, with $u$ being $H, P, Q, J, \sigma$. In the analysis, $\pi_h^u$ is taken to be $\pi_h$, the $L^2$-projection, for $u=E, H, P, Q, J, \sigma$, except for the following two cases: when the numerical fluxes are alternating, we take
%the following instead
\begin{equation}
(\pi_h^E, \pi_h^H)=\left\{
\begin{array}{ll}
(\pi_h^+, \pi_h^-) & \text{when}\; (\widehat{E_h}, \widetilde{H_h})=(E_h^+, H_h^-),\\
(\pi_h^-, \pi_h^+) & \text{when}\; (\widehat{E_h}, \widetilde{H_h})=(E_h^-, H_h^+),
\end{array}
\right.
\end{equation}
while with the upwind flux, we use
\begin{eqnarray}
\pi_h^E (E, H)&=\frac{1}{2}\pi_h^+\big(E+\sqrt{\frac{\mu_0}{\epsilon_0\epsilon_\infty}}H\big)+\frac{1}{2}\pi_h^-\big(E-\sqrt{\frac{\mu_0}{\epsilon_0\epsilon_\infty}}H\big),\\
\pi_h^H  (E, H)&=\frac{1}{2}\pi_h^+\big(H+\sqrt{\frac{\epsilon_0\epsilon_\infty}{\mu_0}}E\big)+\frac{1}{2}\pi_h^-\big(H-\sqrt{\frac{\epsilon_0\epsilon_\infty}{\mu_0}}E\big).
\end{eqnarray}
 See \cite{cheng2017L2} (such as  Lemma 2.4) for the properties of such vector-form projection operators. 
For the a priori error estimate in next theorem, we assume the following regularity for the exact solutions,
\begin{equation}
E, H, P, Q, J, \sigma\in W^{1,\infty}([0, T], H^{k+1}(\Omega)),
\label{eq:reg1}
\end{equation}
and
\begin{equation}
 E\in W^{1,\infty}([0, T], W^{1,\infty}(\Omega)), \quad Q\in W^{1,\infty}([0, T], L^{\infty}(\Omega)),
 \label{eq:reg2}
 \end{equation}
where the former  is standard for error analysis of linear models, and  the latter are needed to  treat nonlinearity.

\begin{theorem}[Error estimates of semi-discrete scheme]
\label{thm:err}
Assuming the periodic boundary condition and the exact solutions being as regular as \eqref{eq:reg1}-\eqref{eq:reg2}, under the conditions that $\theta\in [0, \frac{1}{4})$ and the strength of nonlinearity is sufficiently small,
the following error estimates hold for the semi-discrete DG scheme \eqref{eq:1d:sch}-\eqref{eq:1d:sch:1} with flux choices \eqref{eq:flux:c}, \eqref{eq:flux:a}, or \eqref{eq:flux:u}
\begin{equation}
\label{eq:err:20}
\|u-u_h\|\leq CC_\text{model} h^{r}, \qquad u=E, H, P, Q, J, \sigma,
\end{equation}
where
\begin{equation}
\label{eq:par:r}
r=\left\{
\begin{array}{ll}
k&\text{for central flux} \; \eqref{eq:flux:c}\\
k+1&\text{ for alternating flux } \; \eqref{eq:flux:a}\; \text{and upwind flux}\; \eqref{eq:flux:u}.
\end{array}
\right.
\end{equation}
\end{theorem}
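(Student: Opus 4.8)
The plan is to run the standard energy-based error analysis for DG methods, but closed by a bootstrapping (a priori assumption) argument to handle the nonlinearity. First I would decompose each error via the projections specified above as $u-u_h=\eta_u-\zeta_u$, so that the projection parts $\eta_u$ are controlled directly by \eqref{eq:approx1} and it remains only to estimate the discrete parts $\zeta_u\in V_h^k$. Using consistency of the numerical fluxes (the exact solution is single-valued at interfaces), I subtract the scheme \eqref{eq:1d:sch}--\eqref{eq:1d:sch:1} from the weak form satisfied by the exact solution to obtain error equations of the same structure as the scheme, now driven by projection-error source terms. The Gauss--Radau choices of $\pi_h^E,\pi_h^H$ for the alternating and upwind fluxes are designed precisely so that the interface contributions of $\eta_E,\eta_H$ vanish, which is what upgrades the convergence rate from $h^k$ (central) to $h^{k+1}$ (alternating/upwind) as recorded in \eqref{eq:par:r}.

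Next I would reproduce the energy identity of Theorem~\ref{thm:semi} at the level of the errors: test the $H$- and $D$-error equations with $\zeta_H$ and $\zeta_E$, the $J$- and $\sigma$-equations with $\zeta_J$ and $\zeta_\sigma$, and use the strong-form relations $\partial_t\zeta_P=\zeta_J$, $\partial_t\zeta_Q=\zeta_\sigma$. Differentiating the discrete constitutive law \eqref{eq:1d:sch:1} in time and substituting, the linear terms assemble into $\frac{d}{dt}$ of an error energy $\mathcal{E}_h^\zeta$ built on the quadratic part of \eqref{d-ene} with $u_h$ replaced by $\zeta_u$. The remaining contributions split into (i) projection-error residuals involving $\eta_u$ and $\partial_t\eta_u$, which \eqref{eq:approx1} and Young's inequality bound by $C\mathcal{E}_h^\zeta+CC_\text{model}^2 h^{2r}$, and (ii) genuinely nonlinear residuals arising from the Kerr term $E^3-E_h^3$ and the Raman products $QE-Q_hE_h$ and $QE^2-Q_hE_h^2$.

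Controlling the nonlinear residuals is the heart of the argument and the main obstacle. Factoring $E^3-E_h^3=(E-E_h)(E^2+EE_h+E_h^2)$ and splitting the bilinear Raman differences produces terms such as $\int_\Omega(\cdots)\zeta_E^2$ and $\int_\Omega Q_h\zeta_E^2$, weighted by $\|E\|_\infty,\|E_h\|_\infty,\|Q_h\|_\infty$. These must be absorbed into the positive part of $\mathcal{E}_h^\zeta$, and this is exactly why the hypotheses tighten to $\theta\in[0,\tfrac14)$ together with ``sufficiently small nonlinearity'': the strict inequality leaves a definiteness margin in the $\theta$-weighted quadratic form coupling $\zeta_E^2$ and $\zeta_Q$ to swallow the Raman cross terms, while smallness of $a$ keeps the Kerr contribution subordinate. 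To supply the required $L^\infty$ control I would run the usual bootstrapping step: posit the a priori bound $\|\zeta_E\|_\infty\le 1$, which with $\|\pi_h^E E\|_\infty\le C$ from \eqref{eq:approx3} yields uniform control of $\|E_h\|_\infty$ (and similarly $\|Q_h\|_\infty$); after the $L^2$ estimate is derived, this assumption is recovered a posteriori through the inverse inequality \eqref{eq:inv}, since for $k\ge 1$ one gets $\|\zeta_E\|_\infty\le C_\star h^{-1/2}\|\zeta_E\|\le C h^{r-1/2}\to 0$ as $h\to 0$, where quasi-uniformity is used.

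Assembling these bounds gives a differential inequality of Gronwall type, $\frac{d}{dt}\mathcal{E}_h^\zeta\le C\mathcal{E}_h^\zeta+CC_\text{model}^2 h^{2r}$, with vanishing initial error under the natural projection of the initial data. Integrating over $[0,T]$ yields $\mathcal{E}_h^\zeta\le CC_\text{model}^2 h^{2r}$, hence $\|\zeta_u\|\le CC_\text{model}\,h^{r}$ for all six variables, and combining with the projection estimate \eqref{eq:approx1} for $\eta_u$ via the triangle inequality produces the claimed bound \eqref{eq:err:20}. Beyond the routine DG bookkeeping, I expect the only delicate points to be the definiteness analysis that pins down the threshold $\theta<\tfrac14$, and the verification that the bootstrapping closes uniformly up to the fixed final time $T$.
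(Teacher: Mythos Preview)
Your overall strategy---error decomposition via the stated projections, consistency-based error equations, an energy identity mirroring Theorem~\ref{thm:semi}, and Gronwall---matches the paper, and the projection mechanism you identify for why the interface residual vanishes under the alternating and upwind fluxes (hence $r=k+1$) is exactly what the paper uses. However, your treatment of the nonlinearity takes a different route and, as written, contains a gap.

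The paper does \emph{not} use a bootstrapping argument. When the time-differentiated constitutive relation is tested with $\zeta_E$, the Kerr contribution $\int_\Omega\partial_t(E_h^3-E^3)\,\zeta_E\,dx$ is expanded via $E_h=(E-\eta_E)+\zeta_E$, and this inevitably produces total time derivatives of higher-order quantities, namely $\tfrac34\zeta_E^4$, $2(E-\eta_E)\zeta_E^3$, and $\tfrac32(E-\eta_E)^2\zeta_E^2$; the Raman term similarly yields $\tfrac12(Q-\eta_Q)\zeta_E^2$. These are moved to the left and incorporated into the error energy. Positivity of this augmented functional is then established under three explicit conditions: Condition~1 is the quartic-form inequality on $\zeta_E^2$ and $\zeta_Q$ (after reserving a fraction $\rho_{\mathrm{err}}$ of the $\zeta_E^4$ and $\zeta_Q^2$ coefficients for later absorption), and it is the precise origin of the threshold $\theta<\tfrac14$; Condition~2 uses only $\|Q-\eta_Q\|_\infty=\|\pi_hQ\|_\infty\le C_k\|Q\|_\infty$; Condition~3 is a smallness requirement on $a$ involving $\|\partial_tE\|_\infty$ and $\|\partial_tQ\|_\infty$. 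All $L^\infty$ bounds in the paper are on projections of the \emph{exact} solution via \eqref{eq:approx3}; no a priori control of $\|\zeta_E\|_\infty$, $\|E_h\|_\infty$, or $\|Q_h\|_\infty$ is ever invoked.

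Your claim that the error energy is ``built on the quadratic part'' is where the gap lies. With a purely quadratic $\mathcal{E}_h^\zeta$, the Kerr residual left on the right side still contains $\partial_t\zeta_E$ (through $\partial_t E_h$), which a bootstrap on $\|\zeta_E\|_\infty$ alone does not control. Any integration by parts in time to eliminate $\partial_t\zeta_E$ regenerates exactly the $\tfrac{d}{dt}\!\int\zeta_E^4$-type terms, forcing you back to an augmented energy. A bootstrapping route can be made rigorous, but it does not let you avoid putting these terms under $\tfrac{d}{dt}$; it would only allow you to dominate them by the quadratic part a posteriori, and in that variant the $\theta<\tfrac14$ restriction need not emerge in the form you describe. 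The paper's direct approach is cleaner here and produces the stated conditions without any a priori assumption to close.
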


\begin{proof}
With the numerical fluxes being consistent, the proposed semi-discrete scheme is consistent. That is, \eqref{eq:1d:sch} holds if the numerical solutions are replaced by the exact ones, while the test functions are still taken from $V_h^k$. From this, one can get
the error equations,

\begin{subequations}
\label{eq:1d:e}
\begin{align}
&\mu_0\int_\Omega\dd{t}{\zeta_H}\phi dx +\sum_{j=1}^{N} \int_{I_j} \zeta_E\dd{x}\phi dx+\sum_{j=1}^{N} (\widehat{\zeta_E}[\phi])_{j-1/2}\notag\\
&\hspace{0.5in} =\mu_0\int_\Omega\dd{t}{\eta_H}\phi dx +\sum_{j=1}^{N} \int_{I_j} \eta_E\dd{x}\phi dx+\sum_{j=1}^{N} (\widehat{\eta_E}[\phi])_{j-1/2} ,\quad \forall \phi\in V_h^k,\label{eq:e1}\\
&\int_\Omega\dd{t}{(D_h-D)} \phi dx +\sum_{j=1}^{N} \int_{I_j} \zeta_H\dd{x} \phi dx+\sum_{j=1}^{N}(\widetilde{\zeta_H}[\phi])_{j-1/2}\notag\\
&\hspace{0.5in}=\sum_{j=1}^{N} \int_{I_j} \eta_H\dd{x} \phi dx+\sum_{j=1}^{N}(\widetilde{\eta_H}[\phi])_{j-1/2},\; \forall \phi\in V_h^k, \label{eq:e2}\\
& \dd{t}{\zeta_P} -\zeta_J=\dd{t}{\eta_P} - \eta_J,  \qquad\dd{t}{\zeta_Q} -\zeta_\sigma=\dd{t}{\eta_Q} -\eta_\sigma, \label{eq:e3}\\
& \dd{t}{\zeta_J}+\frac{1}{\tau}\zeta_J +\omega_0^2\zeta_P-\omega_p^2\zeta_E
= \dd{t}{\eta_J}+\frac{1}{\tau}\eta_J +\omega_0^2\eta_P-\omega_p^2\eta_E, \label{eq:e4}\\
&\int_\Omega\dd{t}{\zeta_\sigma} \phi dx+\int_\Omega\left(\frac{1}{\tau_v}\zeta_\sigma +\omega_v^2\zeta_Q -\omega_v^2(E_h^2-E^2)\right)\phi dx\notag\\
&\hspace{0.5in}=
\int_\Omega\dd{t}{\eta_\sigma} \phi dx+\int_\Omega\left(\frac{1}{\tau_v}\eta_\sigma +\omega_v^2\eta_Q\right)\phi dx
,\; \forall \phi\in V_h^k, \label{eq:e6}
\end{align}
\end{subequations}
coupled with
\begin{equation}
\label{eq:e7}
\int_\Omega (D_h-D)  \phi dx= \int_\Omega \epsilon_0\Big(\epsilon_\infty (\zeta_E-\eta_E) +a(1-\theta)(E_h^3-E^3)+\zeta_P-\eta_P+a\theta (Q_h E_h-QE)\Big)\phi dx.\quad\forall\phi\in V_h^k.
\end{equation}

Now we take $\phi=\zeta_H$ in \eqref{eq:e1}, $\phi=\zeta_E$ in \eqref{eq:e2},
$\phi=\zeta_\sigma$ in \eqref{eq:e6}. We then differentiate \eqref{eq:e7} in time $t$, and take $\phi=\zeta_E$. Following similar steps as in the proof of Theorem \ref{thm:semi}, we get
\begin{align}
\label{eq:e8}
\frac{d}{dt}\int_{\Omega} & \left (\frac{\mu_0}{2} \zeta_H^2 + \frac{\epsilon_0 \epsilon_\infty}{2} \zeta_E^2 +  \frac{\epsilon_0}{2\omega_p^2} \zeta_J^2   + \frac{\epsilon_0\omega_0^2}{2 \omega_p^2}   \zeta_P^2+ \frac{\epsilon_0  a\theta}{4\omega_v^2} \zeta_\sigma^2 + \frac{\epsilon_0  a\theta}{2}  \zeta_Q \zeta_E^2  + \frac{3 \epsilon_0 a (1-\theta)}{4} \zeta_E^4+\frac{\epsilon_0  a\theta}{4}\zeta_Q^2\right) dx\notag\\
&+\epsilon_0a(1-\theta) \frac{d}{dt}\int_{\Omega}\left( 2(E-\eta_E)\zeta_E^3+\frac{3}{2}(E-\eta_E)^2\zeta_E^2\right)dx
+\frac{\epsilon_0a\theta}{2} \frac{d}{dt}\int_{\Omega}(Q-\eta_Q)\zeta_E^2dx\\
&+\frac{\epsilon_0}{\omega_p^2 \tau} \int_\Omega \zeta_J^2 dx+\frac{\epsilon_0  a\theta}{2\omega_v^2 \tau_v}  \int_\Omega \zeta_\sigma^2 dx-
M(\zeta_E, \zeta_H)
=\sum_{j=1}^4\Lambda_j.\notag
\end{align}
Here the non-positive term $M(\cdot, \cdot)$ is defined in \eqref{Mflux}.
  The four terms on the right are
\begin{equation}
\Lambda_1=\mu_0\int_\Omega(\dd{t}\eta_H) \zeta_Hdx,
\end{equation}
\begin{equation}
\Lambda_2=\sum_{j=1}^{N}\int_{I_j}\left(\eta_E(\dd{x}\zeta_H)+\eta_H(\dd{x}\zeta_E)\right)dx
+\sum_{j=1}^N(\widehat{\eta_E}[\zeta_H]+\widetilde{\eta_H}[\zeta_E])_{j-1/2},
\end{equation}
\begin{align}
\Lambda_3=&\frac{\epsilon_0}{\omega_p^2}\int_\Omega \left(\dd{t}\eta_J+\frac{1}{\tau}\eta_J+\omega_0^2\eta_P-\omega_p^2\eta_E\right)\zeta_J dx
+\frac{\epsilon_0\omega_0^2}{\omega_p^2}\int_\Omega \left(\dd{t}\eta_P-\eta_J\right)\zeta_P dx\notag\\
&+\frac{\epsilon_0 a\theta}{2\omega_v^2}\int_\Omega \left(\dd{t}\eta_\sigma+\frac{1}{\tau_v}\eta_\sigma\right)\zeta_\sigma dx
+\frac{\epsilon_0 a \theta}{2} \int_\Omega \left(\eta_Q-2E\eta_E+\eta_E^2\right)\zeta_\sigma dx,
\end{align}
\begin{align}
\Lambda_4=&\epsilon_0\epsilon_\infty\int_\Omega(\dd{t}\eta_E)\zeta_Edx
+\epsilon_0\int_\Omega \eta_J\zeta_Edx%\notag\\
%&
+\epsilon_0 a (1-\theta)\int_\Omega \dd{t}(\eta_E^3-3E\eta_E^2+3E^2\eta_E)\zeta_E dx\\
&+\epsilon_0 a \theta\int_\Omega \dd{t}(Q\eta_E+(E-\eta_E)\eta_Q)\zeta_Edx
- \epsilon_0 a \theta\int_\Omega (E-\eta_E)(\dd{t}\eta_Q -\eta_\sigma)\zeta_Edx\notag\\
&-\epsilon_0 a (1-\theta)\int_\Omega \left(\dd{t}(E-\eta_E) \zeta_E^3
          +\frac{3}{2}\dd{t}(E-\eta_E)^2 \zeta_E^2 \right)dx\notag\\
&-\frac{\epsilon_0 a\theta}{2} \int_\Omega \dd{t}(Q-\eta_Q)\zeta_E^2dx+\frac{\epsilon_0 a \theta}{2} \int_\Omega (\dd{t}\eta_Q-\eta_\sigma)\zeta_Qdx-\epsilon_0 a\theta\int_\Omega \dd{t}(E-\eta_E)\zeta_Q\zeta_Edx.\notag
\end{align}

Next we will take two steps to estimate the left and the right hand side of \eqref{eq:e8}, respectively.

\medskip
\noindent {\sf Step 1:}
Compared with the discrete energy in the stability analysis, the terms in the second row of the left hand side of \eqref{eq:e8} are new, and they arise from the discretizations of nonlinear terms. With arbitrarily chosen constant parameters $\rho_\text{err}\in(0,1)$ and $\kappa_\text{err}\in(0,1)$, we have

 \begin{align}
\label{eq:e9}
\int_{\Omega} & \left (\frac{\mu_0}{2} \zeta_H^2 + \frac{\epsilon_0 \epsilon_\infty}{2} \zeta_E^2 +  \frac{\epsilon_0}{2\omega_p^2} \zeta_J^2   + \frac{\epsilon_0\omega_0^2}{2 \omega_p^2}   \zeta_P^2+ \frac{\epsilon_0  a\theta}{4\omega_v^2} \zeta_\sigma^2 + \frac{\epsilon_0  a\theta}{2}  \zeta_Q \zeta_E^2  + \frac{3 \epsilon_0 a (1-\theta)}{4} \zeta_E^4+\frac{\epsilon_0  a\theta}{4}\zeta_Q^2\right) dx\notag\\
&+\epsilon_0a(1-\theta) \int_{\Omega}\left( 2(E-\eta_E)\zeta_E^3+\frac{3}{2}(E-\eta_E)^2\zeta_E^2\right)dx
+\frac{\epsilon_0a\theta}{2} \int_{\Omega}(Q-\eta_Q)\zeta_E^2dx\notag\\
&=\int_{\Omega} \left (\frac{\mu_0}{2} \zeta_H^2 +  \frac{\epsilon_0}{2\omega_p^2} \zeta_J^2   + \frac{\epsilon_0\omega_0^2}{2 \omega_p^2}   \zeta_P^2+ \frac{\epsilon_0  a\theta}{4\omega_v^2} \zeta_\sigma^2 + \frac{ \epsilon_0 a (1-\theta)\rho_\text{err}}{12} \zeta_E^4+\frac{\epsilon_0  a\theta\rho_\text{err}}{4}\zeta_Q^2 + \frac{\epsilon_0 \epsilon_\infty\kappa_\text{err}}{2} \zeta_E^2
\right) dx\notag\\
&+\frac{3\epsilon_0a(1-\theta)}{2} \int_{\Omega}(E-\eta_E+\frac{2}{3}\zeta_E)^2\zeta_E^2dx+\frac{\epsilon_0}{2} \int_{\Omega}\left(\epsilon_\infty(1-\kappa_\text{err})+a\theta (Q-\eta_Q)\right)\zeta_E^2dx\notag\\
&+
\int_\Omega \left(\frac{ \epsilon_0 a (1-\theta)(1-\rho_\text{err})}{12} \zeta_E^4 + \frac{\epsilon_0  a\theta}{2}  \zeta_Q \zeta_E^2  +\frac{\epsilon_0  a\theta(1-\rho_\text{err})}{4}\zeta_Q^2\right) dx\notag\\
&\geq
\int_{\Omega} \left (\frac{\mu_0}{2} \zeta_H^2 +  \frac{\epsilon_0}{2\omega_p^2} \zeta_J^2   + \frac{\epsilon_0\omega_0^2}{2 \omega_p^2}   \zeta_P^2+ \frac{\epsilon_0  a\theta}{4\omega_v^2} \zeta_\sigma^2 + \frac{ \epsilon_0 a (1-\theta)\rho_\text{err}}{12} \zeta_E^4+\frac{\epsilon_0  a\theta\rho_\text{err}}{4}\zeta_Q^2 + \frac{\epsilon_0 \epsilon_\infty\kappa_\text{err}}{2} \zeta_E^2
\right) dx\notag\\
&:={\mathcal E}_h^\text{(mod)},
\end{align}
under the conditions:
\begin{align}
&\text{\bf Condition 1}:  \qquad \theta \in[0, \frac{1}{3(1-\rho_\text{err})^{-2}+1}]
\label{eq:cond1}\\
&\text{\bf Condition 2}:  \qquad
 a\theta C_k\|Q\|_\infty\leq \epsilon_\infty(1-\kappa_\text{err})
\label{eq:cond2}
\end{align}
with a computable constant $C_k$ from \eqref{eq:approx3}.
Indeed, under  Condition 1,
$$\frac{ \epsilon_0 a (1-\theta)(1-\rho_\text{err})}{12} \zeta_E^4 + \frac{\epsilon_0  a\theta}{2}  \zeta_Q \zeta_E^2  +\frac{\epsilon_0  a\theta(1-\rho_\text{err})}{4}\zeta_Q^2\geq 0$$
holds, while Condition 2  is to ensure
$$\epsilon_\infty(1-\kappa_\text{err})+a\theta (Q-\eta_Q)\geq \epsilon_\infty(1-\kappa_\text{err})-a\theta\|\pi_hQ\|_\infty
\geq
\epsilon_\infty(1-\kappa_\text{err})-a\theta C_k\|Q\|_\infty\geq 0.$$

\medskip
\noindent{\sf Step 2:} Next we will  estimate $\Lambda_j$, $j=1, \cdots, 4$. Cauchy-Schwartz inequality, Young's inequality, as well as approximation result and estimate in \eqref{eq:approx1}-\eqref{eq:approx3} will be used repeatedly. For $\Lambda_1$,
\begin{equation}
\label{eq:lam1}
|\Lambda_1|\leq \mu_0\|\dd{t}\eta_H\|^2+\frac{\mu_0}{4}\|\zeta_H\|^2\leq CC_\text{model} h^{2k+2}+\frac{\mu_0}{4}\|\zeta_H\|^2.
\end{equation}
We here have used $\dd{t}\eta_H=\dd{t}H-\pi_h^H\dd{t}H$. As for $\Lambda_2$, with the choice of the projection operators $\pi_h^E$ and $\pi_h^H$, one has
\begin{equation} \Lambda_2=0 \label{eq:lam2_0}\end{equation}
 for alternating and upwind flux; while for central flux, we have
\begin{equation}
\label{eq:lam2}
|\Lambda_2|=
\sum_{j=1}^{N}(\widehat{\eta_E}[\zeta_H]+\widetilde{\eta_H}[\zeta_E])_{j-1/2}
\leq CC_\text{model}C(\kappa_\text{err})h^{2k}+  \frac{\mu_0}{4} \|\zeta_H\|^2 + \frac{\epsilon_0 \epsilon_\infty\kappa_\text{err}}{8} \|\zeta_E\|^2.
\end{equation}

For $\Lambda_3$,
\begin{align}
|\Lambda_3|\leq & \frac{\epsilon_0}{2\omega_p^2}(\|\dd{t}\eta_J+\omega_0^2\eta_P-\omega_p^2\eta_E\|^2+   \|\zeta_J\|^2)
+\frac{\epsilon_0}{\omega_p^2\tau}(\frac{1}{4}\|\eta_J\|^2+\|\zeta_J\|^2)\notag\\
&
+\frac{\epsilon_0\omega_0^2}{2\omega_p^2}(\|\dd{t}\eta_P-\eta_J\|^2+\|\zeta_P\|^2)
+\frac{\epsilon_0 a\theta}{2\omega_v^2\tau_v}(\frac{1}{4}\|\eta_\sigma\|^2+\|\zeta_\sigma\|^2)\notag\\
&+\frac{\epsilon_0 a\theta}{4\omega_v^2}(\|\dd{t}\eta_\sigma+\omega_v^2 (\eta_Q-2E\eta_E+\eta_E^2)\|^2+\|\zeta_\sigma\|^2).
\end{align}
Using the approximation property and estimate in \eqref{eq:approx1}-\eqref{eq:approx3}, as well as the boundedness of $E$,  we have
$$\|E\eta_E\|\leq \|E\|_\infty\|\eta_E\|,\qquad \|\eta_E^2\| \leq \|\eta_E\|_\infty\|\eta_E\|\leq C_k \|E\|_\infty\|\eta_E\|, $$
  hence get
\begin{align}
\label{eq:lam3}
|\Lambda_3|\leq & CC_\text{model} h^{2k+2}
+
\frac{\epsilon_0}{2\omega_p^2} \|\zeta_J\|^2
+\frac{\epsilon_0}{\omega_p^2\tau}\|\zeta_J\|^2
+\frac{\epsilon_0\omega_0^2}{2\omega_p^2}\|\zeta_P\|^2
+\frac{\epsilon_0 a\theta}{2\omega_v^2\tau_v}\|\zeta_\sigma\|^2+\frac{\epsilon_0 a\theta}{4\omega_v^2}\|\zeta_\sigma\|^2.
\end{align}

Term  $\Lambda_4$ is relatively subtle, and we will proceed as follows.
\begin{align}
|\Lambda_4|\leq &\epsilon_0\epsilon_\infty (\frac{1}{4\alpha_1}\|\dd{t}\eta_E\|^2 +\alpha_1\|\zeta_E\|^2)
+\epsilon_0 (\frac{1}{4\alpha_2}\|\eta_J\|^2 +\alpha_2\|\zeta_E\|^2) \notag\\
&+\epsilon_0 a (1-\theta) (\frac{1}{4\alpha_3}\|\dd{t}(\eta_E^3-3E\eta_E^2+3E^2\eta_E)\|^2+\alpha_3\|\zeta_E\|^2)\notag\\
&+\epsilon_0 a \theta (\frac{1}{4\alpha_4}\|\dd{t}(Q\eta_E+(E-\eta_E)\eta_Q)\|^2+\alpha_4\|\zeta_E\|^2)\notag\\
&
+\epsilon_0 a \theta(\frac{1}{4\alpha_5}\|E-\eta_E \|_\infty^2\|\dd{t}\eta_Q -\eta_\sigma\|^2+\alpha_5\|\zeta_E\|^2)\notag\\
&+\epsilon_0 a (1-\theta) (\frac{1}{4\alpha_6}\|\zeta_E^2\|^2+\alpha_6\|\dd{t}(E-\eta_E)\|_\infty^2\|\zeta_E\|^2)+ \frac{3\epsilon_0 a (1-\theta)}{2}\|\dd{t}(E-\eta_E)^2\|_\infty\|\zeta_E\|^2\notag\\
&+\frac{\epsilon_0 a\theta}{2} \|\dd{t}(Q-\eta_Q)\|_\infty\|\zeta_E\|^2+\frac{\epsilon_0 a \theta}{2} (\frac{1}{4\alpha_7}\|\dd{t}\eta_Q-\eta_\sigma\|^2+\alpha_7\|\zeta_Q\|^2)\notag\\
&+\frac{\epsilon_0 a \theta}{2} (\frac{1}{\alpha_8}\|\dd{t}(E-\eta_E)\|_\infty^2 \|\zeta_E\|^2+\alpha_8\|\zeta_Q\|^2).
\end{align}

The constant parameters $\alpha_j$, $j=1, \cdots, 7$, are chosen so that
\begin{equation}
\label{eq:err:12}
\epsilon_0\epsilon_\infty\alpha_1=\epsilon_0\alpha_2=\epsilon_0a(1-\theta)\alpha_3=\epsilon_0a\theta\alpha_4=\epsilon_0a\theta\alpha_5=\frac{1}{5}\frac{\epsilon_0\epsilon_\infty\kappa_\text{err}}{8},
\end{equation}
\begin{equation}
\frac{\epsilon_0 a (1-\theta)}{4\alpha_6}=\frac{ \epsilon_0 a (1-\theta)\rho_\text{err}}{12},\quad \frac{\epsilon_0 a\theta}{2}\alpha_7=\frac{\epsilon_0 a\theta}{2}\alpha_8=\frac{1}{2}\frac{\epsilon_0 a\theta\rho_\text{err}}{4}.
\end{equation}
We then further restrict the strength of the nonlinearity such that
 \begin{align}
 %equation}
 \epsilon_0a\Big((\alpha_6(1-\theta)&+\frac{\theta}{2\alpha_8}) \|\dd{t}(E-\eta_E)\|_\infty^2+ \frac{3(1-\theta)}{2}\|\dd{t}(E-\eta_E)^2\|_\infty
 +\frac{\theta}{2} \|\dd{t}(Q-\eta_Q)\|_\infty\Big) \|\zeta_E\|^2\notag\\
 &\leq \frac{\epsilon_0\epsilon_\infty\kappa_\text{err}}{4}\|\zeta_E\|^2,
\end{align}
%equation}
and this, with the estimate \eqref{eq:approx3}, can be ensured under the condition
 \begin{equation}
  \label{eq:cond22}
  \text{\bf Condition 3}: \quad a\left(\frac{3-\theta}{\rho_\text{err}}C_k^2\|\dd{t}E\|^2_\infty+3(1-\theta) C_k^2\|\dd{t}E\|_\infty\|E\|_\infty+\frac{\theta}{2}C_k\|\dd{t}Q\|_\infty
 \right) \leq \frac{\epsilon_\infty\kappa_\text{err}}{4}.
\end{equation}
Using \eqref{eq:err:12}-\eqref{eq:cond2} and applying \eqref{eq:approx1}-\eqref{eq:approx3}, we are able to bound $\Lambda_4$
\begin{align}
\label{eq:lam4}
|\Lambda_4|\leq &CC_\text{model} h^{2k+2} + \frac{3\epsilon_0\epsilon_\infty\kappa_\text{err}}{8}\|\zeta_E\|^2
+\frac{ \epsilon_0 a (1-\theta)\rho_\text{err}}{12}\|\zeta_E^2\|^2+\frac{\epsilon_0 a\theta\rho_\text{err}}{4}\|\zeta_Q\|^2.
\end{align}

Now we can combine  \eqref{eq:e8}, \eqref{eq:e9}, \eqref{eq:lam1}, \eqref{eq:lam2_0}, \eqref{eq:lam2}, \eqref{eq:lam3}, \eqref{eq:lam4},  and reach
\begin{align}
\frac{d}{dt}{\mathcal E}_h^{\text{(mod)}}\leq
\frac{d}{dt}{\mathcal E}_h^{\text{(mod)}}-\frac{1}{2} M(\zeta_E, \zeta_H)
\leq {\mathcal E}_h^{\text{(mod)}}+CC_\text{model}C(\kappa_\text{err}, \rho_\text{err}) h^{2r},
\end{align}
where $r$ is specified in \eqref{eq:par:r}.
Finally, we apply Gronwall inequality and the estimation of projection errors in \eqref{eq:approx1}, and conclude that
\begin{equation}
\|u-u_h\|\leq \|\eta_u\|+\|\zeta_u\|\leq CC_\text{model}C(\kappa_\text{err}, \rho_\text{err}) h^{r}, \qquad u=E, H, P, Q, J, \sigma,
\end{equation}
under the Conditions 1-3. Note that $\rho_\text{err}$ is arbitrary in $(0,1)$, Condition 1 essentially implies $\theta\in[0, \frac{1}{4})$, while  Conditions 2-3 require the smallness of the strength of the nonlinearity. In our final result \eqref{eq:err:20}, we no longer carry the two parameters $\kappa_\text{err}$ and $\rho_\text{err}$.
\end{proof}

\section{Fully Discrete Scheme and Energy Analysis}
\label{sec:fully}

In this section, we focus on fully discrete schemes for the nonlinear PDE-ODE system \eqref{eq:1d:sys}. A particular focus will be on designing  temporal discretizations, with which the fully discrete methods have {\em provable} energy stability. This turns out to be a nontrivial task for the nonlinear model examined in this work.
Common choices, such as  the second order leap-frog or implicit trapezoidal method, may not yield provable stability results as for the linear models. The main difficulties arise from the nonlinear Kerr and Raman terms. What we will develop in this section can be understood as novel modifications of leap-frog or implicit trapezoidal method, in the presence of these nonlinear effects. The proposed temporal discretizations are still of formal second order accuracy. We will establish the energy stability for the resulting fully discrete methods. The time discretizations developed here can be used not only in conjunction with DG spatial discretizations, but also with other type discretizations, and this will be addressed in our future work.

We design two second-order time schemes, both implicit in the ODE parts. The first scheme uses the leap-frog staggered in time for the PDE parts.  Given $u^n_h(\cdot)\in V_h^k$ at $t^n$, with $u=H, D, E, P, Q, J, \sigma$, we look for  $u^{n+1}_h(\cdot)\in V_h^k$ at $t^{n+1}=t^n+\Delta t$, with $u=H, D, E, P, Q, J, \sigma$, satisfying $\forall j$
\begin{subequations}
\label{eq:1d:schst}
\begin{align}
\mu_0&\int_{I_j} \frac{H_h^{n+1/2}-H_h^n}{\Delta t/2}\phi dx +\int_{I_j} E_h^n \dd{x}\phi dx- (\widehat{E_h^n}\phi^-)_{j+1/2}
+ (\widehat{E_h^n}\phi^+)_{j-1/2}=0,\quad \forall \phi\in V_h^k, \label{eq:schst1}\\
&\int_{I_j} \frac{D_h^{n+1}-D_h^n}{\Delta t} \phi dx +\int_{I_j} H_h^{n+1/2}\dd{x}\phi dx- (\widetilde{H_h^{n+1/2}}\phi^-)_{j+1/2}
+ (\widetilde{H_h^{n+1/2}}\phi^+)_{j-1/2}=0,\quad \forall \phi\in V_h^k, \label{eq:schst2}\\
&\int_{I_j} D_h^{n+1}  \phi dx= \int_{I_j} \epsilon_0\left(\epsilon_\infty E_h^{n+1} +a(1-\theta)Y_h^{n+1}+P_h^{n+1}+a\theta Q_h^{n+1} E_h^{n+1}\right)\phi dx,\quad\forall\phi\in V_h^k,  \label{eq:schst3}\\
&\int_{I_j} Y_h^{n+1}  \phi dx= \int_{I_j}  \left(Y_h^n+\frac{3}{2}((E_h^{n+1})^2+(E_h^n)^2)(E_h^{n+1}-E_h^n)\right) \phi dx,\quad\forall\phi\in V_h^k,  \label{eq:schst4}\\
&  \frac{Q_h^{n+1}-Q_h^n}{\Delta t}=\frac{1}{2}(\sigma_h^n+\sigma_h^{n+1}), \label{eq:schst5}\\
&\int_{I_j}  \frac{\sigma_h^{n+1}-\sigma_h^n}{\Delta t} \phi dx= -\frac{1}{2} \int_{I_j}\left(\frac{1}{\tau_v}(\sigma_h^n +\sigma_h^{n+1})+\omega_v^2 (Q_h^n   + Q_h^{n+1}) -2\omega_v^2E^n_h E^{n+1}_h\right)\phi dx,\quad \forall \phi\in V_h^k, \label{eq:schst6}\\
&  \frac{P_h^{n+1}-P_h^n}{\Delta t}=\frac{1}{2}(J_h^n+J_h^{n+1}),  \label{eq:schst7}\\
&  \frac{J_h^{n+1}-J_h^n}{\Delta t}= - \frac{1}{2}\left(\frac{1}{\tau}(J_h^n+J_h^{n+1}) +\omega_0^2(P_h^n+P_h^{n+1})-\omega_p^2(E_h^n+E_h^{n+1})\right), \label{eq:schst8} \\
\mu_0&\int_{I_j} \frac{H_h^{n+1}-H_h^{n+1/2}}{\Delta t/2}\phi dx +\int_{I_j} E_h^{n+1} \dd{x} \phi dx- (\widehat{\widehat{E_h^{n+1}}}\phi^-)_{j+1/2}
+ (\widehat{\widehat{E_h^{n+1}}}\phi^+)_{j-1/2}=0,\quad \forall \phi\in V_h^k. \label{eq:schst9}
\end{align}
\end{subequations}
 The flux terms in the scheme have no ambiguity for the central and alternating fluxes \eqref{eq:flux:c}-\eqref{eq:flux:a} with $\widehat{\widehat{E^n_h}}=\widehat{E_h^n}$, and their expressions are omitted for brevity. For the upwind flux \eqref{eq:flux:u}, the flux terms should be defined as
\begin{subequations}
\label{eq:flux:uf0}
\begin{align}
&\widehat{E_h^n} = \{E_h^n\}+\frac{1}{2}\sqrt{\frac{\mu_0}{\epsilon_0 \epsilon_\infty}}\left[H_h^{n+1/2}\right],\quad \widehat{\widehat{E_h^{n}}} = \{E_h^{n}\}+\frac{1}{2}\sqrt{\frac{\mu_0}{\epsilon_0 \epsilon_\infty}}\left[H_h^{n-1/2}\right], \\
&\widetilde{H_h^{n+1/2}} = \{H_h^{n+1/2}\}+\frac{1}{2}\sqrt{\frac{\epsilon_0 \epsilon_\infty}{\mu_0}}\left [\frac{E_h^n+E_h^{n+1}}{2} \right]
\end{align}
\end{subequations}
as in the standard leap-frog formulations. Notice that   the scheme is implicit for the upwind flux, but for the alternating and central fluxes, the implicit part is only on the ODEs which can be locally solved in each element. In practice, we
use a Newton's method to obtain $E_h^{n+1}, Q_h^{n+1}, \sigma_h^{n+1}, P_h^{n+1}, J_h^{n+1}$ from  \eqref{eq:schst2}-\eqref{eq:schst8}.
The main novelty of the formulation is the introduction of $Y_h^n$ in \eqref{eq:schst4} as an auxiliary variable  to approximate $Y=E^3.$
 This is motivated by the fact that $dY=3E^2 dE$ and is defined to achieve energy stability of the fully discrete scheme as shown in Theorem \ref{thm:full}. One does not need to store $Y_h^n$, instead only
its temporal difference is needed to be substituted into \eqref{eq:schst2}. Another change in the scheme for stability consideration is the discretization of $E^2$ term in \eqref{eq:schst6} as $E_h^n E_h^{n+1}$. This is motivated by theoretical analysis as shown in  the proof of Theorem \ref{thm:full}.
%Finally, we note that
%\eqref{eq:schst1} and \eqref{eq:schst9} can be also equivalently written as
%\begin{equation}
%\label{eq:schst10}
%\mu_0\int_{I_j} \frac{H_h^{n+1/2}-H_h^{n-1/2}}{\Delta t}\phi dx +\int_{I_j} E_h^n \phi_xdx- (\widehat{E_h^n}\phi^-)_{j+1/2}
%+ (\widehat{E_h^n}\phi^+)_{j-1/2}=0,\quad \forall \phi\in V_h^k
%\end{equation}
%in the staggered in time formulation.

Similarly, our second formulation, which is a fully implicit scheme writes
\begin{subequations}
\label{eq:1d:schim}
\begin{align}
\mu_0&\int_{I_j} \frac{H_h^{n+1}-H_h^n}{\Delta t}\phi dx +\int_{I_j} \frac{E_h^{n+1}+E_h^n}{2} \dd{x}\phi dx\notag\\
&\hspace{0.6in}- (\widehat{\frac{E_h^{n+1}+E_h^n}{2}}\phi^-)_{j+1/2}
+ (\widehat{\frac{E_h^{n+1}+E_h^n}{2}}\phi^+)_{j-1/2}=0,\quad \forall \phi\in V_h^k, \label{eq:schim1}\\
&\int_{I_j} \frac{D_h^{n+1}-D_h^n}{\Delta t} \phi dx +\int_{I_j} \frac{H_h^{n+1}+H_h^n}{2}\dd{x} \phi dx\notag\\
&\hspace{0.6in}- (\widetilde{\frac{H_h^{n+1}+H_h^n}{2}}\phi^-)_{j+1/2}
+ (\widetilde{\frac{H_h^{n+1}+H_h^n}{2}}\phi^+)_{j-1/2}=0,\quad \forall \phi\in V_h^k, \label{eq:schim2}\\
&\int_{I_j} D_h^{n+1}  \phi dx= \int_{I_j} \epsilon_0\left(\epsilon_\infty E_h^{n+1} +a(1-\theta)Y_h^{n+1}+P_h^{n+1}+a\theta Q_h^{n+1} E_h^{n+1}\right)\phi dx,\quad\forall\phi\in V_h^k,  \label{eq:schim3}\\
&\int_{I_j} Y_h^{n+1}  \phi dx= \int_{I_j}  \left(Y_h^n+\frac{3}{2}((E_h^{n+1})^2+(E_h^n)^2)(E_h^{n+1}-E_h^n)\right) \phi dx,\quad\forall\phi\in V_h^k,  \label{eq:schim4}\\
&  \frac{Q_h^{n+1}-Q_h^n}{\Delta t}=\frac{1}{2}(\sigma_h^n+\sigma_h^{n+1}), \label{eq:schim5}\\
&\int_{I_j}  \frac{\sigma_h^{n+1}-\sigma_h^n}{\Delta t} \phi dx= -\frac{1}{2} \int_{I_j}\left(\frac{1}{\tau_v}(\sigma_h^n +\sigma_h^{n+1})+\omega_v^2 (Q_h^n   + Q_h^{n+1}) -2\omega_v^2E^n_h E^{n+1}_h\right)\phi dx,\quad \forall \phi\in V_h^k,\label{eq:schim6}\\
&  \frac{P_h^{n+1}-P_h^n}{\Delta t}=\frac{1}{2}(J_h^n+J_h^{n+1}),  \label{eq:schim7}\\
&  \frac{J_h^{n+1}-J_h^n}{\Delta t}= - \frac{1}{2}\left(\frac{1}{\tau}(J_h^n+J_h^{n+1}) +\omega_0^2(P_h^n+P_h^{n+1})-\omega_p^2(E_h^n+E_h^{n+1})\right).  \label{eq:schim8}
\end{align}
\end{subequations}
The scheme is of second order accuracy in time. The flux terms are defined according to their semi-discrete counterparts. For example, with
 the upwind flux \eqref{eq:flux:u}, the flux terms are
\begin{subequations}
\label{eq:flux:uf}
\begin{align}
&\widehat{\frac{E_h^{n+1}+E_h^n}{2}}=\left\{\frac{E_h^{n+1}+E_h^n}{2}\right \}+\frac{1}{2}\sqrt{\frac{\mu_0}{\epsilon_0 \epsilon_\infty}}\left [\frac{H_h^{n+1}+H_h^n}{2} \right], \\
&\widetilde{\frac{H_h^{n+1}+H_h^n}{2}}= \left\{\frac{H_h^{n+1}+H_h^n}{2}\right \}+\frac{1}{2}\sqrt{\frac{\epsilon_0 \epsilon_\infty}{\mu_0}}\left [\frac{E_h^{n+1}+E_h^n}{2} \right].
\end{align}
\end{subequations}
%As shown in Theorem \ref{thm:full}, this fully discrete scheme is unconditionally stable.

\begin{theorem} [Fully discrete stability]
\label{thm:full}
Assuming the periodic boundary condition, then the fully discrete scheme \eqref{eq:1d:schst} with central and alternating fluxes,  \eqref{eq:flux:c} and \eqref{eq:flux:a}, satisfies
\begin{equation}
\label{eq:full:stab1}
 \mathcal{E}_h^{n+1}- \mathcal{E}_h^{n}=-\frac{\epsilon_0 \Delta t}{4 \omega_p^2 \tau} \int_\Omega (J_h^{n+1} +J_h^{n})^2 dx-\frac{\epsilon_0  a\theta \Delta t}{8\omega_v^2 \tau_v}  \int_\Omega  (\sigma_h^{n+1} +\sigma_h^{n})^2  dx \le 0,
\end{equation}
where
\begin{eqnarray}
\mathcal{E}_h^n&=&\int_{\Omega}  \frac{\mu_0}{2} H_h^{n+1/2}H_h^{n-1/2} + \frac{\epsilon_0 \epsilon_\infty}{2} (E_h^n)^2 +  \frac{\epsilon_0}{2\omega_p^2} (J_h^n)^2 + \frac{\epsilon_0\omega_0^2}{2 \omega_p^2}   (P_h^n)^2 \label{eq:full:ene1} \\
&&+ \frac{\epsilon_0  a\theta}{4\omega_v^2} (\sigma_h^n)^2 + \frac{\epsilon_0  a\theta}{2}  Q_h^n (E_h^n)^2  + \frac{3 \epsilon_0 a (1-\theta)}{4} (E_h^n)^4+\frac{\epsilon_0  a\theta}{4}(Q_h^n)^2  dx\notag
\end{eqnarray}
is the discrete energy. In addition, $\mathcal{E}_h \ge 0$ if $\theta \in [0, \frac{3}{4}]$ and the CFL condition $\frac{\Delta t}{h} \le %\frac{2}{C_{\star}}
 C_\star\sqrt{ \mu_0 \epsilon_0 \epsilon_\infty}$ is satisfied.
 %Here, $C_{\star}$ is a constant independent of the mesh size $h$, and it may depend on $k$ and mesh parameter $\delta$.

The  fully discrete scheme \eqref{eq:1d:schst} with the upwind flux \eqref{eq:flux:uf0}  satisfies
\begin{eqnarray}
\label{eq:full:stab2}
&& \mathcal{E}_h^{n+1}- \mathcal{E}_h^{n} =
-\frac{\epsilon_0 \Delta t}{4 \omega_p^2 \tau} \int_\Omega (J_h^{n+1} +J_h^{n})^2 dx-\frac{\epsilon_0  a\theta \Delta t}{8\omega_v^2 \tau_v}  \int_\Omega  (\sigma_h^{n+1}+\sigma_h^{n})^2  dx \\
&&-\frac{\Delta t}{8}\sqrt{\frac{\mu_0}{\epsilon_0 \epsilon_\infty}}\sum_{j=1}^N[H_h^{n-1/2}+H_h^{n+1/2}]_{j+1/2}^2-\frac{\Delta t}{8}\sqrt{\frac{\epsilon_0 \epsilon_\infty}{\mu_0}}\sum_{j=1}^N[E_h^n+E_h^{n+1}]_{j+1/2}^2 \le 0,\notag
\end{eqnarray}
where
\begin{eqnarray}
\mathcal{E}_h^n&=&\int_{\Omega}  \frac{\mu_0}{2} H_h^{n+1/2} H_h^{n-1/2} + \frac{\epsilon_0 \epsilon_\infty}{2} (E_h^n)^2 +  \frac{\epsilon_0}{2\omega_p^2} (J_h^n)^2 + \frac{\epsilon_0\omega_0^2}{2 \omega_p^2}   (P_h^n)^2+ \frac{\epsilon_0  a\theta}{4\omega_v^2} (\sigma_h^n)^2 \notag \\
&& + \frac{\epsilon_0  a\theta}{2}  Q_h^n (E_h^n)^2  + \frac{3 \epsilon_0 a (1-\theta)}{4} (E_h^n)^4+\frac{\epsilon_0  a\theta}{4}(Q_h^n)^2  dx\notag\\
&&+\frac{\Delta t}{8}\sqrt{\frac{\mu_0}{\epsilon_0 \epsilon_\infty}}\sum_{j=1}^N ([H_h^{n-1/2}] [H_h^{n-1/2}+H_h^{n+1/2}])_{j+1/2} \label{eq:full:ene2}
\end{eqnarray}
is the discrete energy. In addition, $\mathcal{E}_h \ge 0$ if $\theta \in [0, \frac{3}{4}]$ and the CFL condition $\frac{\Delta t}{h} \le \frac{C_\star\mu_0 \min(1,  \sqrt{\frac{\epsilon_0 \epsilon_\infty}{\mu_0}}) }{(\sqrt{2}+\min(1,  \sqrt{\frac{\epsilon_0 \epsilon_\infty}{\mu_0}}))} $
%$\frac{\Delta t}{h} \le \frac{2\mu_0 \min(1,  \sqrt{\frac{\epsilon_0 \epsilon_\infty}{\mu_0}}) }{C_\star(\sqrt{2}+\min(1,  \sqrt{\frac{\epsilon_0 \epsilon_\infty}{\mu_0}}))} $
is satisfied.% Here, $C_{\star}$ is a constant independent of the mesh size $h$, and it may depend on $k$ and mesh parameter $\delta$.

Similarly, the  fully discrete scheme \eqref{eq:1d:schim} with central and alternating fluxes, \eqref{eq:flux:c} and \eqref{eq:flux:a},  satisfies
\begin{equation}
\label{eq:full:stab3}
 \mathcal{E}_h^{n+1}- \mathcal{E}_h^{n}=-\frac{\epsilon_0 \Delta t}{4 \omega_p^2 \tau} \int_\Omega (J_h^{n+1} +J_h^{n})^2 dx-\frac{\epsilon_0  a\theta \Delta t}{8\omega_v^2 \tau_v}  \int_\Omega  (\sigma_h^{n+1} +\sigma_h^{n})^2  dx \le 0,
\end{equation}
and that with the upwind flux \eqref{eq:flux:uf}  satisfies
\begin{eqnarray}
&& \mathcal{E}_h^{n+1}- \mathcal{E}_h^{n} =
-\frac{\epsilon_0 \Delta t}{4 \omega_p^2 \tau} \int_\Omega (J_h^{n+1} +J_h^{n})^2 dx-\frac{\epsilon_0  a\theta \Delta t}{8\omega_v^2 \tau_v}  \int_\Omega  (\sigma_h^{n+1}+\sigma_h^{n})^2  dx \label{eq:full:stab4}\\
&&-\frac{\Delta t}{8}\sqrt{\frac{\mu_0}{\epsilon_0 \epsilon_\infty}}\sum_{j=1}^N[H_h^{n}+H_h^{n+1}]_{j+1/2}^2-\frac{\Delta t}{8}\sqrt{\frac{\epsilon_0 \epsilon_\infty}{\mu_0}}\sum_{j=1}^N[E_h^n+E_h^{n+1}]_{j+1/2}^2 \le 0,\notag
\end{eqnarray}
where
\begin{eqnarray}
\mathcal{E}_h^n&=&\int_{\Omega}  \frac{\mu_0}{2} (H_h^{n})^2 + \frac{\epsilon_0 \epsilon_\infty}{2} (E_h^n)^2 +  \frac{\epsilon_0}{2\omega_p^2} (J_h^n)^2 + \frac{\epsilon_0\omega_0^2}{2 \omega_p^2}   (P_h^n)^2 \label{eq:full:ene3} \\
&&+ \frac{\epsilon_0  a\theta}{4\omega_v^2} (\sigma_h^n)^2 + \frac{\epsilon_0  a\theta}{2}  Q_h^n (E_h^n)^2  + \frac{3 \epsilon_0 a (1-\theta)}{4} (E_h^n)^4+\frac{\epsilon_0  a\theta}{4}(Q_h^n)^2  dx.\notag
\end{eqnarray}
It is non-negative when $\theta \in [0, \frac{3}{4}]$. In other words, the scheme  \eqref{eq:1d:schim} is unconditionally stable for all three flux choices.

\end{theorem}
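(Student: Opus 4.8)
The plan is to reproduce the semi-discrete energy argument of Theorem~\ref{thm:semi} at the fully discrete level, replacing the time derivatives by the one-step differences in \eqref{eq:1d:schst} and \eqref{eq:1d:schim} and testing every equation against the temporally centered value of its own unknown. For the implicit scheme \eqref{eq:1d:schim} I would test \eqref{eq:schim1} with $\phi=\tfrac12(H_h^{n+1}+H_h^n)$, the $D$-update \eqref{eq:schim2} (after expanding $D_h^{n+1}-D_h^n$ through the constitutive law \eqref{eq:schim3}) with $\phi=\tfrac12(E_h^{n+1}+E_h^n)$, the Raman block \eqref{eq:schim6} with $\phi=\tfrac12(\sigma_h^{n+1}+\sigma_h^n)$, and treat the strong-form ODEs \eqref{eq:schim5}, \eqref{eq:schim7}, \eqref{eq:schim8} by multiplying with the corresponding averages. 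Summing over elements, the spatial coupling of the $H$- and $E$-equations collapses through the same flux identities \eqref{eq:fluxidentity}--\eqref{eq:fluxidentityd} used before, now applied to the centered traces, producing $0$ for central and alternating fluxes and the non-positive jump terms of \eqref{eq:full:stab4} for the upwind flux \eqref{eq:flux:uf}. The linear pairs $(P_h,J_h)$ and $(\sigma_h,Q_h)$ telescope exactly as in the Crank--Nicolson setting, yielding $\tfrac12((J_h^{n+1})^2-(J_h^n)^2)$, $\tfrac{\omega_0^2}{2}((P_h^{n+1})^2-(P_h^n)^2)$, and their $\sigma_h,Q_h$ analogues, together with the damping terms $-\tfrac{\Delta t}{\tau}\int(\tfrac{J_h^{n+1}+J_h^n}{2})^2$ and its $\sigma_h$ counterpart appearing on the right of \eqref{eq:full:stab3}--\eqref{eq:full:stab4}.

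The crux is to make the two nonlinear contributions telescope exactly, which is precisely what the auxiliary variable $Y_h$ and the product discretization $E_h^nE_h^{n+1}$ are engineered for. For the Kerr term, the definition \eqref{eq:schim4}, after testing the $a(1-\theta)$-part of $D_h^{n+1}-D_h^n$ against $\tfrac12(E_h^{n+1}+E_h^n)$, gives the pointwise identity $\tfrac32((E_h^{n+1})^2+(E_h^n)^2)(E_h^{n+1}-E_h^n)\cdot\tfrac12(E_h^{n+1}+E_h^n)=\tfrac34((E_h^{n+1})^4-(E_h^n)^4)$, which reproduces the Kerr energy $\tfrac{3\epsilon_0 a(1-\theta)}{4}(E_h)^4$. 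For the Raman cross term I would use $(Q_h^{n+1}E_h^{n+1}-Q_h^nE_h^n)\tfrac12(E_h^{n+1}+E_h^n)=\tfrac12(Q_h^{n+1}(E_h^{n+1})^2-Q_h^n(E_h^n)^2)+\tfrac12 E_h^nE_h^{n+1}(Q_h^{n+1}-Q_h^n)$ and substitute $Q_h^{n+1}-Q_h^n=\tfrac{\Delta t}{2}(\sigma_h^n+\sigma_h^{n+1})$ from \eqref{eq:schim5}; the residual $\tfrac12 E_h^nE_h^{n+1}(\sigma_h^n+\sigma_h^{n+1})$ is then cancelled exactly by the $-2\omega_v^2E_h^nE_h^{n+1}$ forcing in \eqref{eq:schim6} tested against $\tfrac12(\sigma_h^{n+1}+\sigma_h^n)$. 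This is the fully discrete analogue of \eqref{eq:d7}, and it is the reason the $E^2$ forcing had to be discretized as $E_h^nE_h^{n+1}$ rather than $\tfrac12((E_h^n)^2+(E_h^{n+1})^2)$. Collecting everything delivers \eqref{eq:full:stab3}--\eqref{eq:full:stab4}; non-negativity of \eqref{eq:full:ene3} then follows because the magnetic term is $\tfrac{\mu_0}{2}(H_h^n)^2\ge0$ and, exactly as in the continuous case, the form $3(1-\theta)E_h^4+2\theta Q_hE_h^2+\theta Q_h^2$, viewed as a quadratic form in $(E_h^2,Q_h)$, is positive semidefinite iff $\theta(3-4\theta)\ge0$, i.e. $\theta\in[0,\tfrac34]$, giving the unconditional stability of \eqref{eq:1d:schim}.

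For the staggered scheme \eqref{eq:1d:schst} the nonlinear bookkeeping is identical, but the magnetic field lives at half-integer levels and the energy carries the indefinite product $\tfrac{\mu_0}{2}H_h^{n+1/2}H_h^{n-1/2}$. Here I would test both half-step updates \eqref{eq:schst1} and \eqref{eq:schst9} with $\phi=H_h^{n+1/2}$ and add them, and test the $D$-update \eqref{eq:schst2} with $\phi=\tfrac12(E_h^{n+1}+E_h^n)$. Combining \eqref{eq:schst9} at step $n-1$ with \eqref{eq:schst1} at step $n$ (both forced by $E_h^n$) shows, for the central and alternating fluxes, that $H_h^n=\tfrac12(H_h^{n-1/2}+H_h^{n+1/2})$, whence $H_h^{n+1}-H_h^n=\tfrac12(H_h^{n+3/2}-H_h^{n-1/2})$ and the magnetic contribution becomes exactly $\tfrac{\mu_0}{2}(H_h^{n+3/2}H_h^{n+1/2}-H_h^{n+1/2}H_h^{n-1/2})$, the difference of \eqref{eq:full:ene1}; for the upwind flux \eqref{eq:flux:uf0} the two half-step dissipative traces assemble into the jump $\tfrac14\sqrt{\mu_0/(\epsilon_0\epsilon_\infty)}[H_h^{n-1/2}+H_h^{n+1/2}]$, producing the extra jump terms in \eqref{eq:full:stab2} and the additional indefinite term in \eqref{eq:full:ene2}. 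The spatial coupling cancels (or dissipates) through \eqref{eq:fluxidentity}--\eqref{eq:fluxidentityd} applied to the centered traces, and together with the already-telescoped electric, Kerr, Raman and linear-dispersion terms this yields \eqref{eq:full:stab1}--\eqref{eq:full:stab2}.

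The main obstacle I anticipate is not the energy identity itself but the non-negativity of the staggered energy, which is where the CFL condition enters. I would write $\tfrac{\mu_0}{2}H_h^{n+1/2}H_h^{n-1/2}=\tfrac{\mu_0}{4}\|H_h^{n+1/2}\|^2+\tfrac{\mu_0}{4}\|H_h^{n-1/2}\|^2-\tfrac{\mu_0}{4}\|H_h^{n+1/2}-H_h^{n-1/2}\|^2$ and control the last term using the leap-frog update together with the inverse inequality \eqref{eq:inv}: since $H_h^{n+1/2}-H_h^{n-1/2}$ equals $\tfrac{\Delta t}{\mu_0}$ times a discrete spatial operator acting on $E_h^n$, one obtains $\|H_h^{n+1/2}-H_h^{n-1/2}\|^2\le C_\star\tfrac{\Delta t^2}{\mu_0^2 h^2}\|E_h^n\|^2$, so that the negative contribution is absorbed by $\tfrac{\epsilon_0\epsilon_\infty}{2}\|E_h^n\|^2$ precisely under $\tfrac{\Delta t}{h}\le C_\star\sqrt{\mu_0\epsilon_0\epsilon_\infty}$. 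The extra indefinite jump term in \eqref{eq:full:ene2} is handled by the same splitting and a further application of \eqref{eq:inv} to the traces, which tightens the admissible ratio to the stated upwind CFL bound. The delicate points in carrying this out are tracking the correct constants through the inverse inequality and verifying that the upwind half-step fluxes assemble into exactly the jump combinations recorded in \eqref{eq:full:ene2} and \eqref{eq:full:stab2}; the $\theta\le\tfrac34$ restriction for positivity of the nonlinear part is inherited verbatim from the continuous and semi-discrete analyses.
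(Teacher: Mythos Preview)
Your proposal is correct and follows essentially the same route as the paper: the nonlinear bookkeeping (the $Y_h$ identity yielding $\tfrac34((E_h^{n+1})^4-(E_h^n)^4)$ and the Raman cross-term splitting cancelled by the $E_h^nE_h^{n+1}$ forcing) is exactly what the paper does, and the spatial coupling is handled through the same flux identities. The only difference is in the CFL step: the paper rewrites $\tfrac{\mu_0}{2}\int H_h^{n+1/2}H_h^{n-1/2}=\tfrac{\mu_0}{2}\|H_h^n\|^2-\tfrac{\mu_0}{2}\|H_h^n-H_h^{n-1/2}\|^2$ (using the half-step equations \eqref{eq:schst1}, \eqref{eq:schst9}) and bounds the last term via the inverse inequality, whereas you use the polarization identity on $H_h^{n\pm1/2}$ directly; for central and alternating fluxes these are literally equivalent since $H_h^n=\tfrac12(H_h^{n+1/2}+H_h^{n-1/2})$, while for the upwind case the paper's $H_h^n$-based decomposition makes the bookkeeping of the extra indefinite jump term in \eqref{eq:full:ene2} slightly more transparent, but your outlined argument goes through as well.
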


\begin{proof} We  will only prove the results for scheme \eqref{eq:1d:schst}, while the proof for the scheme \eqref{eq:1d:schim} shares great similarity and is omitted.

Apply two time steps of \eqref{eq:schst1} and \eqref{eq:schst9}, we have
\begin{align}
\label{eq:pff1}
\mu_0\int_{I_j} \frac{H_h^{n+3/2}-H_h^{n-1/2}}{\Delta t}\phi dx &+\int_{I_j} (E_h^{n+1}+E_h^n) \dd{x}\phi dx- (\mathcal{F}(E_h^{n+1}+E_h^n)\phi^-)_{j+1/2}
 \\
&+ (\mathcal{F}(E_h^{n+1}+E_h^n)\phi^+)_{j-1/2}=0, \quad \forall \phi\in V_h^k,\notag
\end{align}
where
\begin{equation}
\mathcal{F}(E_h^{n+1}+E_h^n): =\left\{
\begin{array}{ll}
\widehat{E_h^{n+1}+E_h^n},&\text{central or alternating fluxes}\\
 \{E_h^{n+1}+E_h^n\}+\frac{1}{4}\sqrt{\frac{\mu_0}{\epsilon_0 \epsilon_\infty}}\left[H_h^{n+3/2}+2H_h^{n+1/2}+H_h^{n-1/2}\right],&\text{upwind flux}.
\end{array}
\right.
\end{equation}

Let $\phi=E_h^{n+1}+E_h^n$ in \eqref{eq:schst2}, $\phi=H_h^{n+1/2}$ in \eqref{eq:pff1} and sum up the two equalities over all elements, we obtain
\begin{eqnarray}
\label{eq:pff2}
&&\int_\Omega  \mu_0 (H_h^{n+3/2}-H_h^{n-1/2}) H_h^{n+1/2} +(D_h^{n+1}-D_h^n) (E_h^{n+1}+E_h^n)dx \\
&&=\left\{ \begin{array}{ll}
         0 & \mbox{ central or alternating  fluxes};\\
        -\frac{\Delta t}{4}\sqrt{\frac{\mu_0}{\epsilon_0 \epsilon_\infty}}\sum_{j=1}^N([H_h^{n+3/2}+2H_h^{n+1/2}+H_h^{n-1/2}][H_h^{n+1/2}])_{j+1/2}-&\\
\;\;\;\;\;\;\frac{\Delta t}{4}\sqrt{\frac{\epsilon_0 \epsilon_\infty}{\mu_0}}\sum_{j=1}^N[E_h^n+E_h^{n+1}]^2_{j+1/2} & \mbox{ upwind flux }\end{array} \right. \notag
\end{eqnarray}
by the identity  \eqref{eq:fluxidentity} in the proof of Theorem \ref{thm:semi}.
%Here, we notice that
%\begin{eqnarray*}
% &&-\frac{1}{4}\sqrt{\frac{\mu_0}{\epsilon_0 \epsilon_\infty}}[H_h^{n+3/2}+2H_h^{n+1/2}+H_h^{n-1/2}][H_h^{n+1/2}] \le \frac{1}{4}\sqrt{\frac{\mu_0}{\epsilon_0 \epsilon_\infty}}\left (\frac{1}{2}[H_h^{n+3/2}]^2+\frac{1}{2}[H_h^{n+1/2}]^2- [H_h^{n+1/2}]^2 \right) \\
% &&=\frac{1}{8}\sqrt{\frac{\mu_0}{\epsilon_0 \epsilon_\infty}}\left (([H_h^{n+3/2}]^2- [H_h^{n+1/2}]^2)-([H_h^{n+1/2}]^2- [H_h^{n-1/2}]^2) \right)
%\end{eqnarray*}
%by Young's inequality.

Using \eqref{eq:schst3}, we obtain
%\begin{subequations}
%\label{eq:pff3}
\begin{align}
\label{eq:pff3}
&\int_\Omega   (D_h^{n+1}-D_h^n) (E_h^{n+1}+E_h^n)dx  \notag \\
%= &\epsilon_0 \int_\Omega  \epsilon_\infty ((E_h^{n+1})^2- (E_h^{n})^2) + a(1-\theta)( Y_h^{n+1}- Y_h^{n}) (E_h^{n+1}+E_h^n)+( P_h^{n+1}- P_h^{n}) (E_h^{n+1}+E_h^n)\notag  \\
%&+a \theta ( Q_h^{n+1} E_h^{n+1}- Q_h^{n} E_h^{n}) (E_h^{n+1}+E_h^n) dx  \notag  \\
= &\epsilon_0 \int_\Omega  \epsilon_\infty ((E_h^{n+1})^2- (E_h^{n})^2) +  \frac{3}{2} a(1-\theta)((E_h^{n+1})^4- (E_h^{n})^4)+( P_h^{n+1}- P_h^{n}) (E_h^{n+1}+E_h^n) \notag \\
&+a \theta ( Q_h^{n+1} E_h^{n+1}- Q_h^{n} E_h^{n}) (E_h^{n+1}+E_h^n) dx,
\end{align}
%\end{subequations}
and here \eqref{eq:schst4} is used. By \eqref{eq:schst7} and \eqref{eq:schst8},
%\begin{subequations}
\begin{align}
\label{eq:pff4}
&\int_\Omega  ( P_h^{n+1}- P_h^{n}) (E_h^{n+1}+E_h^n) dx \notag  \\
=& \int_\Omega \frac{1}{\omega_p^2} ( P_h^{n+1}- P_h^{n}) \left ( 2 \frac{J_h^{n+1}-J_h^n}{\Delta t}+\frac{1}{\tau}(J_h^n+J_h^{n+1}) +\omega_0^2(P_h^n+P_h^{n+1}) \right)   dx   \notag \\
=& \int_\Omega \frac{\Delta t}{2 \omega_p^2} ( J_h^{n+1}+ J_h^{n}) \left ( 2 \frac{J_h^{n+1}-J_h^n}{\Delta t}+\frac{1}{\tau}(J_h^n+J_h^{n+1})  \right)   dx + \frac{\omega_0^2}{\omega_p^2}  \int_\Omega  (P_h^{n+1})^2- (P_h^{n})^2  dx  \notag  \\
=& \frac{1}{\omega_p^2}  \int_\Omega  (J_h^{n+1})^2- (J_h^{n})^2  dx + \frac{\omega_0^2}{\omega_p^2}  \int_\Omega  (P_h^{n+1})^2- (P_h^{n})^2  dx +\frac{\Delta t}{2  \tau \omega_p^2} \int_\Omega  (J_h^{n+1} +J_h^{n})^2  dx.
\end{align}
%\end{subequations}
On the other hand,
$$
 \int_\Omega ( Q_h^{n+1} E_h^{n+1}- Q_h^{n} E_h^{n}) (E_h^{n+1}+E_h^n) dx =  \int_\Omega Q_h^{n+1} (E_h^{n+1})^2 - Q_h^{n} (E_h^{n})^2 + E_h^{n+1} E_h^{n} (Q_h^{n+1}-Q_h^n) dx.
$$
By \eqref{eq:schst6} and \eqref{eq:schst5}, we have
\begin{align}
\label{eq:pff5}
&\int_\Omega   E_h^{n+1} E_h^{n} (Q_h^{n+1}-Q_h^n) dx \notag  \\
=& \int_\Omega \frac{1}{\omega_v^2}   \left (   \frac{\sigma_h^{n+1}-\sigma_h^n}{\Delta t}+\frac{1}{2 \tau_v}(\sigma_h^n+\sigma_h^{n+1}) +\frac{\omega_v^2}{2}(Q_h^n+Q_h^{n+1}) \right)   (Q_h^{n+1}-Q_h^n)  dx\notag \\
=& \int_\Omega \frac{\Delta t}{2 \omega_v^2}   \left (   \frac{\sigma_h^{n+1}-\sigma_h^n}{\Delta t}+\frac{1}{2 \tau_v}(\sigma_h^n+\sigma_h^{n+1})  \right)   (\sigma_h^{n+1}+\sigma_h^n)  dx  + \frac{1}{2 }  \int_\Omega  (Q_h^{n+1})^2- (Q_h^{n})^2  dx  \notag \\
=&  \frac{1}{2 \omega_v^2}    \int_\Omega  (\sigma_h^{n+1})^2- (\sigma_h^{n})^2  dx   + \frac{1}{2}  \int_\Omega  (Q_h^{n+1})^2- (Q_h^{n})^2  dx +\frac{\Delta t}{4  \tau_v \omega_v^2} \int_\Omega  (\sigma_h^{n+1} +\sigma_h^{n})^2  dx.
\end{align}

Substituting \eqref{eq:pff3}-\eqref{eq:pff5} into  \eqref{eq:pff2}, we have shown the energy stability \eqref{eq:full:stab1} for the scheme \eqref{eq:1d:schst} with the alternating and central fluxes, where the discrete energy $\mathcal{E}_h^n$ is defined in \eqref{eq:full:ene1}. When the flux is upwind, we instead get the energy stability in \eqref{eq:full:stab2}, with the discrete  energy $\mathcal{E}_h^n$ defined in \eqref{eq:full:ene2}.

\medskip
The final step is to find the conditions to guarantee the discrete energy to be non-negative.  We define two operators
%$$
%\widehat{\mathcal{H}}_j(E_h^n, \phi)=\int_{I_j} E_h^n \dd{x}\phi dx- (\widehat{E_h^n}\phi^-)_{j+1/2} + (\widehat{E_h^n}\phi^+)_{j-1/2}
%$$
%as appeared in \eqref{eq:schst1}, and
\begin{align}
\widehat{\mathcal{H}}(E_h^n, \phi)&=
%\sum_{j=1}^N \widehat{\mathcal{H}}_j(E_h^n, \phi)=
\sum_{j=1}^N \int_{I_j} E_h^n \dd{x} \phi dx+\sum_{j=1}^N \widehat{E_h^n}[\phi]_{j+1/2},\\
%Similarly,
%$$
%\widehat{\widehat{\mathcal{H}}}_j(E_h^n, \phi)=\int_{I_j} E_h^n \dd{x}\phi dx- (\widehat{\widehat{E_h^n}}\phi^-)_{j+1/2} + (\widehat{\widehat{E_h^n}}\phi^+)_{j-1/2},
%$$
%$$
\widehat{\widehat{\mathcal{H}}}(E_h^n, \phi)&=
%\sum_{j=1}^N \widehat{\widehat{\mathcal{H}}}_j(E_h^n, \phi)=
\sum_{j=1}^N  \int_{I_j} E_h^n \dd{x} \phi dx+\sum_{j=1}^N \widehat{\widehat{E_h^n}}[\phi]_{j+1/2}.
\end{align}
%$$

From \eqref{eq:schst1}, we have
\begin{equation}
\label{eq:pff6:00}
\int_\Omega H_h^{n+1/2}  \phi dx = \int_\Omega H_h^n \phi dx -\frac{\Delta t}{2 \mu_0}  \widehat{\mathcal{H}}(E_h^n, \phi),\quad \forall \phi\in V_h^k,
\end{equation}
and similarly by \eqref{eq:schst9}
\begin{equation}
\label{eq:pff6}
\int_\Omega H_h^{n}  \phi dx = \int_\Omega H_h^{n-1/2} \phi dx-\frac{\Delta t}{2 \mu_0}  \widehat{\widehat{\mathcal{H}}}(E_h^n, \phi),\quad \forall \phi\in V_h^k.
\end{equation}

Therefore,
\begin{align}
&\int_{\Omega}  \frac{\mu_0}{2} H_h^{n+1/2} H_h^{n-1/2} dx  = \int_{\Omega}  \frac{\mu_0}{2} H_h^{n} H_h^{n-1/2} dx -\frac{\Delta t}{4}  \widehat{\mathcal{H}}(E_h^n, H_h^{n-1/2}) \notag \\
=& \int_{\Omega}  \frac{\mu_0}{2} (H_h^{n})^2  dx +\frac{\Delta t}{4} \widehat{\widehat{ \mathcal{H}}}(E_h^n, H_h^{n}) -\frac{\Delta t}{4} \widehat{\mathcal{H}}(E_h^n, H_h^{n-1/2})  \notag\\
%=& \int_{\Omega}  \frac{\mu_0}{2} (H_h^{n})^2  dx +\frac{\Delta t}{4} \widehat{\widehat{ \mathcal{H}}}(E_h^n, H_h^{n}-H_h^{n-1/2}) +\frac{\Delta t}{4} \Big( \widehat{\widehat{ \mathcal{H}}}(E_h^n, H_h^{n-1/2})-\widehat{\mathcal{H}}(E_h^n, H_h^{n-1/2}) \Big)\notag\\
=& \int_{\Omega}  \frac{\mu_0}{2} (H_h^{n})^2  dx -\frac{\mu_0}{2} \|H_h^{n}-H_h^{n-1/2}\|^2 +\frac{\Delta t}{4} \Big( \widehat{\widehat{ \mathcal{H}}}(E_h^n, H_h^{n-1/2})-\widehat{\mathcal{H}}(E_h^n, H_h^{n-1/2})\Big).\label{eq:full:stab:100}
\end{align}
In the last equality, \eqref{eq:pff6} has been used.

With the alternating or central fluxes, the last term on the right of \eqref{eq:full:stab:100} vanishes, hence
\begin{equation}
\label{eq:full:stab:101}
\int_{\Omega}  \frac{\mu_0}{2} H_h^{n+1/2} H_h^{n-1/2} dx=\int_{\Omega}  \frac{\mu_0}{2} (H_h^{n})^2  dx-\frac{\mu_0}{2} \|H_h^{n}-H_h^{n-1/2}\|^2.
\end{equation}
On the other hand, using inverse inequality \eqref{eq:inv},
%recall from classical estimates involving inverse and trace inequalities \textcolor{red}{\cite{}},
 one gets
\begin{equation}
|\widehat{\widehat{\mathcal{H}}}(\varphi,\phi)| \le \frac{C_\star}{h} \|\varphi\|\, \|\phi\|, \quad \forall \varphi, \phi \in V_h^k.
 \label{eq:full:stab:103}
\end{equation}
In addition, we take
%where $\|\cdot \|$ denote the $L^2(\Omega)$ norm and $C$ is a constant independent of $h.$
%Hence we get from \eqref{eq:pff6} by letting
$\phi=H_h^{n}-H_h^{n-1/2}$ in \eqref{eq:pff6}, and reach
\begin{equation}
 \label{eq:full:stab:104}
\|H_h^{n}-H_h^{n-1/2}\| \le \frac{\Delta t C_\star}{2 \mu_0 h} \|E_h^n\|,
\end{equation}
and therefore
$$
\left |\int_{\Omega}  \frac{\mu_0}{2} H_h^{n+1/2} H_h^{n-1/2} dx -\int_{\Omega}  \frac{\mu_0}{2} (H_h^{n})^2  dx \right |  \le  \frac{\Delta t^2 C_\star^2}{8 h^2 \mu_0} \|E_h^n\|^2.
$$
This estimate guarantees %the positivity of
 $\mathcal{E}_h^n$ being non-negative
if $\theta \in [0, \frac{3}{4}]$ and $ \frac{\Delta t^2 C_\star^2}{8 h^2 \mu_0} \le  \frac{\epsilon_0 \epsilon_\infty}{2} $, i.e. the CFL condition $\frac{\Delta t}{h} \le \frac{2}{C_\star} \sqrt{ \mu_0 \epsilon_0 \epsilon_\infty}$ is satisfied.  This condition can be also written as
$\frac{\Delta t}{h} \le C_\star \sqrt{ \mu_0 \epsilon_0 \epsilon_\infty}$.  Here the generic constant $C_\star$  depends on $k$ and mesh regularity parameter $\delta$ and is independent of $h$.

With the upwind flux in \eqref{eq:flux:uf0}, using the definitions of  $\widehat{\widehat{ \mathcal{H}}}$ and  $\widehat{ \mathcal{H}}$, the last term on the right of \eqref{eq:full:stab:100} becomes
\begin{align}
&\frac{\Delta t}{4} \Big( \widehat{\widehat{ \mathcal{H}}}(E_h^n, H_h^{n-1/2})-\widehat{\mathcal{H}}(E_h^n, H_h^{n-1/2})\Big)=\frac{\Delta t}{4} \sum_{j=1}^N ((\widehat{\widehat{E^n_h}}-\widehat{E^n_h})[H_h^{n-1/2}])_{j+1/2}\notag\\
=&
\frac{\Delta t}{8}\sqrt{\frac{\mu_0}{\epsilon_0\epsilon_\infty}}\sum_{j=1}^N
([H_h^{n-1/2}-H_h^{n+1/2}][H_h^{n-1/2}])_{j+1/2}.\label{eq:full:stab:102}
\end{align}

Now taking into account the jump terms in the discrete energy \eqref{eq:full:ene2}, and with \eqref{eq:full:stab:100}\eqref{eq:full:stab:102}, we have
\begin{eqnarray}
&&\int_{\Omega}  \frac{\mu_0}{2} H_h^{n+1/2} H_h^{n-1/2} dx +\frac{\Delta t}{8}\sqrt{\frac{\mu_0}{\epsilon_0 \epsilon_\infty}} \sum_{j=1}^N [H_h^{n-1/2}] [H_h^{n-1/2}+H_h^{n+1/2}]_{j+1/2} \notag \\
&&=  \int_{\Omega}  \frac{\mu_0}{2} (H_h^{n})^2  dx+\frac{\Delta t}{4}\sqrt{\frac{\mu_0}{\epsilon_0 \epsilon_\infty}}\sum_{j=1}^{N}    [H_h^{n-1/2}]^2_{j+1/2}-\frac{\mu_0}{2} \|H_h^{n}-H_h^{n-1/2}\|^2. \label{eqn:pfh}
\end{eqnarray}
An analogue of \eqref{eq:full:stab:104} for the upwind flux can be obtained
$$
\|H_h^{n}-H_h^{n-1/2}\| \le \frac{\Delta t C_\star}{2 \mu_0 h} (\|E_h^n\|+\|H_h^{n-1/2}\|).%,
$$

We can choose $\Delta t$ small enough so that $A=\frac{\Delta t C_\star}{2 \mu_0 h}$ satisfies $\frac{A}{1-A} \le \frac{1}{\sqrt{2}} \min(1,  \sqrt{\frac{\epsilon_0 \epsilon_\infty}{\mu_0}})$, then
$$
\|H_h^{n}-H_h^{n-1/2}\| \le A (\|E_h^n\|+\|H_h^{n}\|+\|H_h^{n}-H_h^{n-1/2}\| ).
$$
Hence
$$
\|H_h^{n}-H_h^{n-1/2}\| \le \frac{A}{1-A} (\|E_h^n\|+\|H_h^{n}\|),
$$
and
$$
\frac{\mu_0}{2} \|H_h^{n}-H_h^{n-1/2}\|^2 \le  \mu_0 \left(\frac{A}{1-A}\right)^2 (\|E_h^n\|^2+\|H_h^{n}\|^2)
\le \frac{\mu_0}{2}\|H_h^{n}\|^2+\frac{\epsilon_0 \epsilon_\infty}{2}\|E_h^n\|^2.
$$
Combined with \eqref{eqn:pfh} and \eqref{eq:full:ene2}, we have shown that $\mathcal{E}_h^n$ in \eqref{eq:full:ene2}  is nonnegative, provided $\theta \in [0, \frac{3}{4}]$ and the CFL condition $\frac{\Delta t}{h} \le \frac{2\mu_0 \min(1,  \sqrt{\frac{\epsilon_0 \epsilon_\infty}{\mu_0}}) }{C_\star(\sqrt{2}+\min(1,  \sqrt{\frac{\epsilon_0 \epsilon_\infty}{\mu_0}}))} $ is satisfied.  This condition can also be written as
$\frac{\Delta t}{h} \le \frac{C_\star\mu_0 \min(1,  \sqrt{\frac{\epsilon_0 \epsilon_\infty}{\mu_0}}) }{(\sqrt{2}+\min(1,  \sqrt{\frac{\epsilon_0 \epsilon_\infty}{\mu_0}}))} $.
\end{proof}

 From the proof, one can see that the fully discrete scheme with the leap-frog temporal discretization is conditionally stable, under a CFL condition that is the \emph{same} as the one for Maxwell's equations without Kerr, linear Lorentz and Raman effects; while the fully implicit scheme is unconditionally stable.

\section{Numerical Results}
\label{sec:numerical}

In this section, we demonstrate the behavior of the fully discrete schemes through two numerical examples. The simulations are performed on the rescaled equations with the scaling  chosen as follows: let the reference time scale be $t_0$, and reference space  scale be $x_0$ with $x_{0}=ct_{0}$ and $c=1/\sqrt{\mu_0\epsilon_0}$. Henceforth, the rescaled fields and constants are defined based on  a reference electric field $E_0$ as follows,
%In the numerical tests, firstly, we scale time on a reference time scale $t_0$, and space on a reference distance scale $x_0$ with $x_{0}=ct_{0}$. Then, all fields and constants are scaled on a reference electric field $E_0$,
\begin{align*}
(H/E_{0})\sqrt{\mu_{0}/\epsilon_{0}}\rightarrow H, \ \ \
D/(\epsilon_{0}E_{0}) \rightarrow D, \ \ \
P/E_{0}\rightarrow P, \\
(J/E_{0})t_{0}\rightarrow J,\ \ \
E/E_{0}\rightarrow E, \ \ \
Q/E_{0}^{2} \rightarrow Q, \\
(\sigma/E_{0}^{2})t_{0}\rightarrow \sigma, \ \ \
a E_{0}^{2} \rightarrow a,\\
\omega_{0}t_{0}\rightarrow\omega_{0}, \ \ \
\omega_{p}t_{0}\rightarrow\omega_{p}, \ \ \
\omega_{v}t_{0}\rightarrow\omega_{v}, \\
(1/\tau)t_{0}\rightarrow 1/\tau, \ \ \
(1/\tau_{v})t_{0}\rightarrow 1/\tau_{v},
\end{align*}
where for simplicity, we have used the same notation to denote the scaled and original variables. In summary,  we arrive at the  dimensionless Maxwell's equations
\begin{subequations}
\label{eq:scale}
\begin{align}
& \partial_{t}H=\partial_{x}E,\qquad \partial_{t}D=\partial_{x}H,\\
& \partial_{t}P = J,\qquad  \partial_{t}J = -\frac{1}{\tau} J - \omega_{0}^2 P +\omega_{p}^2 E,\\
& \partial_{t}Q = \sigma,\qquad  \partial_{t}\sigma = -\frac{1}{\tau_{v}}\sigma - \omega_{v}^2Q + \omega_{v}^2 E^2,\\
& D=\epsilon_{\infty}E  + a(1-\theta)E^3 +P + a\theta QE.
\end{align}
\end{subequations}

%\begin{subequations}
%\label{eq:scale}
%\begin{align}
%& \partial_{t}H=\partial_{x}E\\
%& \partial_{t}D=\partial_{x}H\\
%& \partial_{t}P = J\\
%& \partial_{t}J = -\frac{1}{\tau} J - \omega_{0}^2 P +\omega_{p}^2 E\\
%& \partial_{t}Q = \sigma\\
%& \partial_{t}\sigma = -\frac{1}{\tau_{v}}\sigma - \omega_{v}^2Q + \omega_{v}^2 E^2\\
%& D=\epsilon_{\infty}E  + a(1-\theta)E^3 +P + a\theta QE.
%\end{align}
%\end{subequations}

Correspondingly, the energy $\mathcal{E}(t)$
$$
\mathcal{E}(t)=\int_{\Omega}  \left (\frac{1}{2} H^2 + \frac{\epsilon_\infty}{2} E^2 +  \frac{1}{2\omega_p^2} J^2   + \frac{\omega_0^2}{2 \omega_p^2}   P^2+ \frac{a\theta}{4\omega_v^2} \sigma^2 + \frac{a\theta}{2}  Q E^2  + \frac{3 a (1-\theta)}{4} E^4+\frac{a\theta}{4}Q^2\right) dx,
$$
satisfies the relation
$$
\frac{d}{dt}\mathcal{E}(t)=-\frac{1}{\omega_p^2 \tau} \int_\Omega J^2 dx-\frac{a\theta}{2\omega_v^2 \tau_v}  \int_\Omega \sigma^2 dx \le 0.
$$
For the rescaled system \eqref{eq:scale}, all fluxes retain their original definition except for the upwind flux, which is modified to
\begin{align}
\widehat{E}_{h}=\{E_{h}\}+\frac{1}{2\sqrt{\epsilon_{\infty}}}[H_{h}], \ \ \
\widetilde{H}_{h}=\{H_{h}\}+\frac{\sqrt{\epsilon_{\infty}}}{2}[E_{h}].
\end{align}
We further refer to one alternating flux
$$\widehat{E}_{h}=E_{h}^{+},  \ \ \ \widetilde{H}_{h}=H_{h}^{-}, $$
as alternating flux I, and
$$\widehat{E}_{h}=E_{h}^{-},  \ \ \ \widetilde{H}_{h}=H_{h}^{+}, $$
as alternating flux II.
For  numerical simulations in this section, we use a uniform mesh with size $h_{j}=h=(x_{R}-x_{L})/N$ for all $j$.  When solving the nonlinear system, we employ a Jacobian-free Newton-Krylov solver \cite{knoll2004jacobian} with absolute error threshold $\epsilon=10^{-10}$.

\subsection{Kink shape solutions}
The first numerical test we consider is originally discussed in \cite{sorensen2005kink}, where a traveling wave solution was constructed for the instantaneous intensity-dependent Kerr response neglecting the
 influence of damping, i.e., $\theta=0$, $\tau=\infty$ in \eqref{eq:scale}.  This yields a simplified system%and we do not consider $Q$ and $\sigma$ any more. In this case, the system \eqref{eq:scale} can be rewritten as following:
\begin{subequations}
	\label{eq:scale1}
	\begin{align}
	& \partial_{t}H=\partial_{x}E, \qquad 
      \partial_{t}D=\partial_{x}H,\\
	& \partial_{t}P = J,\qquad
	 \partial_{t}J = - \omega_{0}^2 P +\omega_{p}^2 E,\\
	& D=\epsilon_{\infty}E  + a E^3 + P.
	\end{align}
\end{subequations}
We use this example  as an accuracy test.
As shown in \cite{sorensen2005kink},   we can find a traveling wave solution $E(x,t)=E(\xi)$, where $\xi=x-vt$, and similarly for other variables $H$, $D$, $P$ and $J$. Here, $E(\xi)$ is comprised of a  kink and antikink wave, and is solved based on the following ODE
\begin{subequations}
\label{eq:ODE}
\begin{align}
& \frac{dE}{d\xi}=\Phi,\\
& \frac{d\Phi}{d\xi}=\frac{6av^2E\Phi^2 + (\epsilon_\infty\omega_			0^2+\omega_p^2-\omega_0^2/v^2)E  + a \omega_0^2 E^3}{1 - \epsilon_{\infty} v^2 - 3av^2E^2}.
\end{align}
\end{subequations}
The parameters are
\begin{align}
\label{eq:para}
& \epsilon_{\infty} = 2.25, \ \ \
  \epsilon_{s} = 5.25, \ \ \
  \beta_{1} = \epsilon_{s} - \epsilon_{\infty}, \nonumber\\
& \omega_{0}=93.627179982222216,  \ \ \
   \omega_{p}=\omega_{0}\sqrt{\beta_{1}}, \nonumber\\
&   a = \epsilon_{\infty}/3, \ \ \
 v = 0.6545 / \sqrt{\epsilon_\infty},\nonumber\\
 & E(0)=0, \ \ \ \Phi(0)=  0.24919666777865812.
\end{align}
Here, $\omega_{0}$ and $\Phi(0)$ are carefully chosen such that $E$ and $\Phi$ are both $6$-periodic.  The approximate solution of \eqref{eq:ODE}, as shown in Figure \ref{Fig1.1}, is obtained with 160000 grid points by a third order Runge-Kutta method. This serves as   the initial condition for the electric field  in the Maxwell's system \eqref{eq:scale1}.
 Furthermore, with the help of \eqref{eq:scale1} and the property that all variables are traveling waves, we can obtain the initial conditions for other variables:
\begin{align*}
&  H(x,0) =-\frac{1}{v}E(\xi), \ \ \ D(x,0) = \frac{1}{v^2}E(\xi),\\
& P(x,0) =(\frac{1}{v^2}-\epsilon_{\infty})E(\xi) - a E(\xi)^3,\\
& J(x,0) =(\epsilon_{\infty}v-\frac{1}{v}) \Phi(\xi) + 3avE(\xi)^2 \Phi(\xi).
\end{align*}

\begin{figure}
	\centering
	\subfigure[Initial condition $E(x,0)$.]{
		\includegraphics[width=0.4\textwidth]{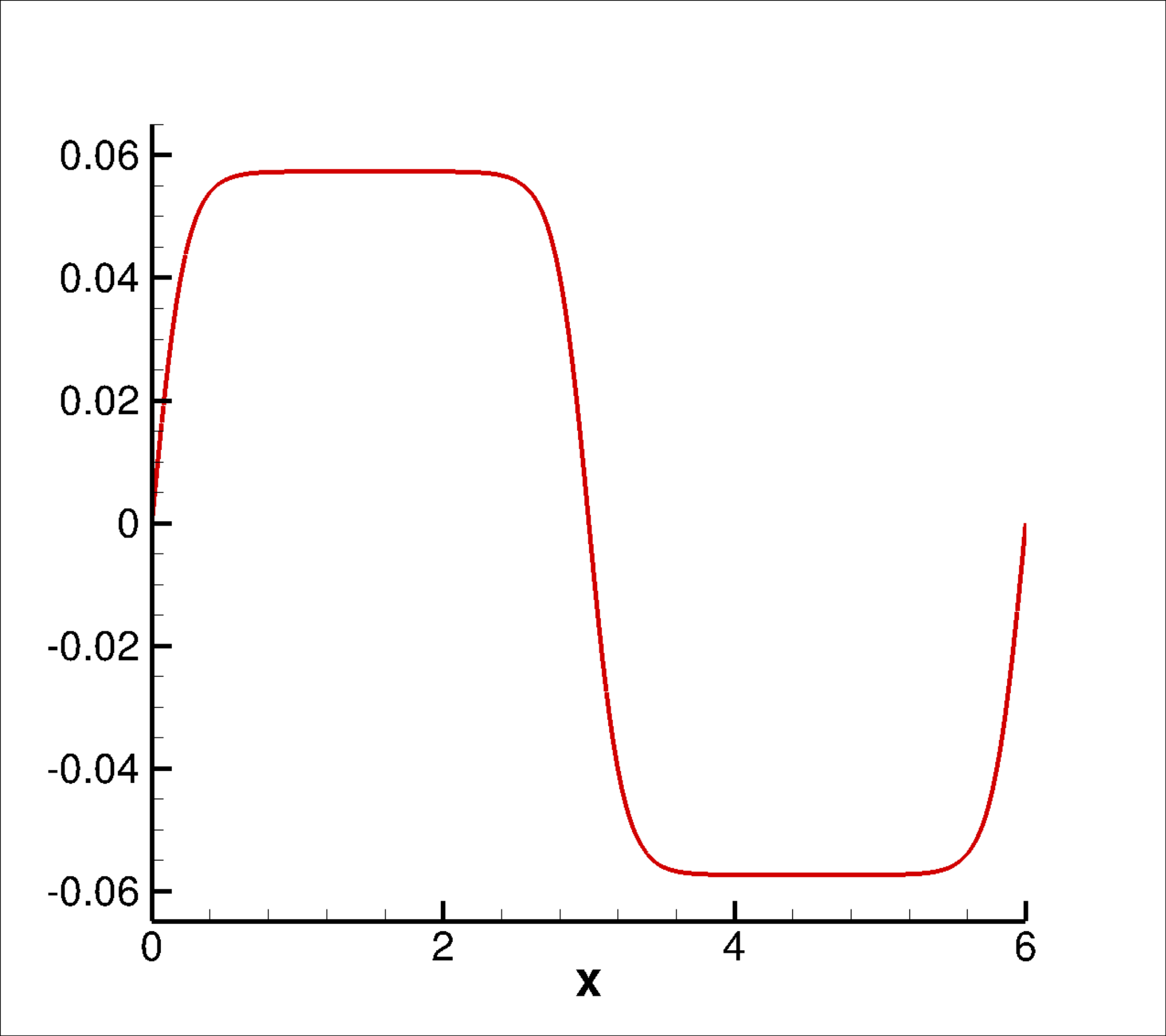}\label{Fig1.1}}
	\subfigure[Reference solution $E(x,t)$.]{
		\includegraphics[width=0.4\textwidth]{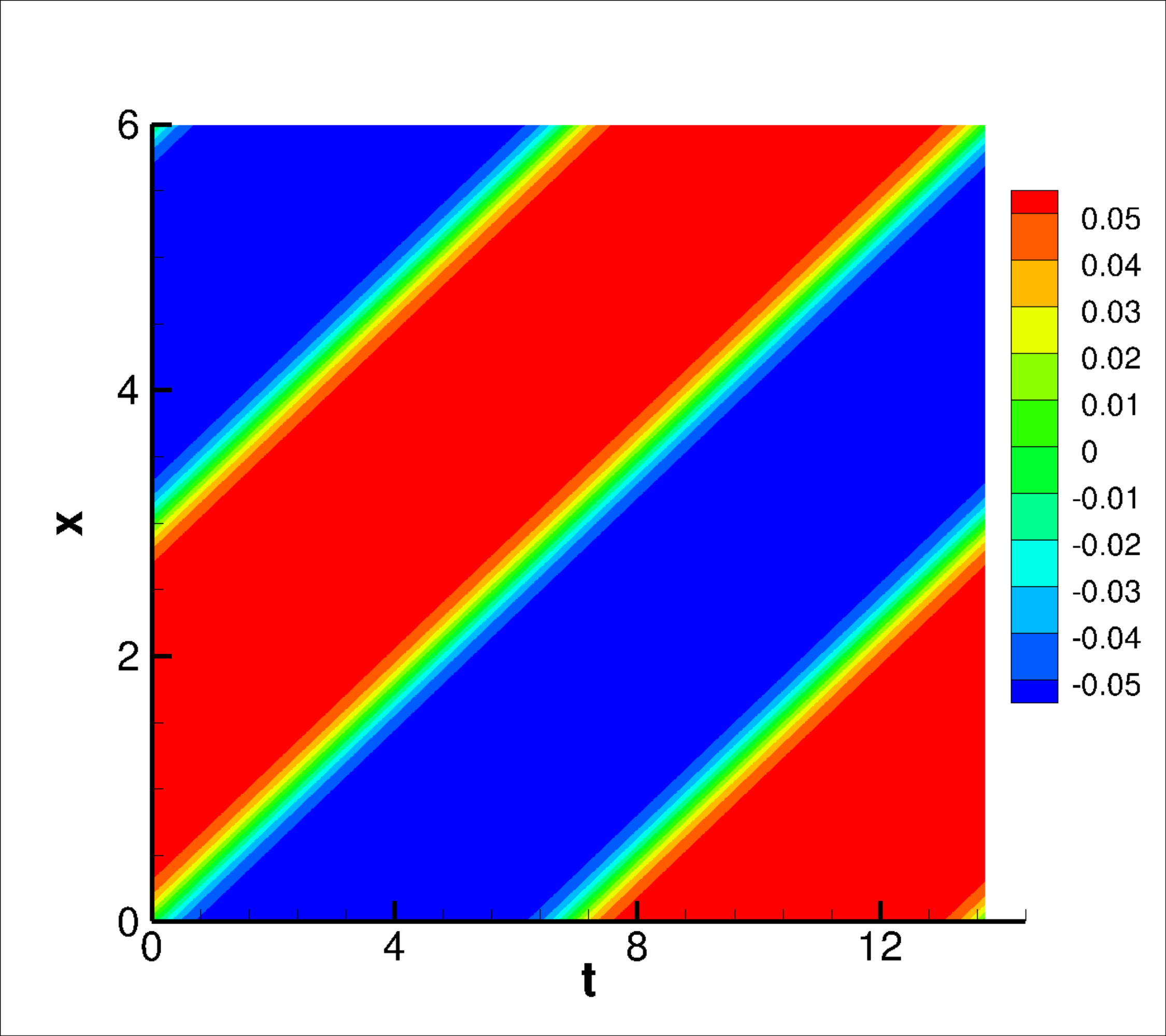}\label{Fig1.2}}
	\caption{\em A traveling kink and antikink wave: the electric field.}
	\label{Fig1}
\end{figure}

 Numerical results are provided at $t = 6/v$, when
the wave moves back to the same position as the initial condition. Time steps are chosen as
$$\Delta t=CFL\times h^{(k+1)/2}$$
to guarantee $(k+1)$-th order accuracy in time, and the CFL numbers are listed in Table \ref{tab0}. Since leap-frog scheme uses staggered time, we set CFL$=0.2/v$ for $k=1$, such that  the last time step has full length $\Delta t$.
This can help us avoid the influence on accuracy caused by time step changes. When $k=2$ or $3$, we do not need to do this because the time steps are already pretty small.  Note that the time steps of the fully implicit scheme are taken to be much larger.
\begin{table}[htb]
	\caption{\label{tab0}\em A traveling kink and antikink wave: $CFL$ number. }
	\centering
%	\bigskip
		\begin{tabular}{|c|c|c|}
			\hline
			$k$   &  Leap-frog scheme &   Fully implicit scheme   \\\hline
			1  &  $0.2/v$  &  5   \\%\hline
			2  &  1  &  10   \\%\hline
			3  &  2  &  20   \\\hline
		\end{tabular}
\end{table}

We list the errors and orders of accuracy of $E$ in Tables \ref{tab1}-\ref{tab3}. All calculations give  the optimal $(k+1)$-th order, except that  for the central flux, if we use the leap-frog scheme, the order of accuracy will be suboptimal when $k=1$. %The accuracy is better than the theoretical prediction in Theorem \ref{thm:err}.
%However, when combining with the fully implicit time discretezation, the central flux can maintain $(k+1)$-th order. This may be result from the large time step we used in the fully implict method, and the errors are dominated by the time errors,.

Next, we investigate the numerical energy behaviors with $N=400$ grid points.
The results are listed in Figure \ref{Fig2}.
Since $\theta=0$ and $\tau=\infty$, following the proof in Theorem \ref{thm:full}, we obtain that the discrete energy $\mathcal{E}_{h}^n$ with alternating and central fluxes satisfies
$$\mathcal{E}^{n+1}_{h}-\mathcal{E}^{n}_{h}=0,$$
where
\begin{eqnarray}
\mathcal{E}_h^n=\int_{\Omega}  \frac{1}{2} H_h^{n+1/2} H_h^{n-1/2} + \frac{\epsilon_\infty}{2} (E_h^n)^2 +  \frac{1}{2\omega_p^2} (J_h^n)^2 + \frac{\omega_0^2}{2 \omega_p^2}   (P_h^n)^2  + \frac{3 a}{4} (E_h^n)^4dx \notag
\end{eqnarray}
for leap-frog scheme, and
\begin{eqnarray}
\label{eq:fullenergy1}
\mathcal{E}_h^n=\int_{\Omega}  \frac{1}{2} (H_h^{n})^2 + \frac{\epsilon_\infty}{2} (E_h^n)^2 +  \frac{1}{2\omega_p^2} (J_h^n)^2 + \frac{\omega_0^2}{2 \omega_p^2}   (P_h^n)^2  + \frac{3 a}{4} (E_h^n)^4dx
\end{eqnarray}
for fully implicit scheme. Therefore, the schemes are energy-conserving. Figure \ref{Fig2} shows that the numerical results are consistent with our analysis:  the leap-frog scheme conserve discrete energy up to the machine error, while the fully implicit scheme has larger errors, which is caused by larger time steps and the error from the Newton solver (we set the tolerance as $\epsilon=10^{-10}$).

On the other hand, the upwind flux is dissipative. When employing the upwind flux and leap-frog scheme, we have
\begin{eqnarray}
\mathcal{E}_h^{n+1}- \mathcal{E}_h^{n} =
-\frac{\Delta t}{8\sqrt{\epsilon_\infty}}\sum_{j=1}^{N}[H_h^{n-1/2}+H_h^{n+1/2}]_{j+1/2}^2 - \frac{\Delta t\sqrt{ \epsilon_\infty}}{8}\sum_{j=1}^{N}[E_h^n+E_h^{n+1}]_{j+1/2}^2 \le 0, \notag
\end{eqnarray}
where the discrete energy is
\begin{eqnarray}
\mathcal{E}_h^n&=&\int_{\Omega}  \frac{1}{2} H_h^{n+1/2} H_h^{n-1/2} + \frac{\epsilon_\infty}{2} (E_h^n)^2 +  \frac{1}{2\omega_p^2} (J_h^n)^2 + \frac{\omega_0^2}{2 \omega_p^2}   (P_h^n)^2 + \frac{3 a}{4} (E_h^n)^4 dx \notag \\
&&
+\frac{\Delta t}{8\sqrt{\epsilon_\infty}}\sum_{j=1}^{N} ([H_h^{n-1/2}] [H_h^{n-1/2}+H_h^{n+1/2}])_{j+1/2}. \notag
\end{eqnarray}
For the fully implicit scheme with upwind flux, the discrete energy \eqref{eq:fullenergy1} will satisfy
\begin{eqnarray}
\mathcal{E}_h^{n+1}- \mathcal{E}_h^{n} =
-\frac{\Delta t}{8\sqrt{\epsilon_\infty}} \sum_{j=1}^{N}[H_h^{n}+H_h^{n+1}]_{j+1/2}^2-\frac{\Delta t\sqrt{\epsilon_\infty}}{8}\sum_{j=1}^{N}[E_h^n+E_h^{n+1}]_{j+1/2}^2 \le 0.\notag
\end{eqnarray}
 We observe the predicted behavior numerically (Figure \ref{Fig2}). Note that for $k=3$, the initial energy increase  is caused by error from the Newton solver. %And it is observed that the energy will decay once the increment exceeds some tolerance.

\begin{table}[htb]
	\caption{\label{tab1}\em A traveling kink and antikink wave: errors and orders of accuracy of $E$. $k=1$. }
	\centering
%	\bigskip
		\begin{tabular}{|c|c|cc|cc|cc|cc|}
			\hline
			\multirow{2}{*}{} &\multirow{2}{*}{$N$} &
			\multicolumn{4}{c|}{Leap-frog scheme} &
			\multicolumn{4}{c|}{Fully implicit scheme} \\
			\cline{3-10}
			&   &  $L_2$ errors &   order  &  $L_\infty$ error  & order
			&  $L_2$ errors &   order  &  $L_\infty$ error  & order  \\\hline
			\multirow{5}{2cm}{upwind flux}
			&  100   &  4.48E-04  &    --    &  1.80E-03  &     --
						&  9.24E-03  &    --    &  2.38E-02  &     --    \\
			&  200   &  7.68E-05  &  2.55  &  3.46E-04  &  2.38
						&  3.75E-03  &  1.30  &  1.16E-02  &  1.04  \\
			&  400   &  1.32E-05  &  2.54  &  5.92E-05  &  2.55
						&  1.15E-03  &  1.71  &  4.20E-03  &  1.46  \\
			&  800   &  2.75E-06  &  2.26  &  1.17E-05  &  2.33
						&  3.01E-04  &  1.93  &  1.20E-03  &  1.80   \\
			& 1600  &  6.57E-07  &  2.06  &  3.10E-06  &  1.92
						&  7.59E-05  &  1.99  &  3.11E-04  &  1.95  \\\hline
			\multirow{5}{2cm}{central flux}
			&  100   &  1.07E-03  &    --    &  4.53E-03  &     --
						&  9.13E-03  &    --    &  2.36E-02  &     --    \\
			&  200   &  2.77E-04  &  1.94  &  1.31E-03  &  1.79
						&  3.64E-03  &  1.33  &  1.18E-02  &  1.12   \\
			&  400   &  7.12E-05  &  1.96  &  3.59E-04  &  1.87
						&  1.10E-03  &  1.72  &  4.20E-03  &  1.49  \\
			&  800   &  1.97E-05  &  1.85  &  1.05E-04  &  1.78
						&  2.90E-04  &  1.93  &  1.23E-03  &  1.77  \\
			& 1600  &  6.91E-06  &  1.51  &  3.51E-05  &  1.57
						&  7.32E-05  &  1.98  &  3.24E-04  &  1.93  \\\hline
			\multirow{5}{2cm}{alternating flux I}
			&  100   &  1.15E-04  &    --    &  5.22E-03  &     --
						&  9.38E-03  &	 --    &  2.41E-02  &	    --    \\
			&  200   &  2.96E-05  &  1.96  &  1.13E-04  &  2.21
						&  3.77E-03  &  1.31  &  1.16E-02  &  1.06  \\
			&  400   &  8.91E-06  &  1.73  &  3.45E-05  &  1.71
						&  1.15E-03  &  1.71  &  4.21E-03  &  1.46  \\
			&  800   &  2.01E-06  &  2.15  &  9.48E-06  &  1.86
						&  3.01E-04  &  1.93  &  1.21E-03  &  1.80  \\
			& 1600  &  4.62E-07  &  2.12  &  2.03E-06  &  2.22
						&  7.59E-05  &  1.99  &  3.11E-04  &  1.95  \\\hline
			\multirow{5}{2cm}{alternating flux II}
			&  100   &  9.41E-05  &    --    &  4.79E-04  &     --
						&  9.39E-03  &	 --    &  2.41E-02  &     --    \\
			&  200   &  3.04E-05  &  1.63  &  1.49E-04  &  1.69
						&  3.78E-03  &  1.31  &  1.16E-02  &  1.05  \\
			&  400   &  8.36E-06  &  1.86  &  3.61E-05  &  2.04
						&  1.15E-03  &  1.71  &  4.20E-03  &  1.47  \\
			&  800   &  1.87E-06  &  2.16  &  8.79E-06  &  2.04
						&  3.01E-04  &  1.93  &  1.21E-03  &  1.80  \\
			& 1600  &  5.40E-07  &  1.79  &  2.65E-06  &  1.73
					     &  7.59E-05  &  1.99  &  3.11E-04  &  1.95  \\\hline
		\end{tabular}
\end{table}

\begin{table}[htb]
	\caption{\label{tab2}\em A traveling kink and antikink wave: errors and orders of accuracy of $E$. $k=2$. }
	\centering
%	\bigskip
		\begin{tabular}{|c|c|cc|cc|cc|cc|}
			\hline
			\multirow{2}{*}{} &\multirow{2}{*}{$N$} &
			\multicolumn{4}{c|}{Leap-frog scheme} &
			\multicolumn{4}{c|}{Fully implicit scheme} \\
			\cline{3-10}
			&   &  $L_2$ errors &   order  &  $L_\infty$ error  & order
			&  $L_2$ errors &   order  &  $L_\infty$ error  & order  \\\hline
			\multirow{4}{2cm}{upwind flux}
			&  100   &  2.49E-05  &    --     &  1.50E-04  &     --
						&  3.66E-03  &    --     &  1.13E-02  &     --    \\
			&  200   &  3.04E-06  &  3.04  &  1.88E-05  &  3.00 	
						&  5.71E-04  &  2.68  &  2.21E-03  &  2.35  \\
			&  400   &  3.69E-07  &  3.04  &  2.28E-06  &  3.04
						&  7.29E-05  &  2.97  &  2.99E-04  &  2.89  \\
			&  800   &  4.72E-08  &  2.96  &  3.04E-07  &  2.91
						&  9.13E-06  &  3.00  &  3.77E-05  &  2.99  \\
%			& 1600  &  7.28E-09  &  2.70  &  5.19E-08  &  2.55
%						&  1.14E-06  &  3.00  &  4.74E-06  &  2.99  \\
			\hline
			\multirow{4}{2cm}{central flux}
			&  100   &  2.32E-05  &    --     &  1.05E-04  &     --
						&  3.66E-03  &    --     &  1.13E-02  &     --    \\
			&  200   &  2.97E-06  &  2.96  &  1.38E-05  &  2.92  	
						&  5.72E-04  &  2.68  &  2.21E-03  &  2.35  \\
			&  400   &  3.71E-07  &  3.00  &  1.75E-06  &  2.99
						&  7.29E-05  &  2.97  &  2.99E-04  &  2.89  \\
			&  800   &  4.62E-08  &  3.00  &  2.19E-07  &  2.99
						&  9.13E-06  &  3.00  &  3.77E-05  &  2.99  \\
%			& 1600  &  5.84E-09  &  2.99  &  2.74E-08  &  3.00
%						&  1.14E-06  &  3.00  &  4.73E-06  &  2.99  \\
			\hline
			\multirow{4}{2cm}{alternating flux I}
			&  100   &  2.38E-05  &    --    &  1.04E-04  &     --
						&  3.66E-03  &    --    &  1.13E-02  &     --    \\
			&  200   &  2.98E-06  &  3.00  &  1.33E-05  &  2.96
						&  5.71E-04  &  2.68  &  2.21E-03  &  2.35 \\
			&  400   &  3.72E-07  &  3.00  &  1.70E-06  &  2.96
						&  7.29E-05  &  2.97  &  2.99E-04  &  2.89 \\
			&  800   &  4.62E-08  &  3.01  &  2.08E-07  &  3.04
						&  9.13E-06  &  3.00  &  3.77E-05  &  2.99  \\
%			& 1600  &  5.80E-09  &  2.99  &  2.65E-08  &  2.97
%						&  1.14E-06  &  3.00  &  4.74E-06  &  2.99  \\
			\hline
			\multirow{4}{2cm}{alternating flux II}
			&  100   &  2.41E-05  &    --    &  1.25E-04  &     --
						&  3.66E-03  &    --    &  1.13E-02  &     --     \\
			&  200   &  3.03E-06  &  2.99  &  1.67E-05  &  2.91
						&  5.72E-04  &  2.68  &  2.21E-03  &  2.35  \\
			&  400   &  3.73E-07  &  3.02  &  1.98E-06  &  3.07
						&  7.29E-05  &  2.97  &  2.99E-04  &  2.89  \\
			&  800   &  4.66E-08  &  3.00  &  2.67E-07  &  2.89
						&  9.13E-06  &  3.00  &  3.77E-05  &  2.99  \\
%			& 1600  &  6.02E-09  &  2.95  &  3.45E-08  &  2.95
%						&  1.14E-06  &  3.00  &  4.74E-06  &  2.99  \\
			\hline
		\end{tabular}
\end{table}

\begin{table}[htb]
	\caption{\label{tab3}\em A traveling kink and antikink wave: errors and orders of accuracy of $E$. $k=3$. }
	\centering
%	\bigskip
		\begin{tabular}{|c|c|cc|cc|cc|cc|}
			\hline
			\multirow{2}{*}{} &\multirow{2}{*}{$N$} &
			\multicolumn{4}{c|}{Leap-frog scheme} &
			\multicolumn{4}{c|}{Fully implicit scheme} \\
			\cline{3-10}
			&   &  $L_2$ errors &   order  &  $L_\infty$ error  & order
			&  $L_2$ errors &   order  &  $L_\infty$ error  & order  \\\hline
			\multirow{3}{2cm}{upwind flux}
			&  100   &  5.58E-06  &    --    &  2.41E-05  &     --
						&  1.07E-03  &    --    &  3.93E-03  &    --      \\
			&  200   &  3.50E-07  &  3.99  &  1.48E-06  &  4.03
						&  7.00E-05  &  3.93  &  2.87E-04  &  3.77  \\
			&  400   &  2.18E-08  &  4.00  &  1.05E-07  &  3.82
						&  4.38E-06  &  4.00  &  1.81E-05  &  3.99  \\
%			&  800   &  3.11E-09  &  2.81  &  2.54E-08  &  2.04
%			             &  2.74E-07  &  4.00  &  1.16E-06  &  3.96  \\
			\hline
			\multirow{3}{2cm}{central flux}
			&  100   &  5.68E-06  &    --    &  2.59E-05  &     --
						&  1.07E-03  &    --    &  3.93E-03  &    --      \\
			&  200   &  3.62E-07  &  3.97  &  1.73E-06  &  3.91
						&  7.00E-05  &  3.93  &  2.87E-04  &  3.77  \\
			&  400   &  2.27E-08  &  4.00  &  1.09E-07  &  3.98
						&  4.37E-06  &  4.00  &  1.80E-05  &  3.99  \\
%			&  800   &  1.55E-09  &  3.87  &  9.04E-09  &  3.60
%						&  2.74E-07  &  4.00  &  1.13E-06  &  3.99  \\
		    \hline
			\multirow{3}{2cm}{alternating flux I}
			&  100   &  5.60E-06  &    --     &  2.30E-05  &     --
						&  1.07E-03  &    --     &  3.93E-03  &    --     \\
			&  200   &  3.53E-07  &  3.99  &  1.44E-06  &  4.00
						&  7.00E-05  &  3.93  &  2.87E-04  &  3.77  \\
			&  400   &  2.19E-08  &  4.01  &  8.97E-08  &  4.00
						&  4.38E-06  &  4.00  &  1.80E-05  &  3.99  \\
%			&  800   &  1.37E-09  &  4.00  &  5.61E-09  &  4.00
%						&  2.73E-07  &  4.00  &  1.17E-06  &  4.00  \\
			\hline
			\multirow{3}{2cm}{alternating flux II}
			&  100   &  5.60E-06  &    --     &  2.35E-05  &     --
						&  1.07E-03  &    --     &  3.93E-03  &    --    \\
			&  200   &  3.53E-07  &  3.99  &  1.45E-06  &  4.02
						&  7.00E-05  &  3.93  &  2.87E-04  &  3.77  \\
			&  400   &  2.19E-08  &  4.01  &  9.01E-08  &  4.01
						&  4.38E-06  &  4.00  &  1.81E-05  &  3.99  \\
%			&  800   &  1.37E-09  &  4.00  &  5.73E-09  &  3.97
%						&  2.72E-07  &  4.01  &  1.15E-06  &  3.97  \\
			\hline
		\end{tabular}
\end{table}

\begin{figure}
	\centering
	\subfigure[Leap-frog scheme. $k=1$.]{
		\includegraphics[width=0.28\textwidth]{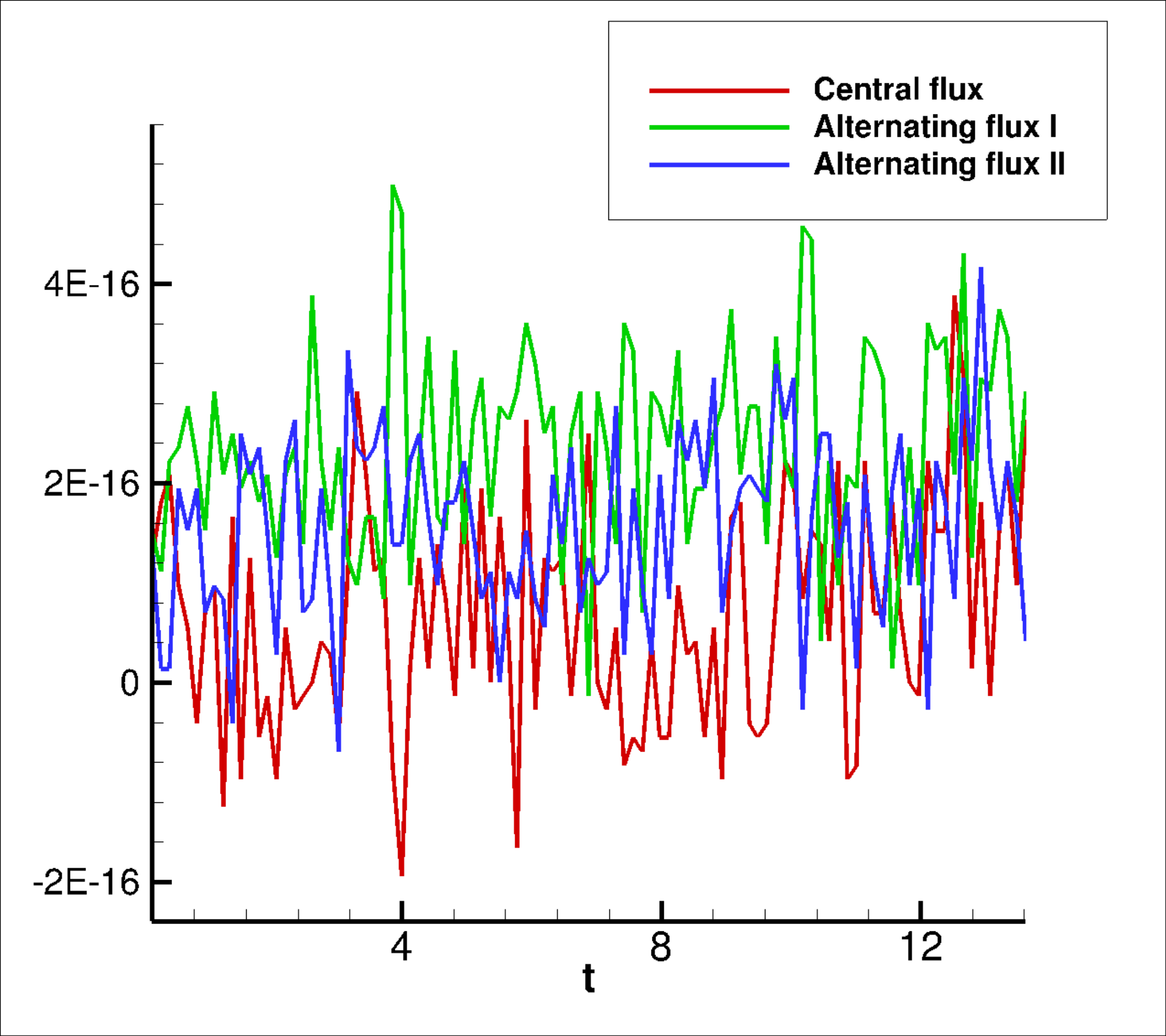}\label{Fig2.1}}
	\subfigure[Leap-frog scheme. $k=2$.]{
		\includegraphics[width=0.28\textwidth]{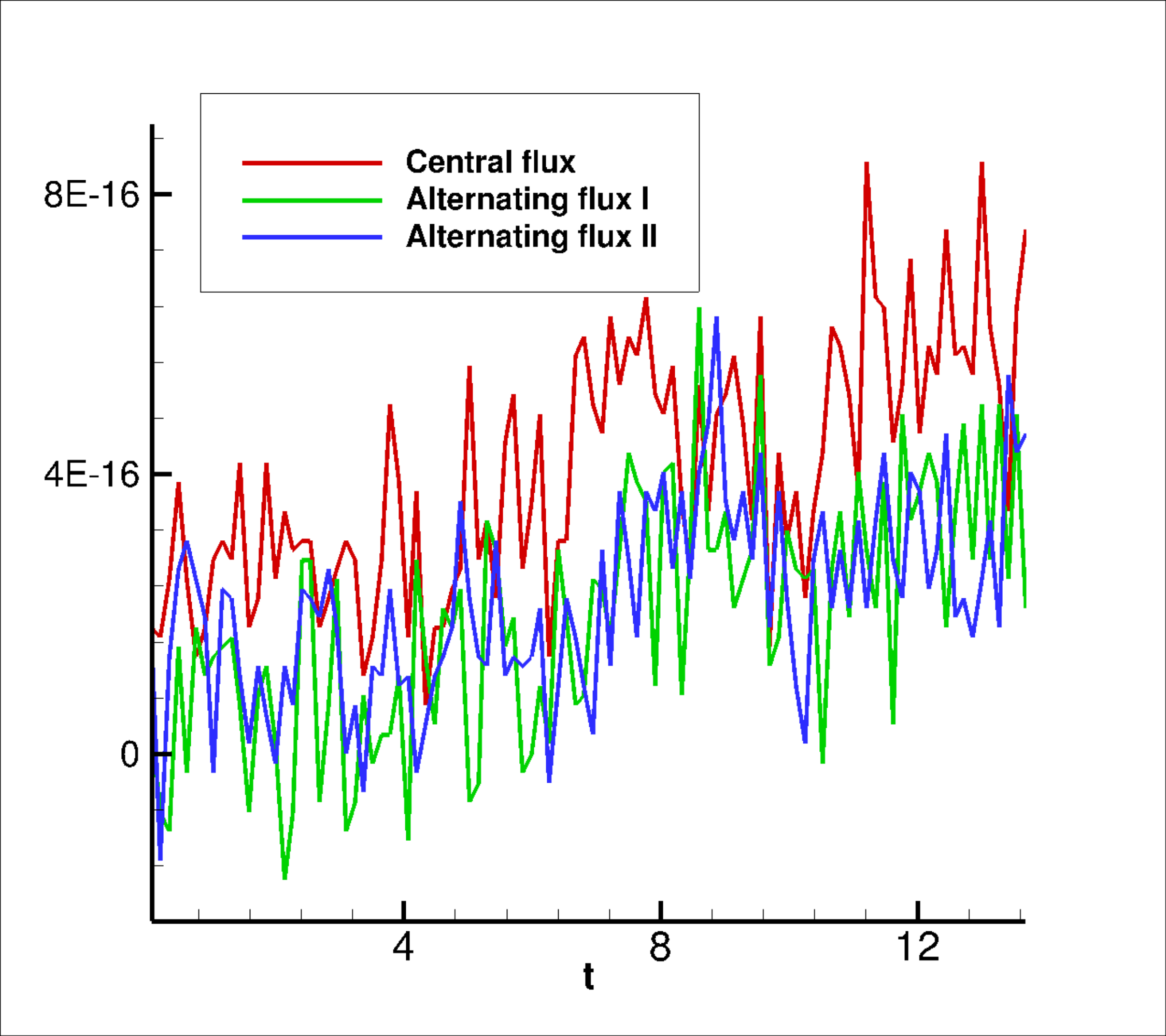}\label{Fig2.3}}
	\subfigure[Leap-frog scheme. $k=3$.]{
		\includegraphics[width=0.28\textwidth]{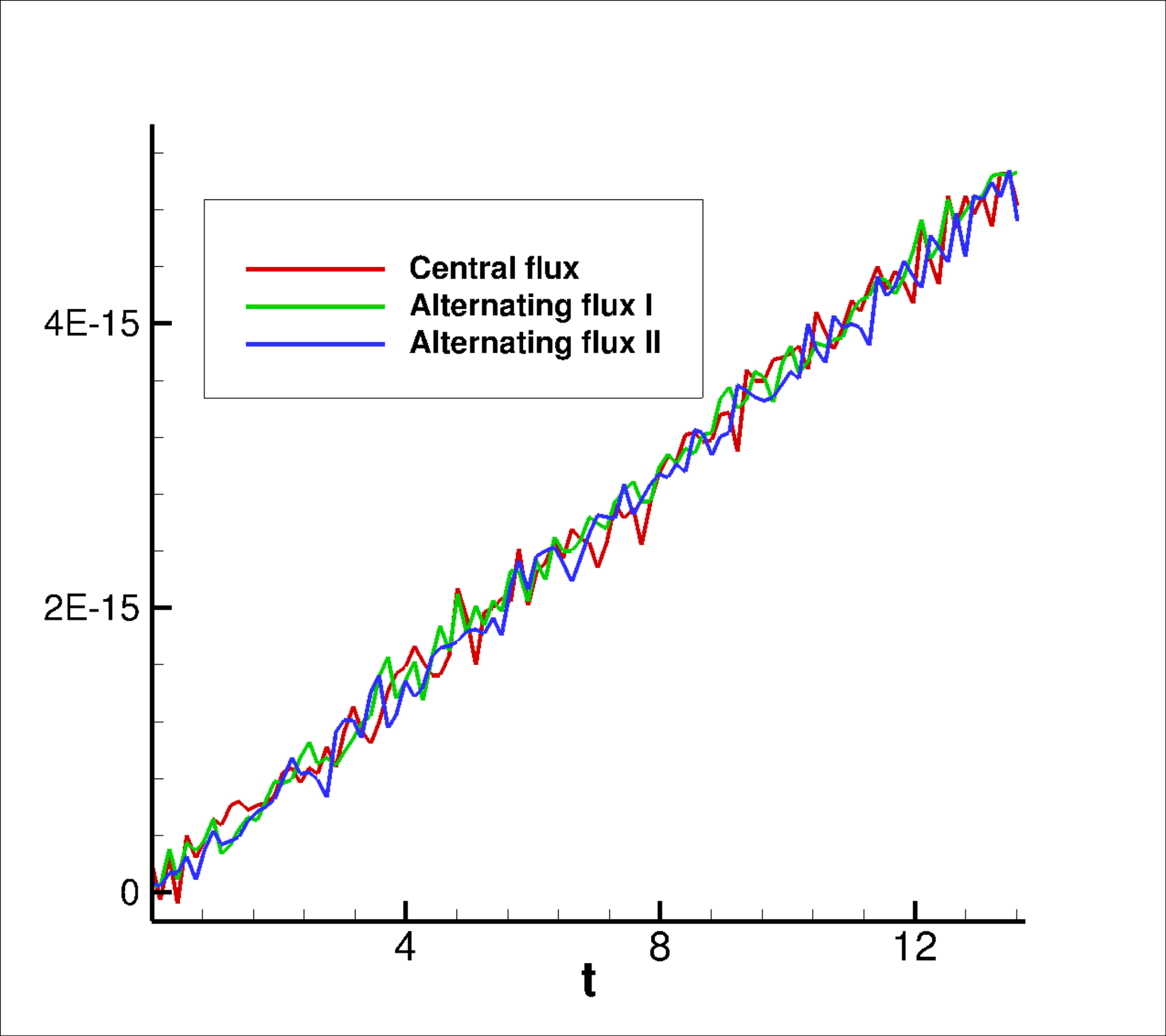}\label{Fig2.5}}
			\subfigure[Fully implicit scheme. $k=1$.]{
		\includegraphics[width=0.28\textwidth]{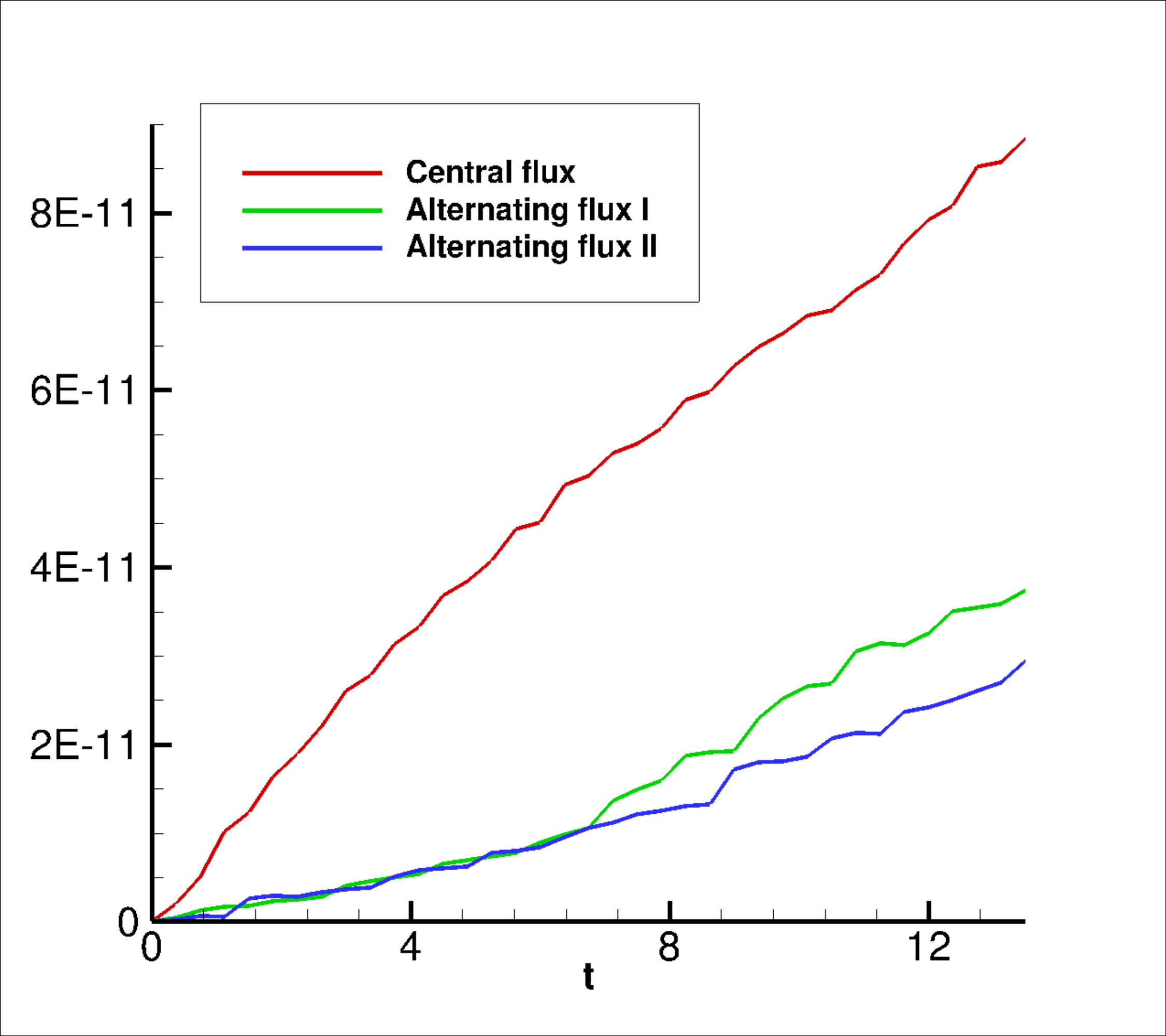}\label{Fig2.2}}
	\subfigure[Fully implicit scheme. $k=2$.]{
		\includegraphics[width=0.28\textwidth]{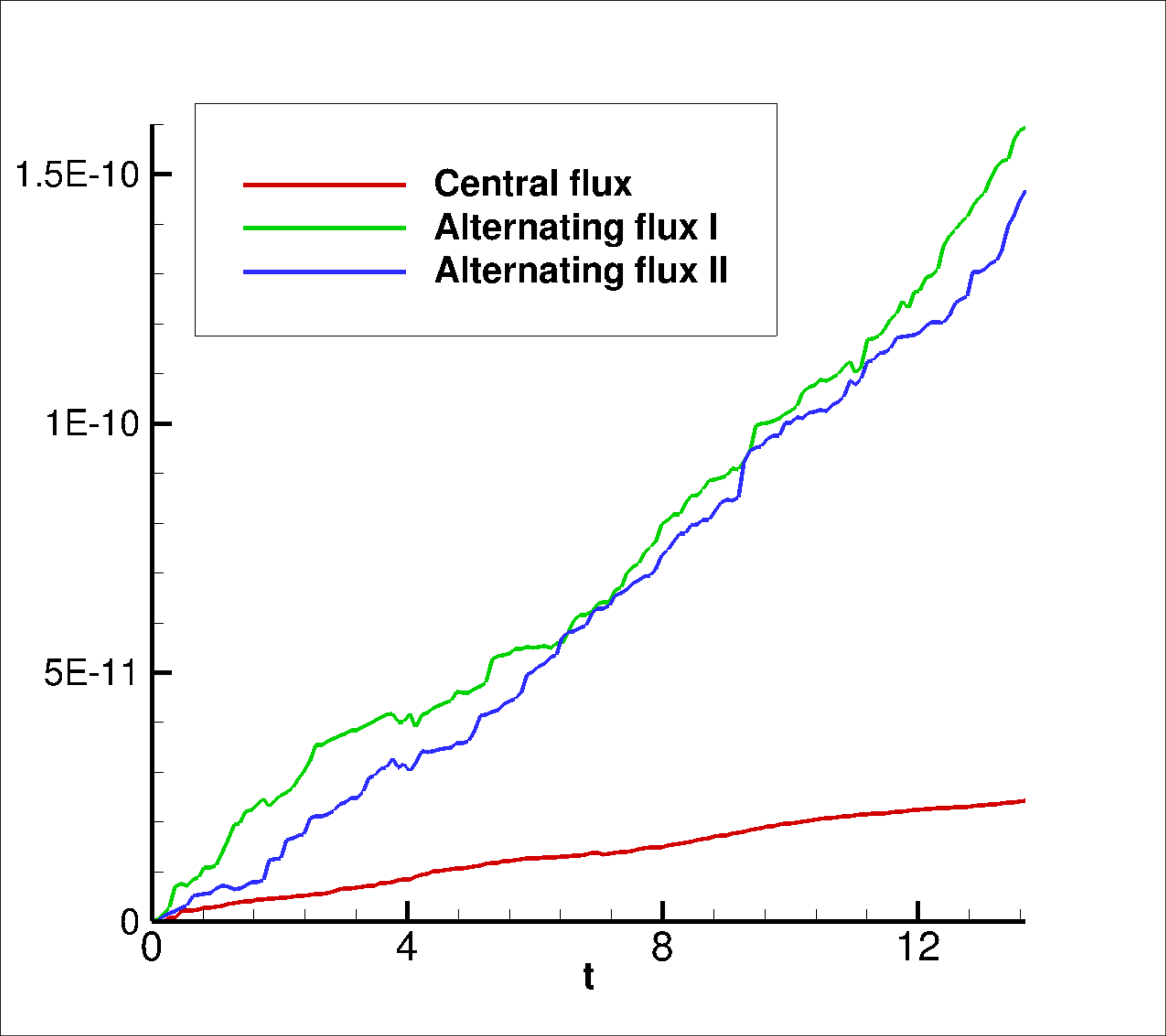}\label{Fig2.4}}
	\subfigure[Fully implicit scheme. $k=3$.]{
		\includegraphics[width=0.28\textwidth]{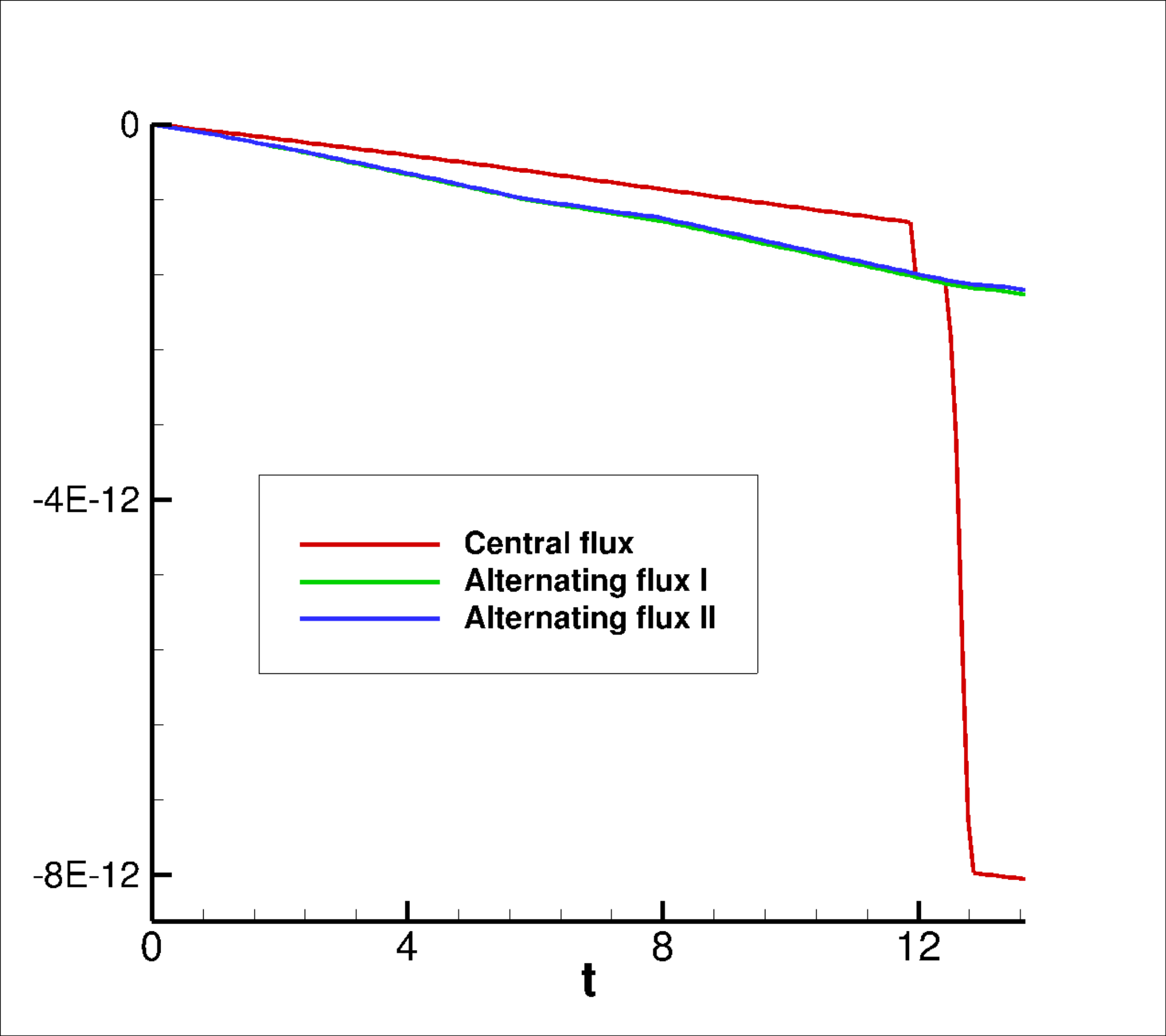}\label{Fig2.6}}
	\subfigure[Leap-frog scheme. $k=1$.]{
		\includegraphics[width=0.28\textwidth]{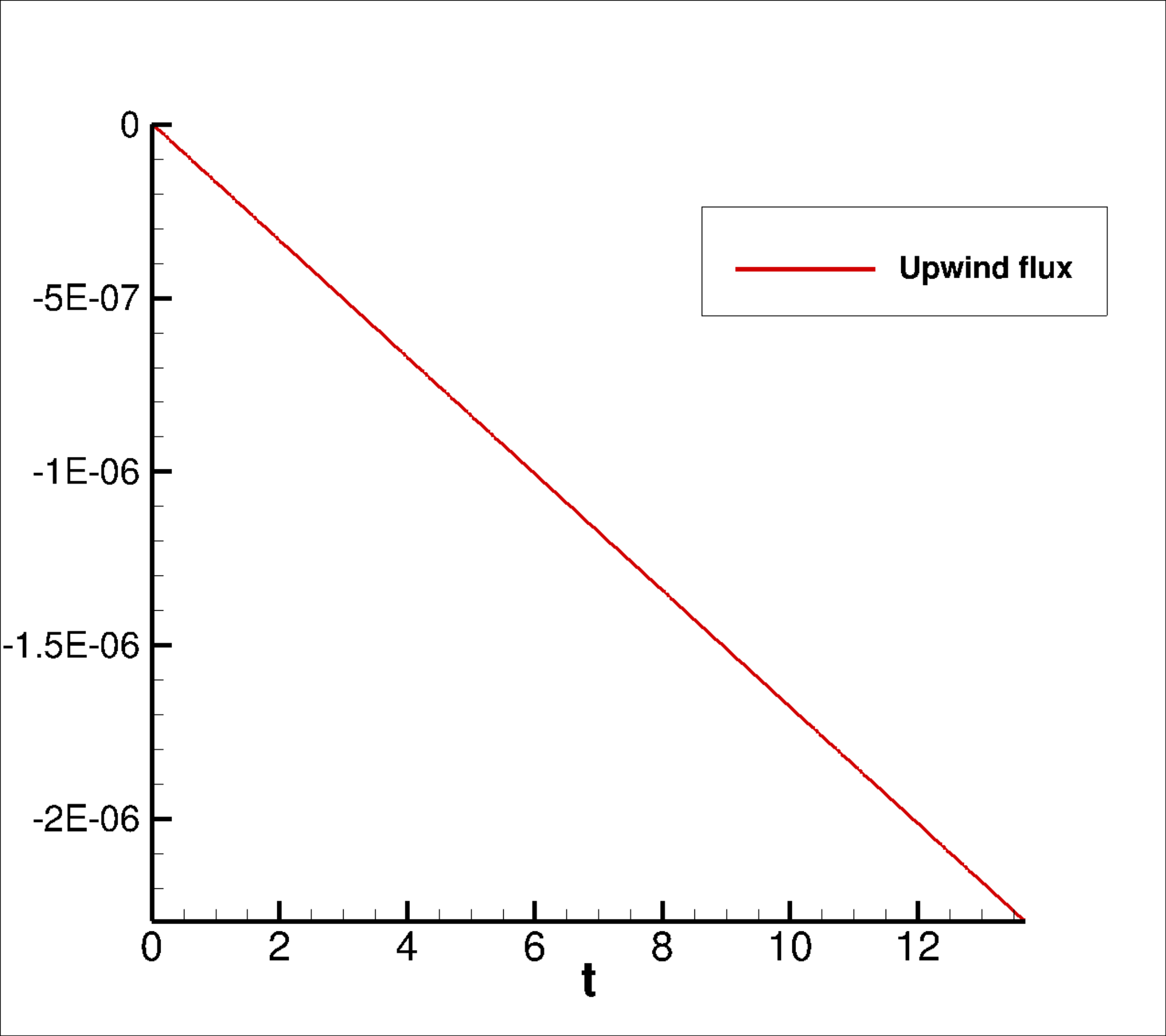}\label{Fig2.1u}}
	\subfigure[Leap-frog scheme. $k=2$.]{
		\includegraphics[width=0.28\textwidth]{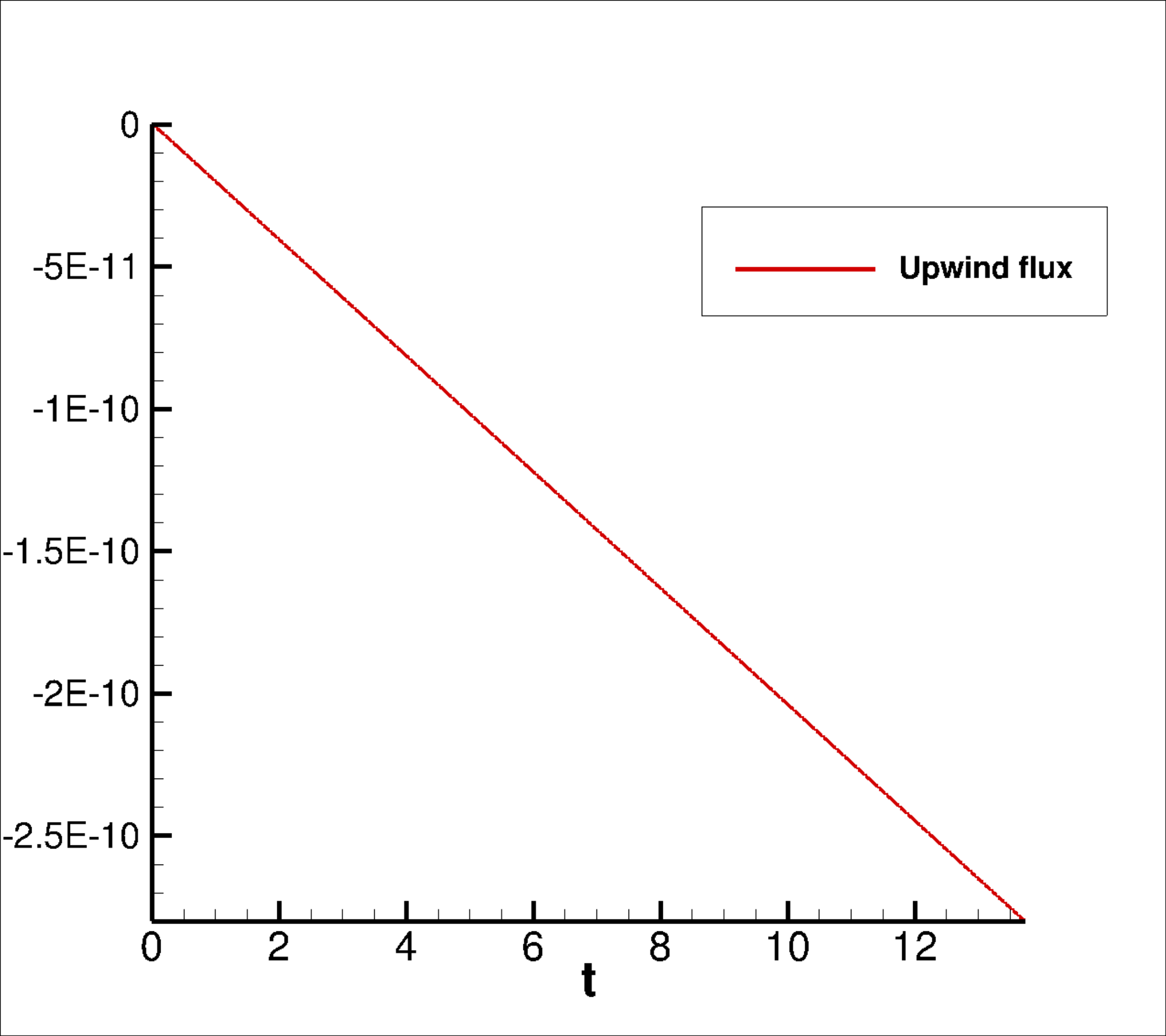}\label{Fig2.3u}}
	\subfigure[Leap-frog scheme. $k=3$.]{
		\includegraphics[width=0.28\textwidth]{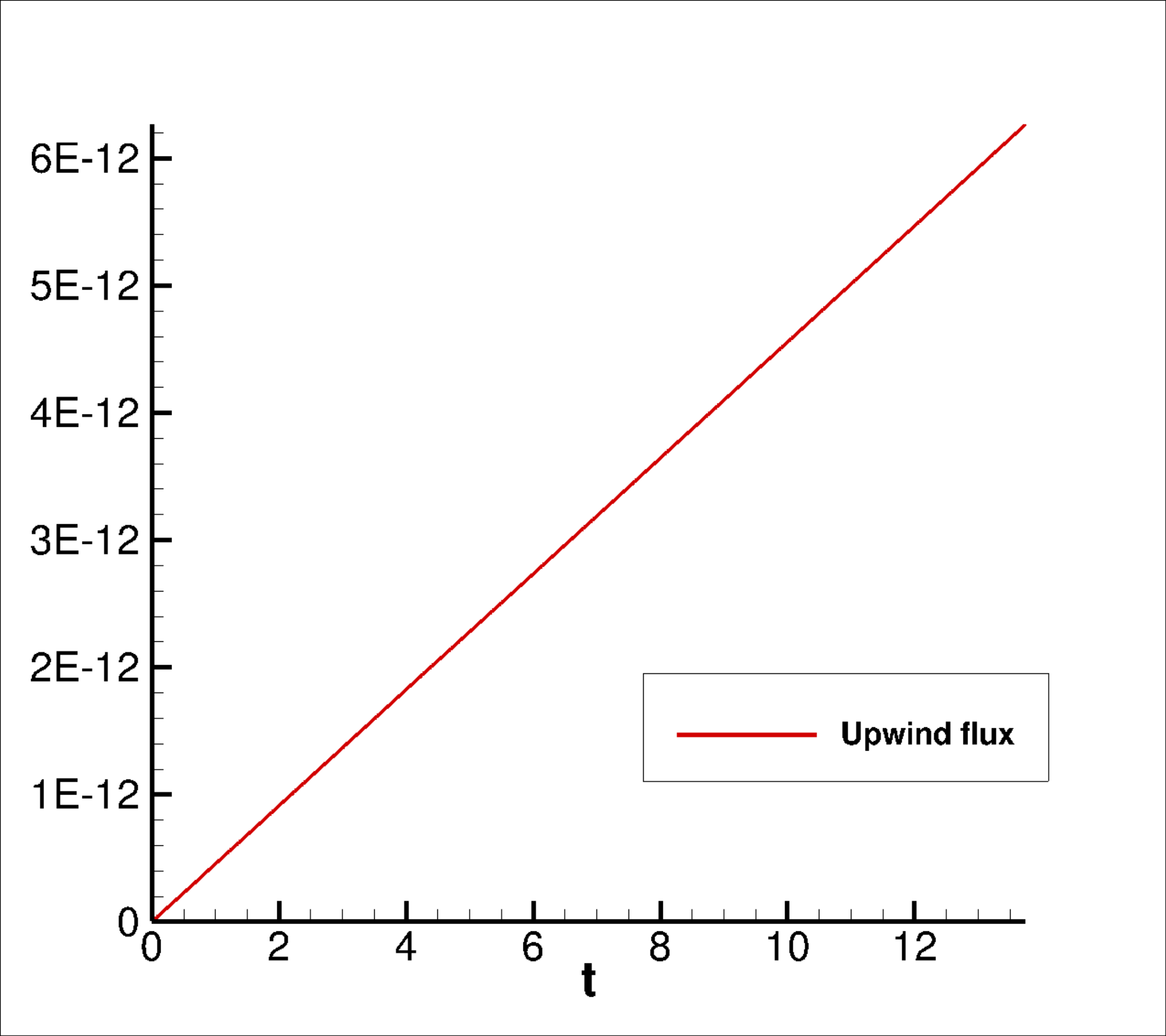}\label{Fig2.5u}}
	\subfigure[Fully implicit scheme. $k=1$.]{
		\includegraphics[width=0.28\textwidth]{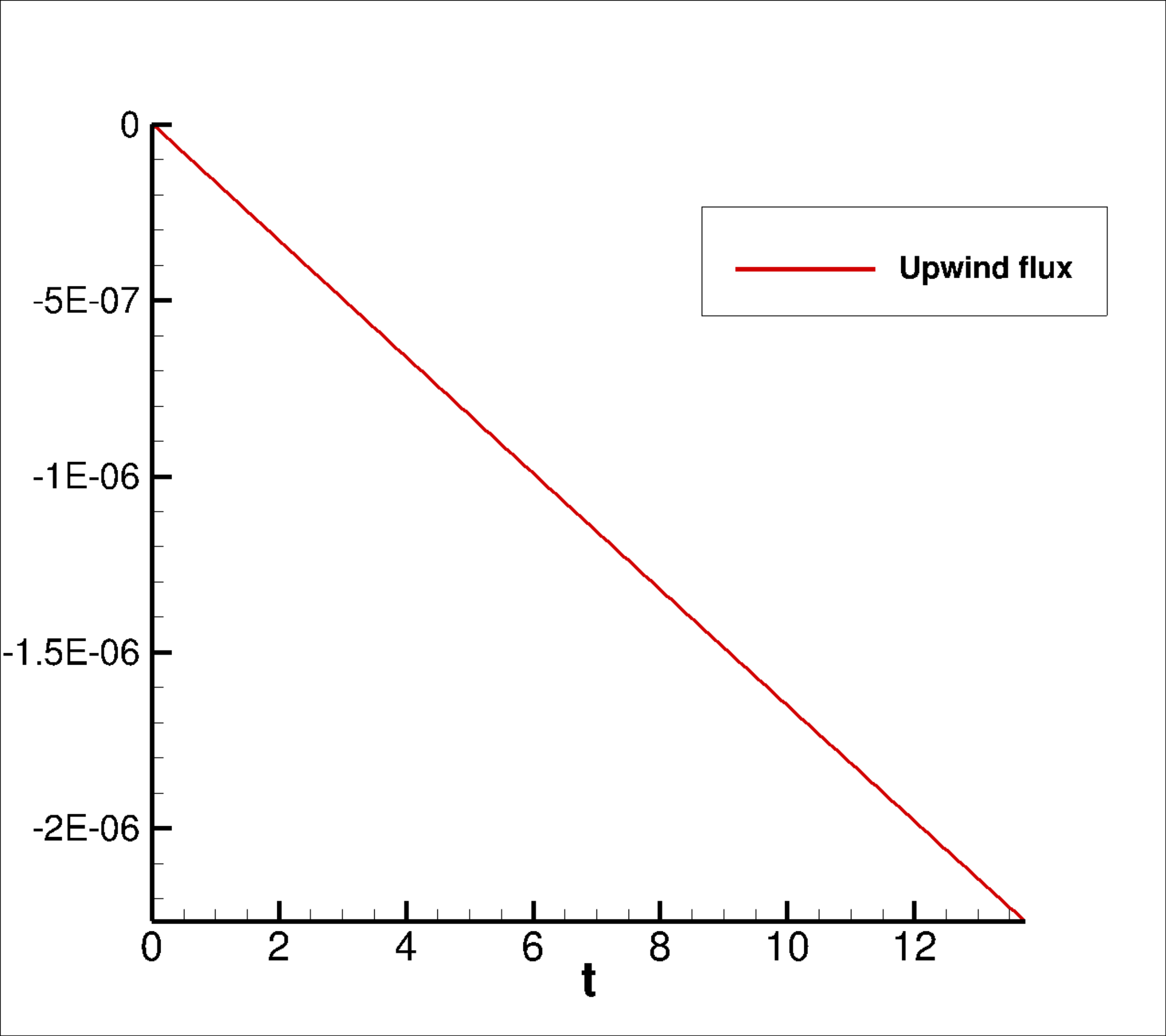}\label{Fig2.2u}}
	\subfigure[Fully implicit scheme. $k=2$.]{
		\includegraphics[width=0.28\textwidth]{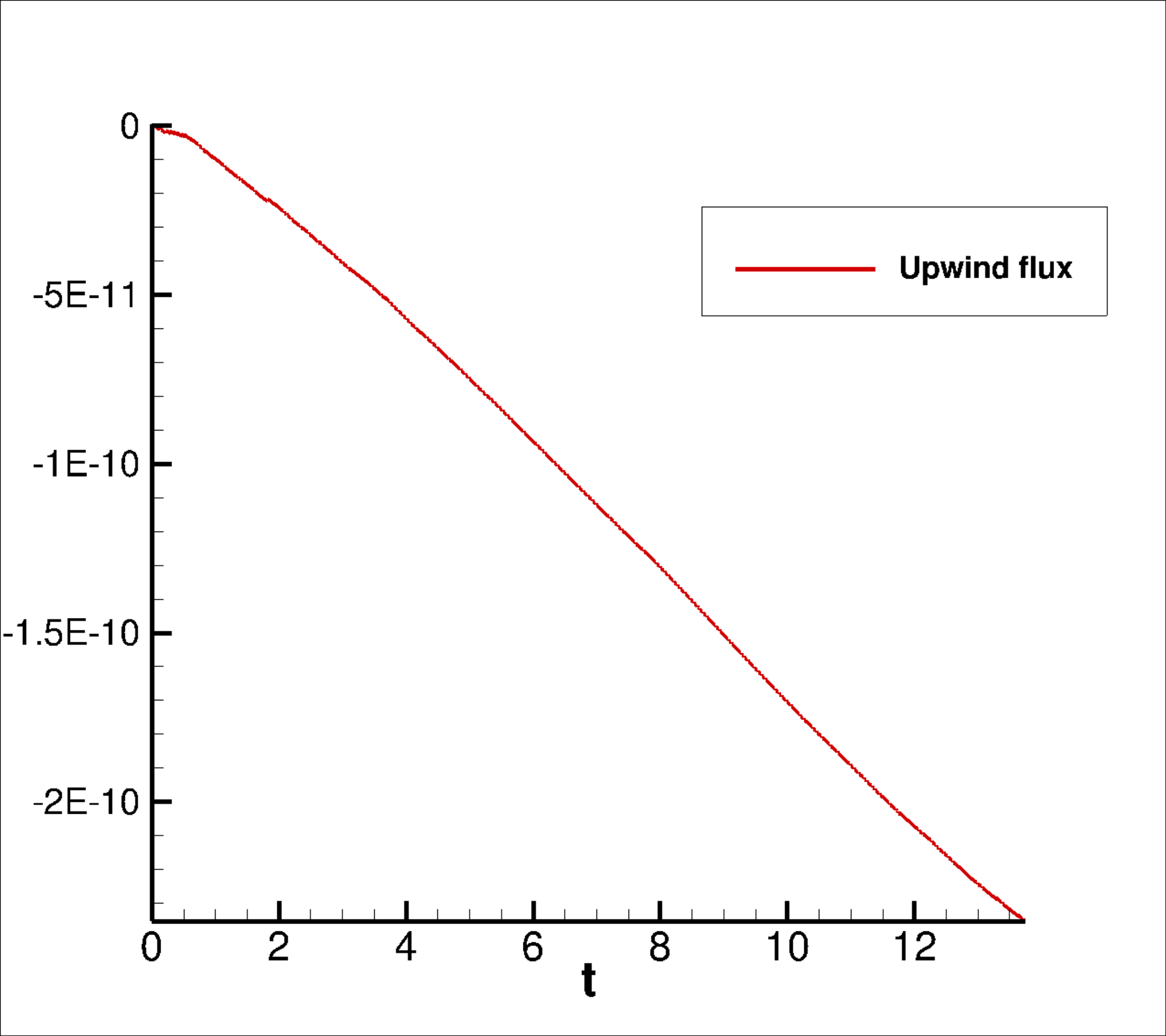}\label{Fig2.4u}}
	\subfigure[Fully implicit scheme. $k=3$.]{
		\includegraphics[width=0.28\textwidth]{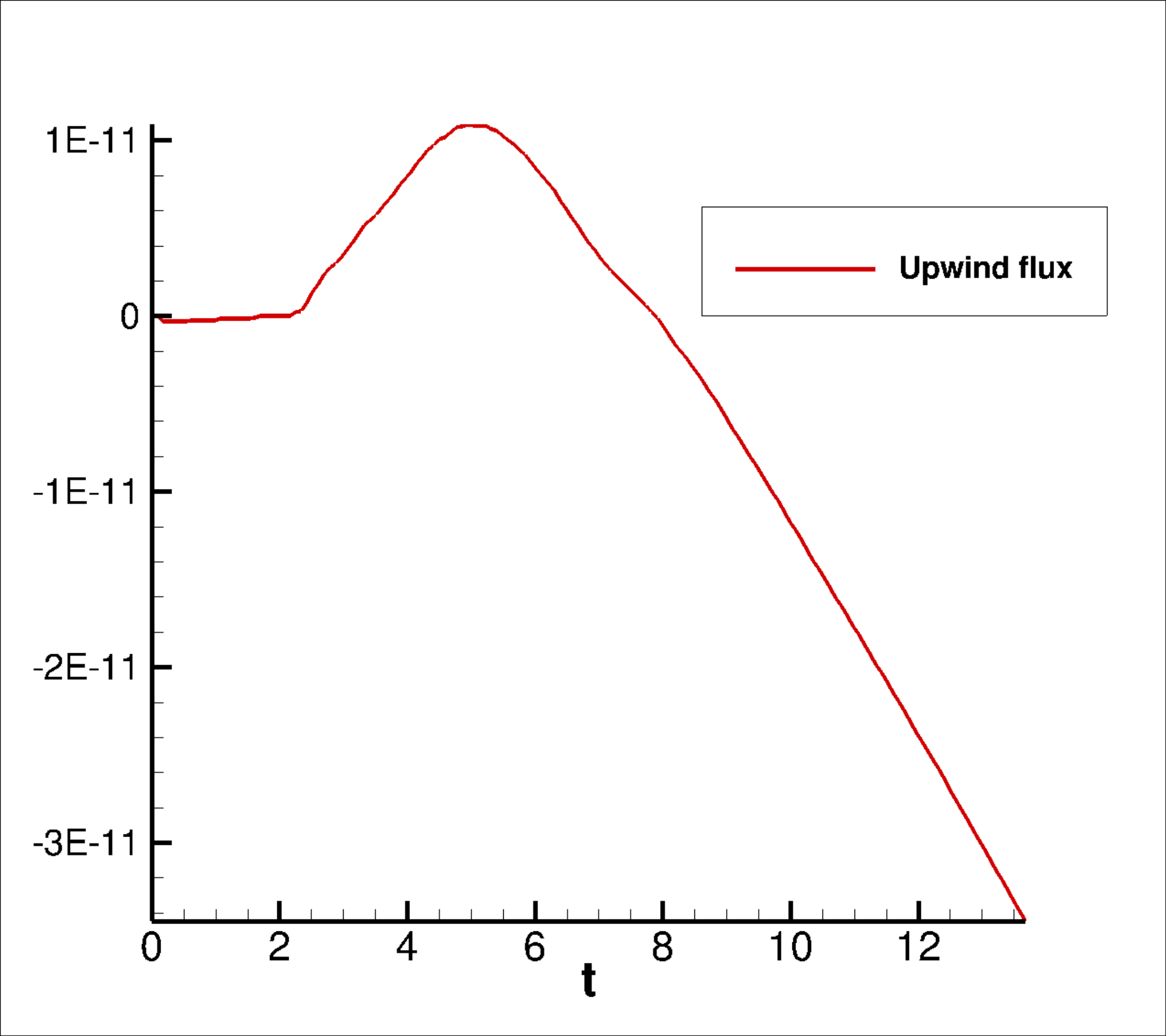}\label{Fig2.6u}}
	\caption{\em A traveling kink and antikink wave: the time evolution of the relative deviation in energy. $N=400$ grid points.  }
	\label{Fig2}
\end{figure}

\subsection{Soliton propagation}
\label{sec:num2}

In this example, we will consider the soliton propagation in the full Maxwell model \eqref{eq:scale}, similar to the setup in \cite{gilles2000comparison}. The computational domain is $x\in[0, 45].$
The coefficients in this example are chosen as
 \begin{align*}
 \epsilon_{\infty}=2.25, \ \ \
 \epsilon_{s} = 5.25, \ \ \
 \beta_{1} = \epsilon_{s} - \epsilon_{\infty}, \\
 1/\tau = 1.168\times 10^{-5}, \ \ \
 1/\tau_{v} = 29.2/32, \\
  a = 0.07,\ \ \
  \theta = 0.3, \ \ \
\Omega_{0}=12.57,\\
\omega_{0} = 5.84, \ \ \
\omega_{v} = 1.28, \ \ \
\omega_{p} =\omega_{0}\sqrt{\beta_{1}}.
 \end{align*}

Initially, all fields are zero.  The left  boundary is  injected with  an incoming solitary wave, for which the electric field is prescribed as
\begin{align}
E(x=0,t)=f(t)\cos(\Omega_{0}t),
\end{align}
where $f(t)=M\ \mathrm{sech}(t-20)$. $M$ is a constant to be specified later. Similar to \cite{gilles2000comparison}, the boundary condition of $H$ can be approximated from the linearized dispersion relation. Assuming a space-time harmonic variation $e^{i(\omega t-kx)}$ of all fields, the exact dispersion relation associated with the linear parts of the system \eqref{eq:scale} is
\begin{align}
\epsilon_{\infty} \omega^4 - i\frac{\epsilon_{\infty}}{\tau}\omega^3 - (\epsilon_{\infty}\omega_{0}^2+\omega_{p}^2+k^2)\omega^2 +  i \frac{1}{\tau} k^2 \omega +k^2\omega_{0}^2=0.
\end{align}
The solution corresponding to the wave propagating to the right is
\begin{align}
k=\omega \sqrt{\epsilon_{\infty}} \sqrt{1-\frac{\omega_{p}^2/\epsilon_{\infty}} {\omega^2-i\omega/\tau-\omega_{0}^2}}.
\end{align}
Then we take the approximate value of $H$ as
\begin{align}
H(x=0,t)=&\int_{-\infty}^{\infty}\hat{H}(\omega)e^{i\omega t}d\omega\nonumber\\
\simeq& \frac{1}{2}\big[ \sum_{m=0}^{8} \frac{(-i)^{m}}{m!} (\frac{1}{Z})^{(m)}|_{\omega=\Omega_0} f^{(m)}(t) \big] e^{i\Omega_{0}t} + c.c.,
\end{align}
where $c.c.$ denotes the complex conjugate of the first term, $f^{(m)}(t)$ is the $m$-th derivative of $f(t)$, and $ (\frac{1}{Z})^{(m)}$ is the $m$-th derivative of $Z=-\omega/k$ with respect to $\omega$.

We treat the right boundary as an absorbing wall corresponding to the linearized system, similar to the procedure performed in \cite{hile1996numerical}. Neglecting the nonlinear effects and the delayed response in \eqref{eq:scale}, we have
\begin{align*}
& \partial_{t}(H+\sqrt{\epsilon_{\infty}}E) = \frac{1}{\sqrt{\epsilon_{\infty}}}\partial_{x}(H+\sqrt{\epsilon_{\infty}}E)\\
& \partial_{t}(H-\sqrt{\epsilon_{\infty}}E) = -\frac{1}{\sqrt{\epsilon_{\infty}}}\partial_{x}(H-\sqrt{\epsilon_{\infty}}E).
\end{align*}
Because only waves that propagate to the right are allowed, the left going characteristic variable $H+\sqrt{\epsilon_{\infty}}E$ is set to be zero at the right boundary $x_{R}=x_{N+1/2}$. Therefore, for semi-discrete scheme, we require
\begin{align*}
& (H_{h}+\sqrt{\epsilon_{\infty}}E_{h})^{+}_{N+1/2} = 0,\\
& (H_{h}-\sqrt{\epsilon_{\infty}}E_{h})^{+}_{N+1/2} = (H_{h}-\sqrt{\epsilon_{\infty}}E_{h})^{-}_{N+1/2}.
\end{align*}
This corresponds to rewriting the central flux as
\begin{equation}
	\label{eq:bccn}
%\begin{align}
\widehat{E_{h}}|_{N+1/2} = \frac{3}{4}E_{h}|^{-}_{N+1/2} - \frac{1}{4\sqrt{\epsilon_{\infty}}}H_{h}|^{-}_{N+1/2},\quad
%\\& 
\widetilde{H_{h}}|_{N+1/2} = \frac{3}{4}H_{h}|^{-}_{N+1/2} - \frac{\sqrt{\epsilon_{\infty}}}{4}E_{h}|^{-}_{N+1/2},
%\end{align}
\end{equation}
and rewriting the upwind flux as
\begin{equation}
\label{eq:bcup}
 \widehat{E_{h}}|_{N+1/2} = \frac{1}{2}E_{h}|^{-}_{N+1/2} - \frac{1}{2\sqrt{\epsilon_{\infty}}}H_{h}|^{-}_{N+1/2},\quad
\widetilde{H_{h}}|_{N+1/2} = \frac{1}{2}H_{h}|^{-}_{N+1/2} - \frac{\sqrt{\epsilon_{\infty}}}{2}E_{h}|^{-}_{N+1/2}.
\end{equation}
To guarantee   better stability results for the outflowing edge, when using alternating fluxes, we employ the central  flux \eqref{eq:bccn} at the right boundary instead. With this boundary condition,   the energy relation such as those in Theorem \ref{thm:semi} should be adjusted accordingly. For example, we can verify that the semi-discrete scheme with alternating and central fluxes satisfy
\begin{eqnarray}
\frac{d}{dt}\mathcal{E}_h&=&-\frac{1}{\omega_p^2 \tau} \int_\Omega J_h^2 dx-\frac{a\theta}{2\omega_v^2 \tau_v}  \int_\Omega \sigma_h^2 dx
 - \frac{1}{4\sqrt{\epsilon_{\infty}}} ( (H_{h}-\sqrt{\epsilon_\infty}E_{h}) ^{-}_{N+1/2} )^2 -\Theta_{in}
\le -\Theta_{in}
\end{eqnarray}
with energy
	\begin{equation}
	\label{eq:enedefc}
	\mathcal{E}_h=\int_{\Omega}  \left (\frac{1}{2} H_h^2 + \frac{\epsilon_\infty}{2} E_h^2 +  \frac{1}{2\omega_p^2} J_h^2   + \frac{\omega_0^2}{2 \omega_p^2}   P_h^2+ \frac{a\theta}{4\omega_v^2} \sigma_h^2 + \frac{a\theta}{2}  Q_h E_h^2  + \frac{3 \epsilon_0 a (1-\theta)}{4} E_h^4+\frac{a\theta}{4}Q_h^2\right) dx,
	\end{equation}
and the contribution from the inflow boundary
\begin{align}
\label{eq:theta1}
\Theta_{in}=\left\{\begin{array}{ll}
\frac{1}{2} E(0,t)H_{h}|^+_{1/2}+ \frac{1}{2} H(0,t)E_{h}|^+_{1/2}, & \mbox{ for central flux},\\
H(0,t)E_{h}|^+_{1/2}, & \mbox{ for alternating flux I},\\
E(0,t)H_{h}|^+_{1/2}, & \mbox{ for alternating flux II}.\\
\end{array}
\right.
\end{align}
 %In addition, $\Theta_{in}$ of the central flux equals to the average of those with alternating fluxes.
The scheme with the upwind flux satisfies
\begin{eqnarray}
\frac{d}{dt}\mathcal{E}_h&=&-\frac{1}{\omega_p^2 \tau} \int_\Omega J_h^2 dx-\frac{a\theta}{2\omega_v^2 \tau_v}  \int_\Omega \sigma_h^2 dx
-\frac{1}{2\sqrt{\epsilon_\infty}}\sum_{j=1}^{N-1}[H_h]_{j+1/2}^2
-\frac{\sqrt{ \epsilon_\infty}}{2}\sum_{j=1}^{N-1}[E_h]_{j+1/2}^2  \\\notag
&&- \frac{1}{2\sqrt{\epsilon_\infty}}  (H_{h}|^{-}_{N+1/2})^2
- \frac{\sqrt{\epsilon_\infty}}{2} ( E_{h}|^{-}_{N+1/2} )^2 -\Theta_{in} 
\le -\Theta_{in},\notag
\end{eqnarray}
with the same energy definition as in \eqref{eq:enedefc} and
\begin{align*}
\Theta_{in}=&\frac{1}{2} (E(0,t)H_{h}|^+_{1/2}+ H(0,t)E_{h}|^+_{1/2})
+\frac{1}{2\sqrt{\epsilon_\infty}}H_{h}|^{+}_{1/2}[H_{h}]_{1/2}
+\frac{\sqrt{\epsilon_\infty}}{2}E_{h}|^{+}_{1/2}[E_{h}]_{1/2}  \\
=&\frac{1}{4\sqrt{\epsilon_\infty}}\left(H_{h}|^+_{1/2}+\sqrt{\epsilon_\infty}E(0,t)\right)^2+\frac{1}{4\sqrt{\epsilon_\infty}}\left(H(0,t)+\sqrt{\epsilon_\infty}E_{h}|^+_{1/2}\right)^2+\frac{1}{4\sqrt{\epsilon_\infty}}[H_{h}]_{1/2}^2+\frac{\sqrt{\epsilon_\infty}}{4}[E_{h}]_{1/2}^2\\
&-\frac{1}{2\sqrt{\epsilon_\infty}}H(0,t)^2-\frac{\sqrt{\epsilon_\infty}}{2}E(0,t)^2.
\end{align*}
%\LL{Yingda, how about we simply define $\Theta_{in}=-\frac{1}{2\sqrt{\epsilon_\infty}}H(0,t)^2-\frac{\sqrt{\epsilon_\infty}}{2}E(0,t)^2$, and write all the other terms into previous equation.}
Therefore,
$$
\frac{d}{dt}\mathcal{E}_h \le \frac{1}{2\sqrt{\epsilon_\infty}}H(0,t)^2+\frac{\sqrt{\epsilon_\infty}}{2}E(0,t)^2,
$$
implying energy stability.

For the fully discrete schemes, there is no ambiguity defining the fluxes \eqref{eq:bccn}, \eqref{eq:bcup} for implicit scheme.
While for the leap-frog formulations with the upwind flux \eqref{eq:bcup}, we use
\begin{align}
	&\widehat{E_h^n}|_{N+1/2} = \frac{1}{2}E_h^n|^{-}_{N+1/2}-\frac{1}{2\sqrt{ \epsilon_\infty}} H_h^{n+1/2}|^{-}_{N+1/2},\quad
	\widehat{\widehat{E_h^{n}}}|_{N+1/2} = \frac{1}{2}E_h^{n}|^{-}_{N+1/2}-\frac{1}{2\sqrt{ \epsilon_\infty}} H_h^{n-1/2}|^{-}_{N+1/2}, \notag \\
	&\widetilde{H_h^{n+1/2}}|_{N+1/2} = \frac{1}{2} H_h^{n+1/2}|^{-}_{N+1/2}-\frac{\sqrt{\epsilon_\infty}}{2}\frac{E_h^n+E_h^{n+1}}{2} |^{-}_{N+1/2}. \label{eq:ffup}
\end{align}
While for the other  fluxes  \eqref{eq:bccn},
\begin{align}
&\widehat{E_h^n}|_{N+1/2} = \frac{3}{4}E_h^n|^{-}_{N+1/2}-\frac{1}{4\sqrt{ \epsilon_\infty}} H_h^{n+1/2}|^{-}_{N+1/2},\quad
\widehat{\widehat{E_h^{n}}}|_{N+1/2} = \frac{3}{4}E_h^{n}|^{-}_{N+1/2}-\frac{1}{4\sqrt{ \epsilon_\infty}} H_h^{n-1/2}|^{-}_{N+1/2}, \notag \\
&\widetilde{H_h^{n+1/2}}|_{N+1/2} = \frac{3}{4} H_h^{n+1/2}|^{-}_{N+1/2}-\frac{\sqrt{\epsilon_\infty}}{4}\frac{E_h^n+E_h^{n+1}}{2} |^{-}_{N+1/2}. \label{eq:ffcn}
\end{align}
Implementation-wise, with \eqref{eq:ffcn}, at the rightmost cell $I_{N}=[x_{N-1/2},x_{N+1/2}]$, we need to solve the nonlinear system to obtain $H_{h}$ by Newton's method. The energy relation for the resulting fully discrete scheme is summarized in the appendix.

We take $N=6400$,   and the time step
 $\Delta t=CFL\times h.$
CFL numbers, listed in Table \ref{tabadd1}, are chosen to ensure the convergence of Newton's method   to the correct solution.

\begin{table}[htb]
	\caption{\label{tabadd1}\em Soliton propagation: $CFL$ number. }
	\centering
		\begin{tabular}{|c|c|c|c|}
		\hline
			\multicolumn{2}{|c|}{Leap-frog scheme} &
			\multicolumn{2}{c|}{Fully implicit scheme} \\\hline
			Central/upwind flux &   Alternating flux  &  Central/upwind flux &   Alternating flux \\\hline
			0.05  &  0.1  &  0.3  &  0.5   \\\hline
		\end{tabular}
\end{table}

We simulate the transient fundamental ($M=1$) and second-order ($M=2$) temporal soliton evolution using various schemes with different orders. The plots of the electric field at $t=40, 80$ are provided in Figures \ref{Fig3}-\ref{Fig6}. As shown in \cite{hile1996numerical, gilles2000comparison}, a daughter pulse travels ahead the soliton-like pulse, resulting from the third-harmonic generation. This daughter pulse is much smaller in amplitude than the soliton pulse, and the frequency is about 3 times as that of the soliton pulse. The daughter pulse is evident in all simulations except with the upwind flux and $k=1$, where the numerical dissipation damps its magnitude significantly. Some reflections from the right boundary is present for the central flux. This is also observed in \cite{hile1996numerical} for the finite difference scheme due to the approximate boundary conditions. As a consequence, there will be spurious oscillation near the right boundary, especially for higher order scheme.  On the other hand, such oscillations are not observed for   alternating fluxes   or the upwind flux.

In Figures \ref{Fig7} - \ref{Fig10}, we  plot the transient evolution of the total energy and pulse area. Here, the pulse area is obtained by the composite trapezoidal rule between two extrema points of $E$. To distinguish the soliton pulse and the daughter pulse effectively, we only consider the soliton pulse area when $|E|\geq0.01$.
Numerical results represent high  agreement between the leap-frog scheme and the fully implicit scheme. Notice that we employ the approximation boundary condition $H(x=0,t)$, and the two alternating fluxes need different inflow information, which means one uses $E(x=0,t)$ and the other one uses $H(x=0,t)$. Hence, there is a slight discrepancy between the total energy with those two fluxes, as well as the pulse area. When using the central flux, both $E(x=0,t)$ and $H(x=0,t)$ are required, therefore the energy and pulse area calculated by the central flux stay in between the two alternating fluxes,  which is consistent with our analysis in \eqref{eq:theta1}. In Figures \ref{Fig7} and \ref{Fig8}, it is observed that the total energy decreases after the entire wave entering the domain, demonstrating the energy stability of the schemes. In particular, the energy calculated from upwind flux displays slightly more damping especially when $t$ is large and $k=1$.

 \begin{figure}
 	\centering
 	\subfigure{
 		\includegraphics[width=0.3\textwidth]{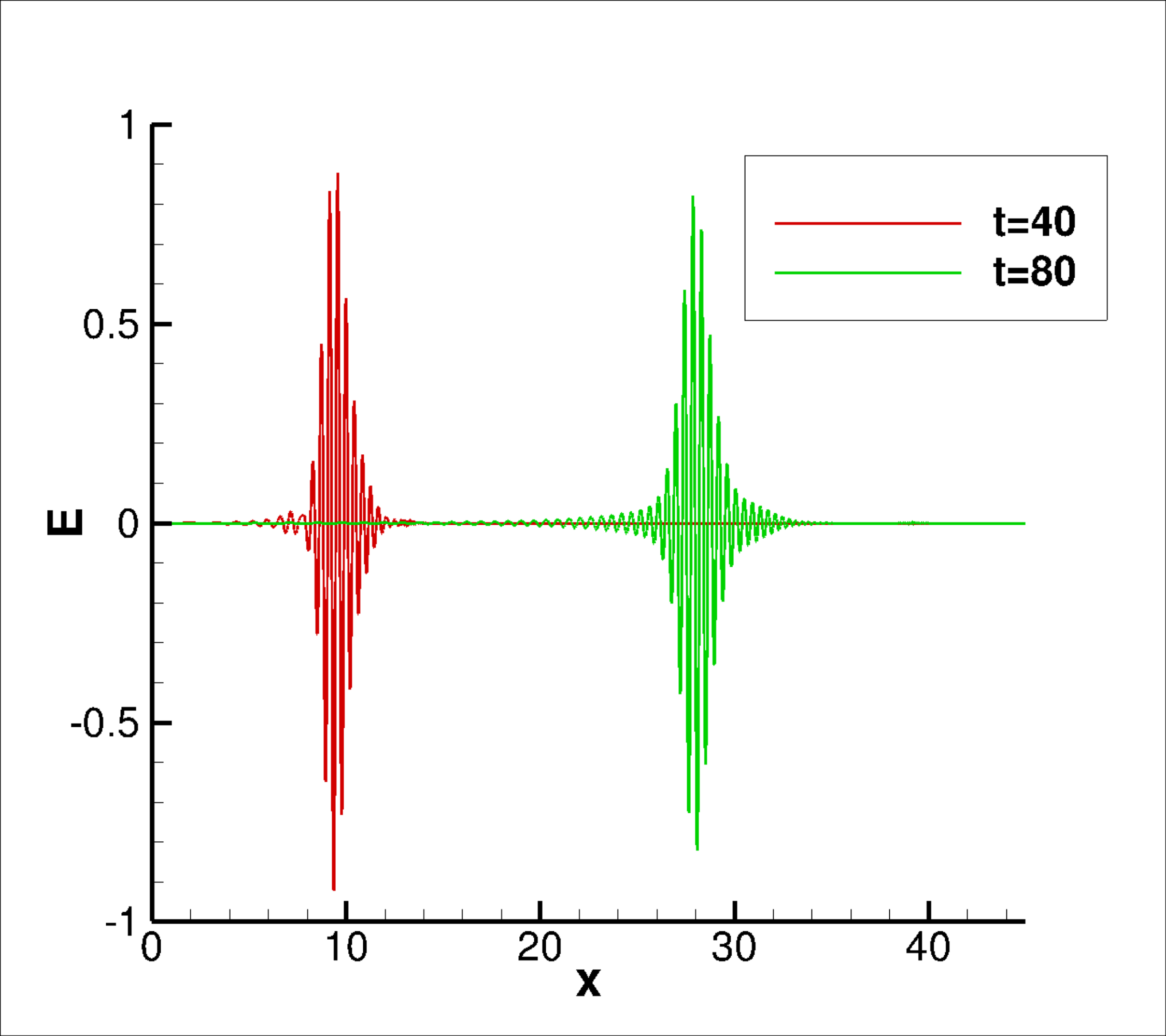}\label{Fig3.1}}
 	\subfigure{
 		\includegraphics[width=0.3\textwidth]{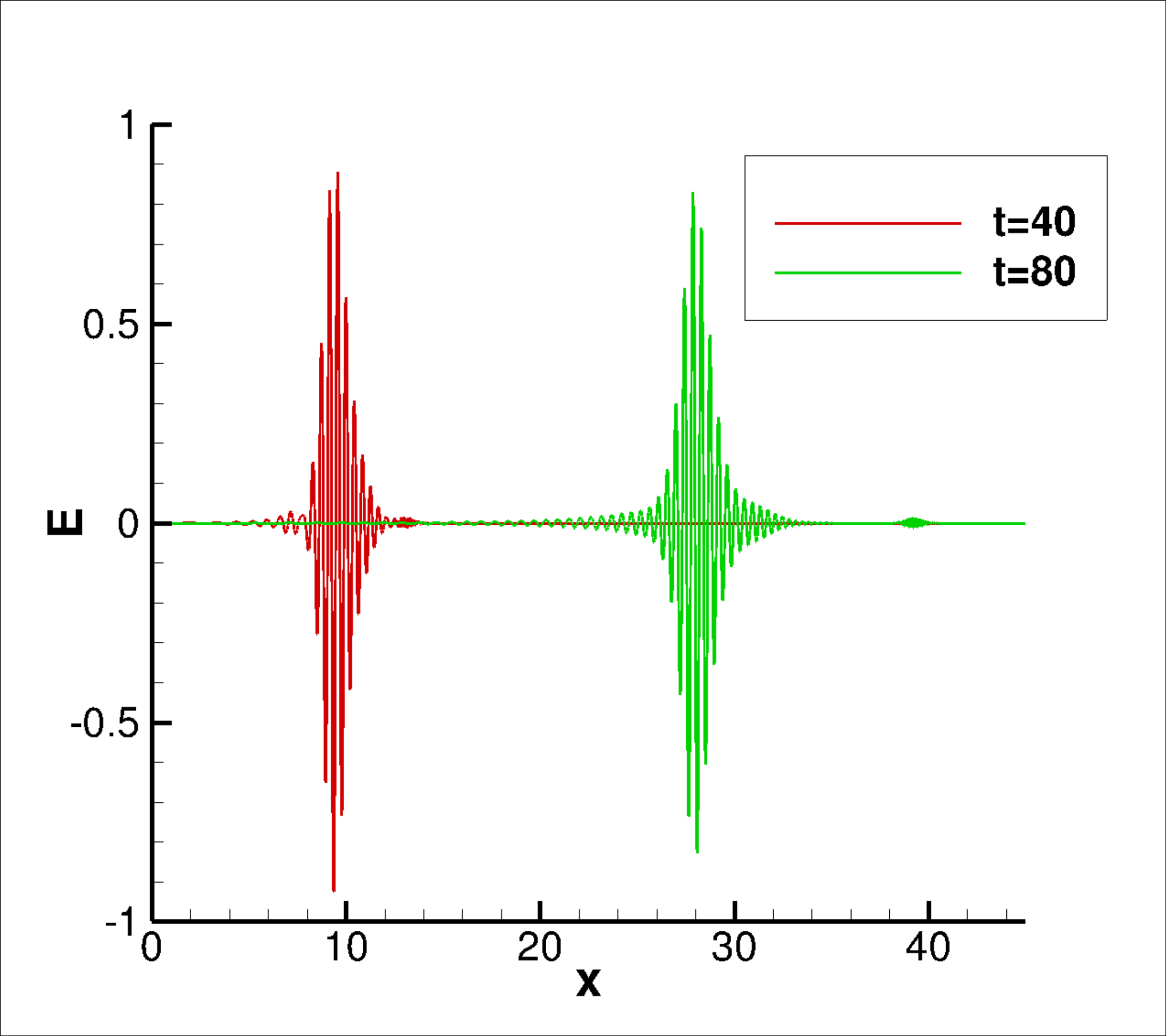}\label{Fig3.2}}
 	\subfigure{
 		\includegraphics[width=0.3\textwidth]{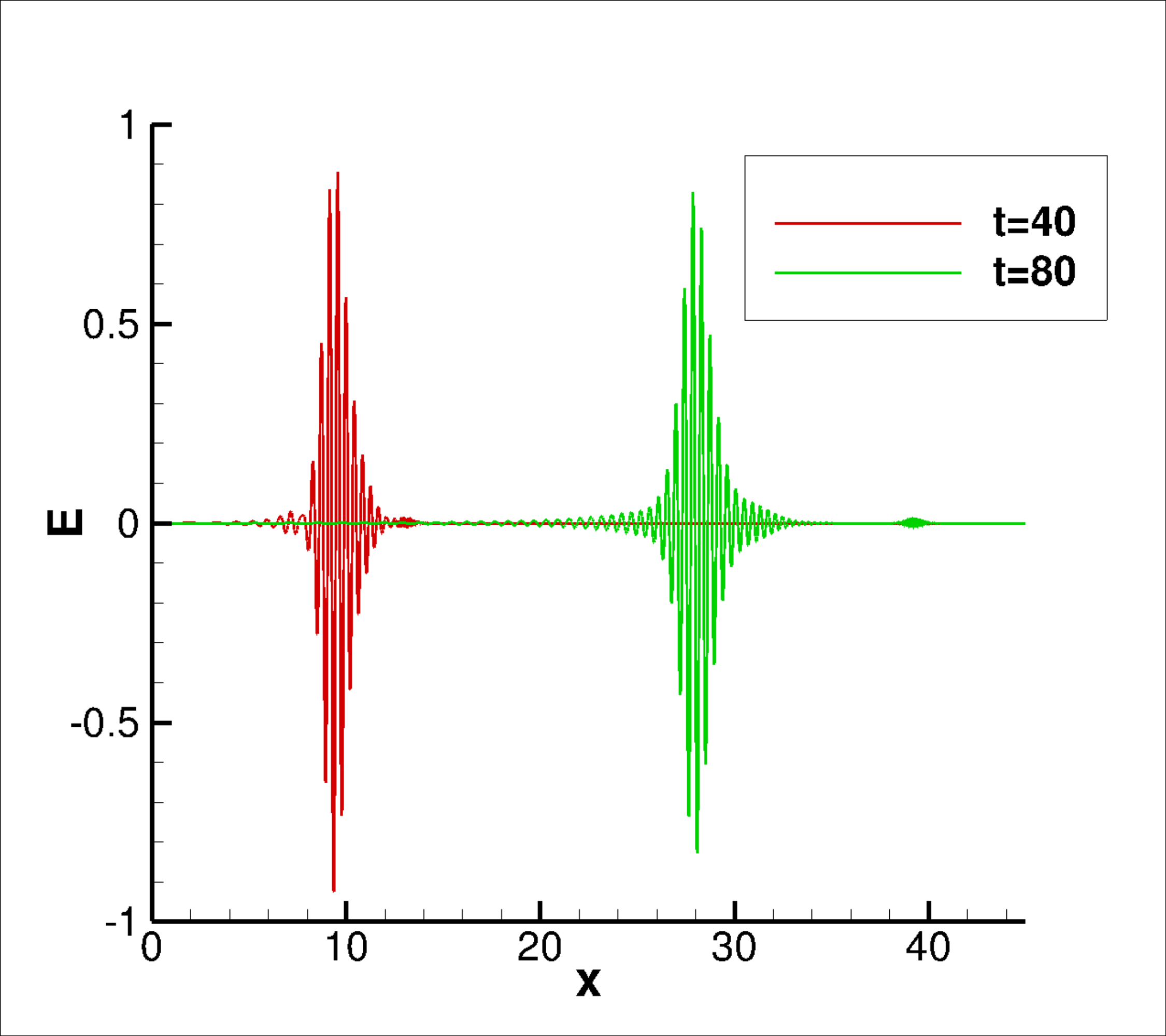}\label{Fig3.3}}
 	\subfigure{
 		\includegraphics[width=0.3\textwidth]{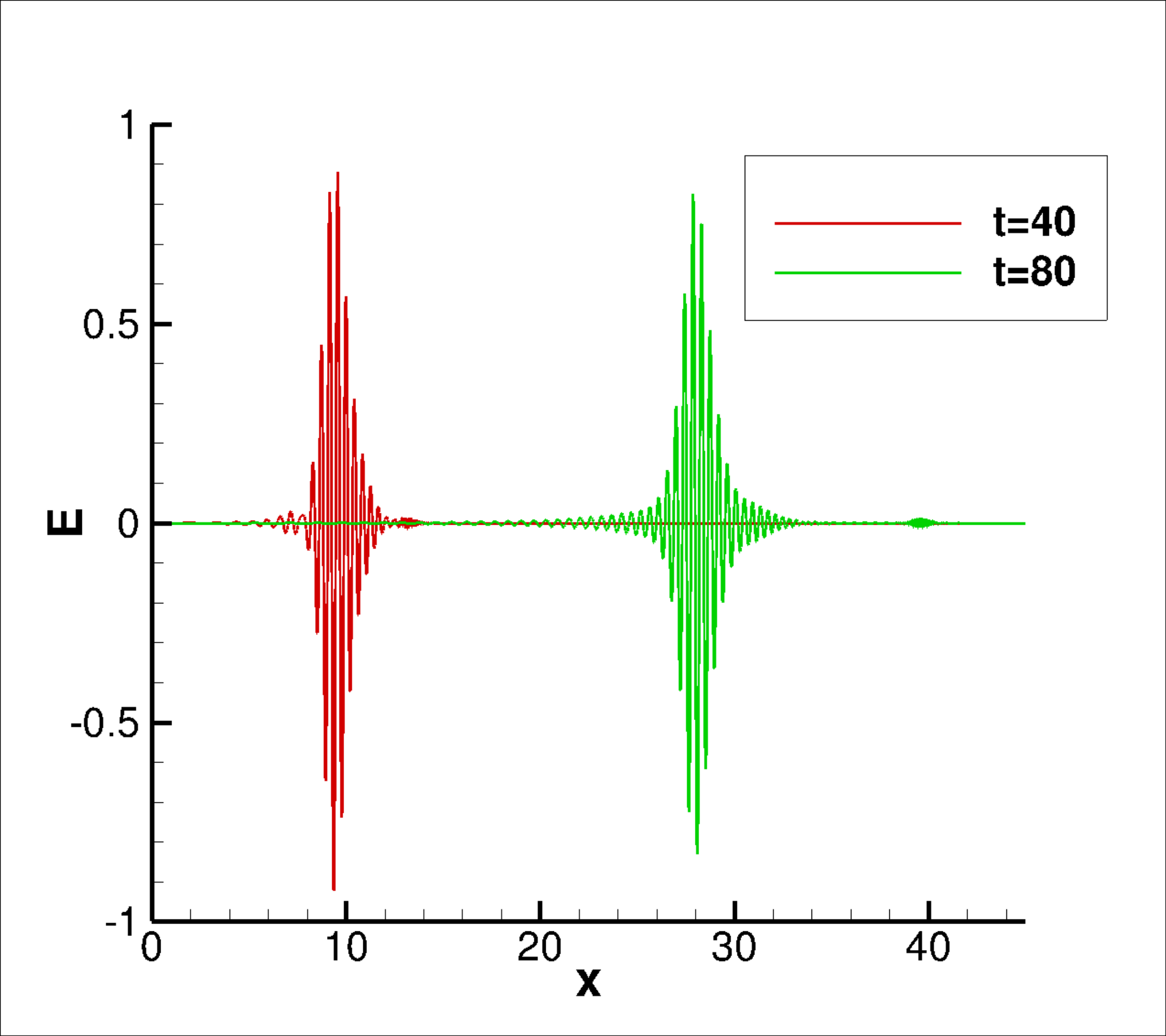}\label{Fig3.4}}
 	\subfigure{
 		\includegraphics[width=0.3\textwidth]{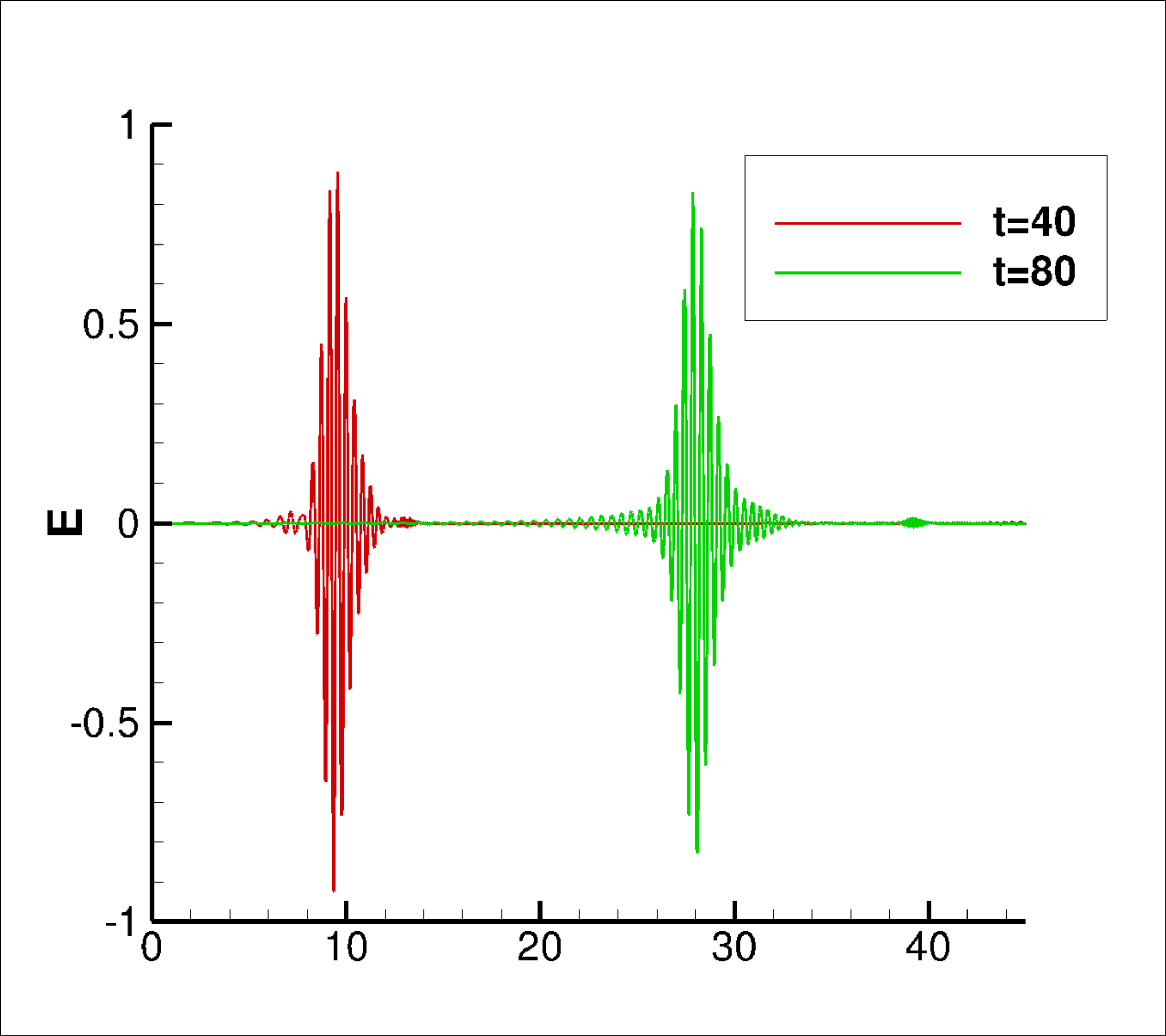}\label{Fig3.5}}
 	\subfigure{
 		\includegraphics[width=0.3\textwidth]{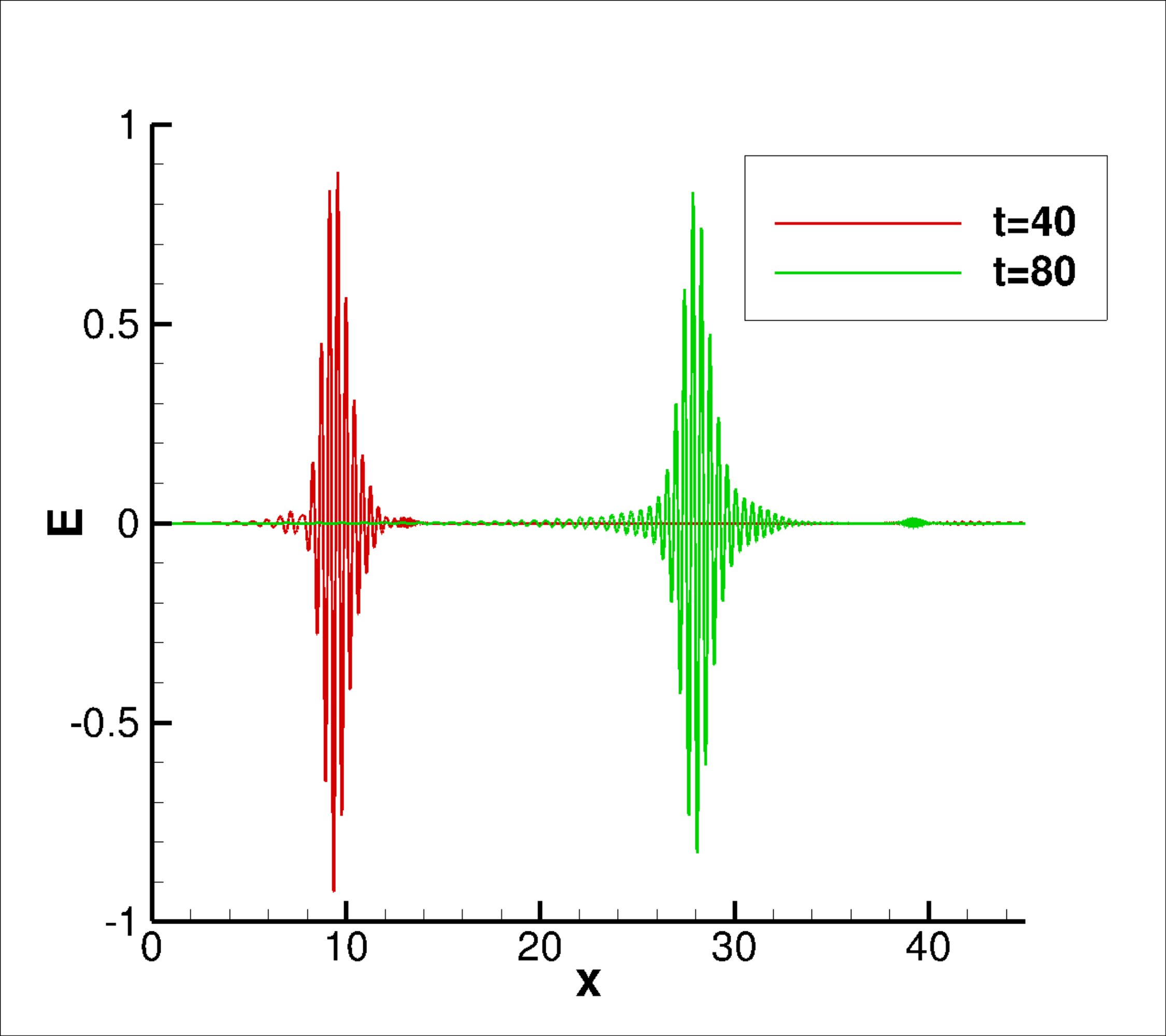}\label{Fig3.6}}
 	 \subfigure{
 		\includegraphics[width=0.3\textwidth]{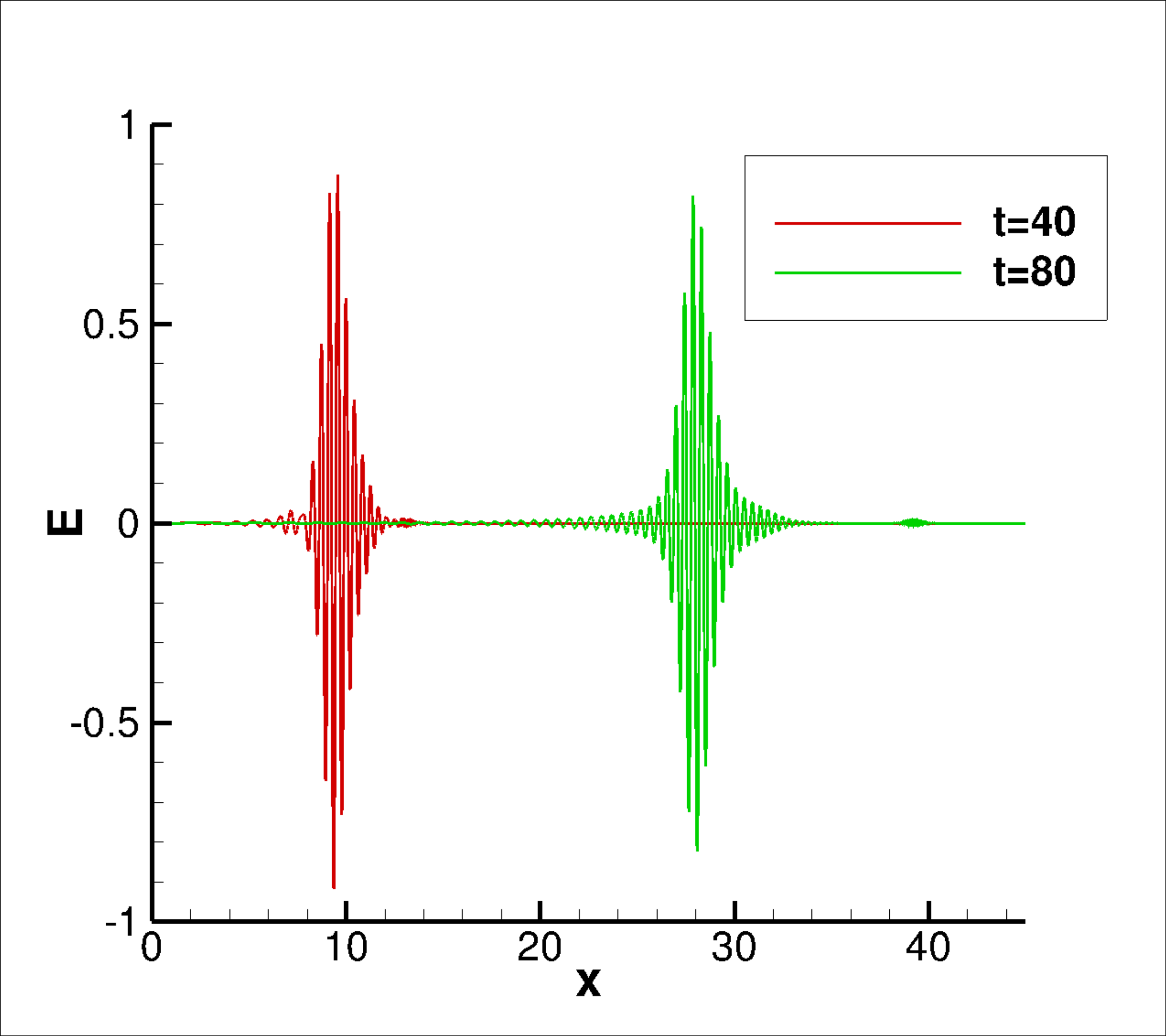}\label{Fig3.7}}
 	 \subfigure{
 	 	\includegraphics[width=0.3\textwidth]{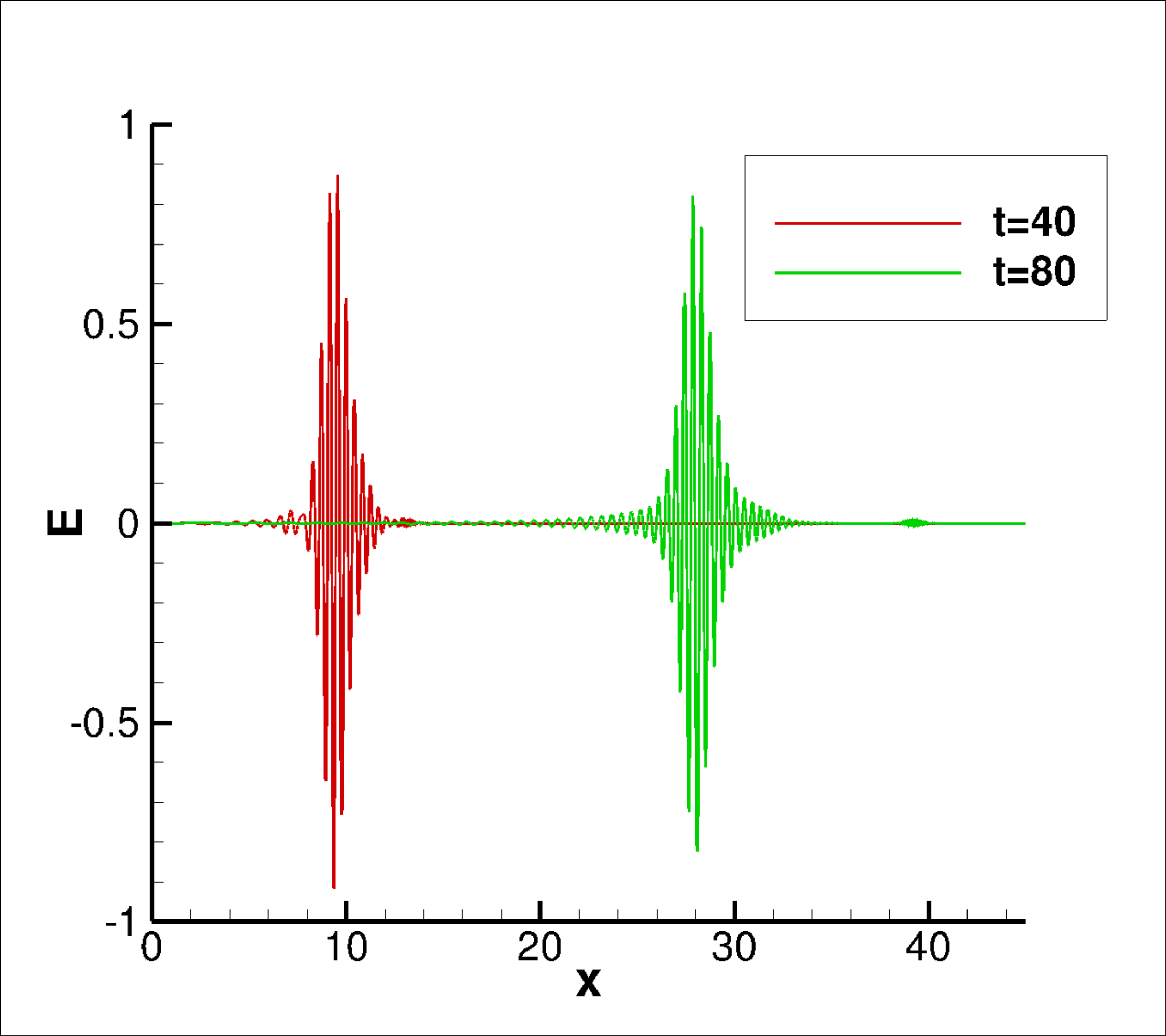}\label{Fig3.8}}
 	 \subfigure{
 	 	\includegraphics[width=0.3\textwidth]{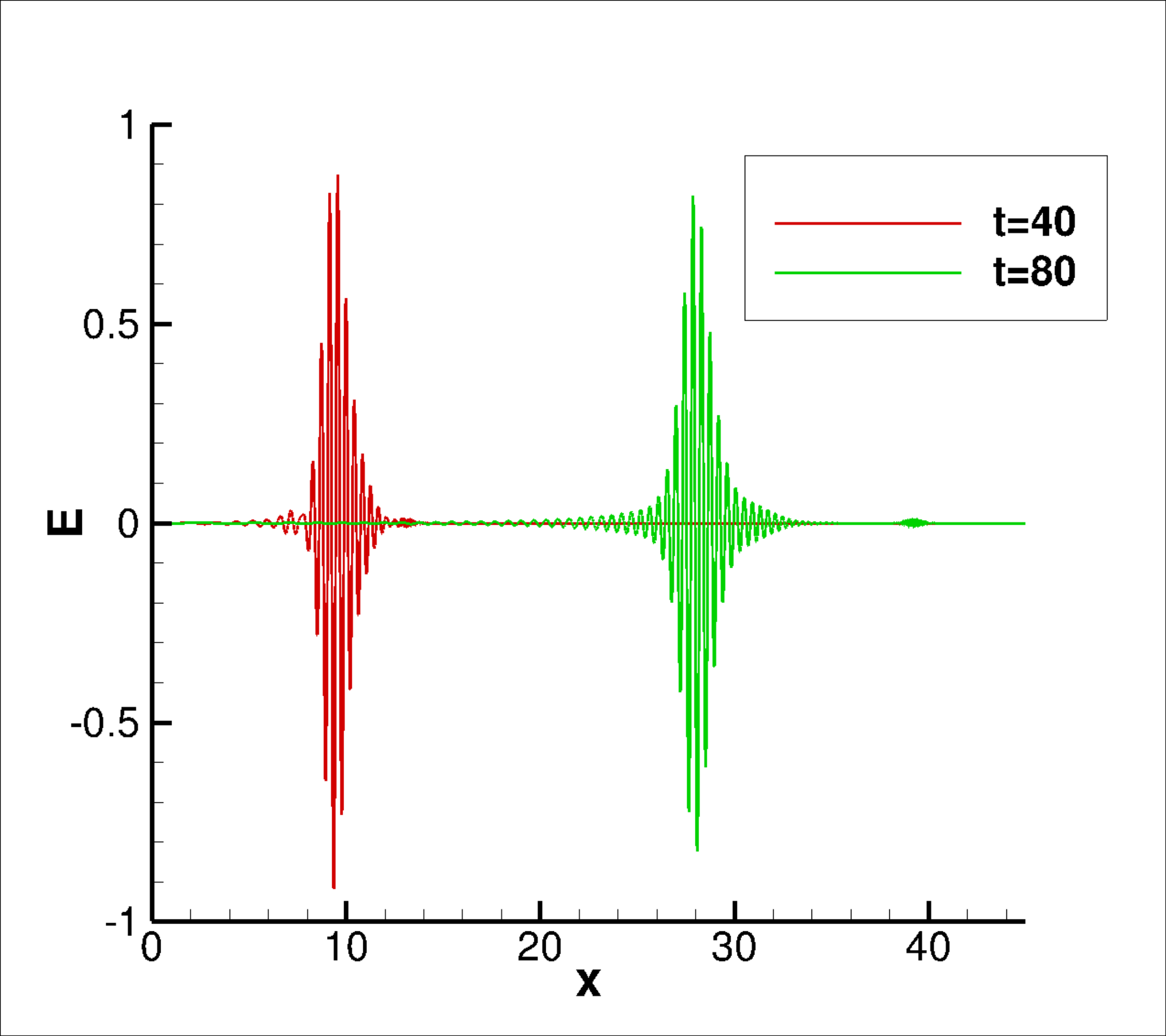}\label{Fig3.9}}
 	  \subfigure{
 	  	\includegraphics[width=0.3\textwidth]{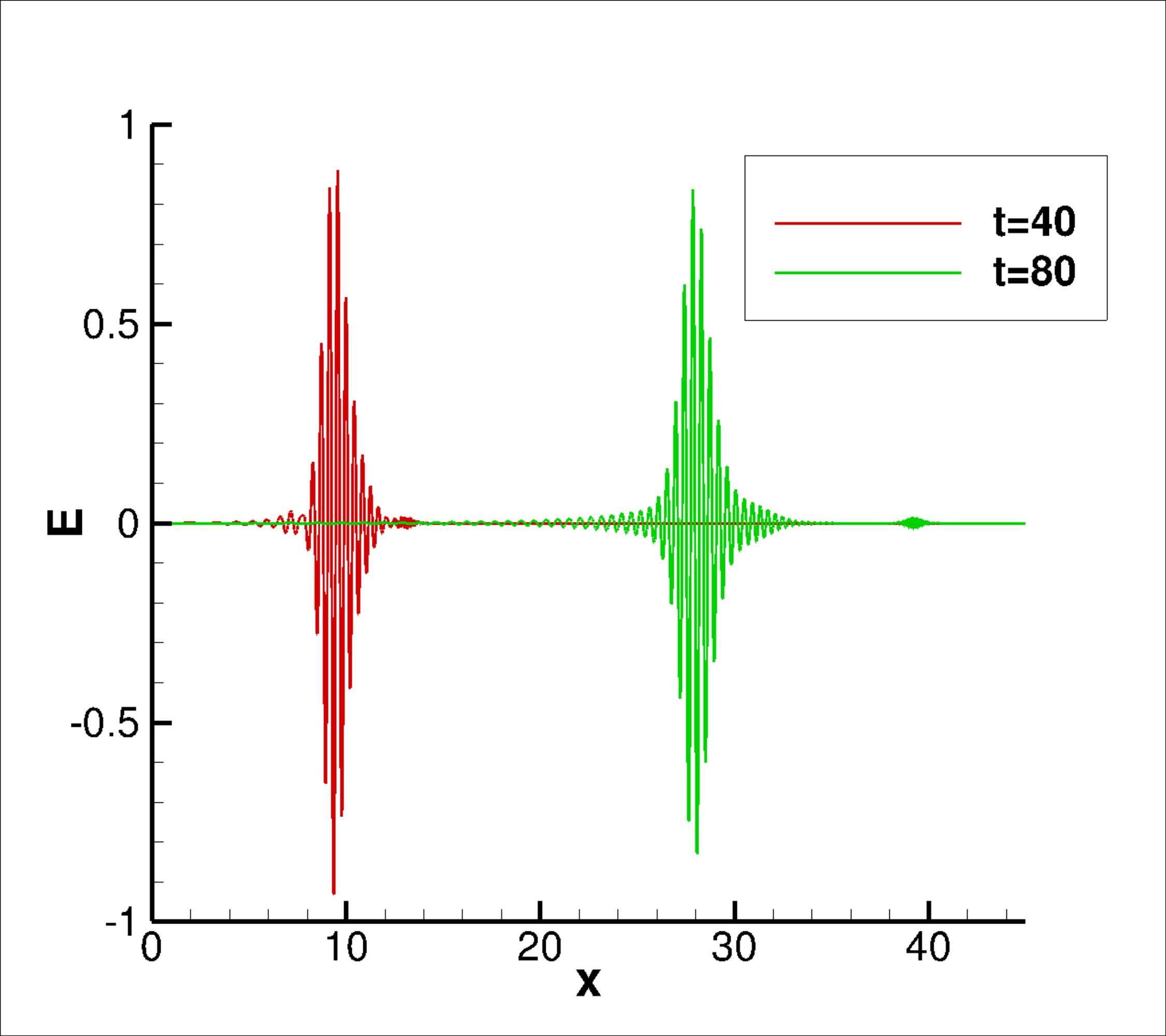}\label{Fig3.10}}
 	  \subfigure{
 	  	\includegraphics[width=0.3\textwidth]{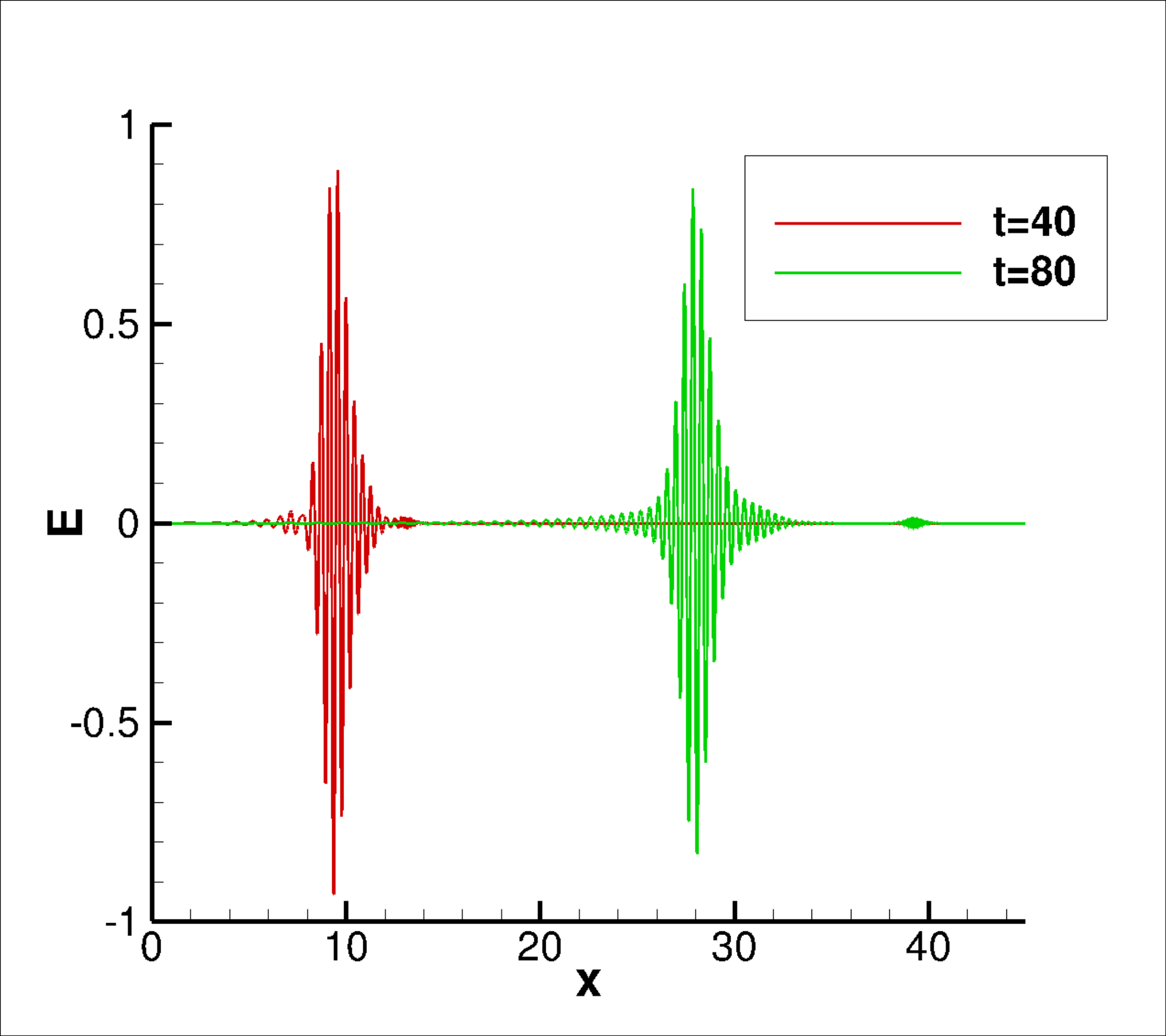}\label{Fig3.11}}
 	  \subfigure{
 	  	\includegraphics[width=0.3\textwidth]{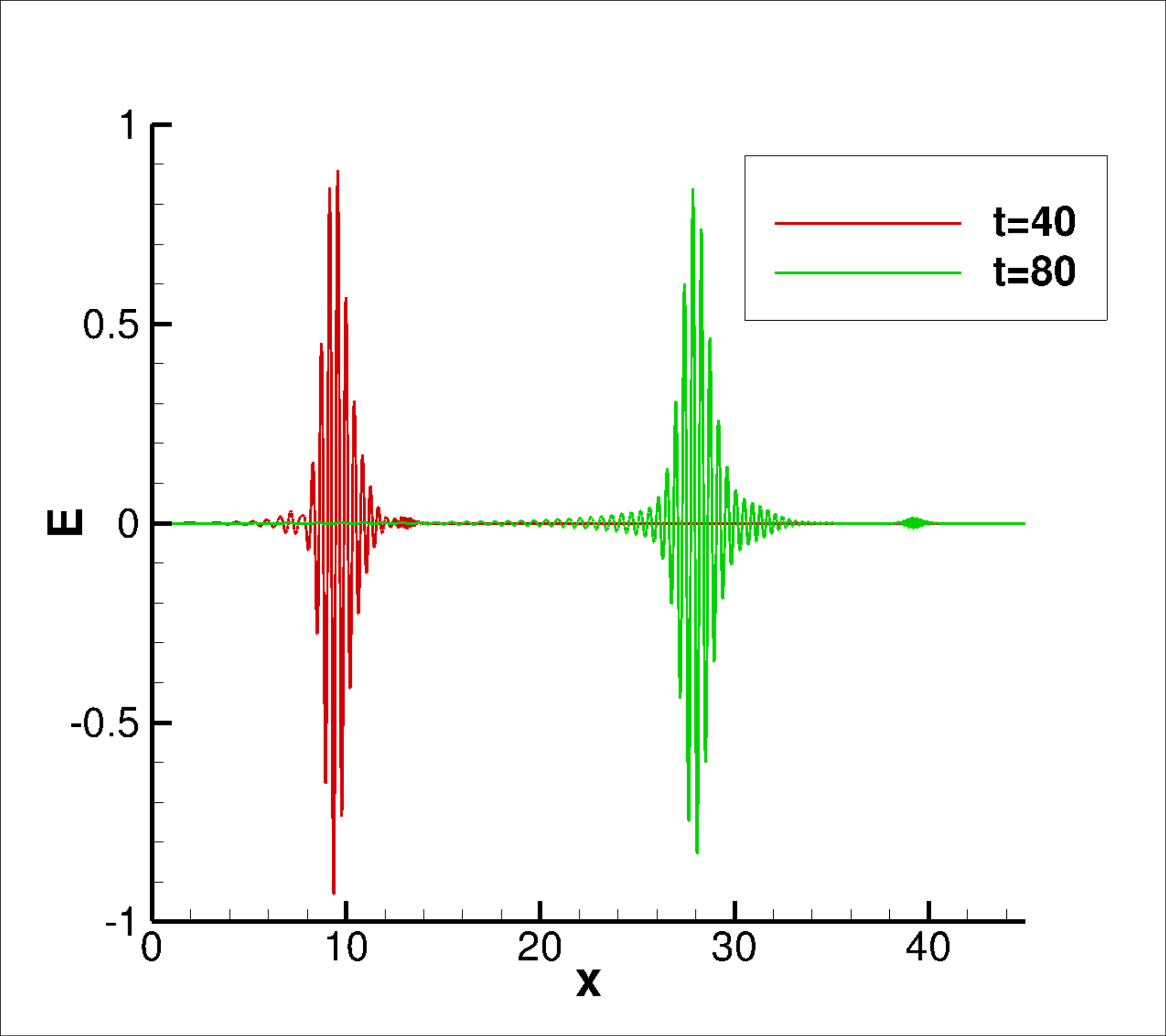}\label{Fig3.12}}
 	\caption{\em  Transient fundamental ($M=1$) temporal soliton propagation with the leap-frog scheme. $N=6400$ grid points. First column: $k=1$; second column: $k=2$; third column: $k=3$. First row: upwind flux;  second row: central flux; third row: alternating flux I; fourth row: alternating flux II.  }
 	\label{Fig3}
 \end{figure}

   \begin{figure}
 	\centering
 	\subfigure{
 		\includegraphics[width=0.3\textwidth]{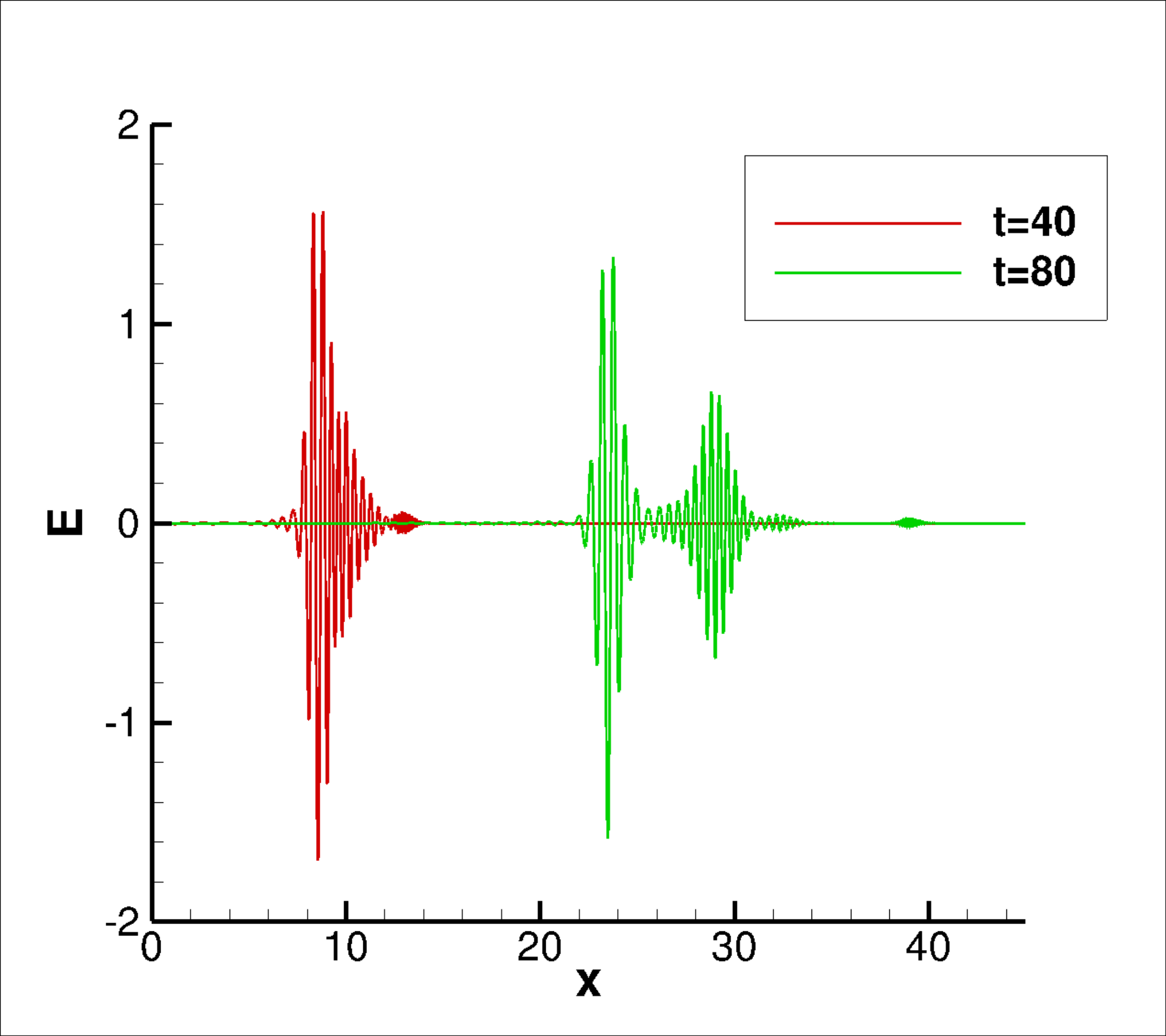}\label{Fig4.1}}
 	\subfigure{
 		\includegraphics[width=0.3\textwidth]{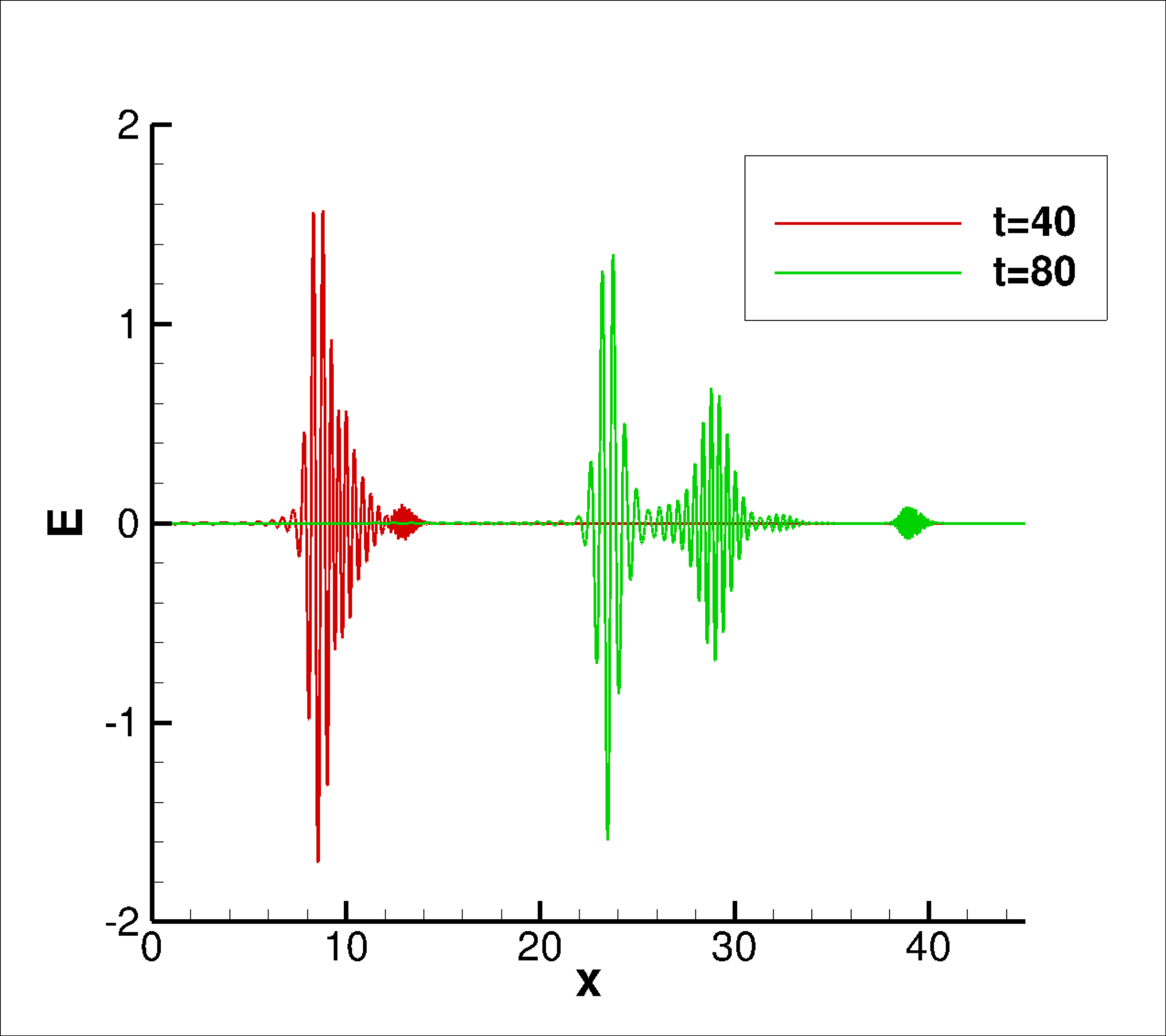}\label{Fig4.2}}
 	\subfigure{
 		\includegraphics[width=0.3\textwidth]{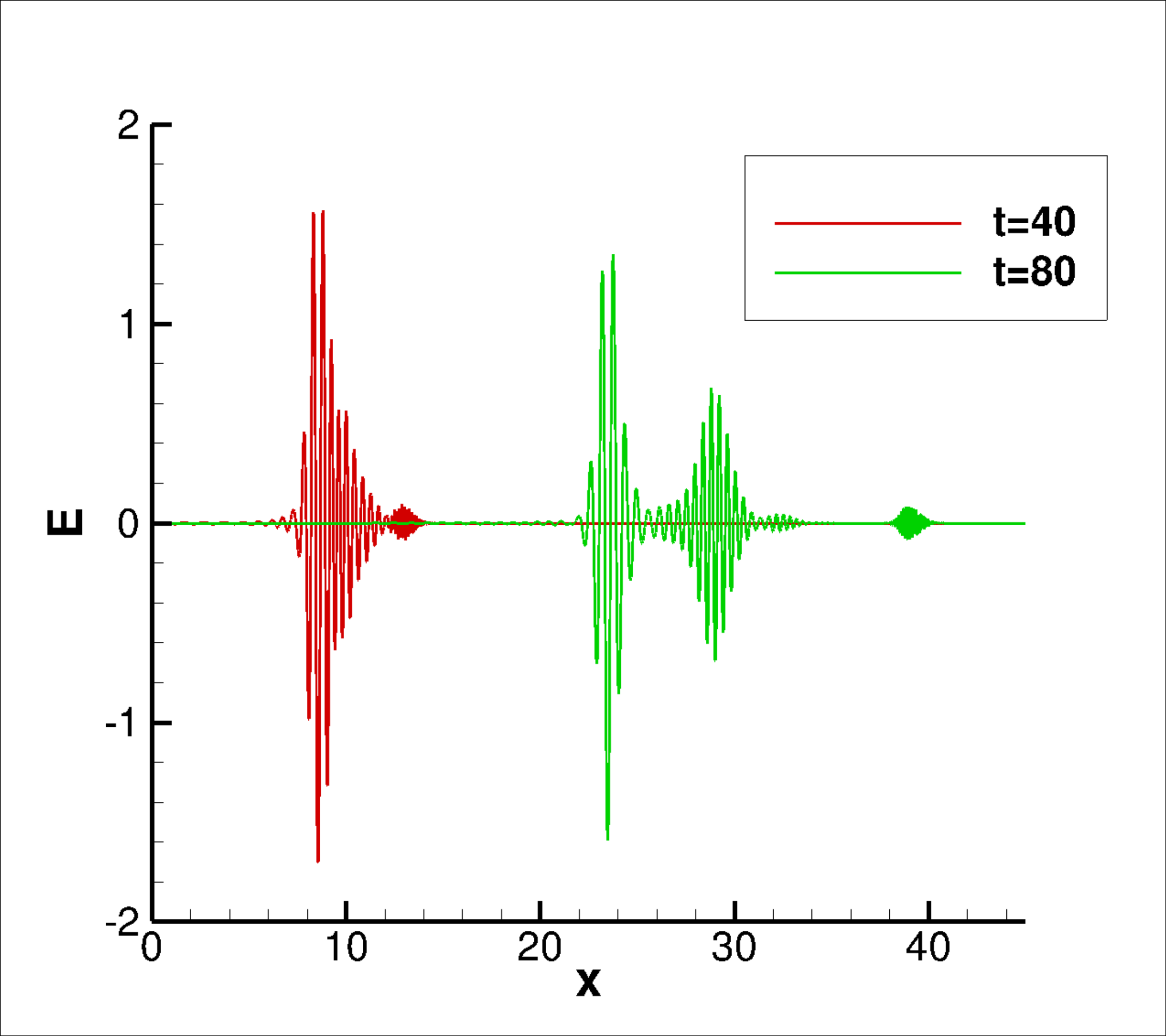}\label{Fig4.3}}\\
 	 \subfigure{
 		\includegraphics[width=0.3\textwidth]{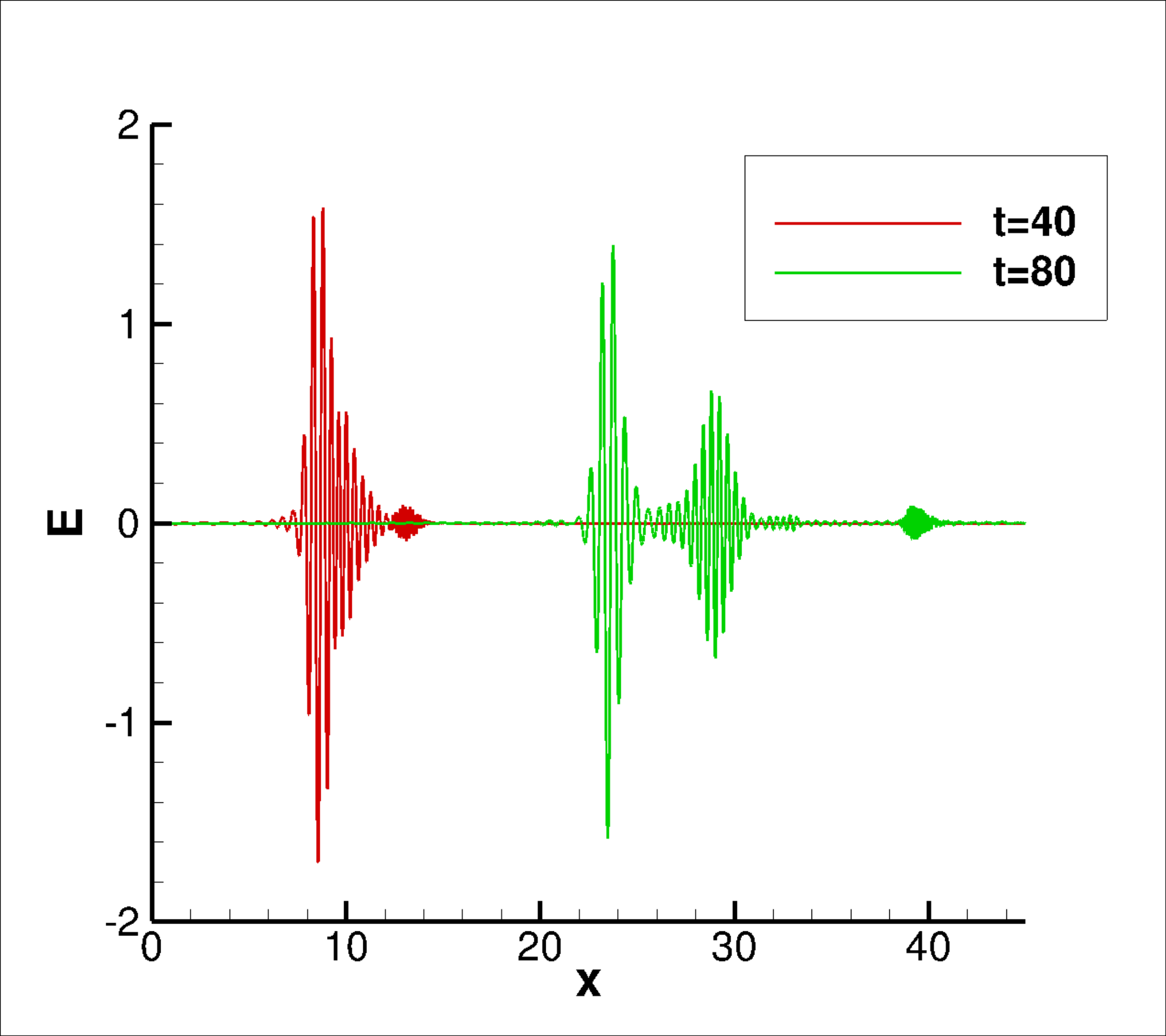}\label{Fig4.4}}
 	 \subfigure{
 	 	\includegraphics[width=0.3\textwidth]{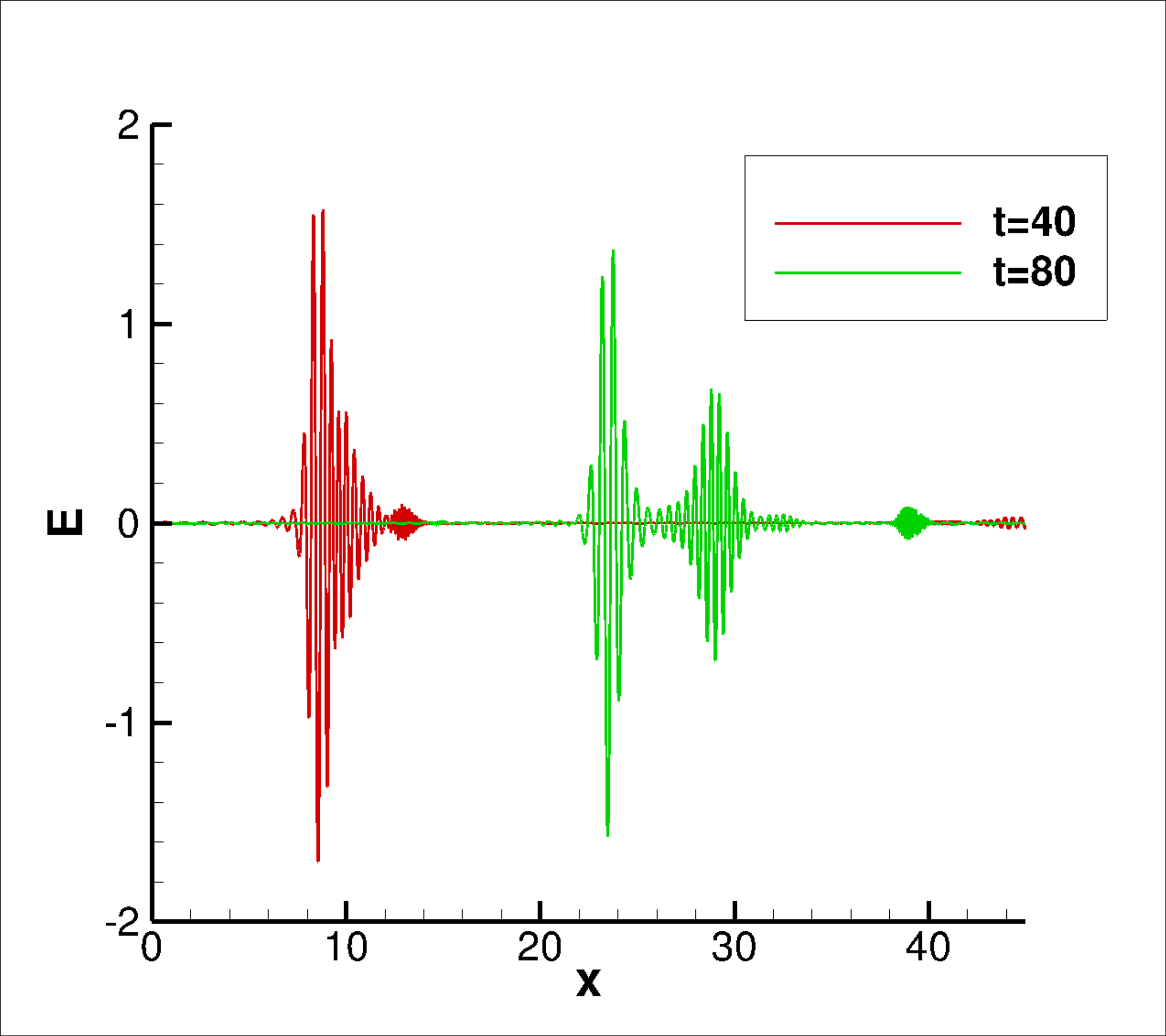}\label{Fig4.5}}
 	 \subfigure{
 	 	\includegraphics[width=0.3\textwidth]{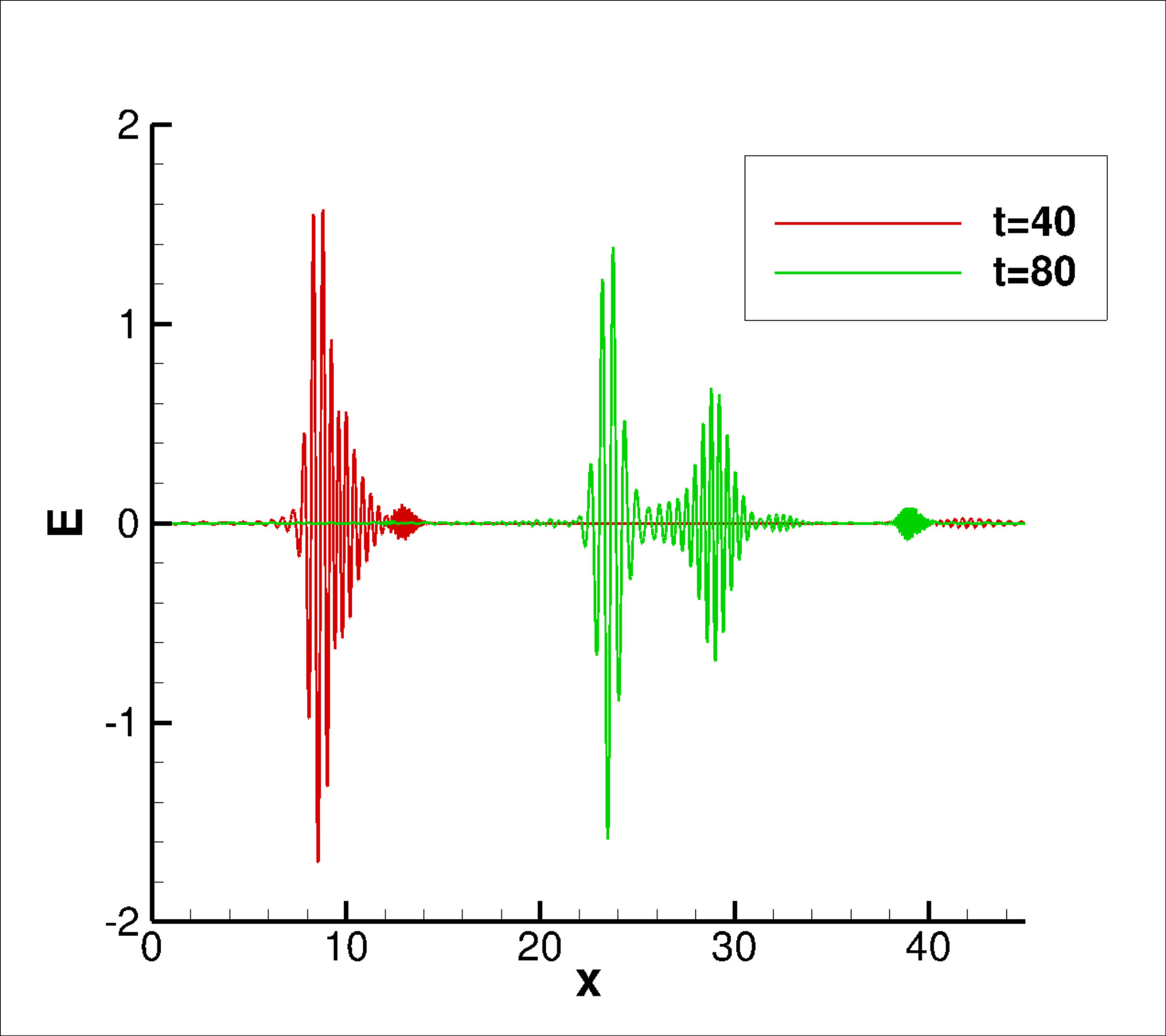}\label{Fig4.6}}\\
 	  \subfigure{
 	  	\includegraphics[width=0.3\textwidth]{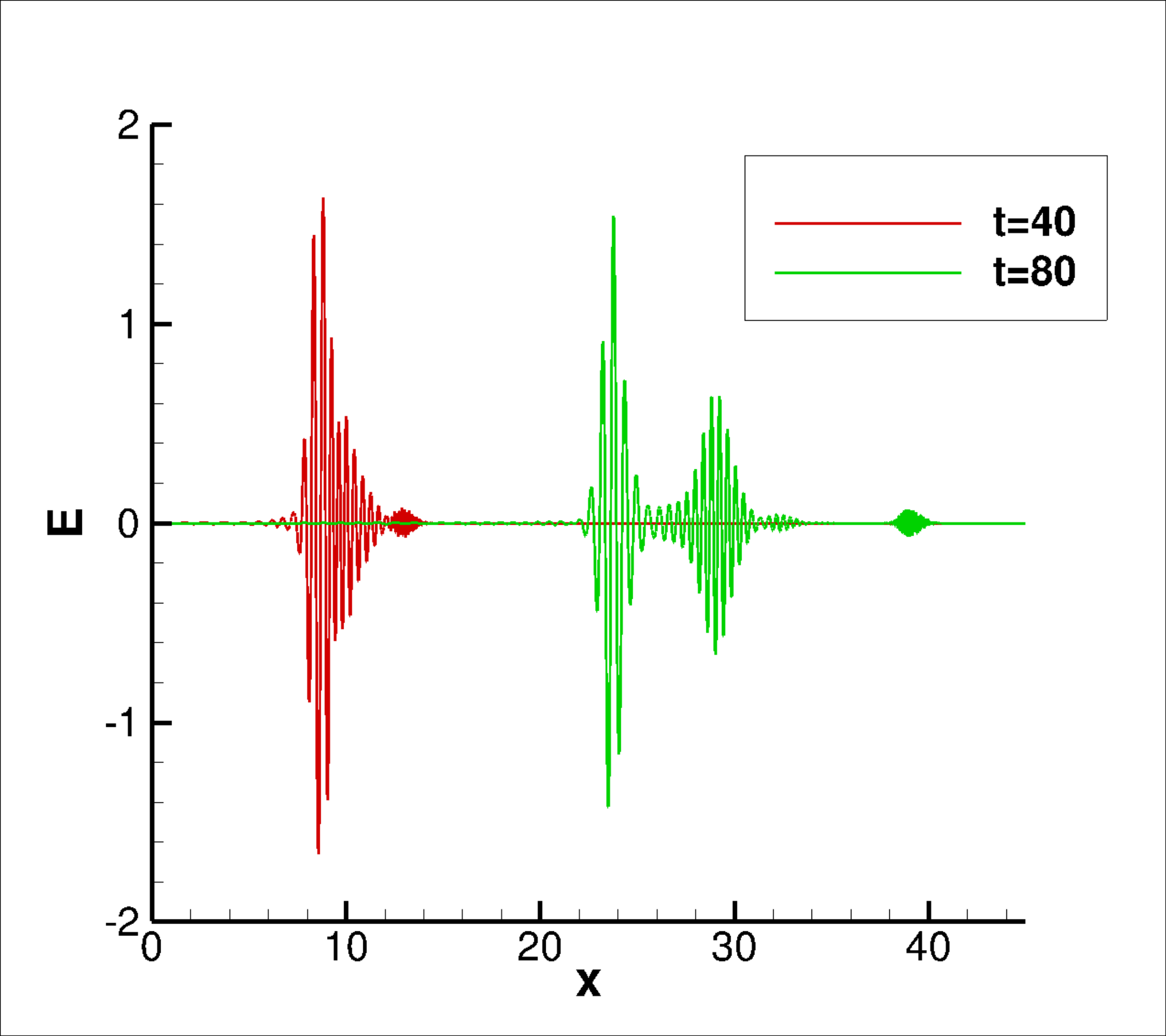}\label{Fig4.7}}
 	  \subfigure{
 	  	\includegraphics[width=0.3\textwidth]{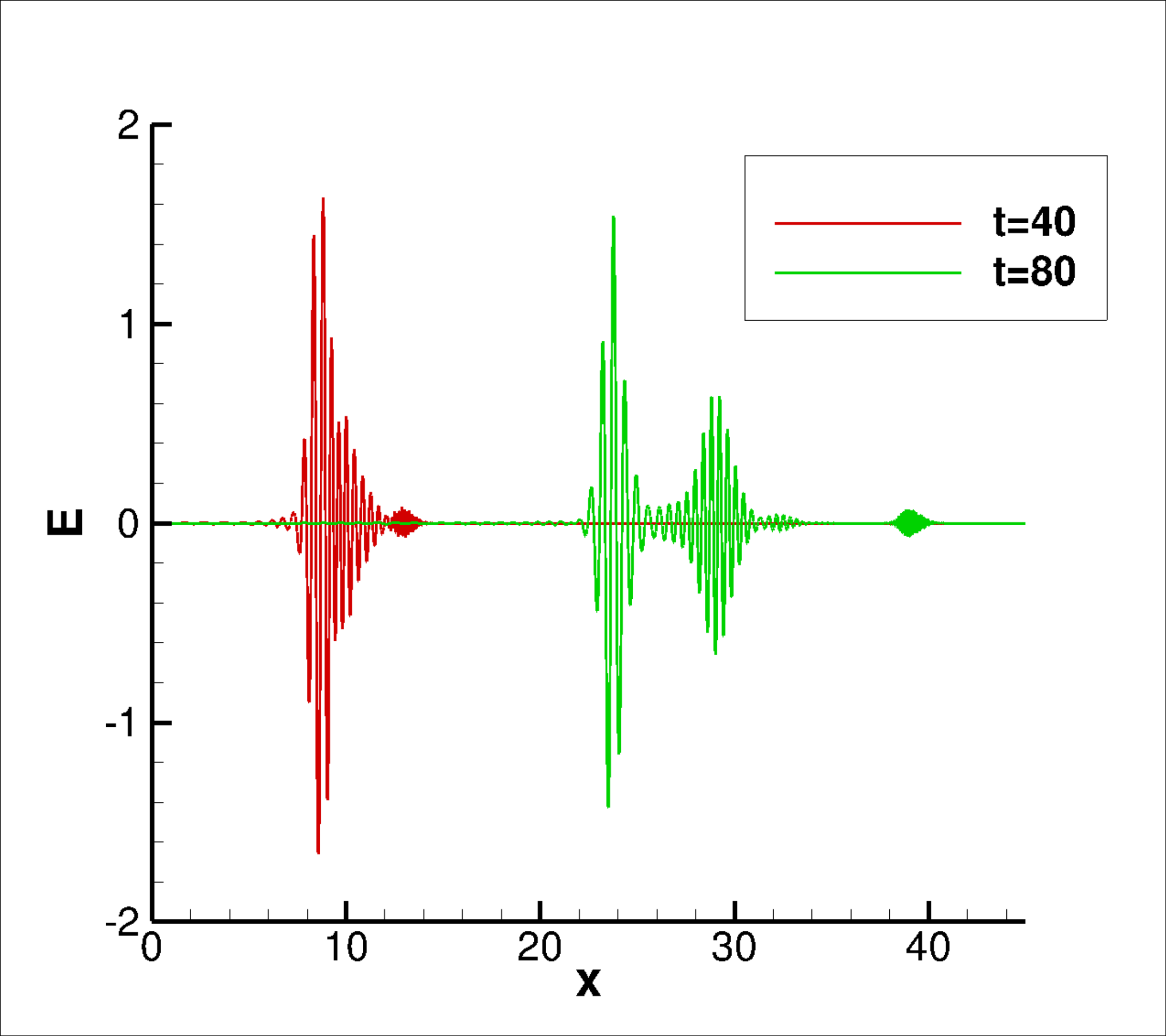}\label{Fig4.8}}
 	  \subfigure{
 	  	\includegraphics[width=0.3\textwidth]{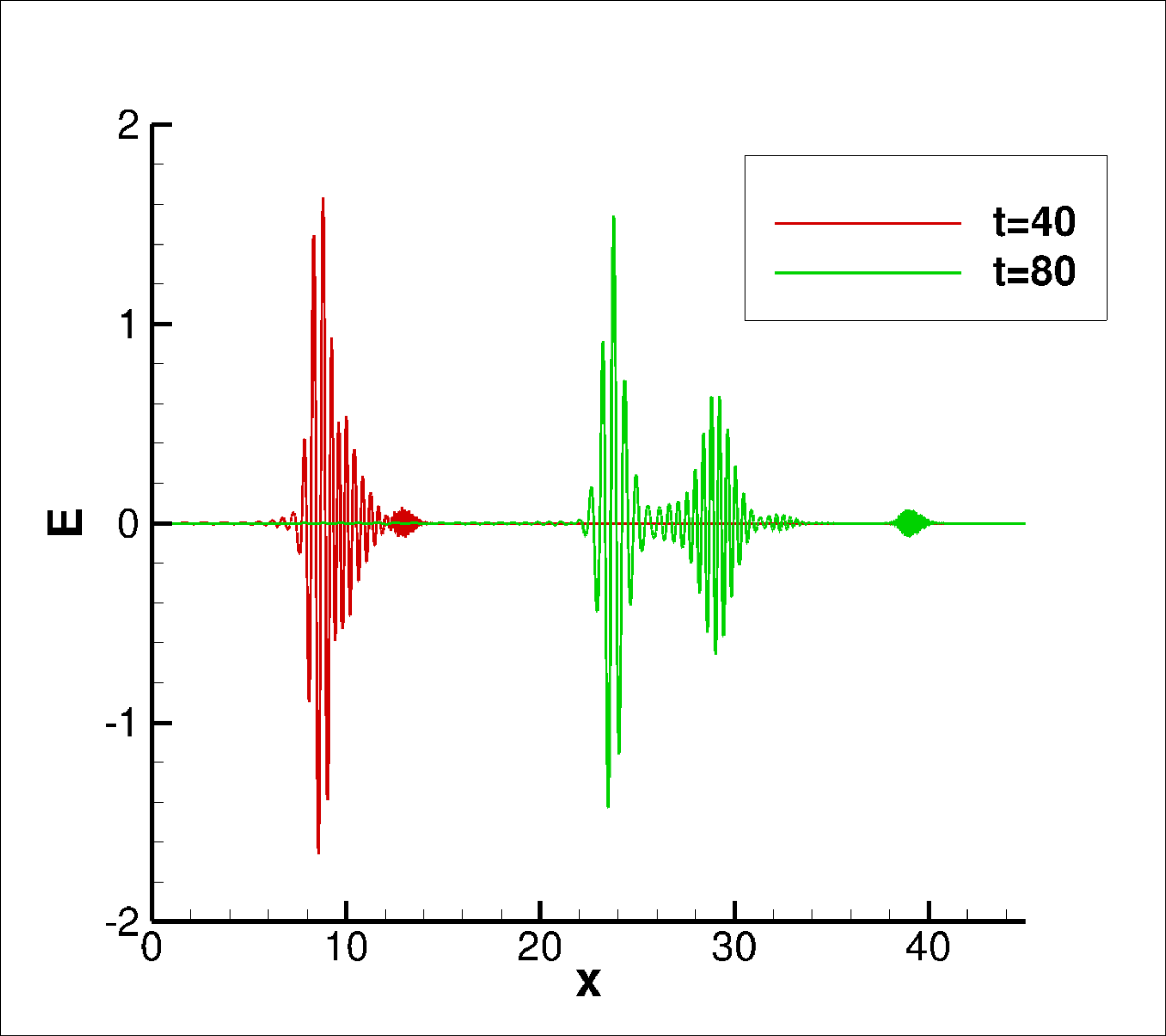}\label{Fig4.9}}\\
 	  \subfigure{
 	  	\includegraphics[width=0.3\textwidth]{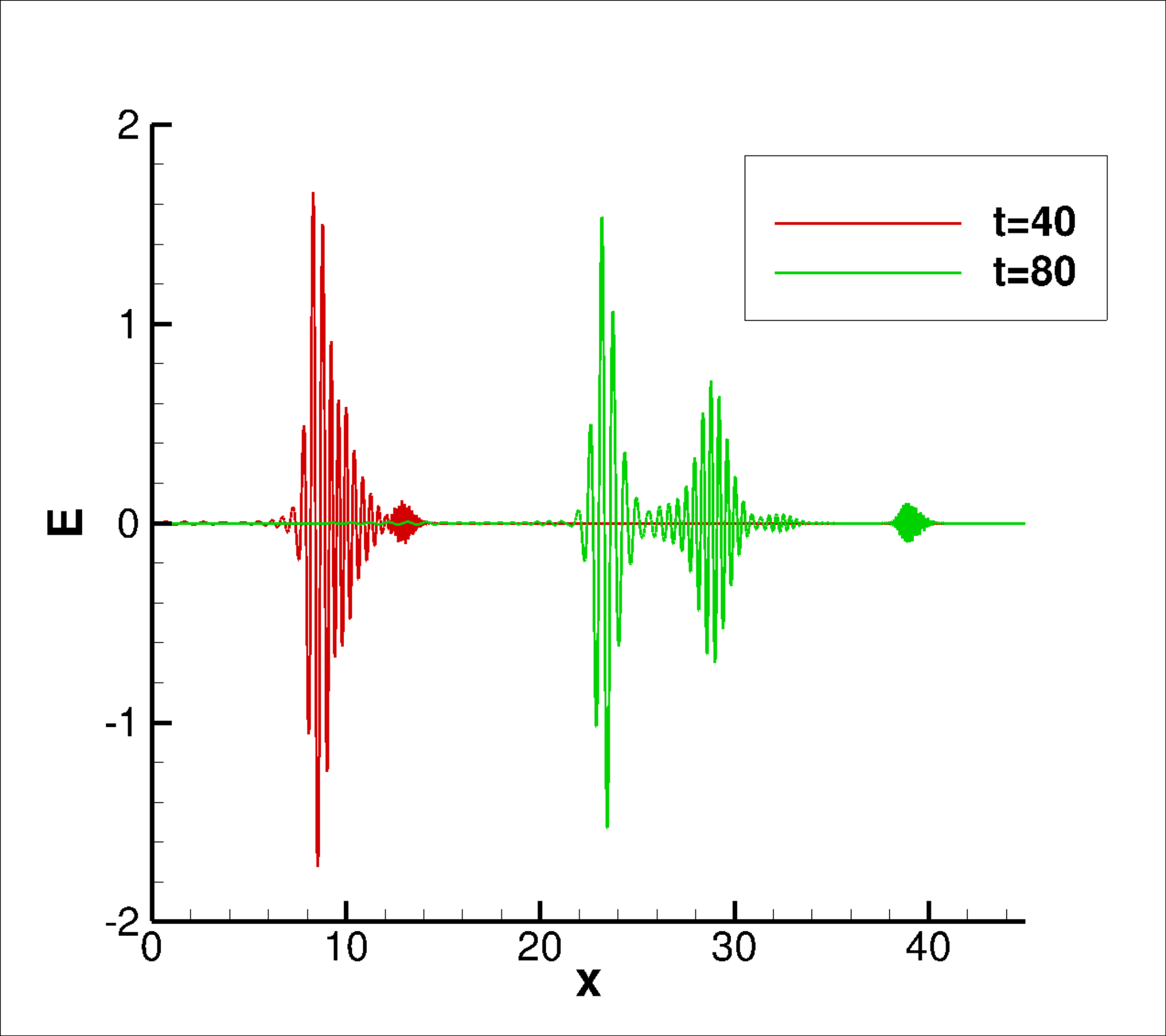}\label{Fig4.10}}
 	  \subfigure{
 	  	\includegraphics[width=0.3\textwidth]{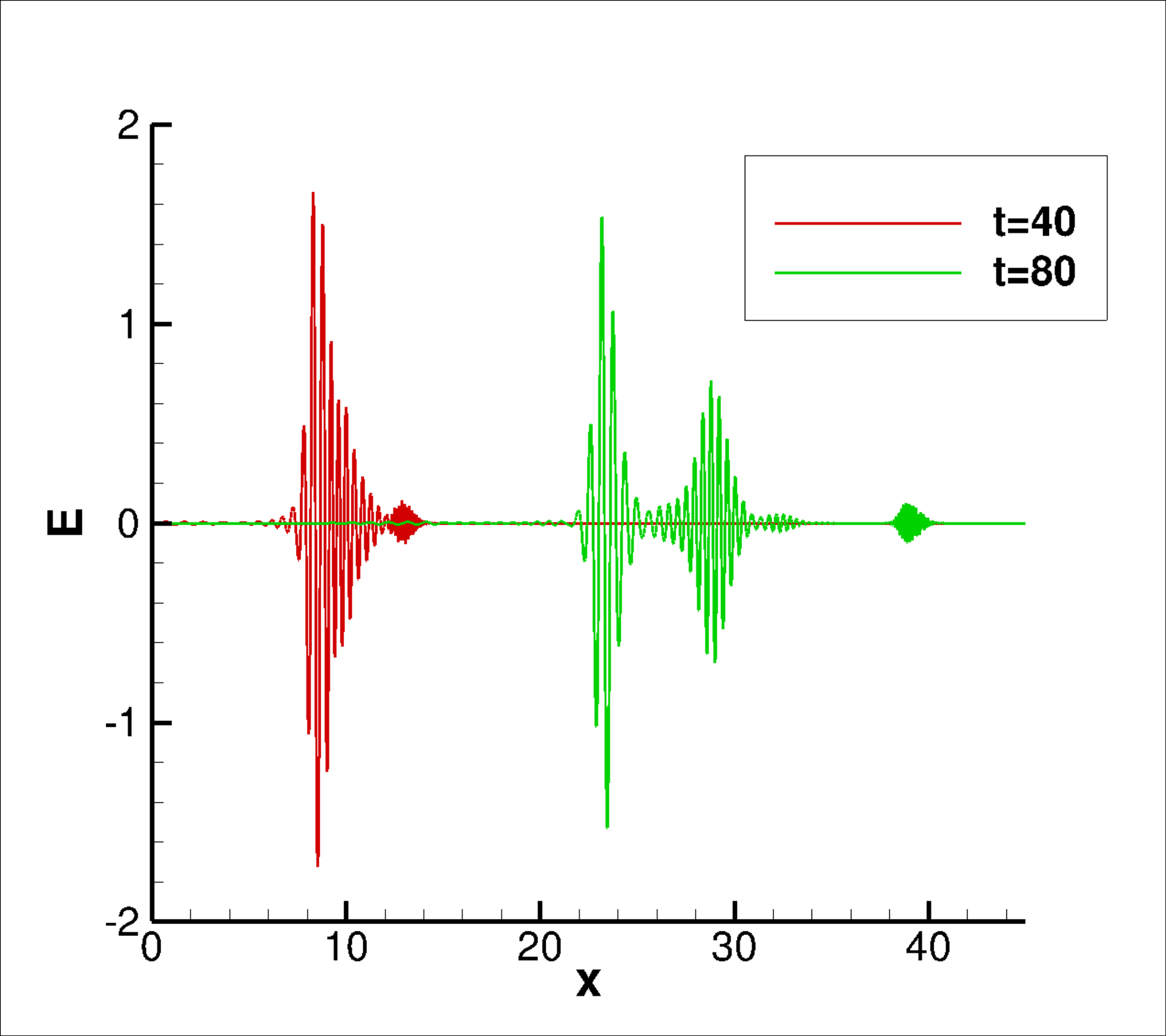}\label{Fig4.11}}
 	  \subfigure{
 	  	\includegraphics[width=0.3\textwidth]{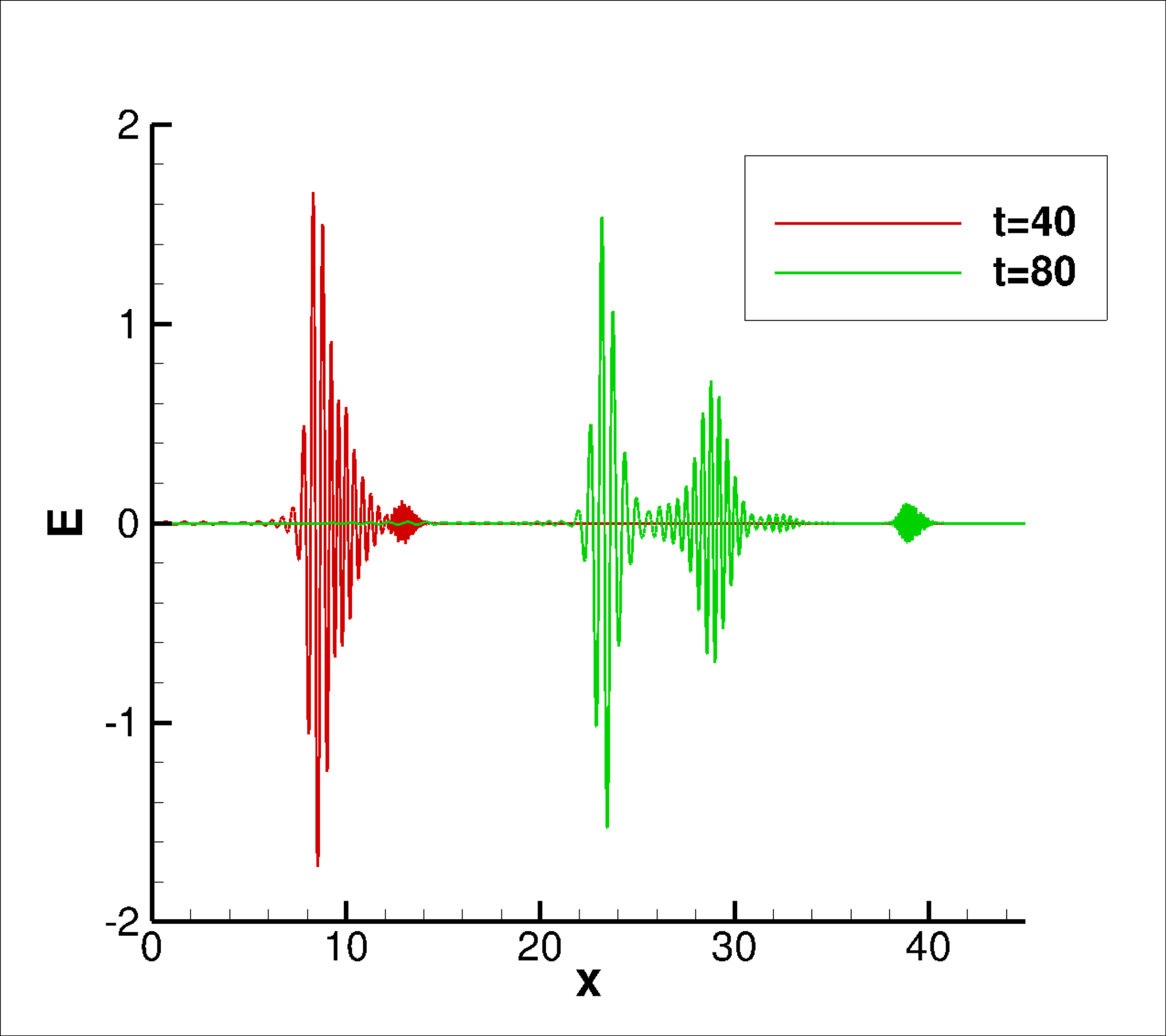}\label{Fig4.12}}
 	\caption{\em  Transient second-order ($M=2$) temporal soliton propagation with the leap-frog scheme. $N=6400$ grid points. First column: $k=1$; second column: $k=2$; third column: $k=3$. First row: upwind flux;  second row: central flux; third row: alternating flux I; fourth row: alternating flux II. }
 	\label{Fig4}
 \end{figure}

   \begin{figure}
 	\centering
 	\subfigure{
 		\includegraphics[width=0.3\textwidth]{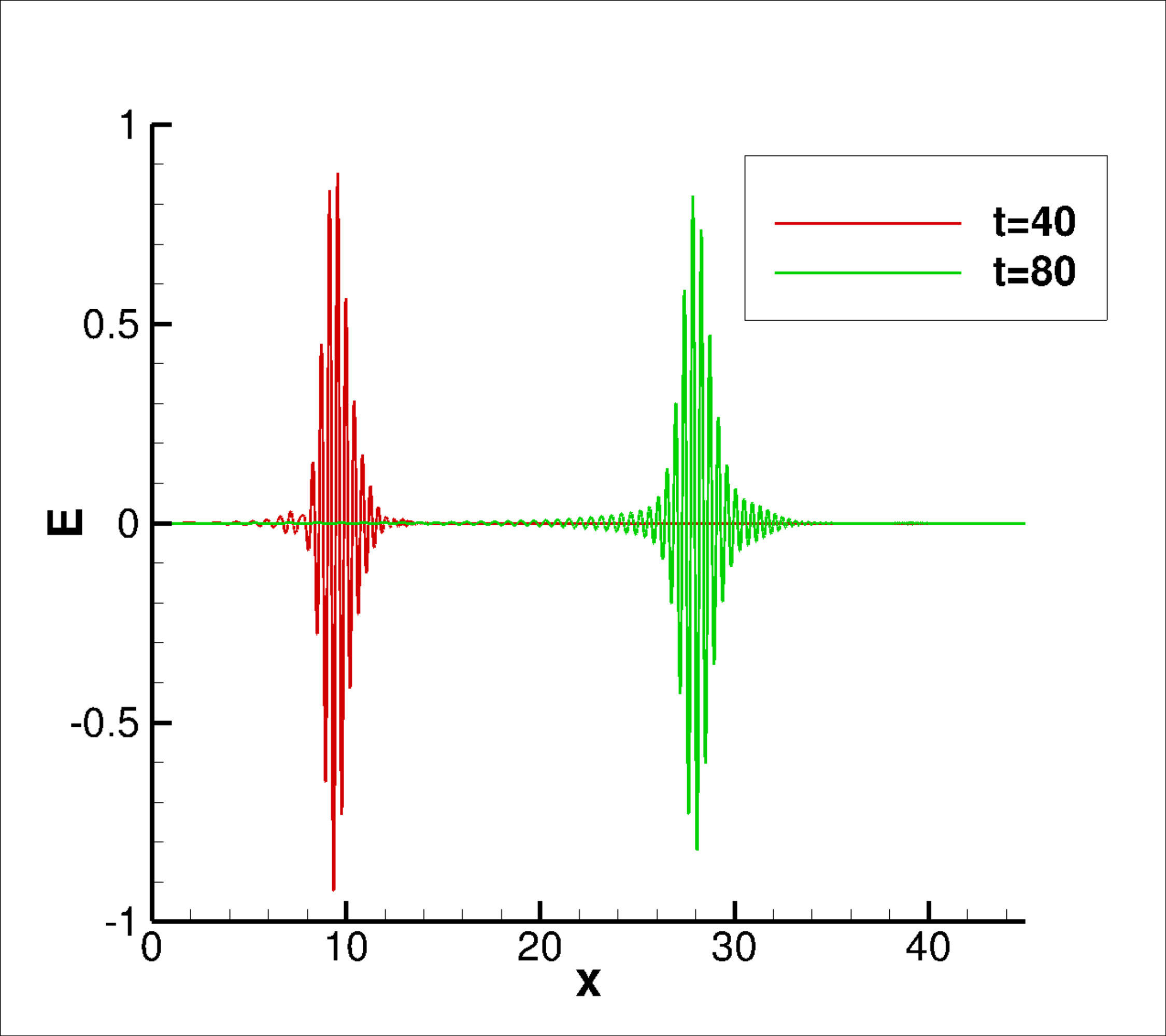}\label{Fig5.1}}
 	\subfigure{
 		\includegraphics[width=0.3\textwidth]{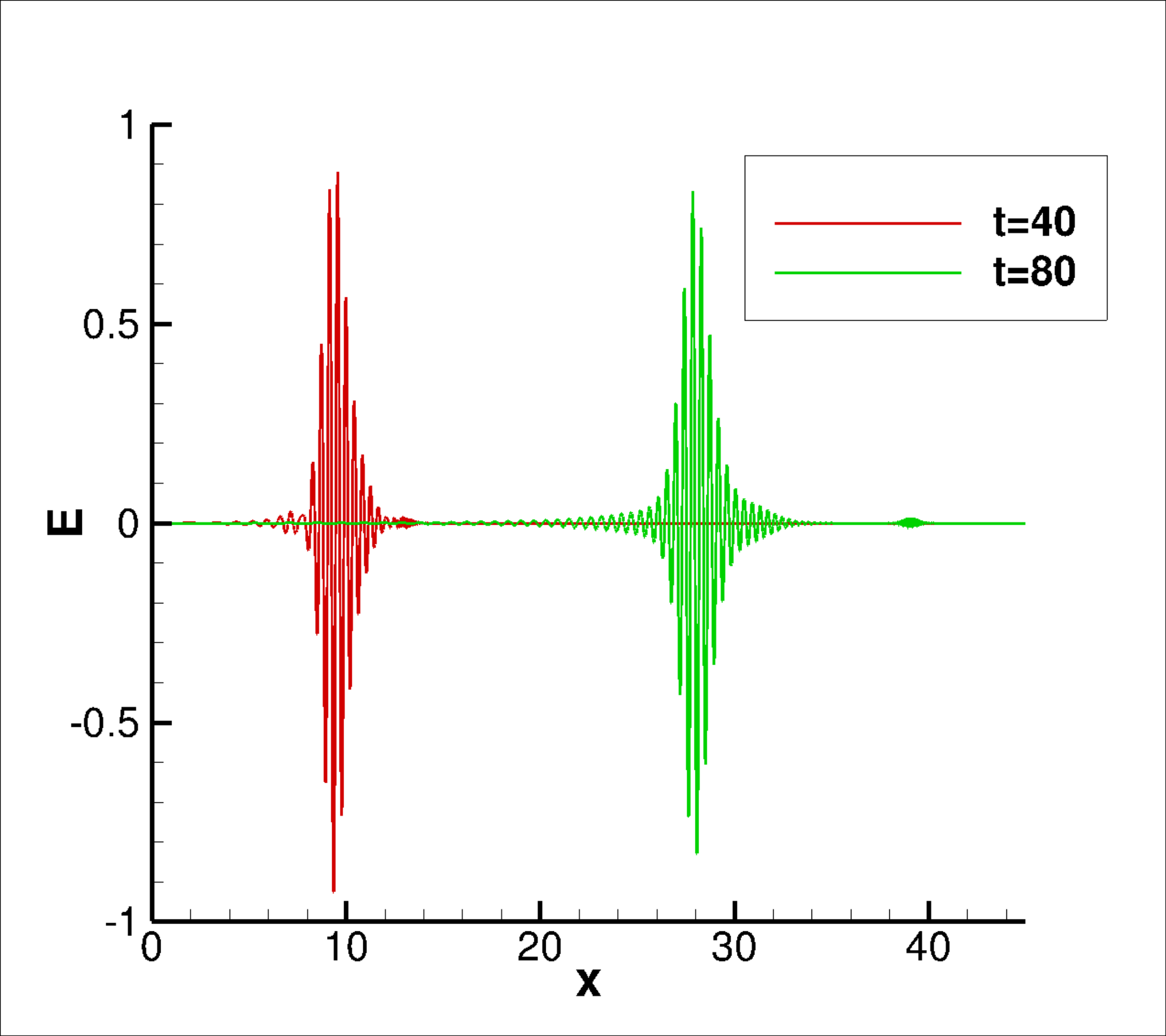}\label{Fig5.2}}
 	\subfigure{
 		\includegraphics[width=0.3\textwidth]{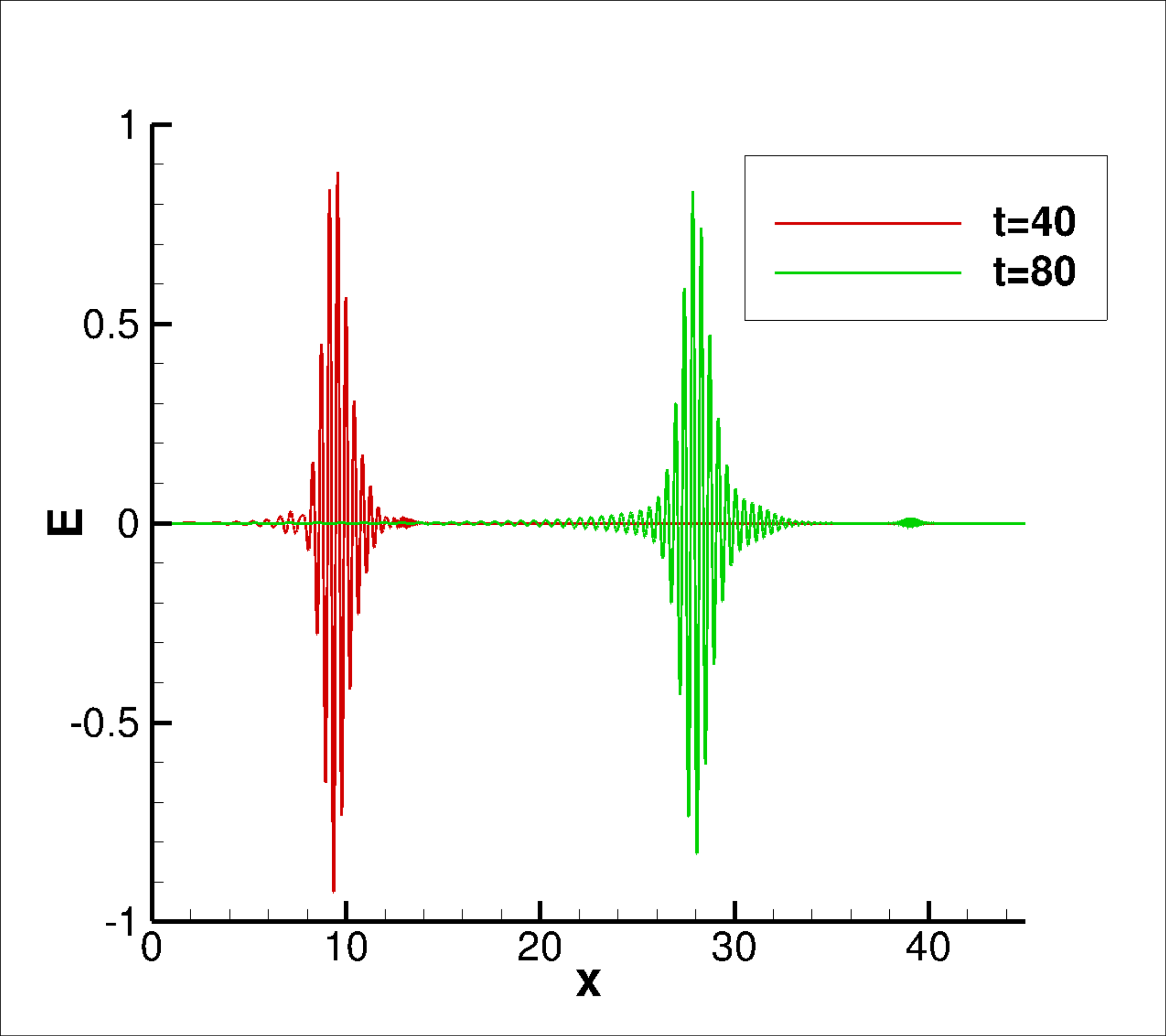}\label{Fig5.3}}
 	 \subfigure{
 		\includegraphics[width=0.3\textwidth]{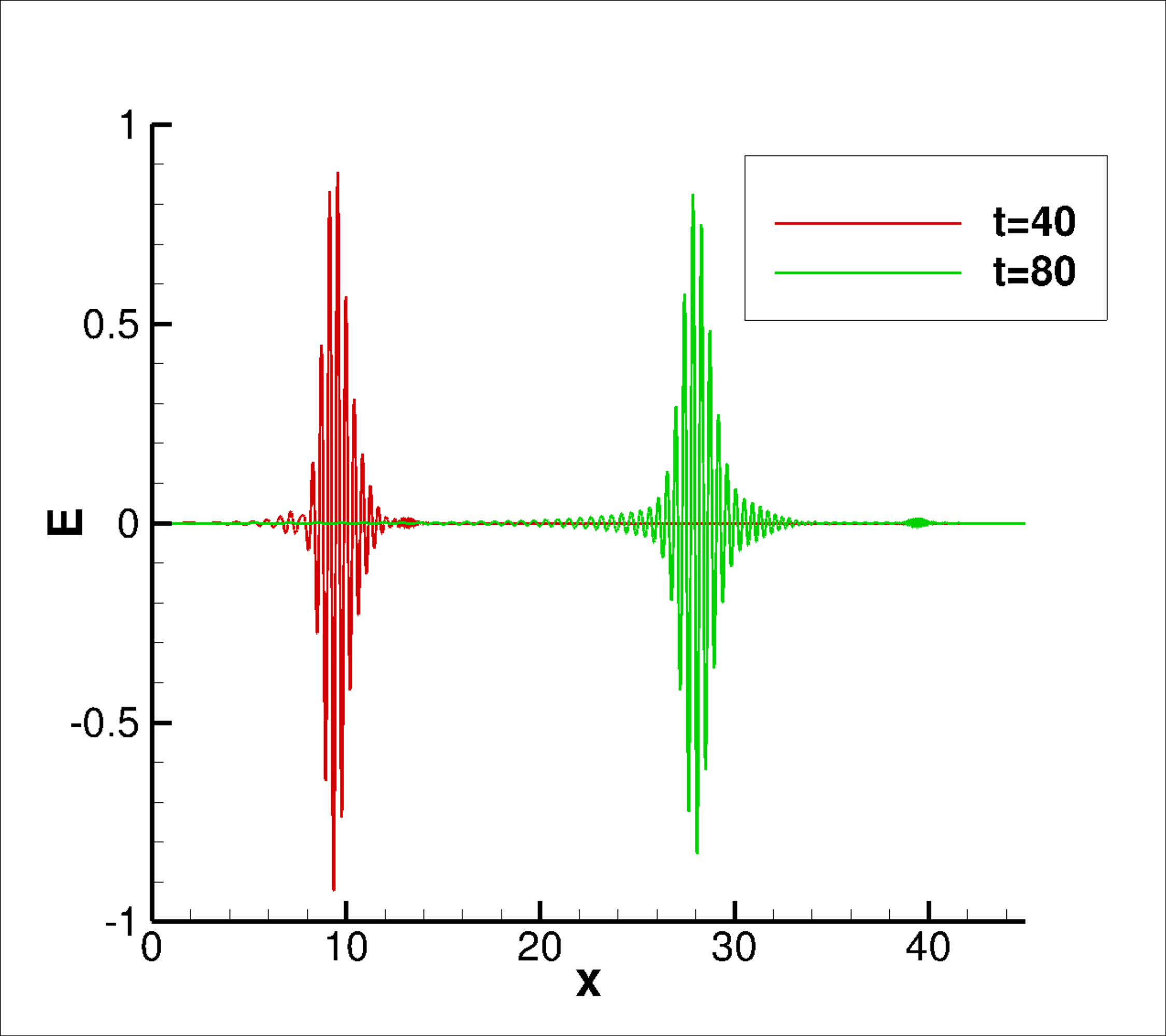}\label{Fig5.4}}
 	 \subfigure{
 	 	\includegraphics[width=0.3\textwidth]{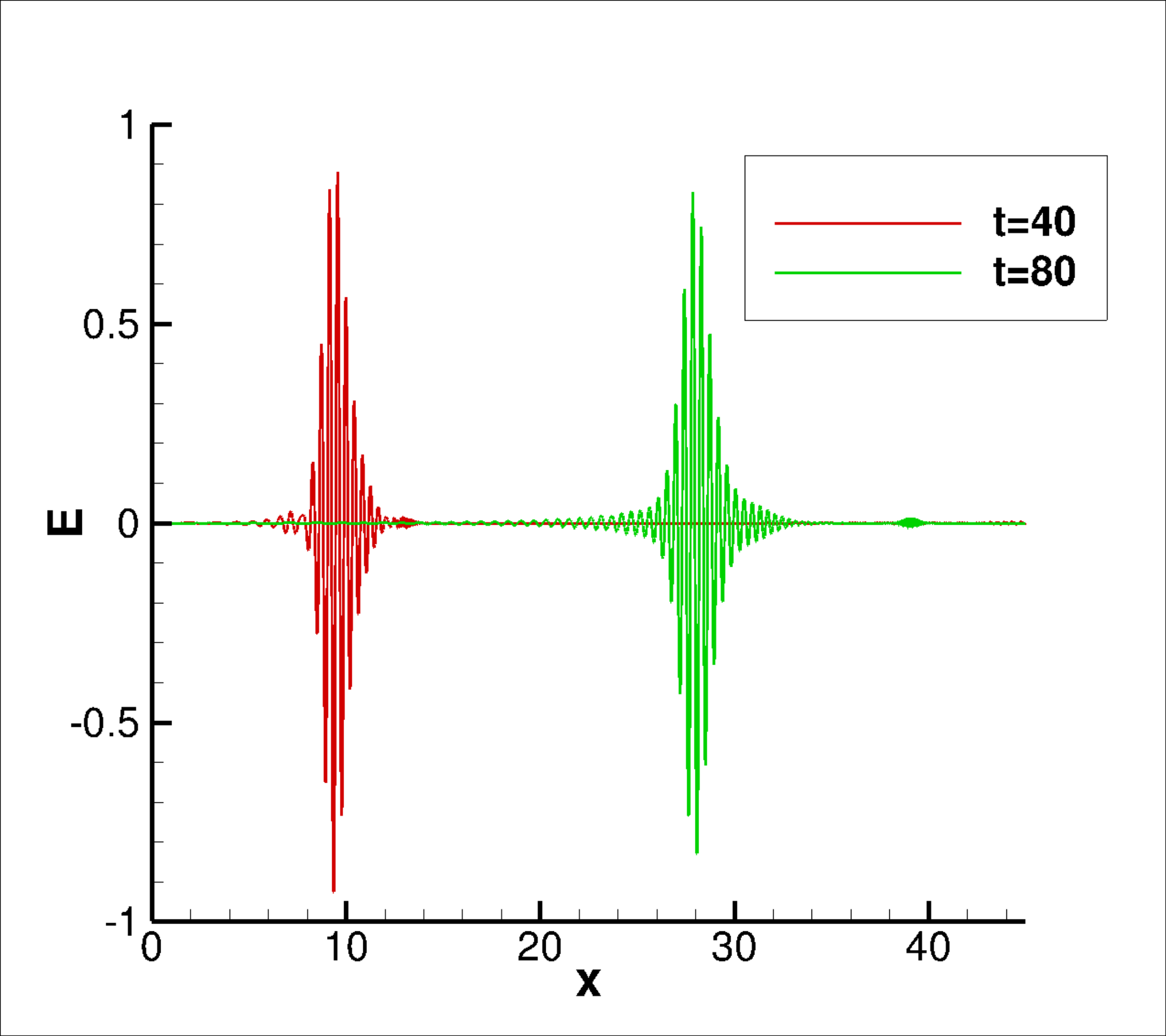}\label{Fig5.5}}
 	 \subfigure{
 	 	\includegraphics[width=0.3\textwidth]{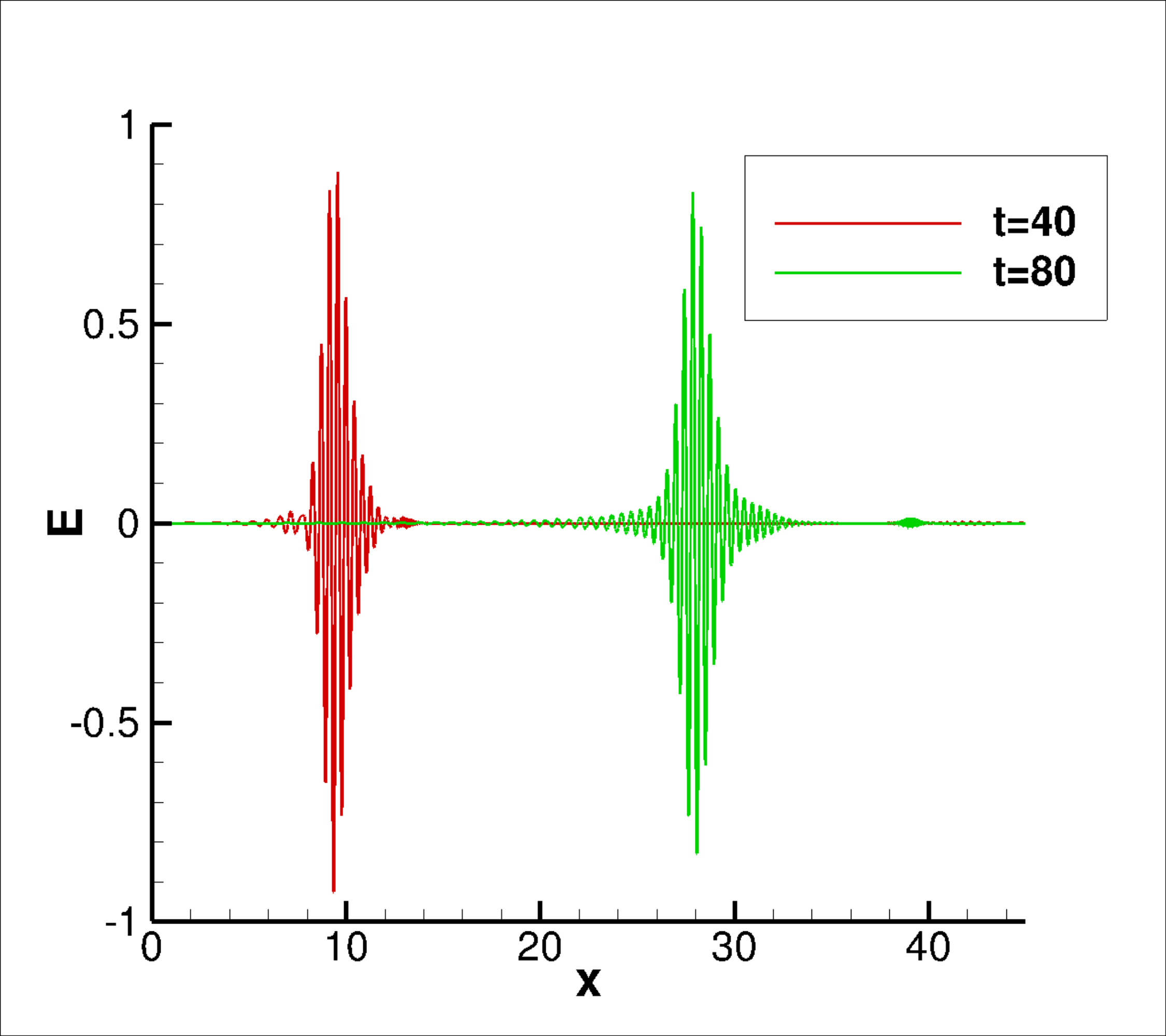}\label{Fig5.6}}
 	  \subfigure{
 	  	\includegraphics[width=0.3\textwidth]{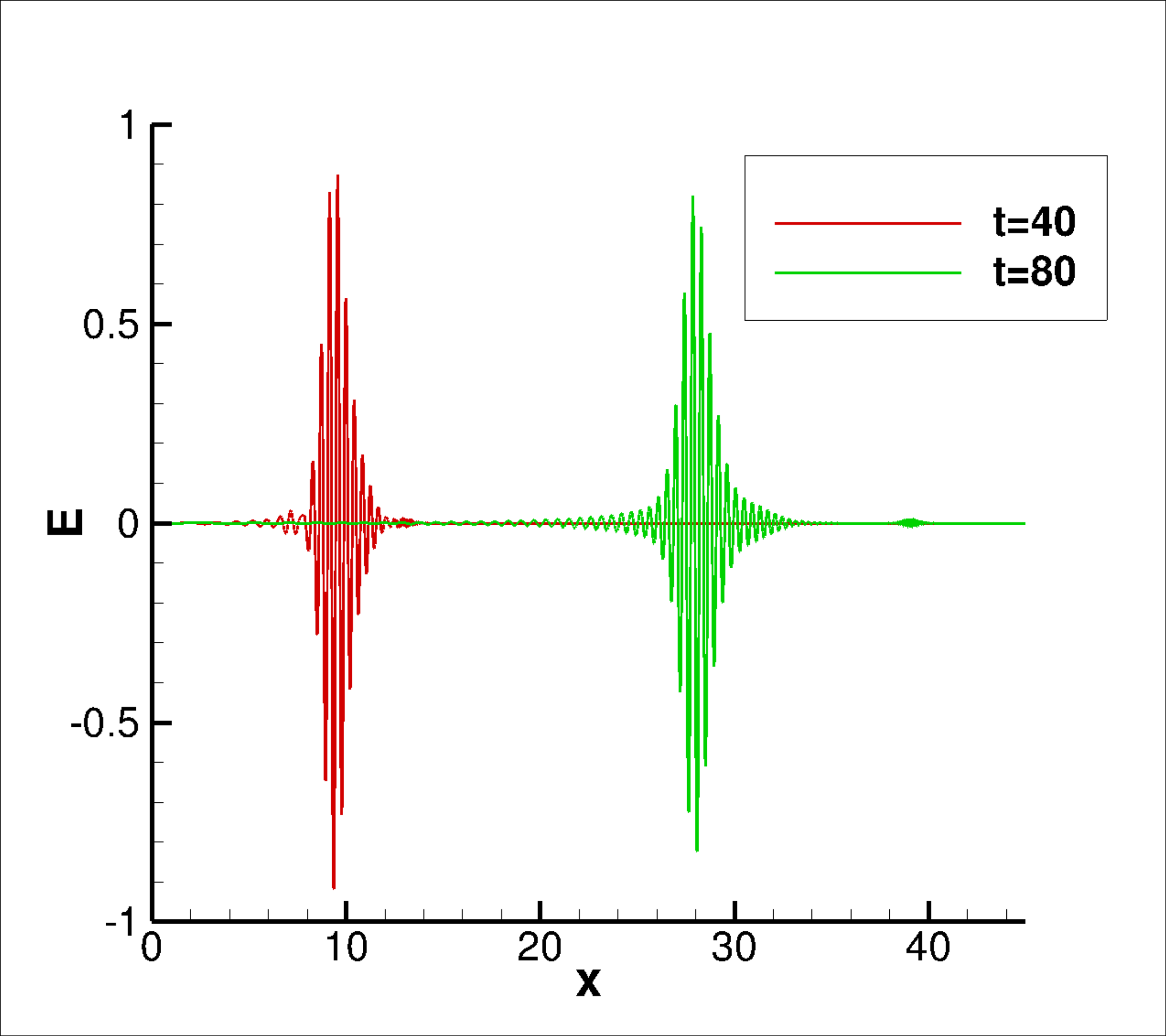}\label{Fig5.7}}
 	  \subfigure{
 	  	\includegraphics[width=0.3\textwidth]{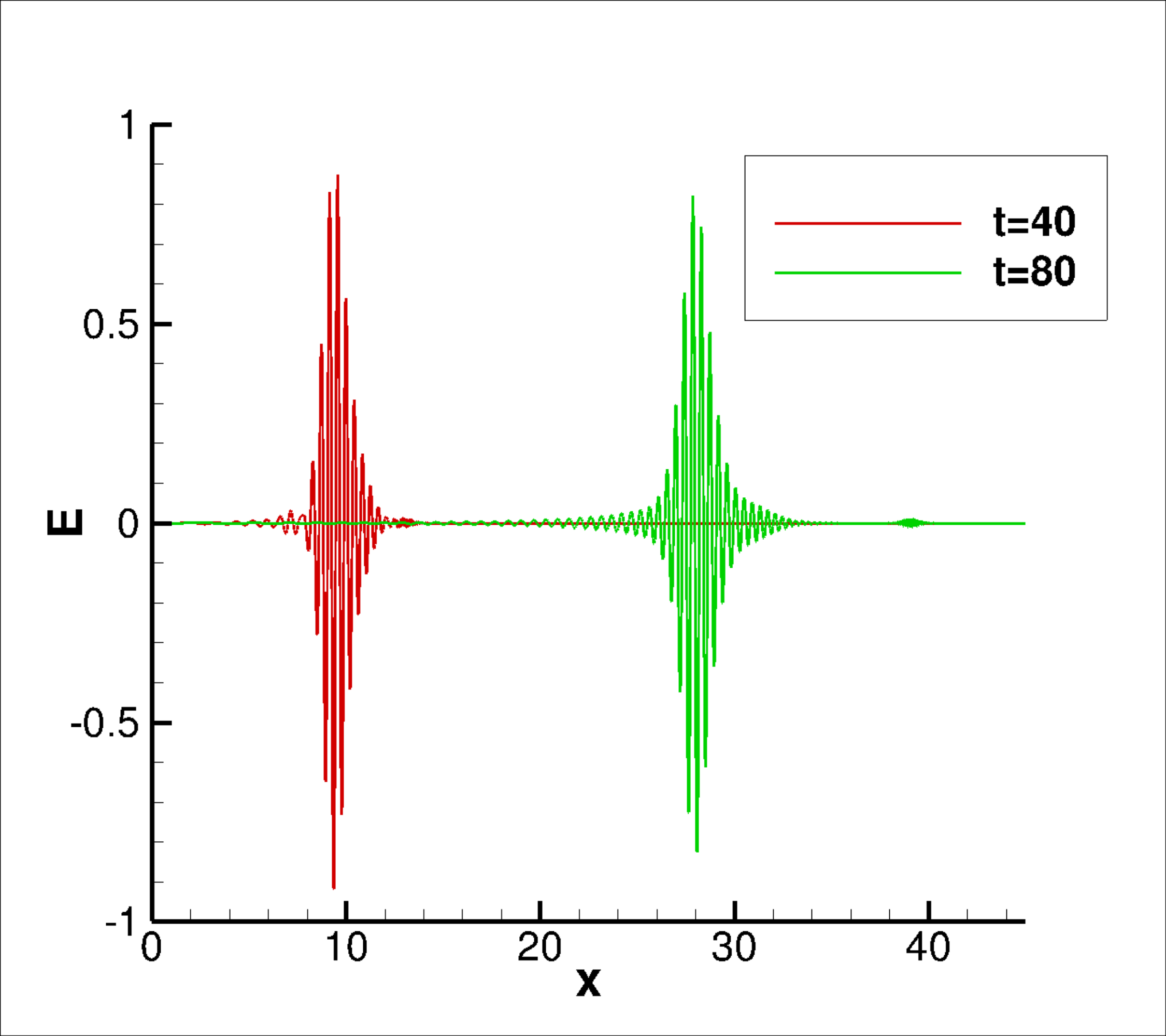}\label{Fig5.8}}
 	  \subfigure{
 	  	\includegraphics[width=0.3\textwidth]{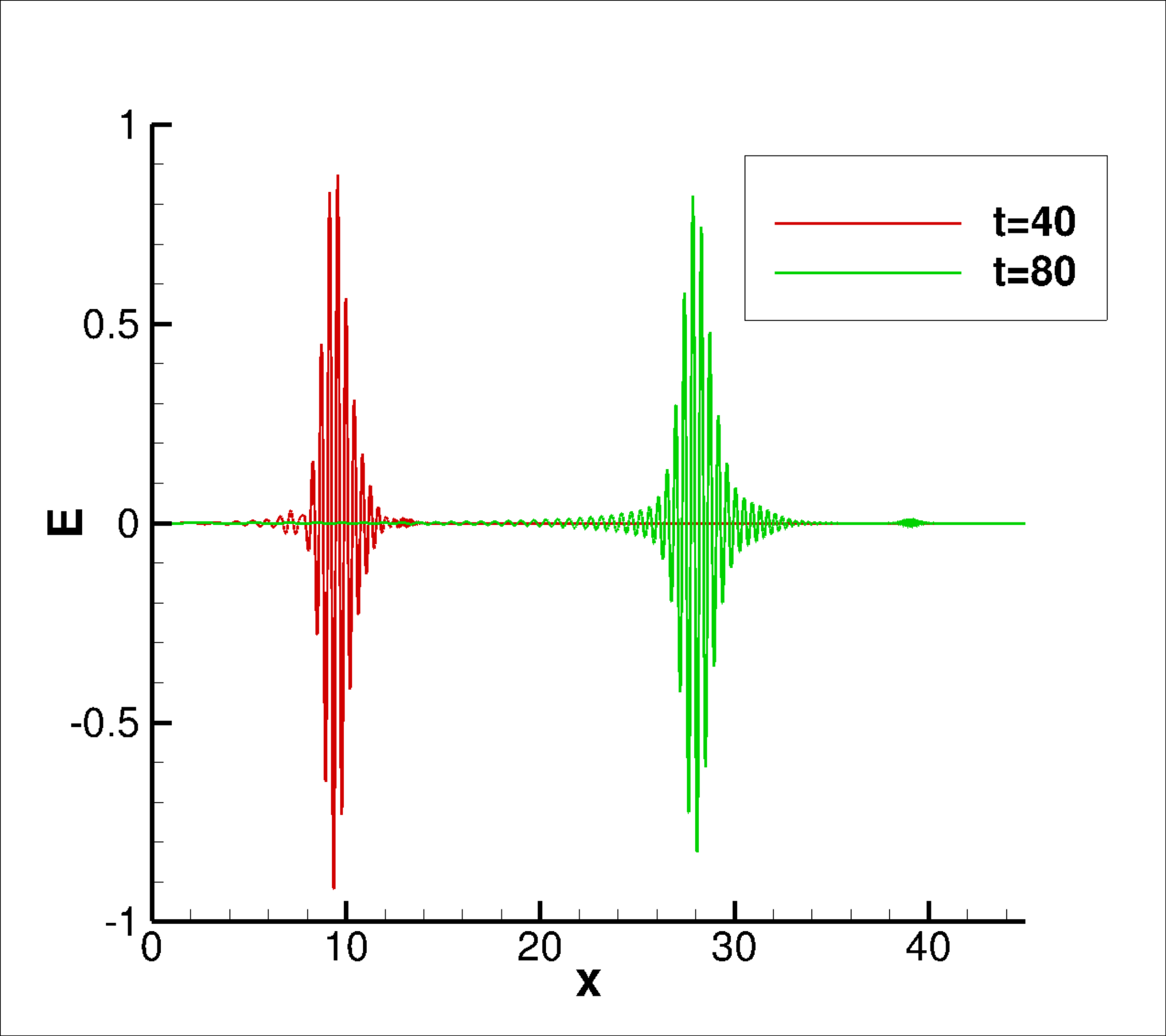}\label{Fig5.9}}
 	  	 	  \subfigure{
 	  	\includegraphics[width=0.3\textwidth]{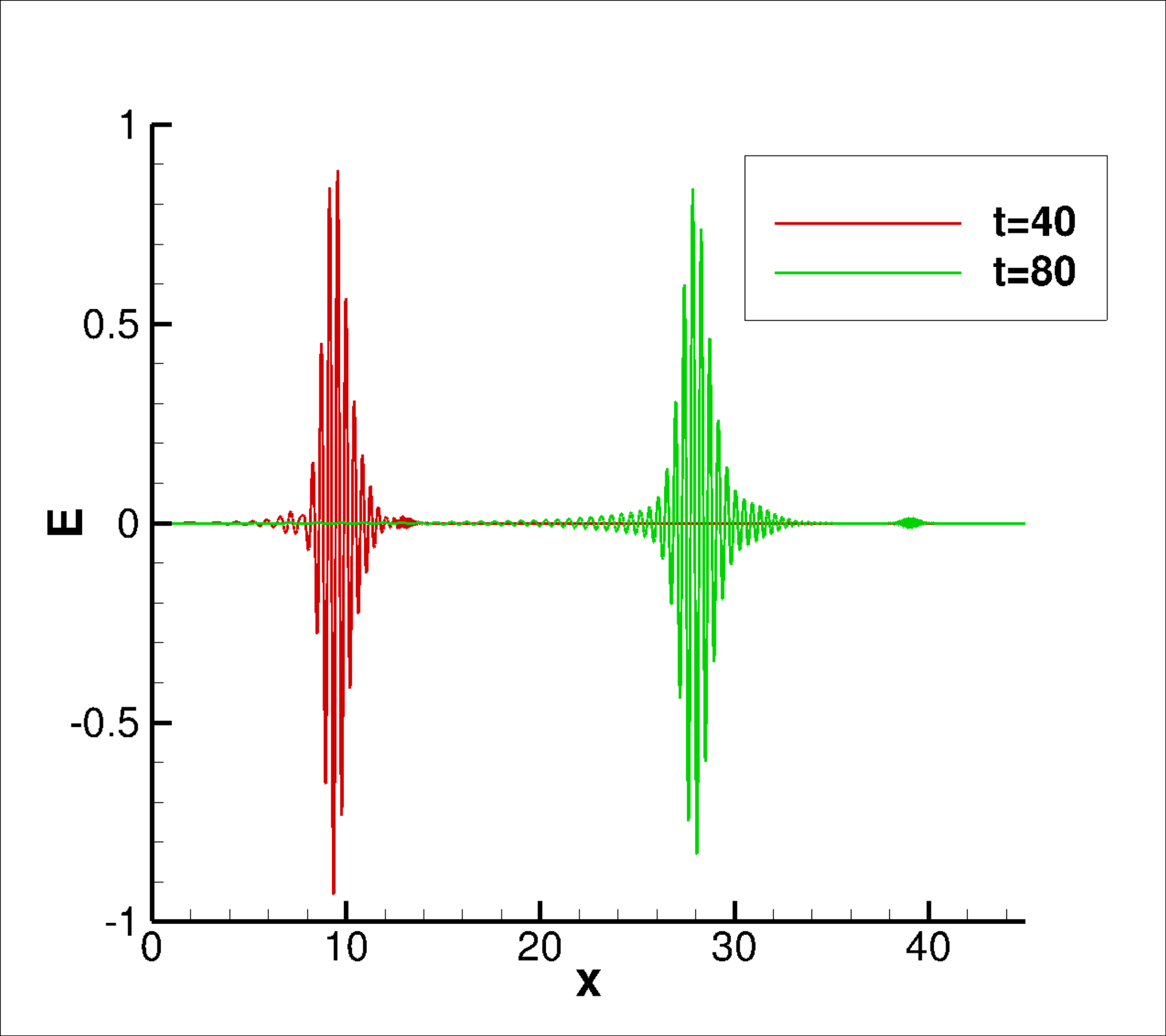}\label{Fig5.10}}
 	  \subfigure{
 	  	\includegraphics[width=0.3\textwidth]{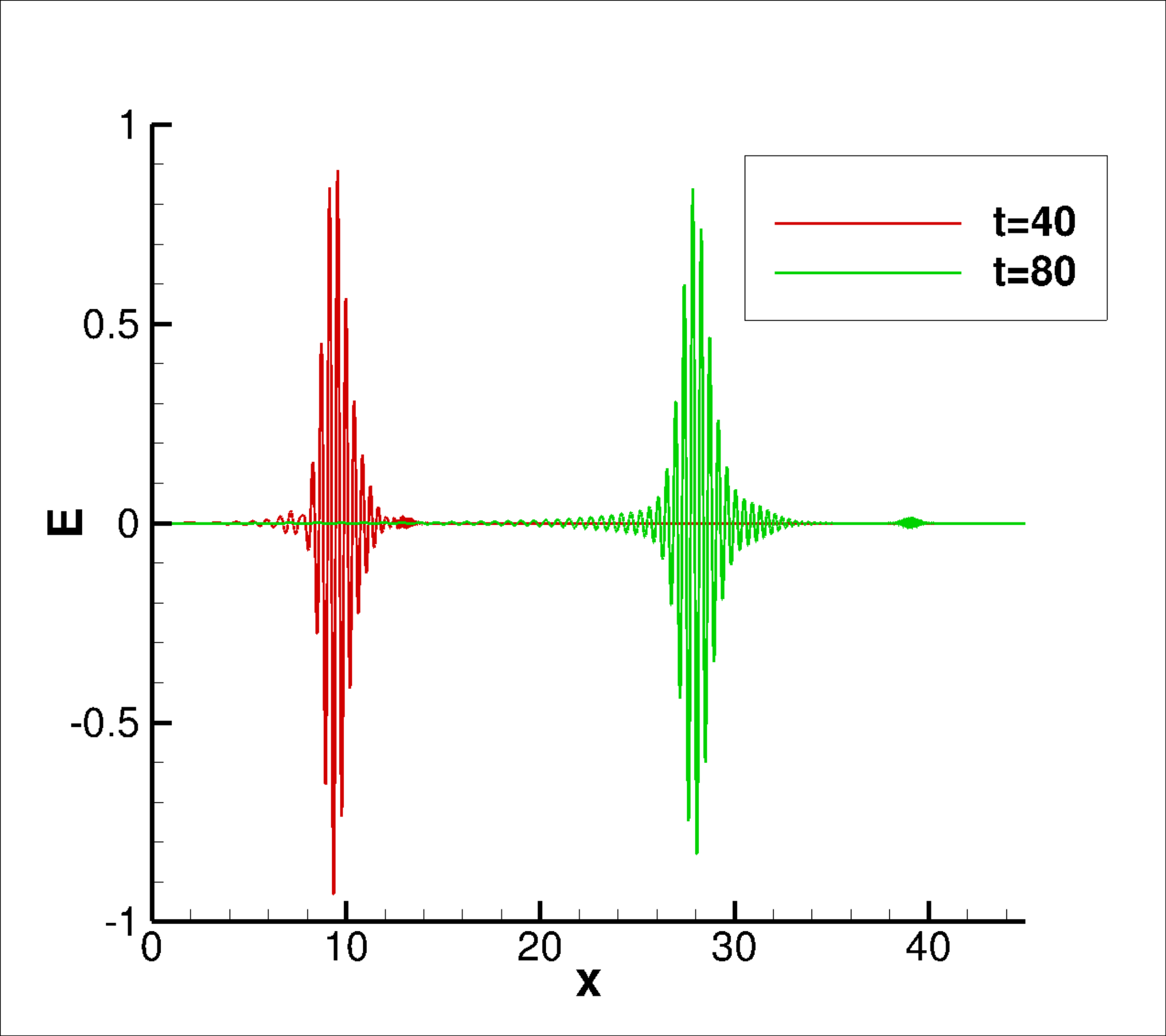}\label{Fig5.11}}
 	  \subfigure{
 	  	\includegraphics[width=0.3\textwidth]{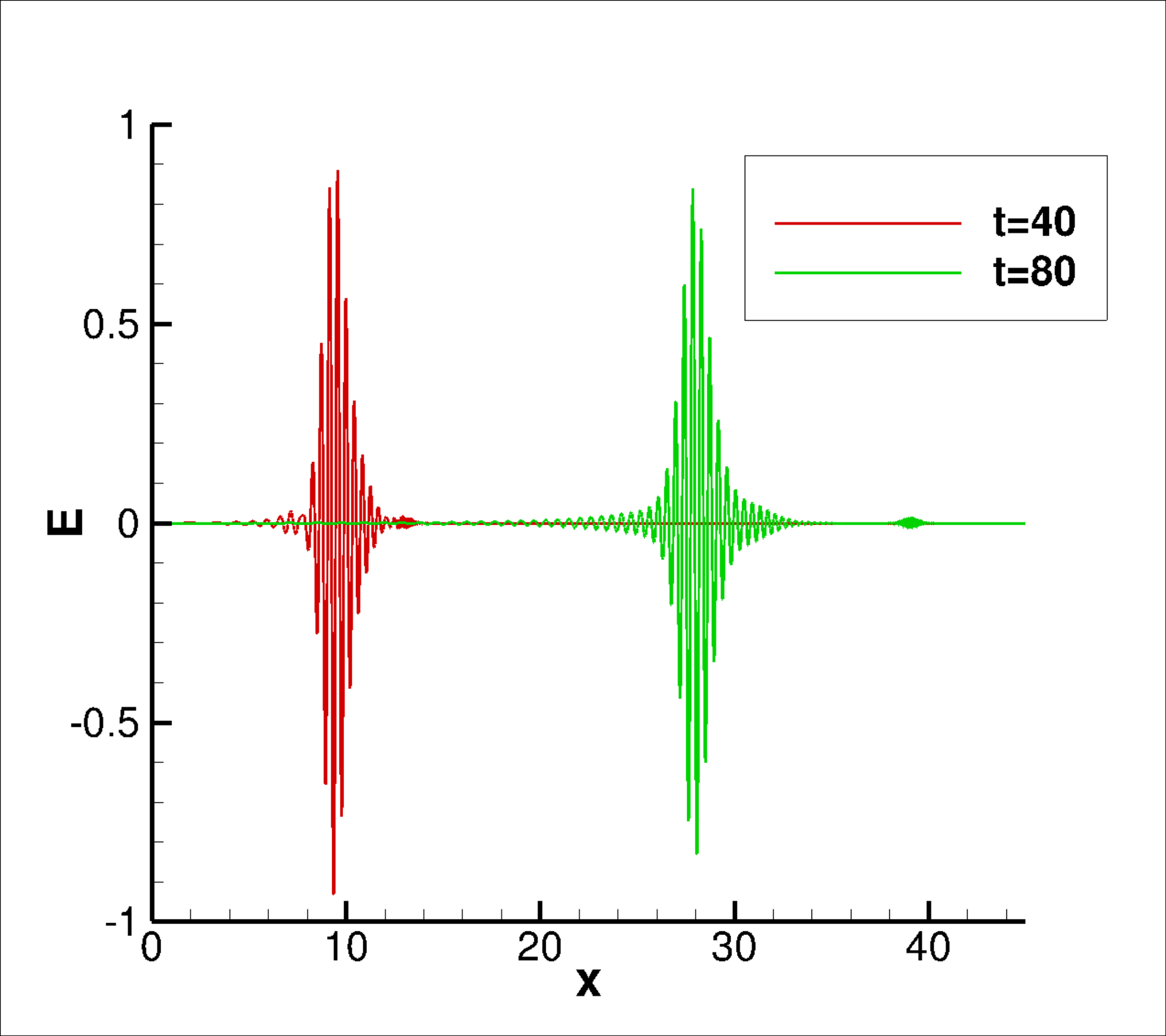}\label{Fig5.12}}
 	\caption{\em  Transient fundamental ($M=1$) temporal soliton propagation with the fully implicit scheme. $N=6400$ grid points. First column: $k=1$; second column: $k=2$; third column: $k=3$. First row: upwind flux;  second row: central flux; third row: alternating flux I; fourth row: alternating flux II. }
 	\label{Fig5}
 \end{figure}

   \begin{figure}
 	\centering
 	\subfigure{
 		\includegraphics[width=0.3\textwidth]{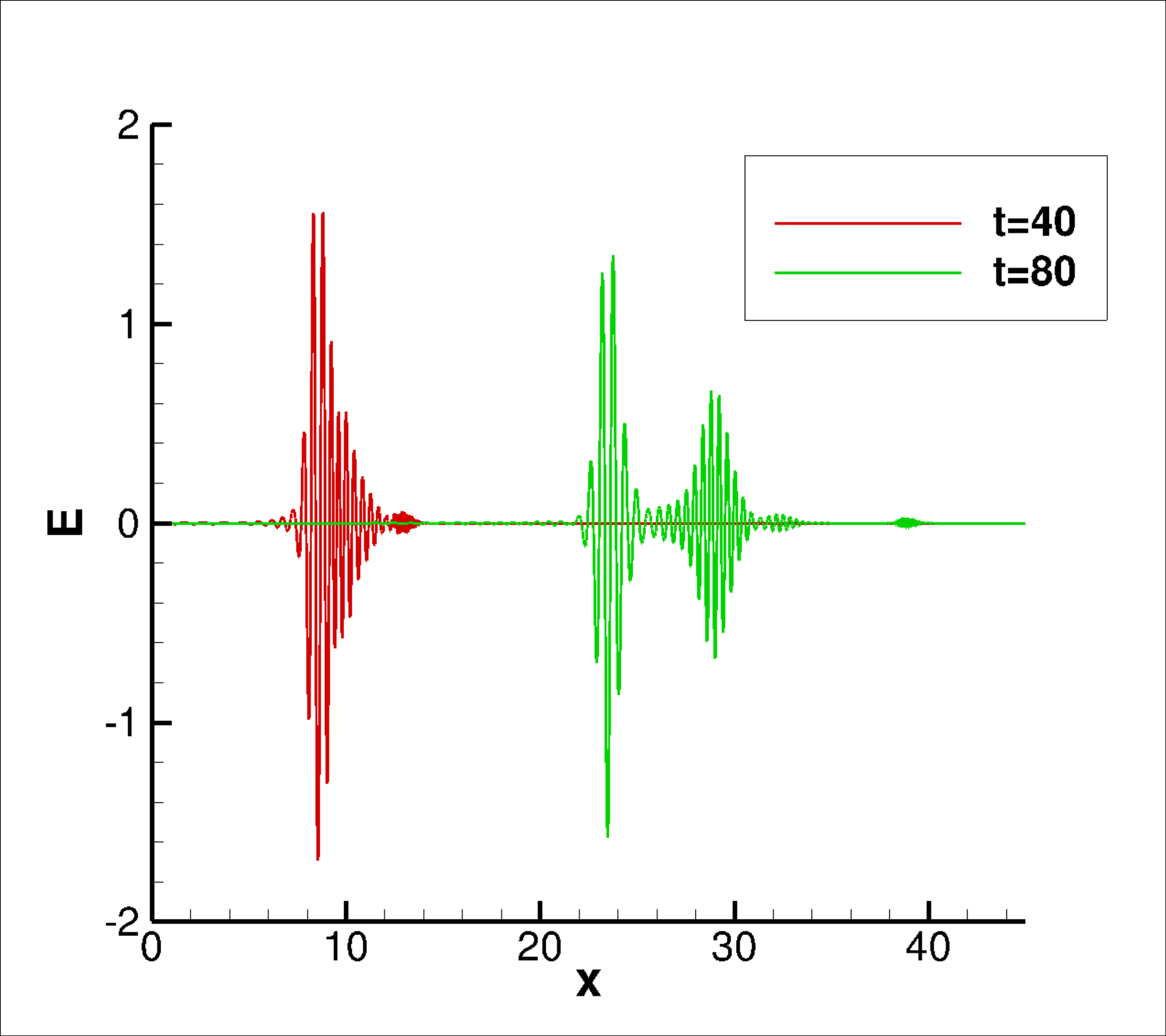}\label{Fig6.1}}
 	\subfigure{
 		\includegraphics[width=0.3\textwidth]{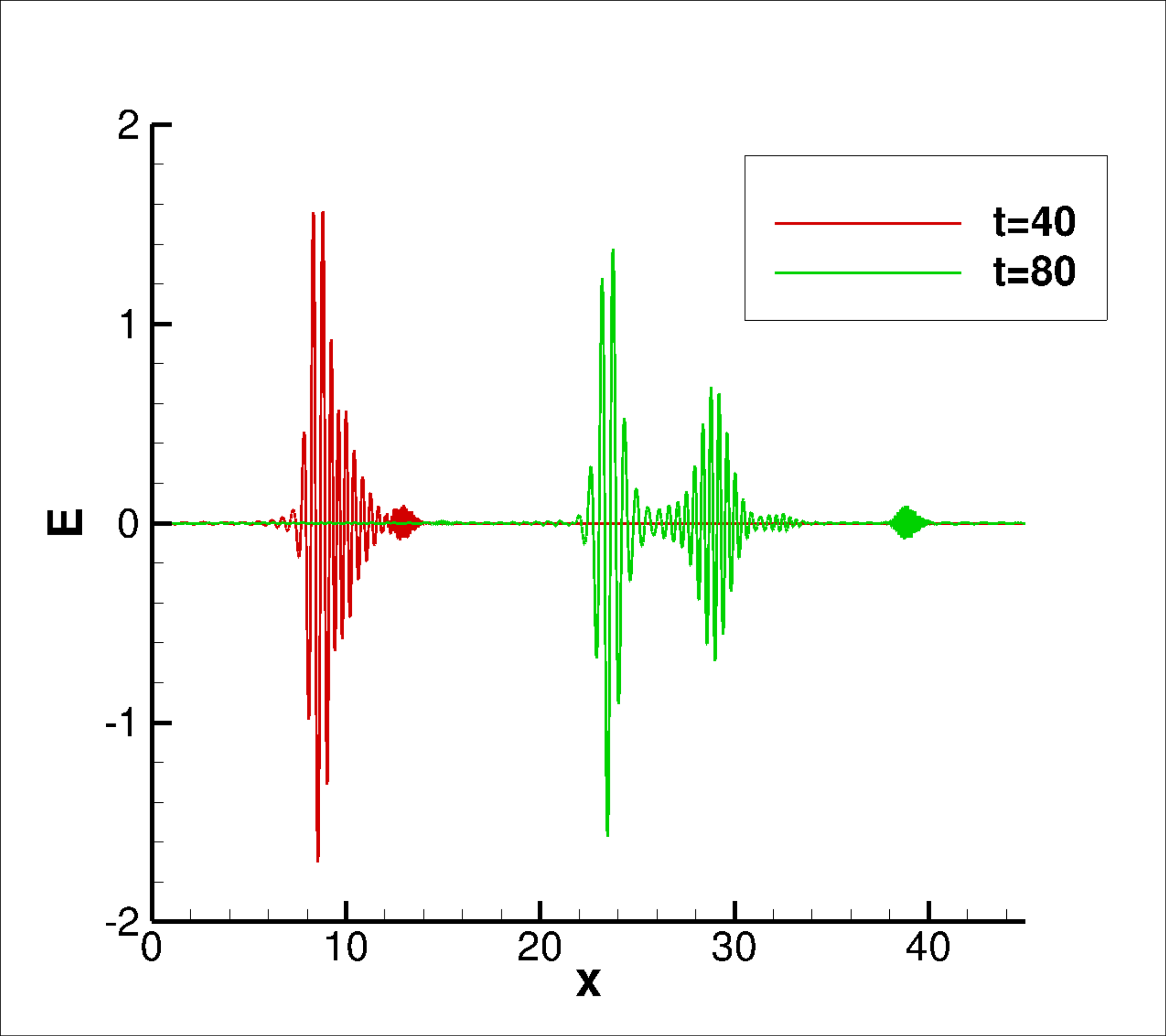}\label{Fig6.2}}
 	\subfigure{
 		\includegraphics[width=0.3\textwidth]{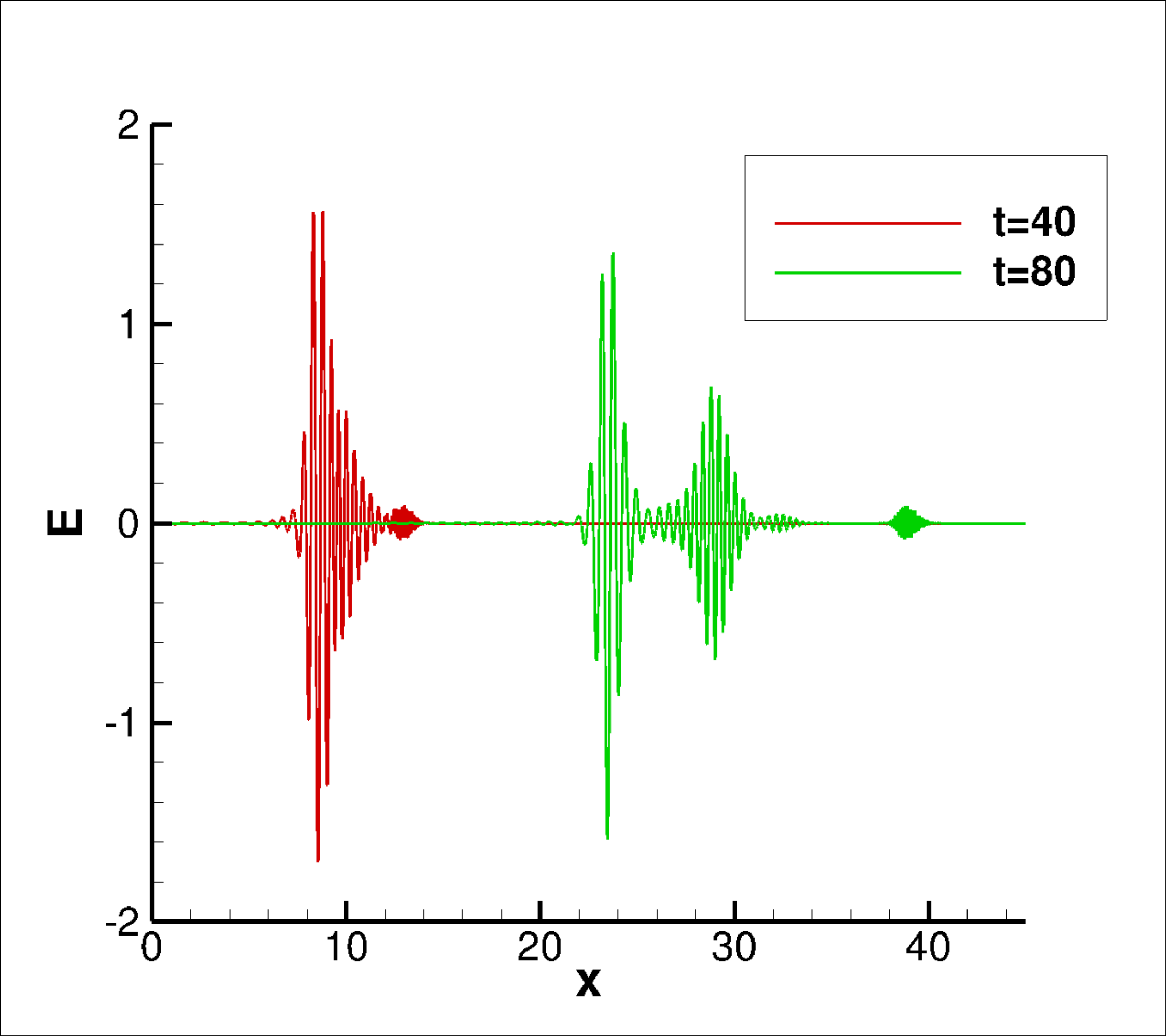}\label{Fig6.3}}
 	 \subfigure{
 		\includegraphics[width=0.3\textwidth]{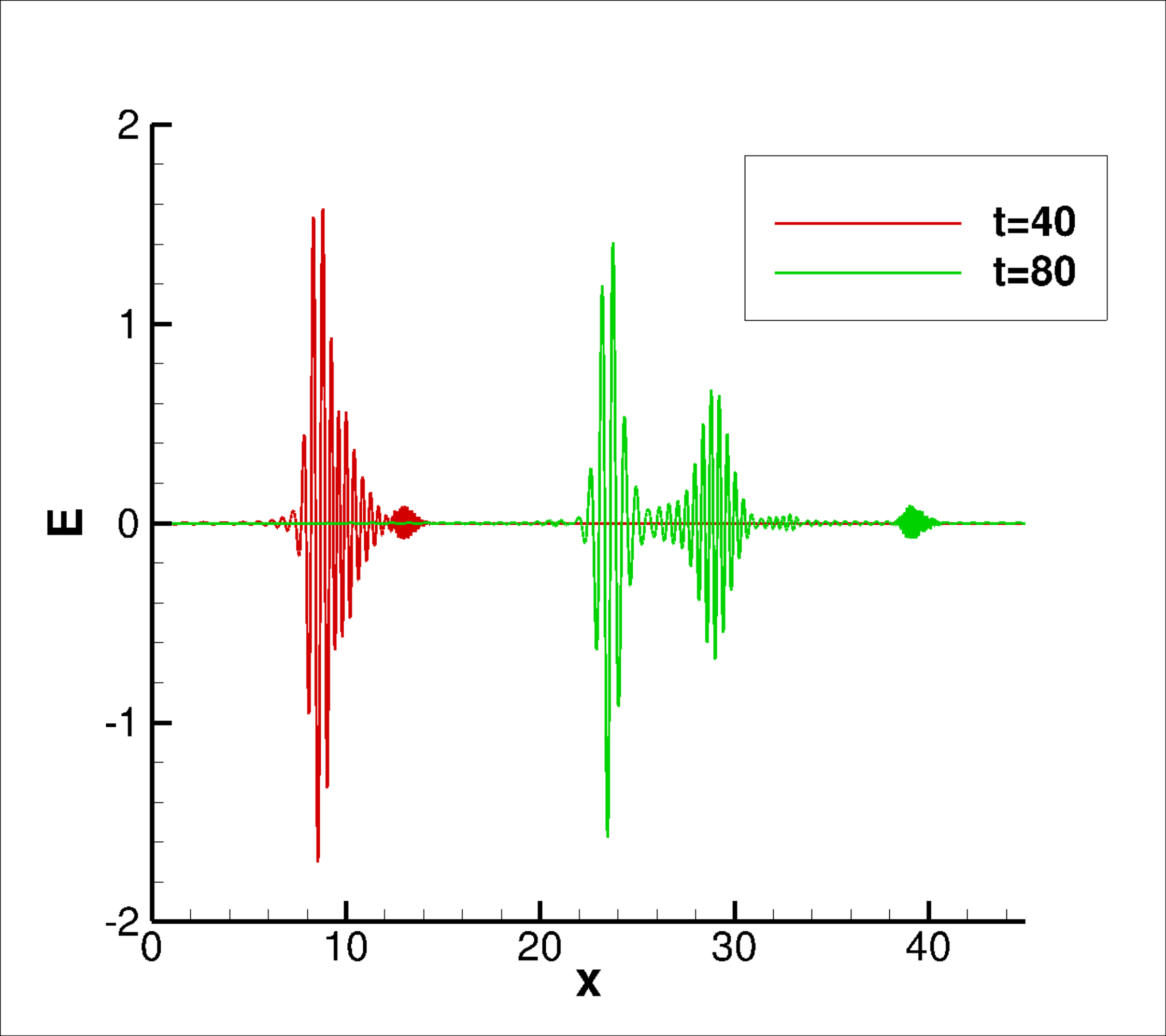}\label{Fig6.4}}
 	 \subfigure{
 	 	\includegraphics[width=0.3\textwidth]{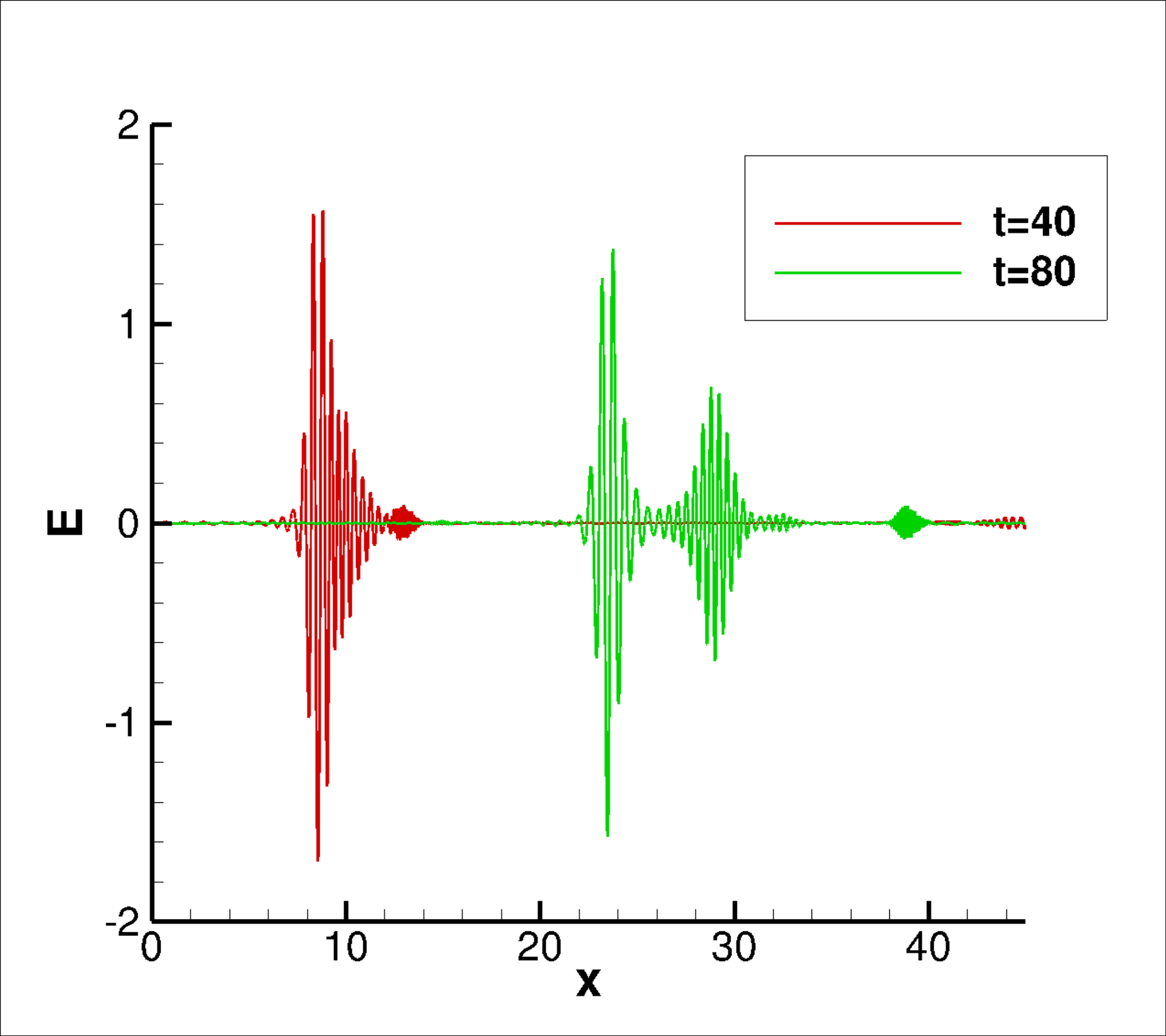}\label{Fig6.5}}
 	 \subfigure{
 	 	\includegraphics[width=0.3\textwidth]{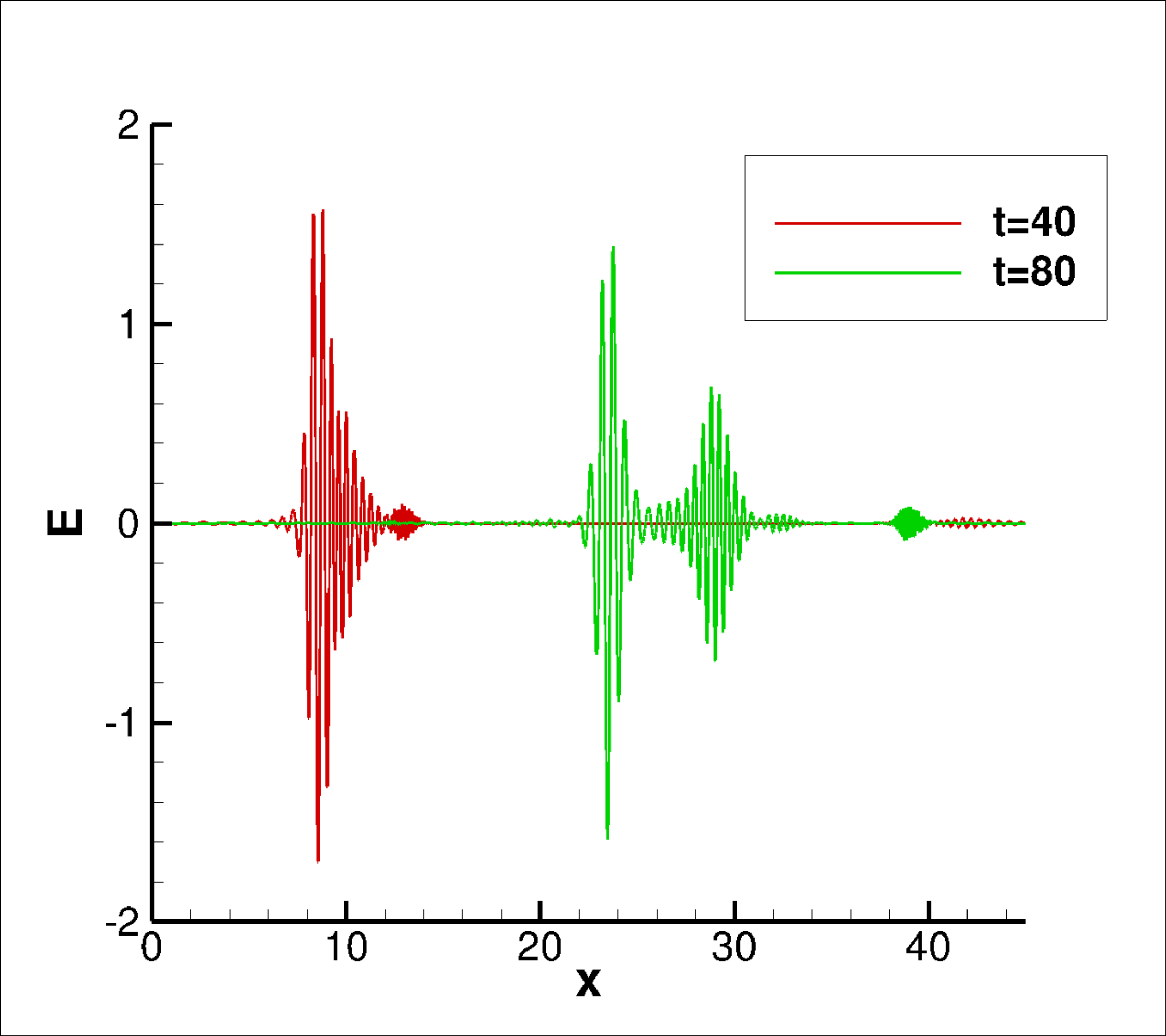}\label{Fig6.6}}
 	  \subfigure{
 	  	\includegraphics[width=0.3\textwidth]{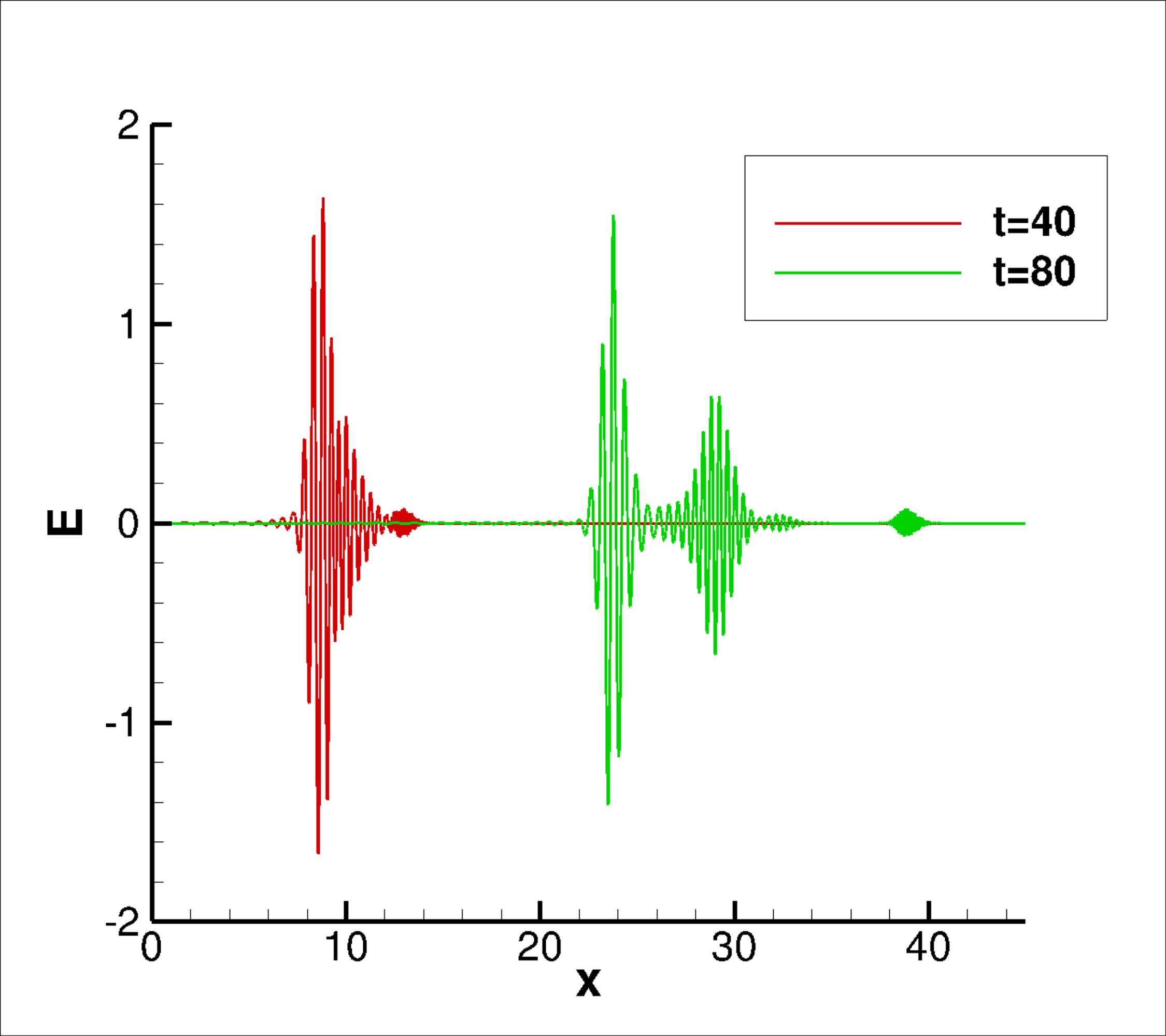}\label{Fig6.7}}
 	  \subfigure{
 	  	\includegraphics[width=0.3\textwidth]{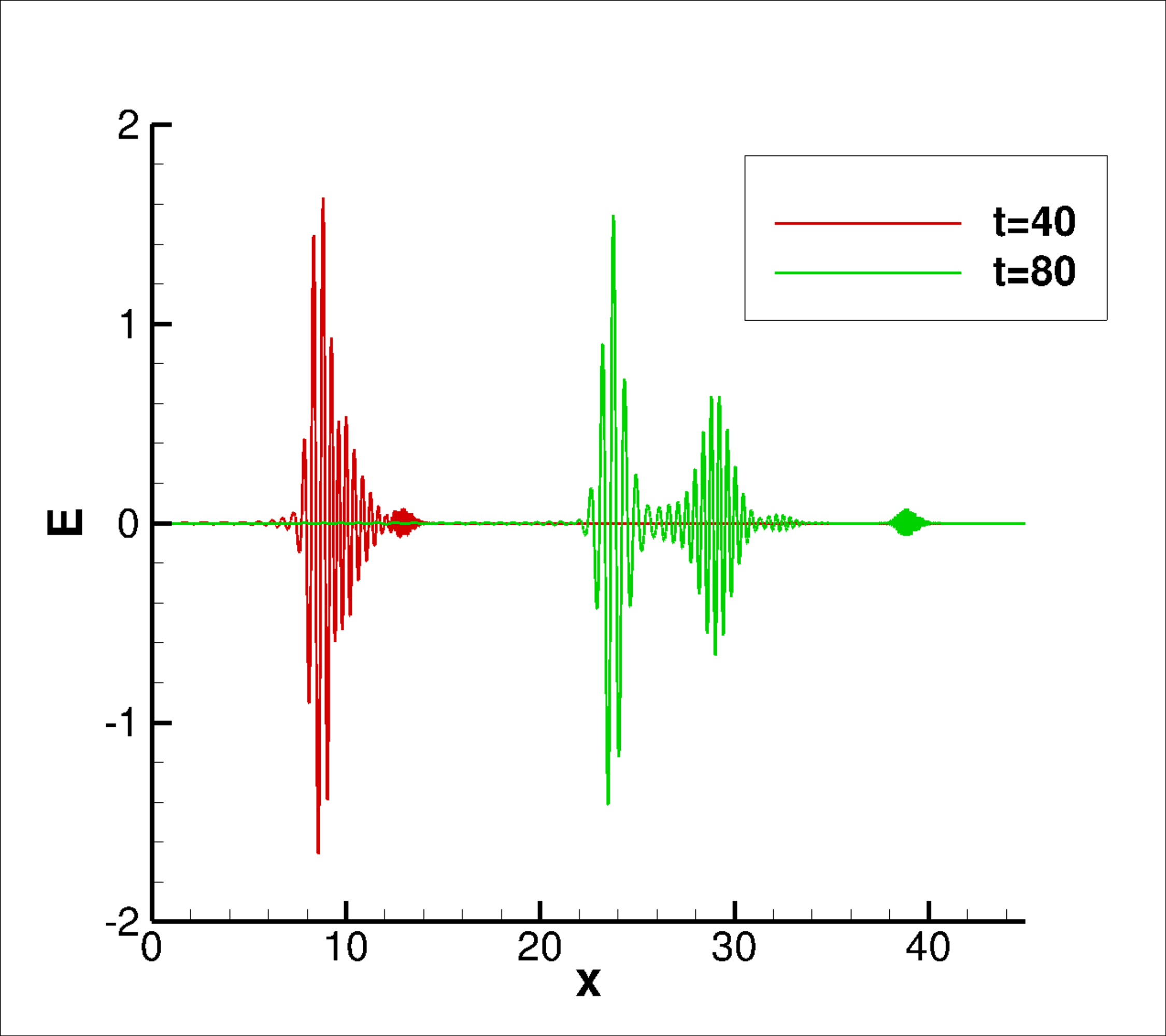}\label{Fig6.8}}
 	  \subfigure{
 	  	\includegraphics[width=0.3\textwidth]{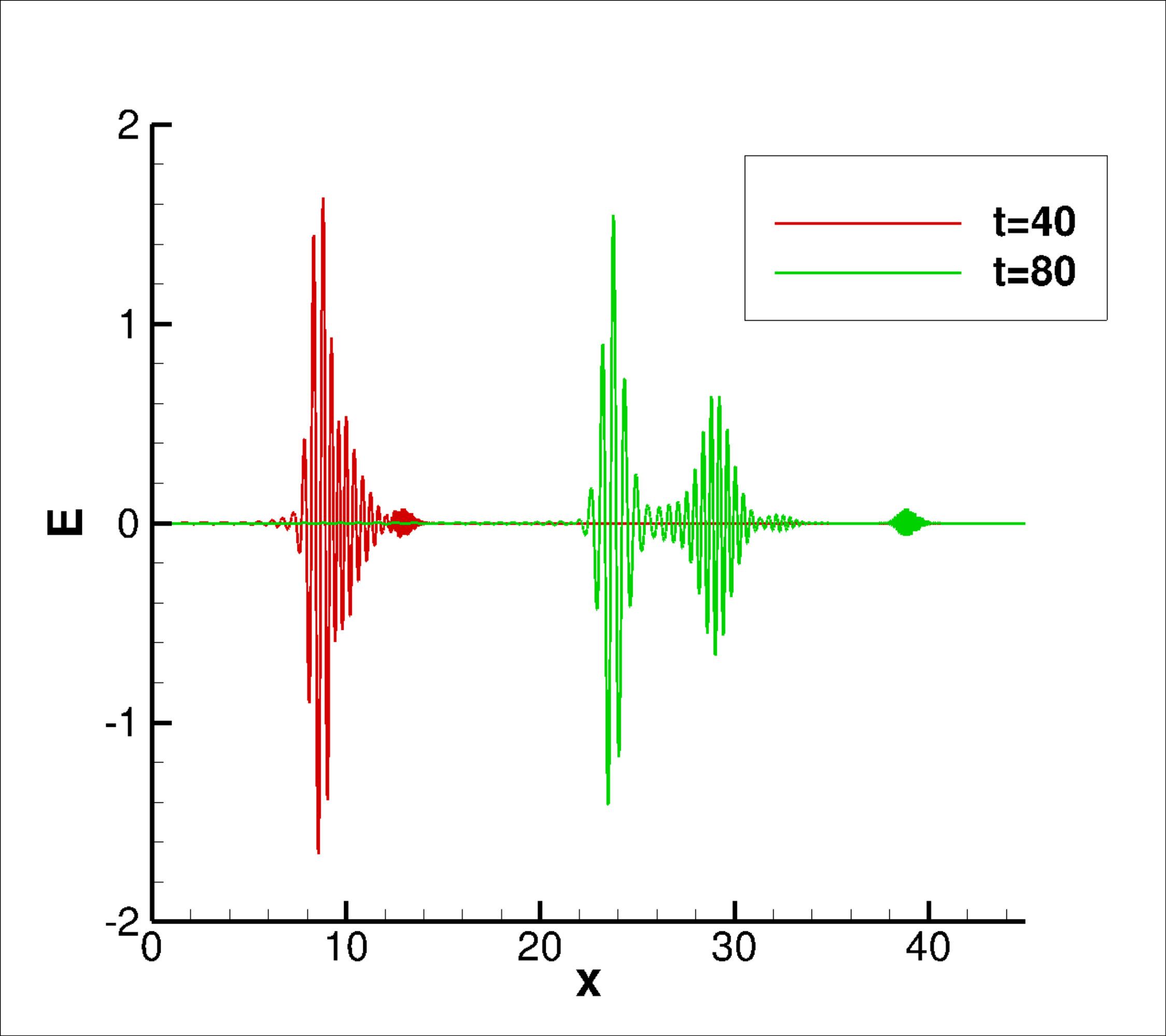}\label{Fig6.9}}
 	  \subfigure{
 	  	\includegraphics[width=0.3\textwidth]{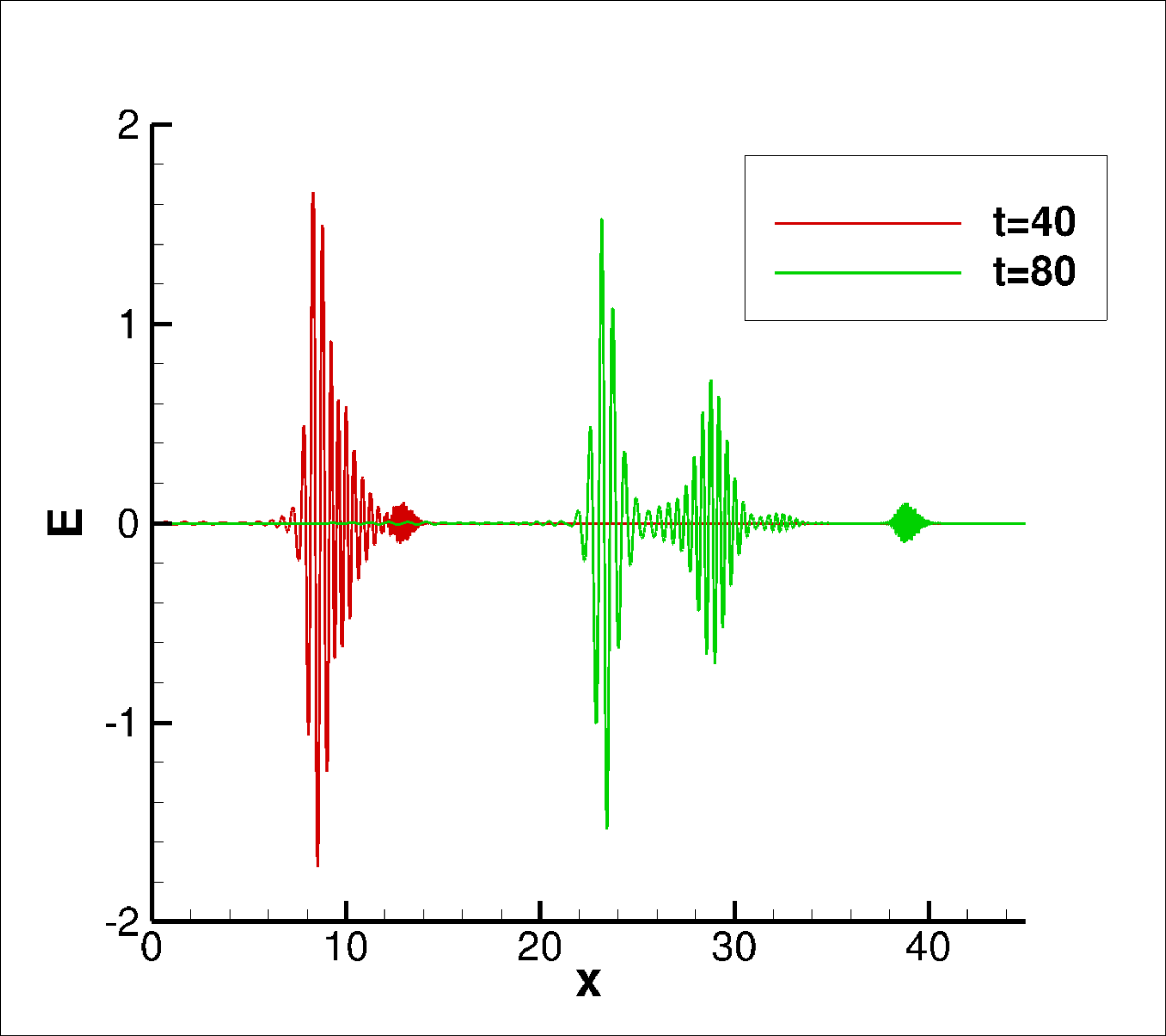}\label{Fig6.10}}
 	  \subfigure{
 	  	\includegraphics[width=0.3\textwidth]{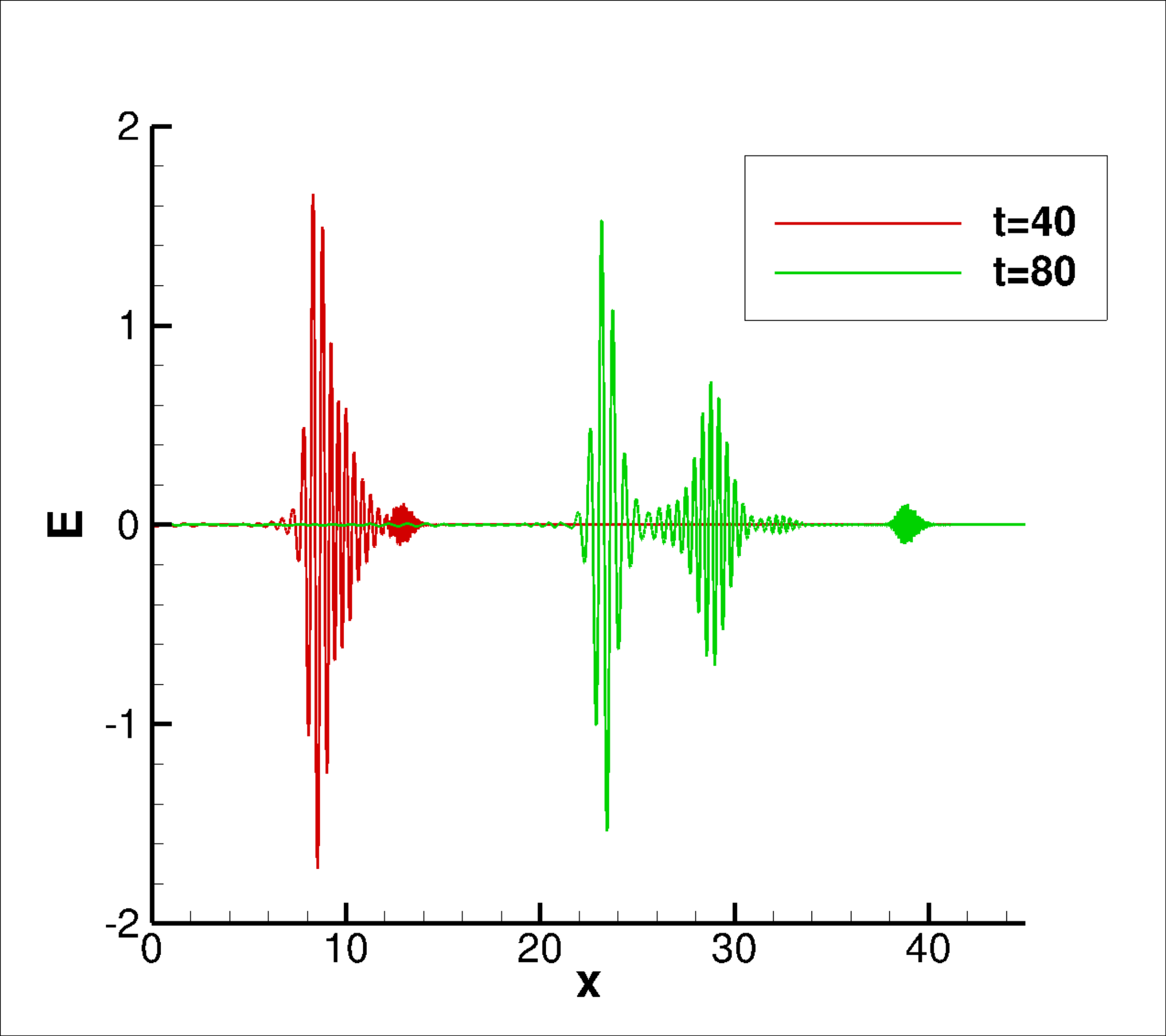}\label{Fig6.11}}
 	  \subfigure{
 	  	\includegraphics[width=0.3\textwidth]{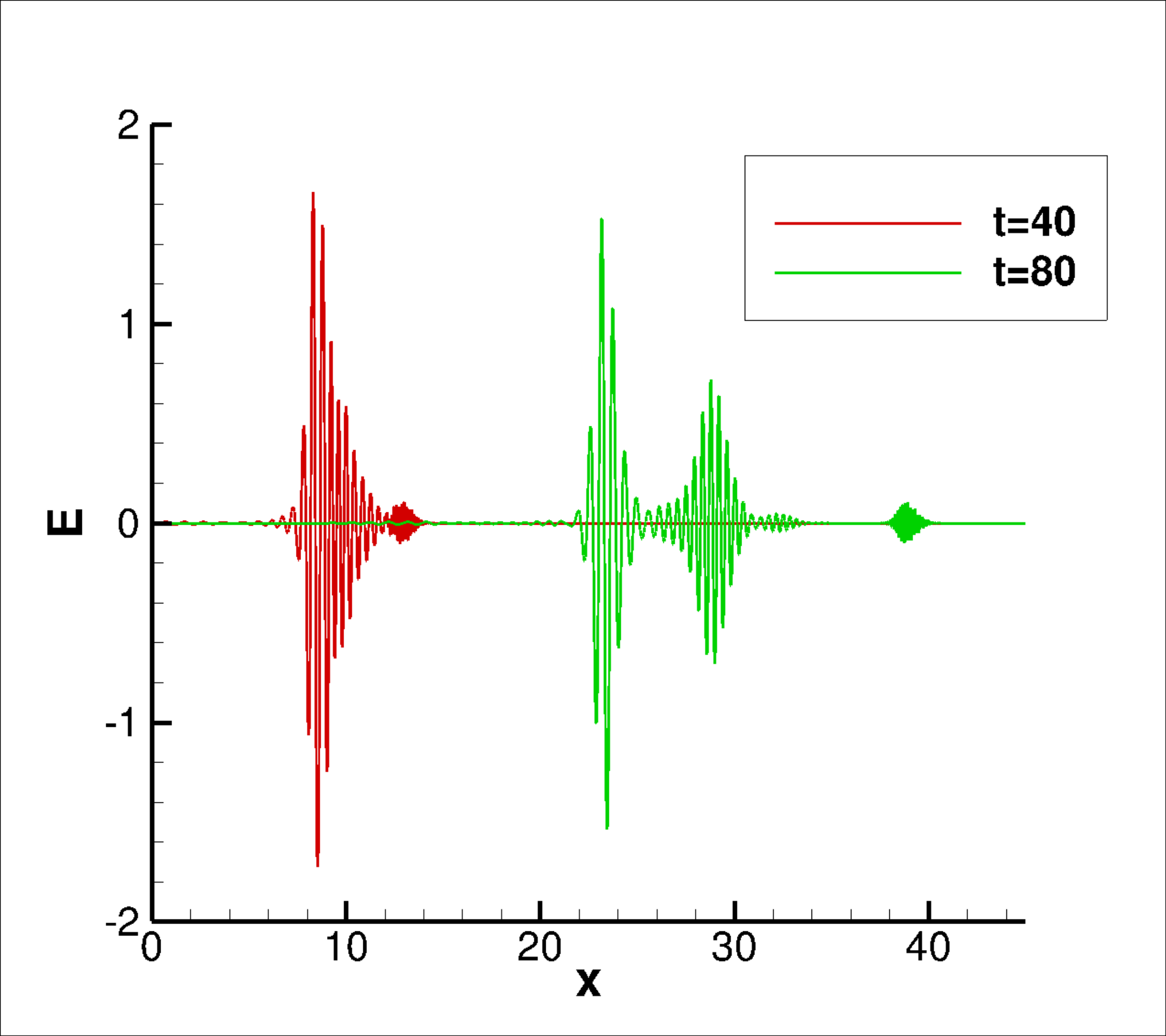}\label{Fig6.12}}
 	\caption{\em  Transient second-order ($M=2$) temporal soliton propagation with the fully implicit scheme. $N=6400$ grid points. First column: $k=1$; second column: $k=2$; third column: $k=3$. First row: upwind flux;  second row: central flux; third row: alternating flux I; fourth row: alternating flux II. }
 	\label{Fig6}
 \end{figure}

   \begin{figure}
 	\centering
 	\subfigure{
 		\includegraphics[width=0.3\textwidth]{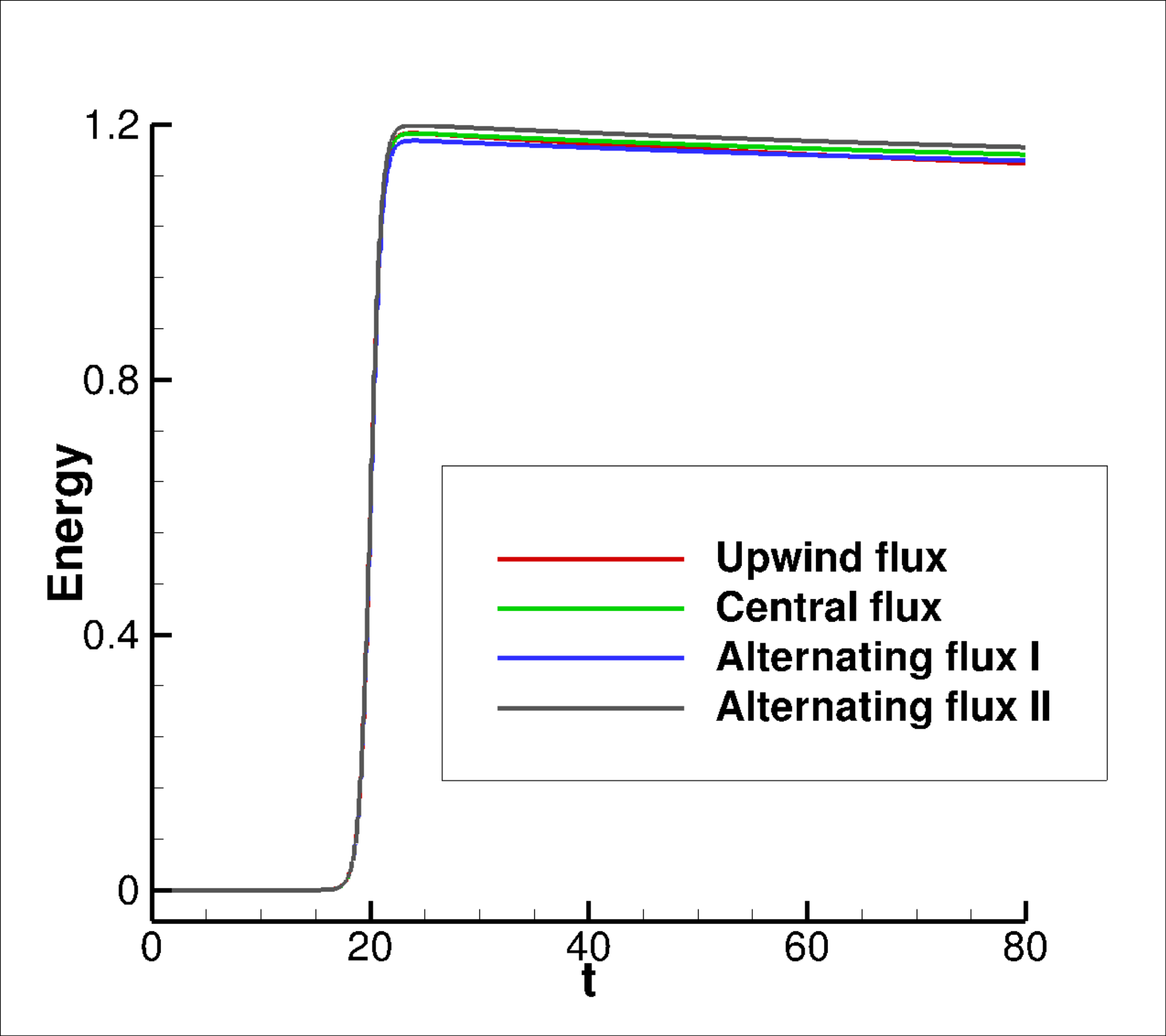}\label{Fig7.1}}
 	\subfigure{
 		\includegraphics[width=0.3\textwidth]{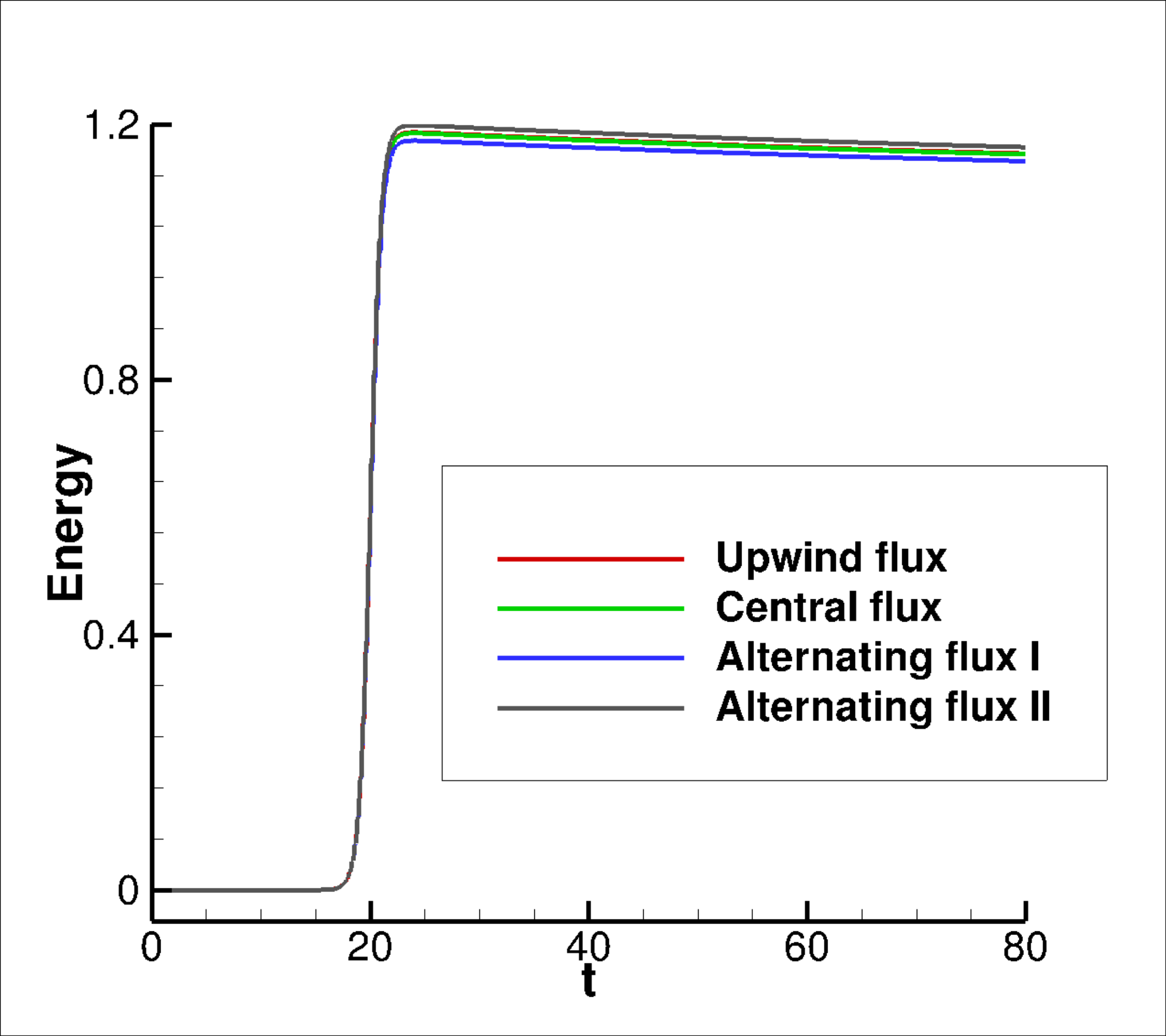}\label{Fig7.2}}
 	\subfigure{
 		\includegraphics[width=0.3\textwidth]{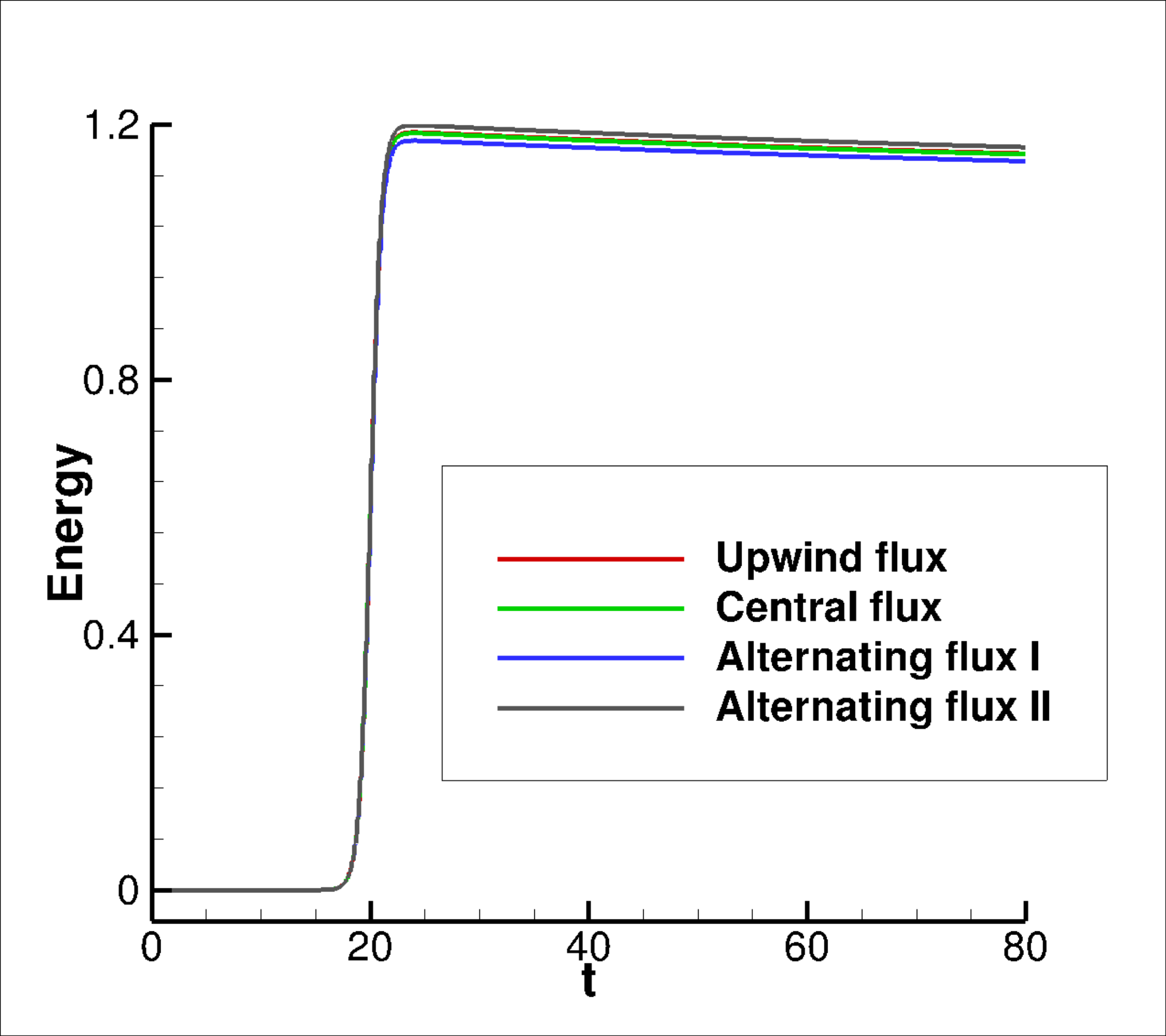}\label{Fig7.3}}
 	 \subfigure{
 		\includegraphics[width=0.3\textwidth]{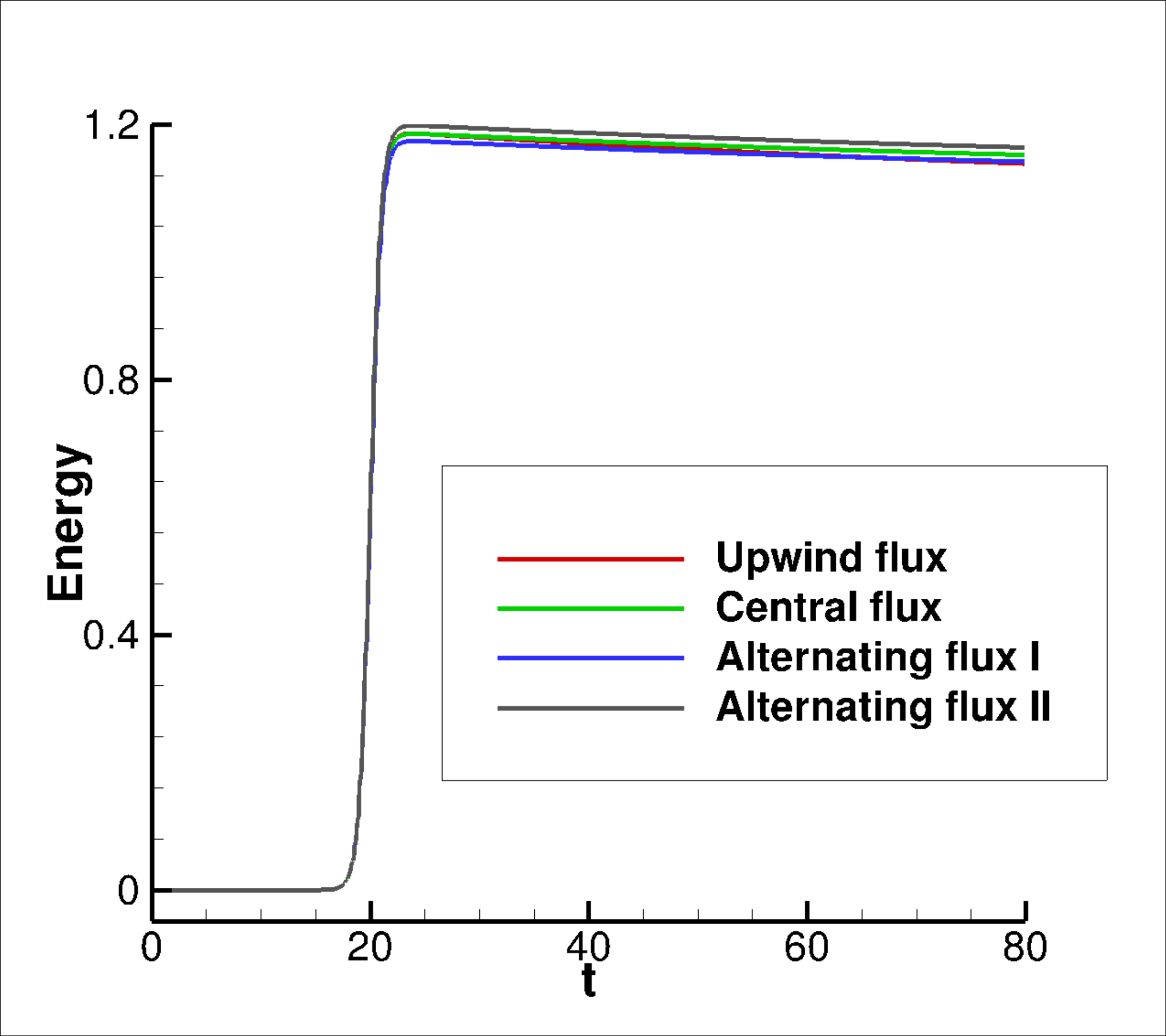}\label{Fig7.4}}
 	 \subfigure{
 	 	\includegraphics[width=0.3\textwidth]{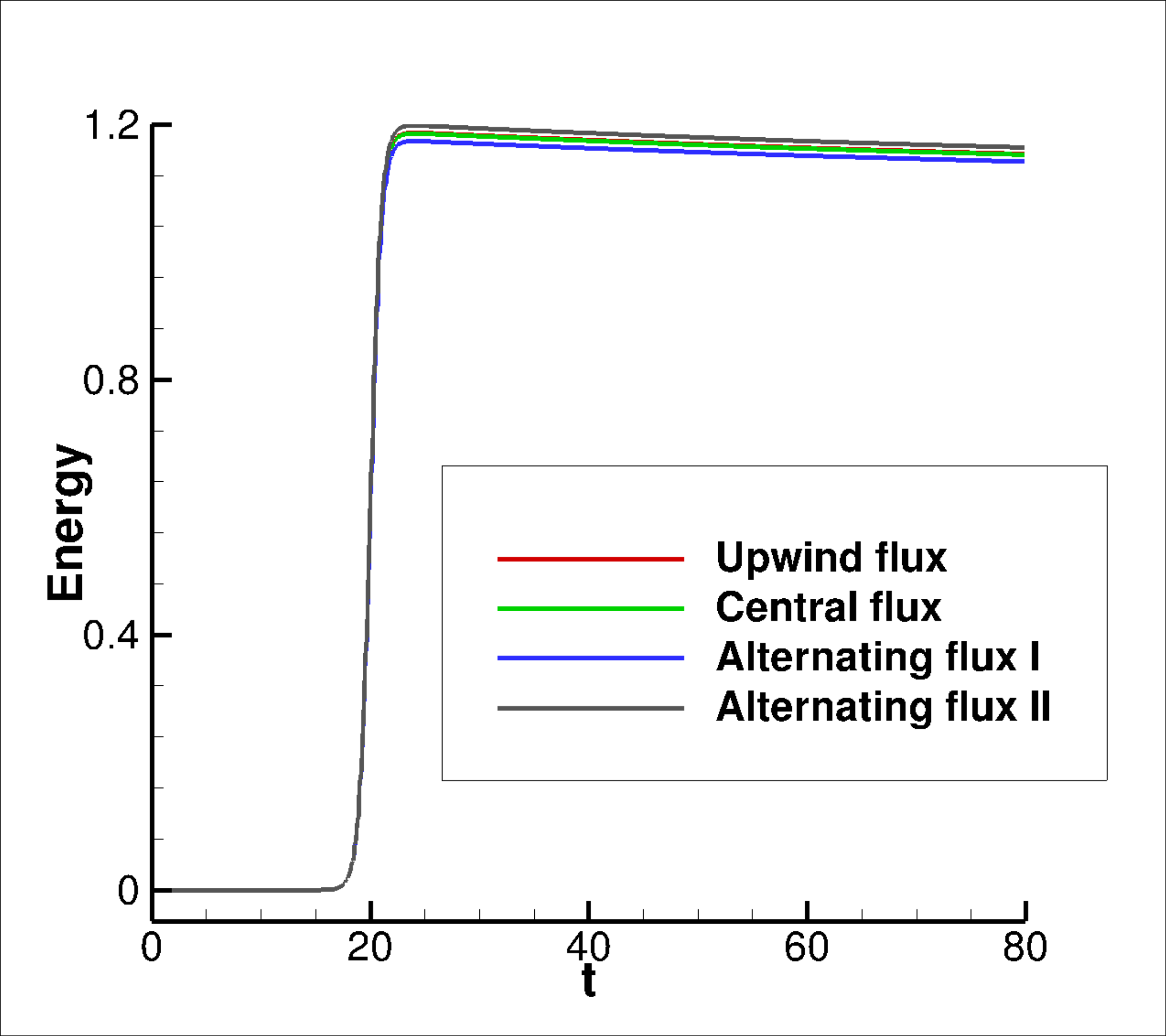}\label{Fig7.5}}
 	 \subfigure{
 	 	\includegraphics[width=0.3\textwidth]{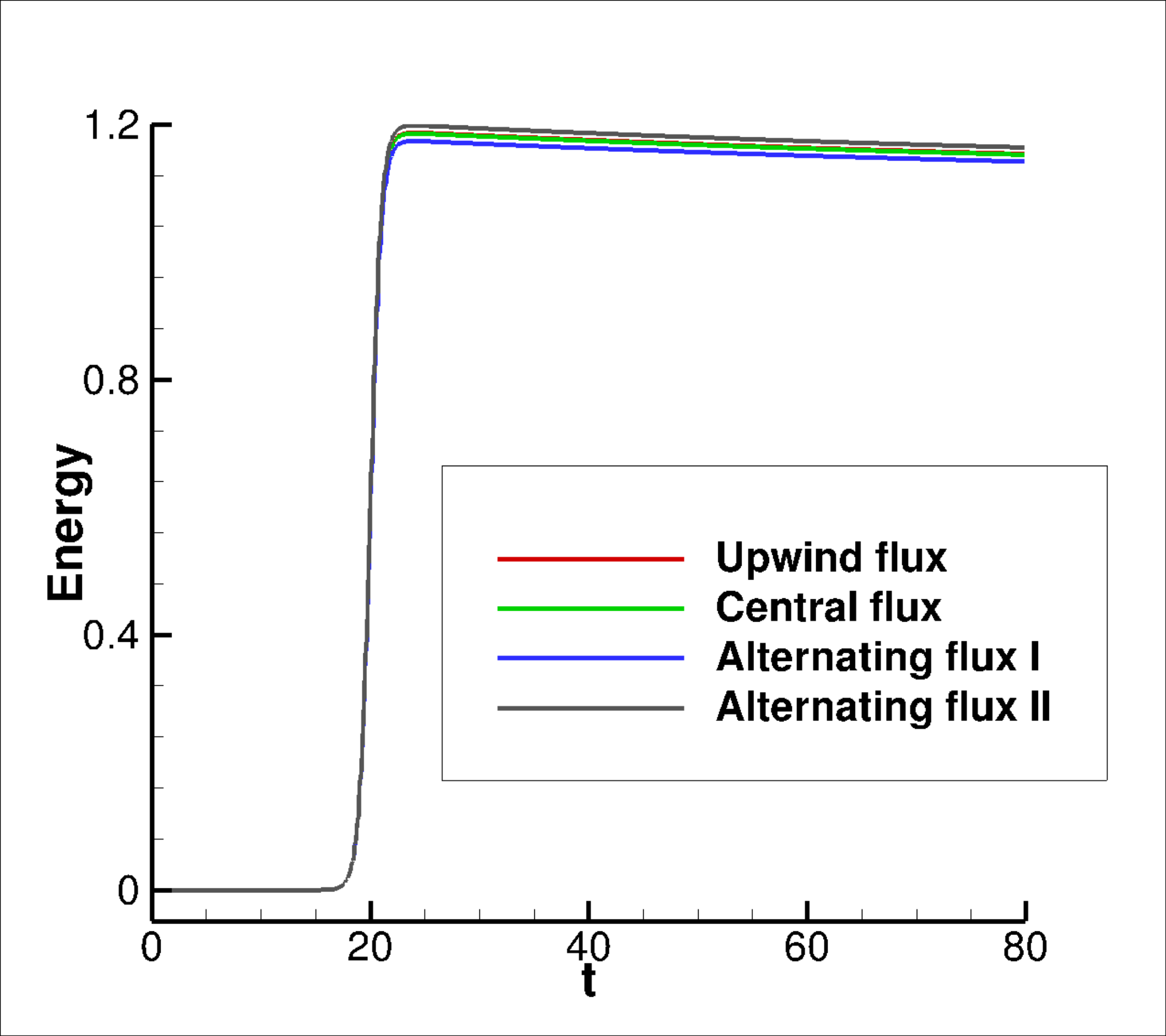}\label{Fig7.6}}
 	\caption{\em Numerical energy of transient fundamental ($M=1$) temporal soliton propagation with the leapfrog scheme and the fully implicit scheme. $N=6400$ grid points. First column: $k=1$; second column: $k=2$; third column: $k=3$. First row: the leapfrog scheme; second row: the fully implicit scheme.  }
 	\label{Fig7}
 \end{figure}

   \begin{figure}
 	\centering
 	\subfigure{
 		\includegraphics[width=0.3\textwidth]{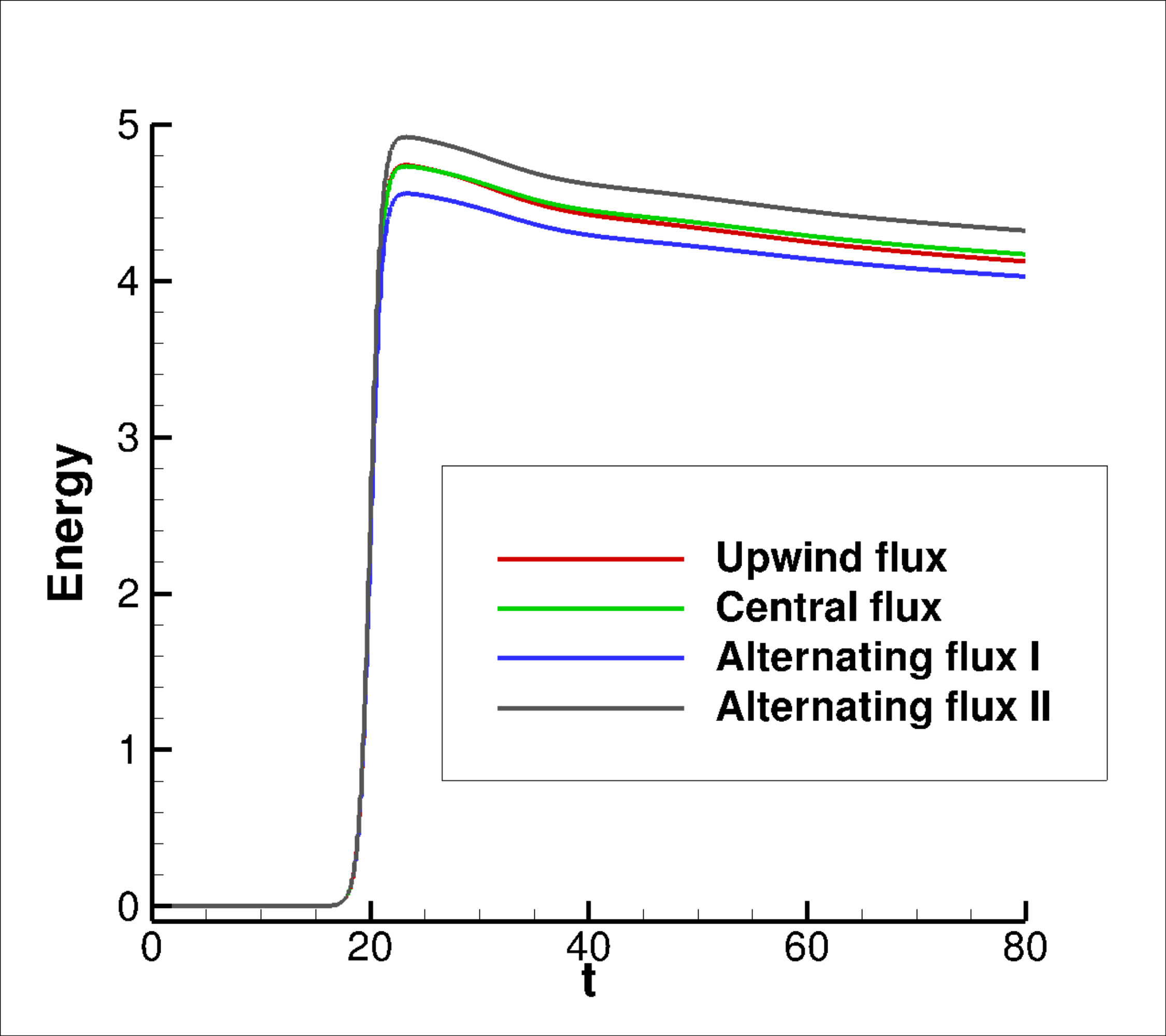}\label{Fig8.1}}
 	\subfigure{
 		\includegraphics[width=0.3\textwidth]{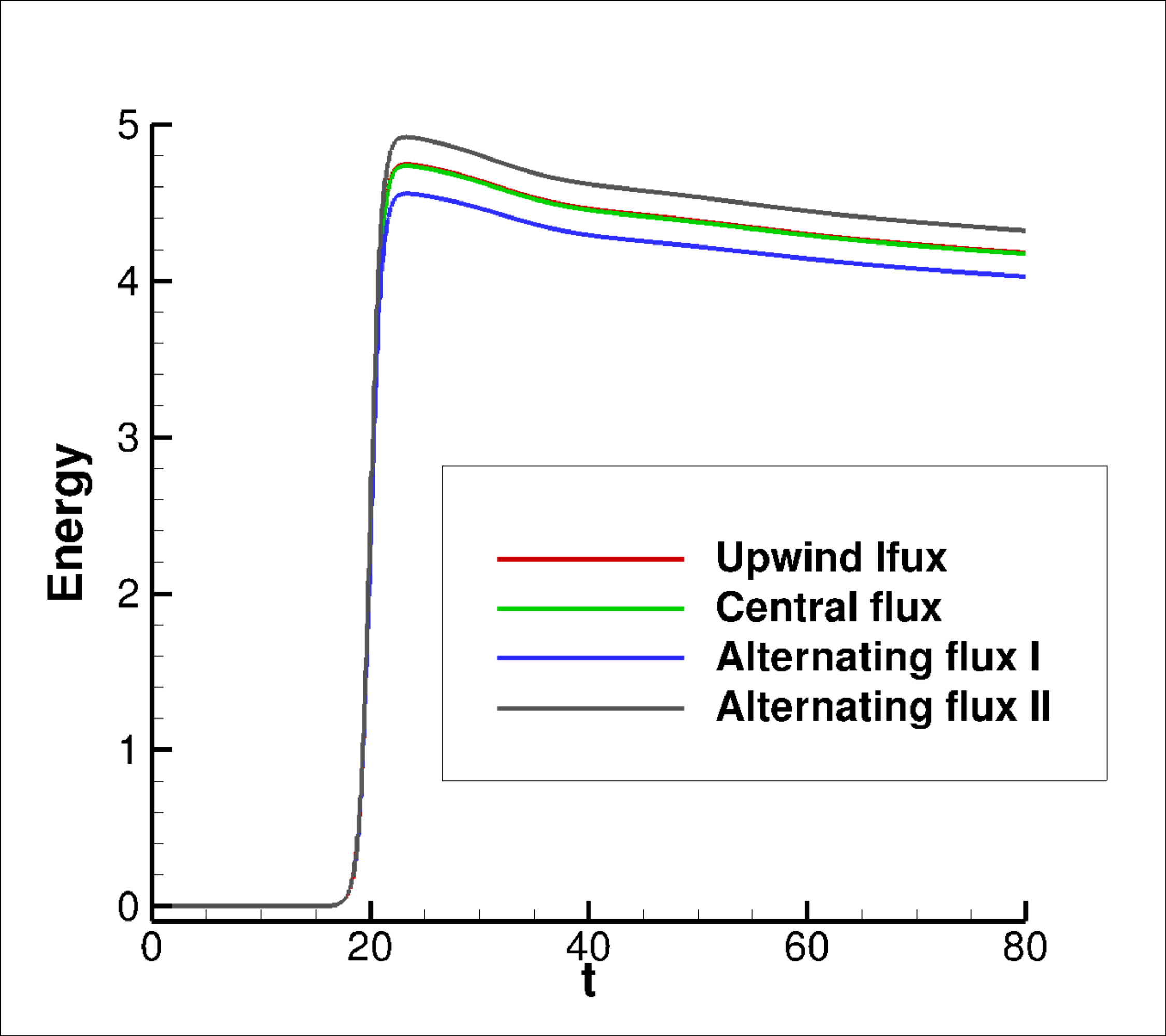}\label{Fig8.2}}
 	\subfigure{
 		\includegraphics[width=0.3\textwidth]{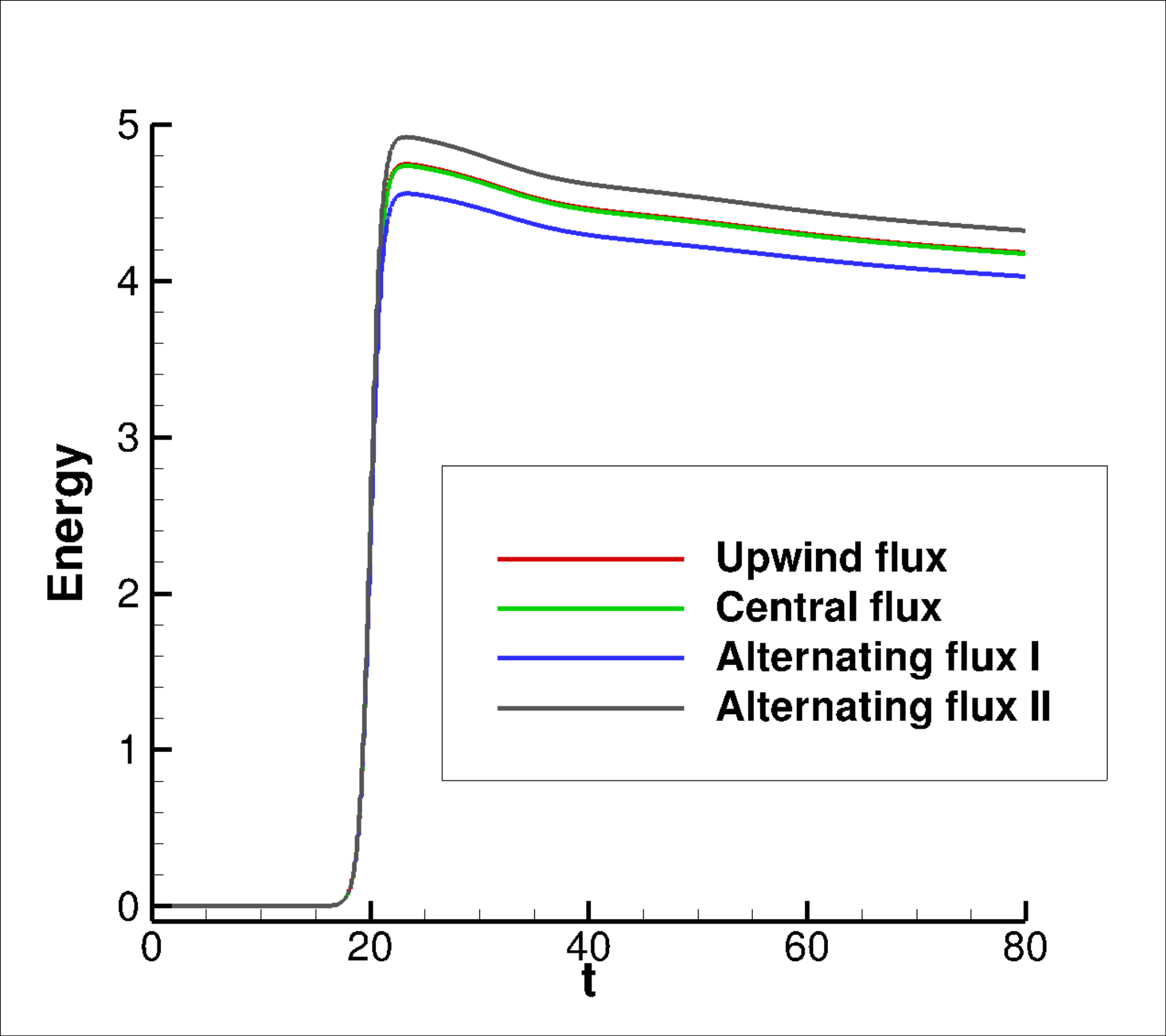}\label{Fig8.3}}
 	 \subfigure{
 		\includegraphics[width=0.3\textwidth]{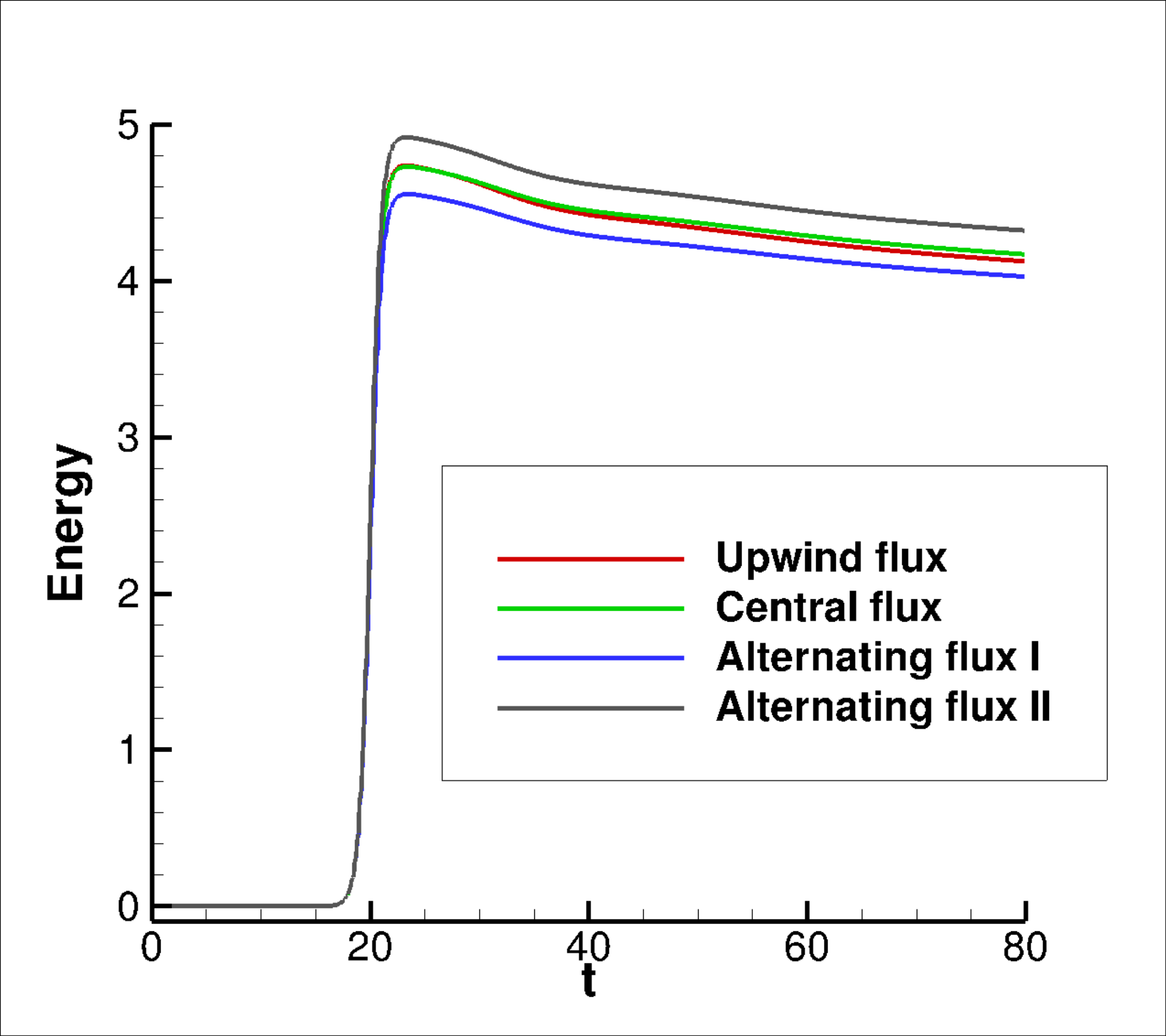}\label{Fig8.4}}
 	 \subfigure{
 	 	\includegraphics[width=0.3\textwidth]{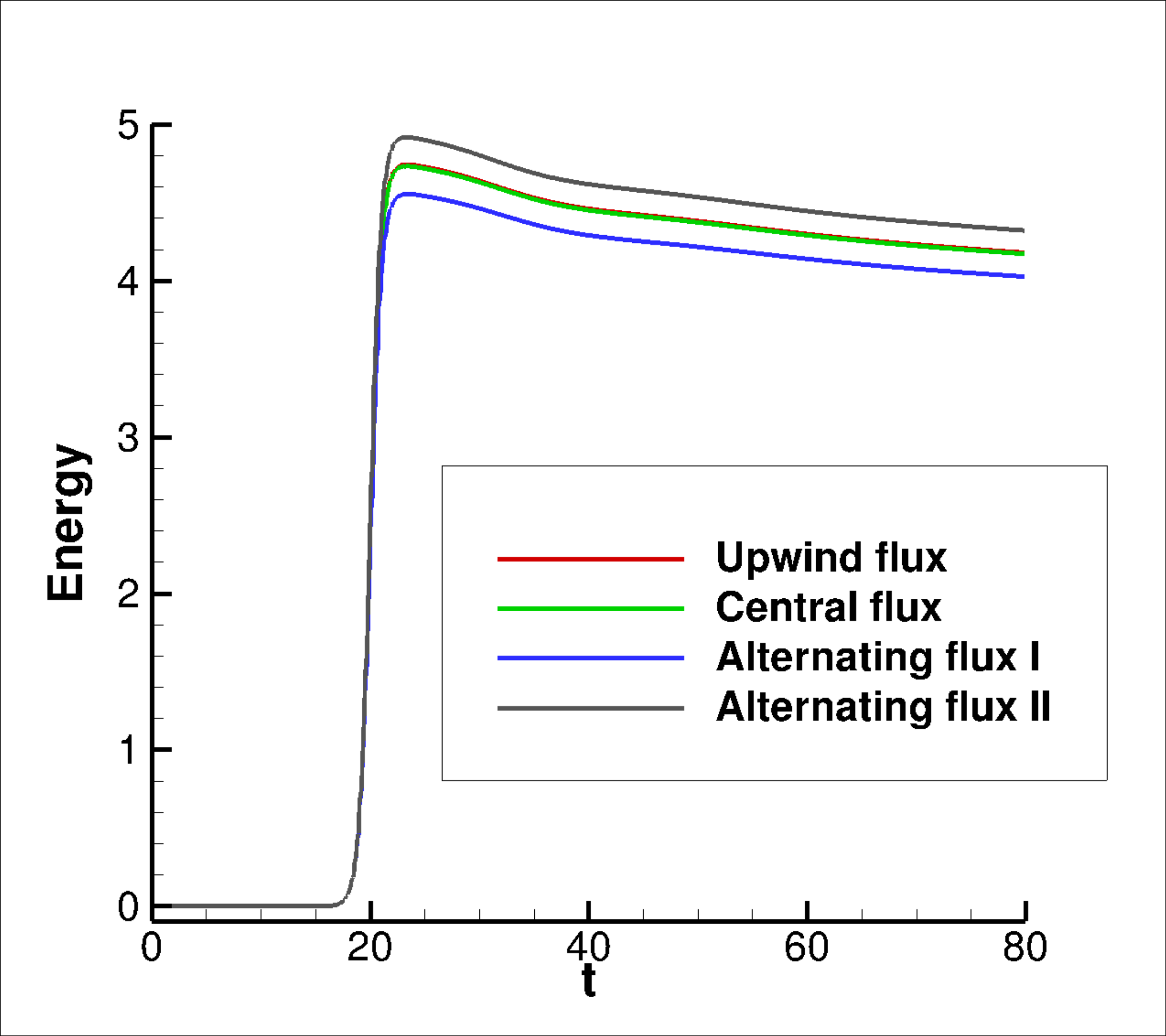}\label{Fig8.5}}
 	 \subfigure{
 	 	\includegraphics[width=0.3\textwidth]{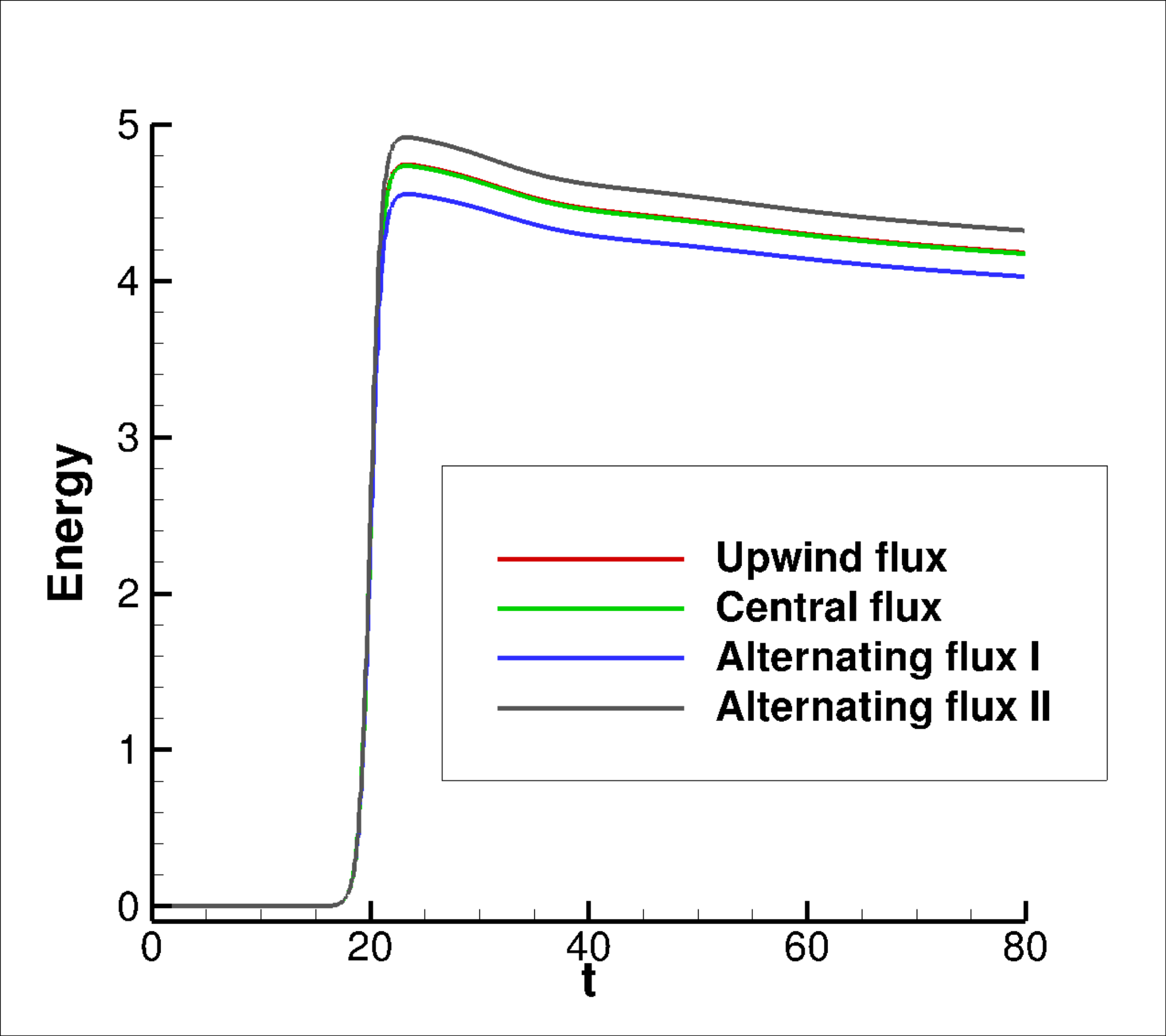}\label{Fig8.6}}
 	\caption{\em Numerical energy of transient second-order ($M=2$) temporal soliton propagation with the leapfrog scheme and the fully implicit scheme. $N=6400$ grid points. First column: $k=1$; second column: $k=2$; third column: $k=3$. First row: the leapfrog scheme; second row: the fully implicit scheme.  }
 	\label{Fig8}
 \end{figure}

   \begin{figure}
 	\centering
 	\subfigure{
 		\includegraphics[width=0.3\textwidth]{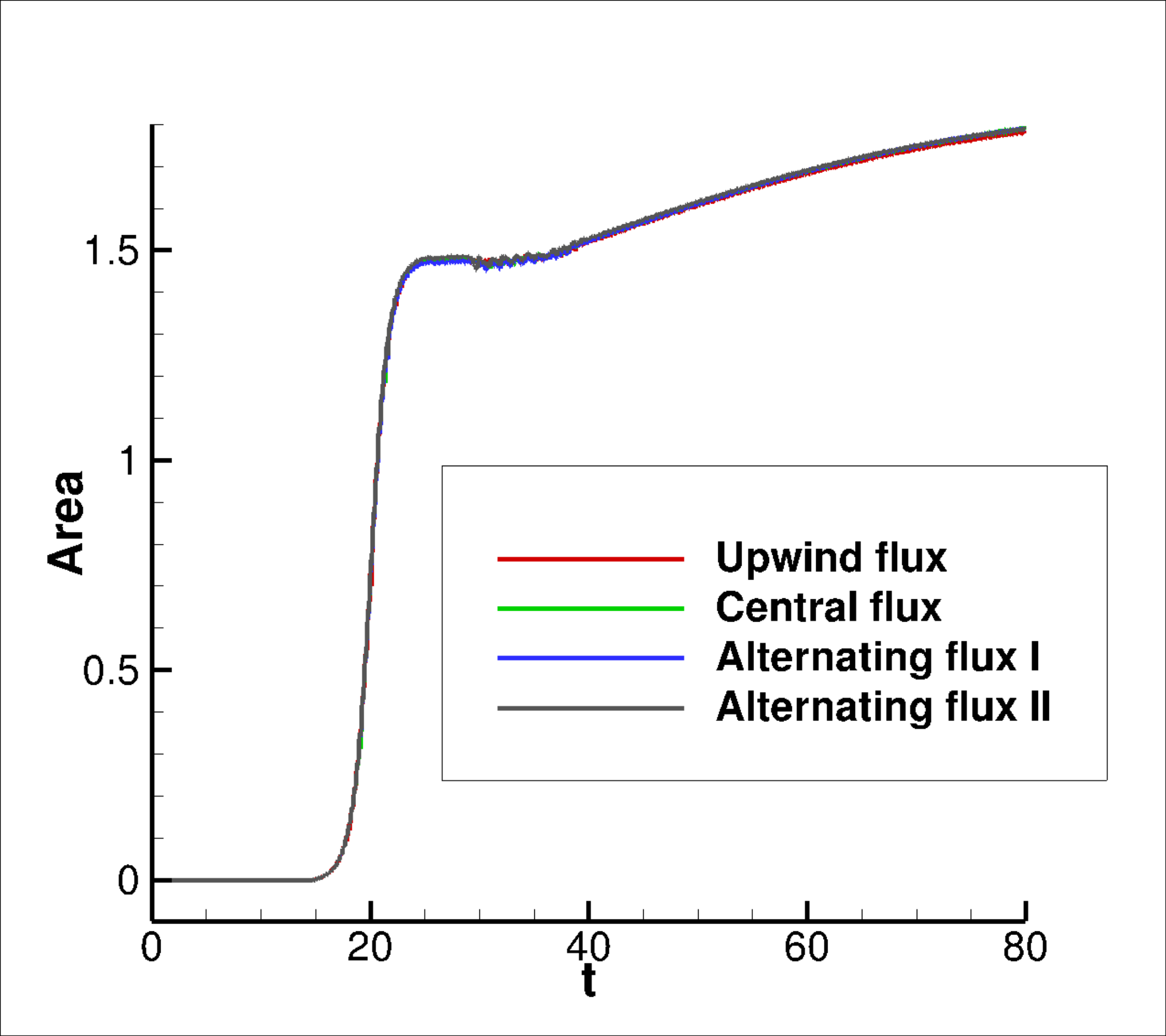}\label{Fig9.1}}
 	\subfigure{
 		\includegraphics[width=0.3\textwidth]{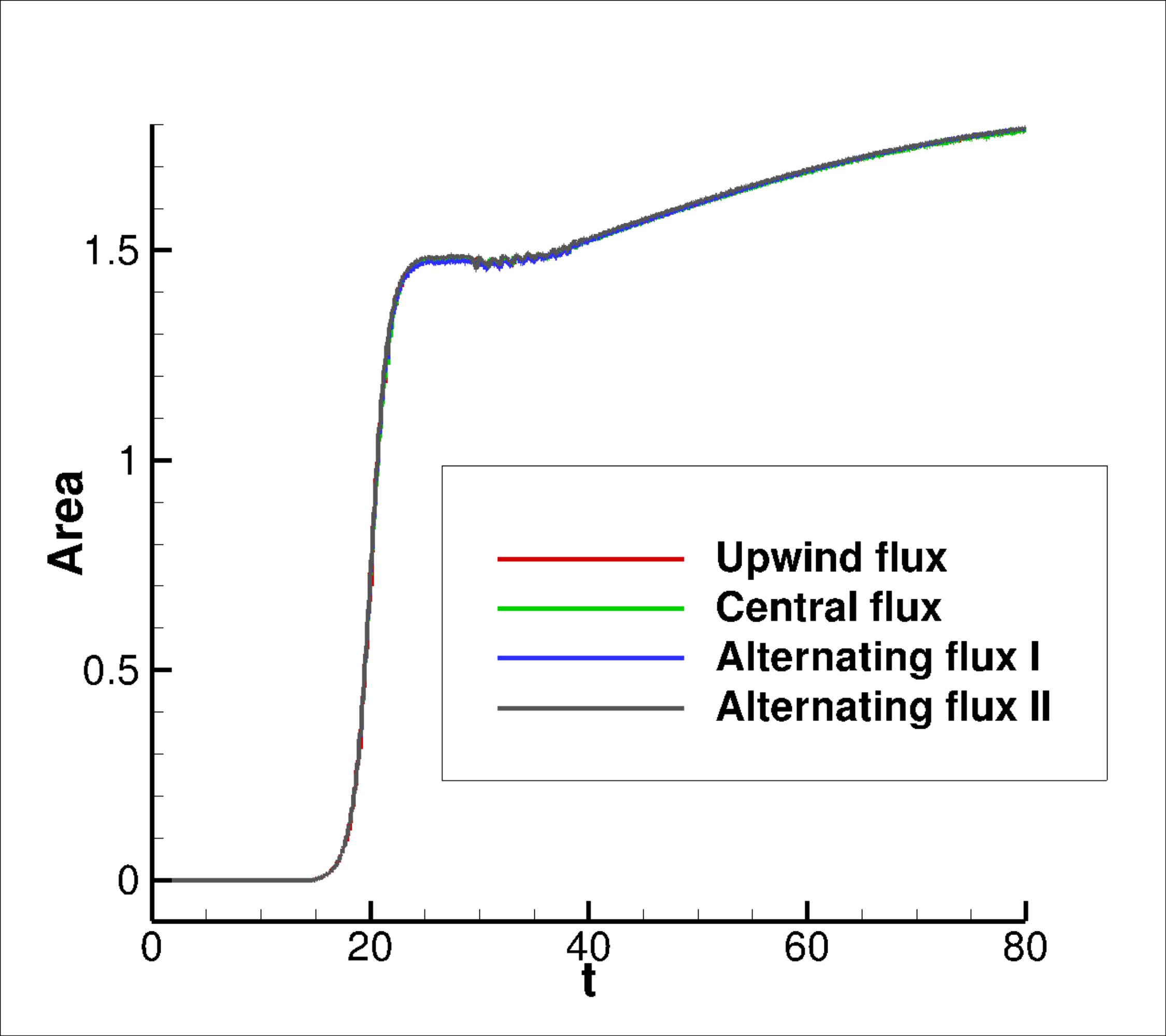}\label{Fig9.2}}
 	\subfigure{
 		\includegraphics[width=0.3\textwidth]{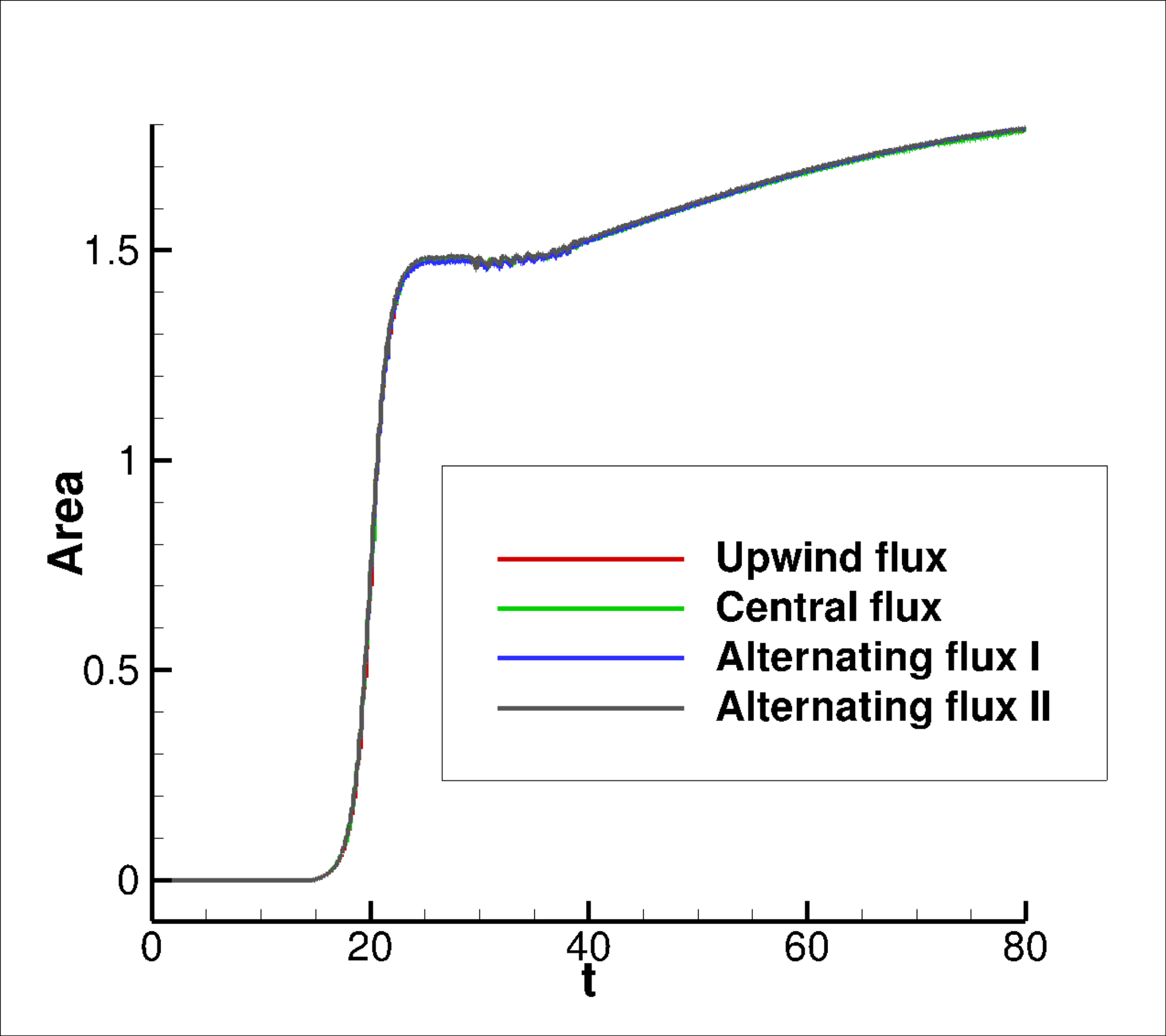}\label{Fig9.3}}
 	 \subfigure{
 		\includegraphics[width=0.3\textwidth]{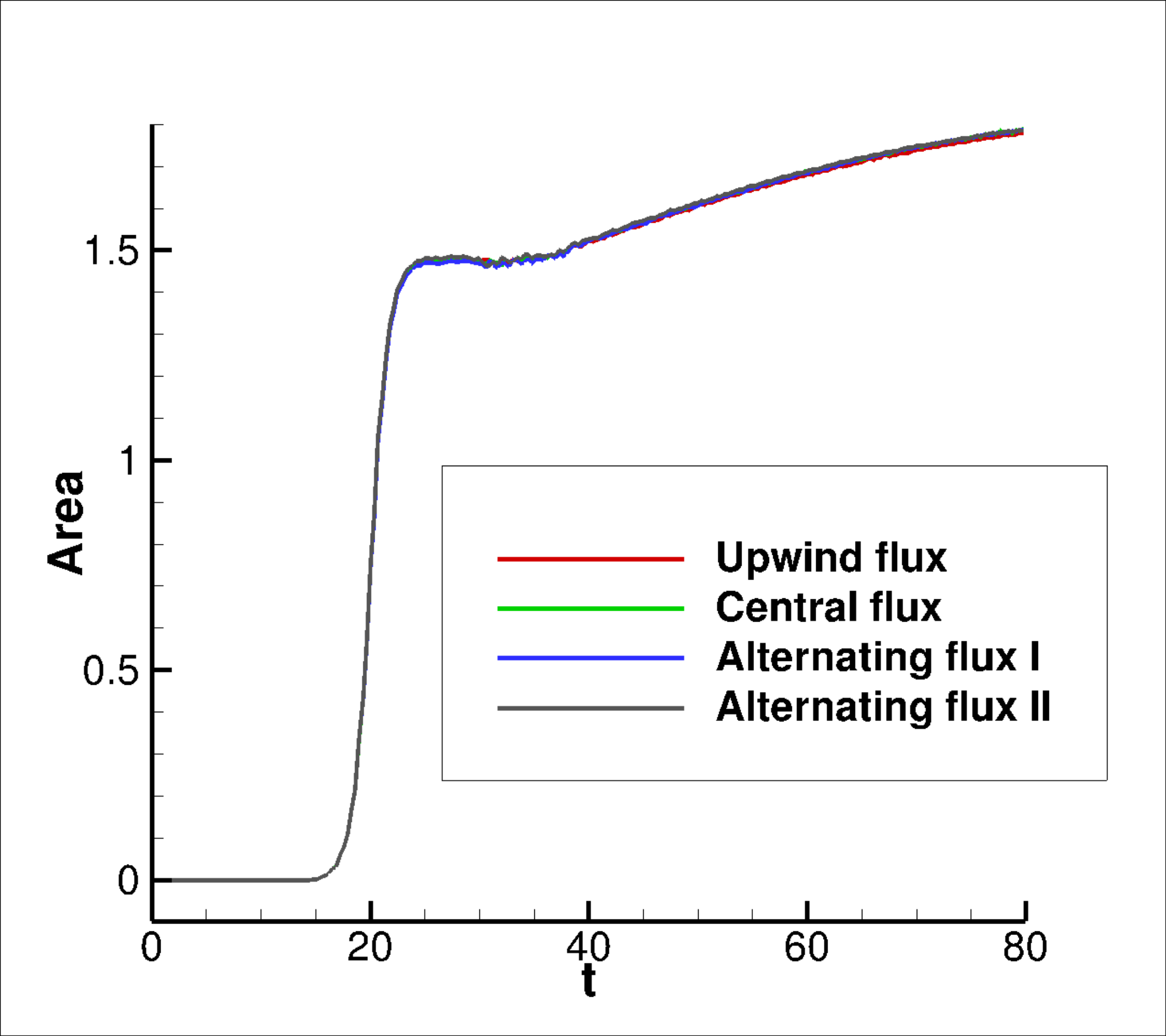}\label{Fig9.4}}
 	 \subfigure{
 	 	\includegraphics[width=0.3\textwidth]{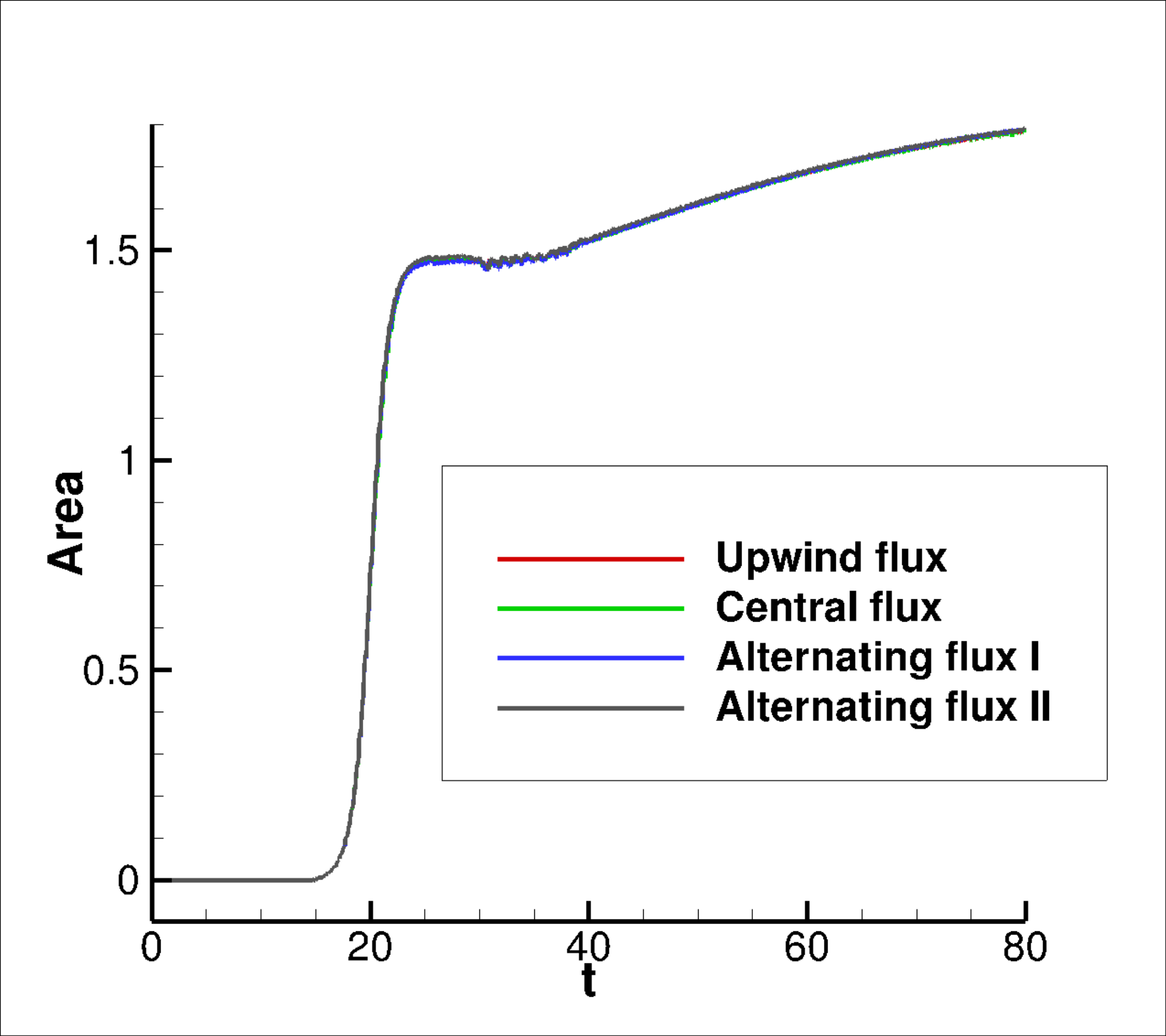}\label{Fig9.5}}
 	 \subfigure{
 	 	\includegraphics[width=0.3\textwidth]{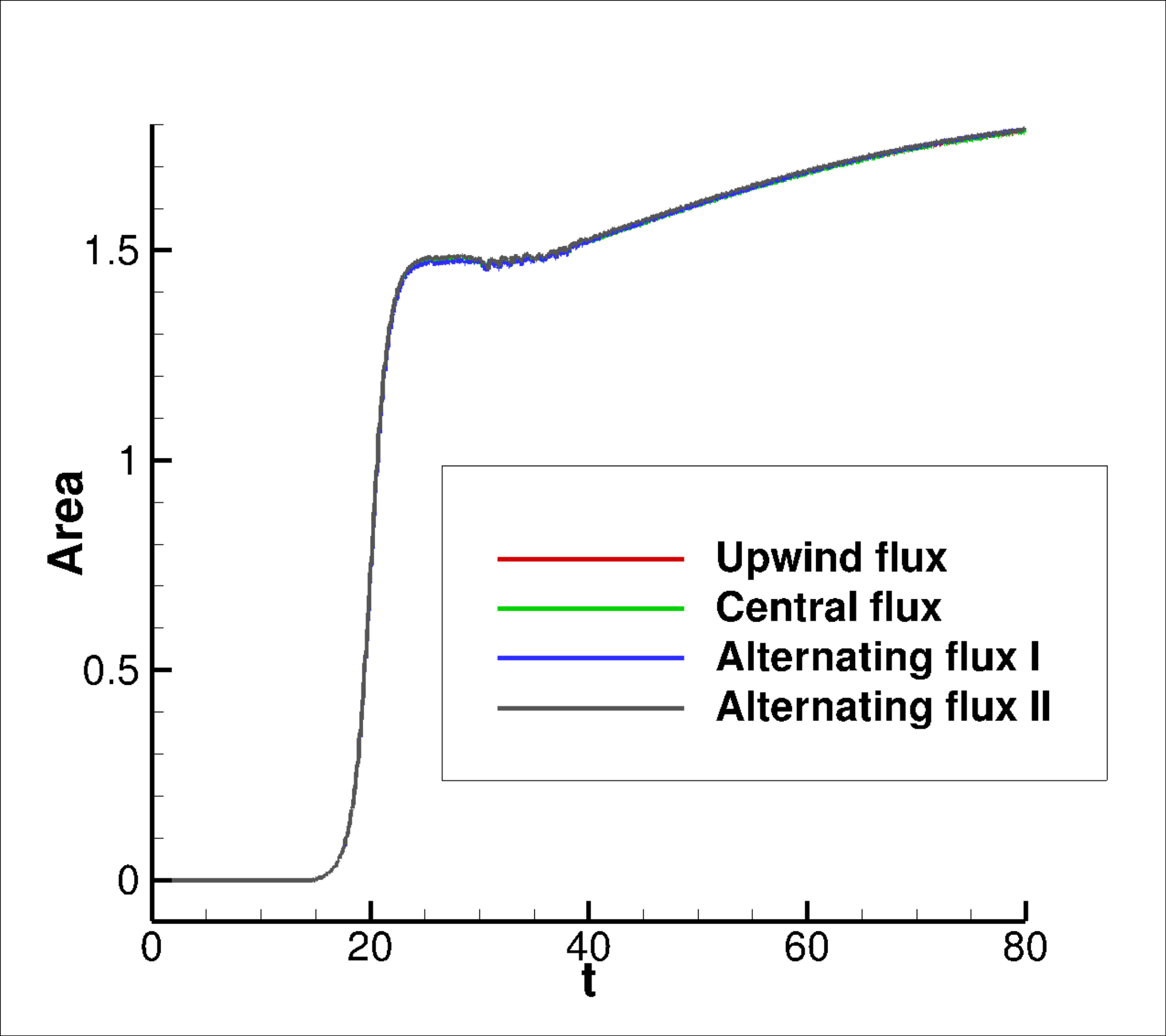}\label{Fig9.6}}
 	\caption{\em Pulse area of transient fundamental ($M=1$) temporal soliton propagation with the leapfrog scheme and the fully implicit scheme. $N=6400$ grid points. First column: $k=1$; second column: $k=2$; third column: $k=3$. First row: the leapfrog scheme; second row: the fully implicit scheme. }
 	\label{Fig9}
 \end{figure}

   \begin{figure}
 	\centering
 	\subfigure{
 		\includegraphics[width=0.3\textwidth]{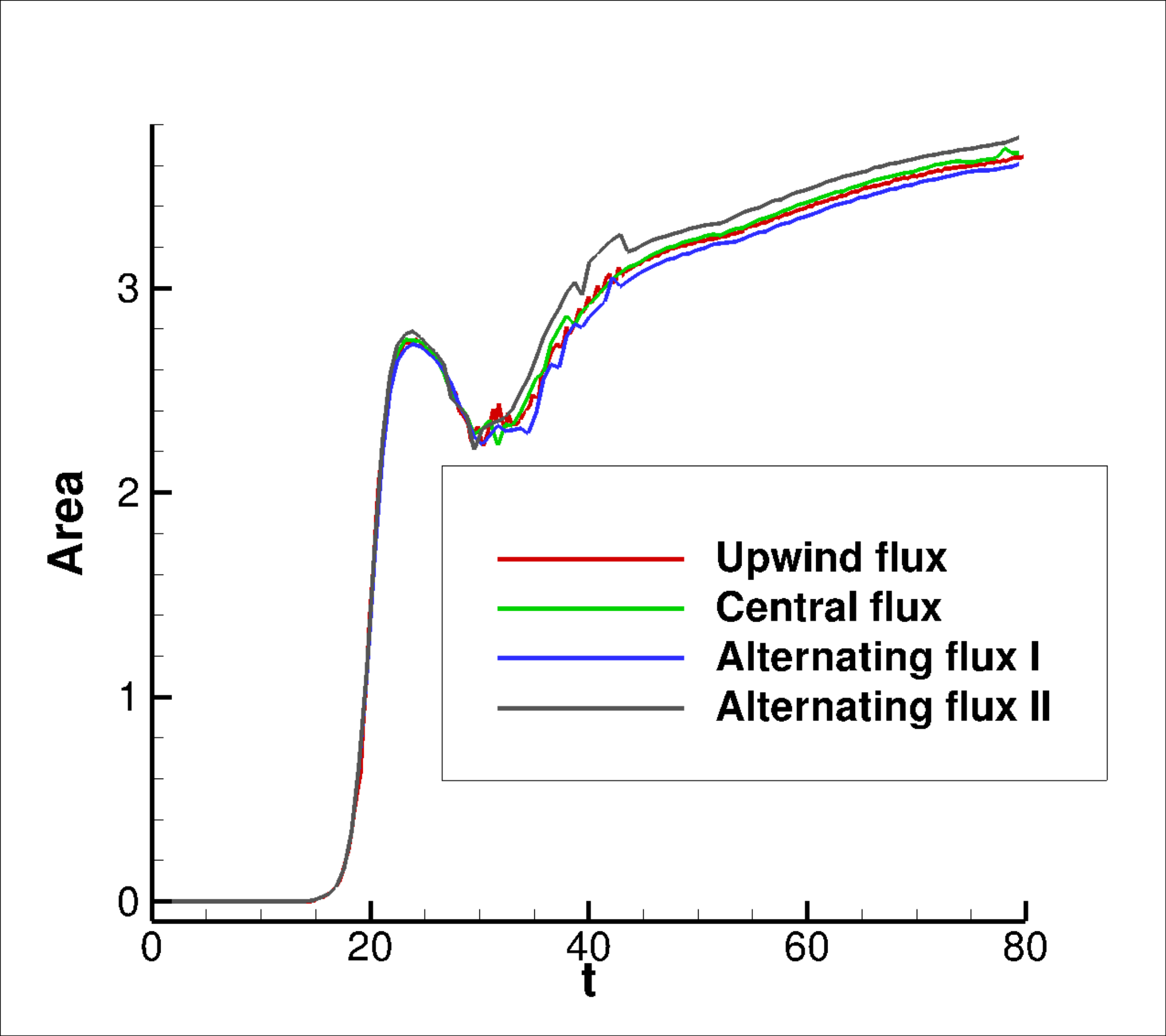}\label{Fig10.1}}
 	\subfigure{
 		\includegraphics[width=0.3\textwidth]{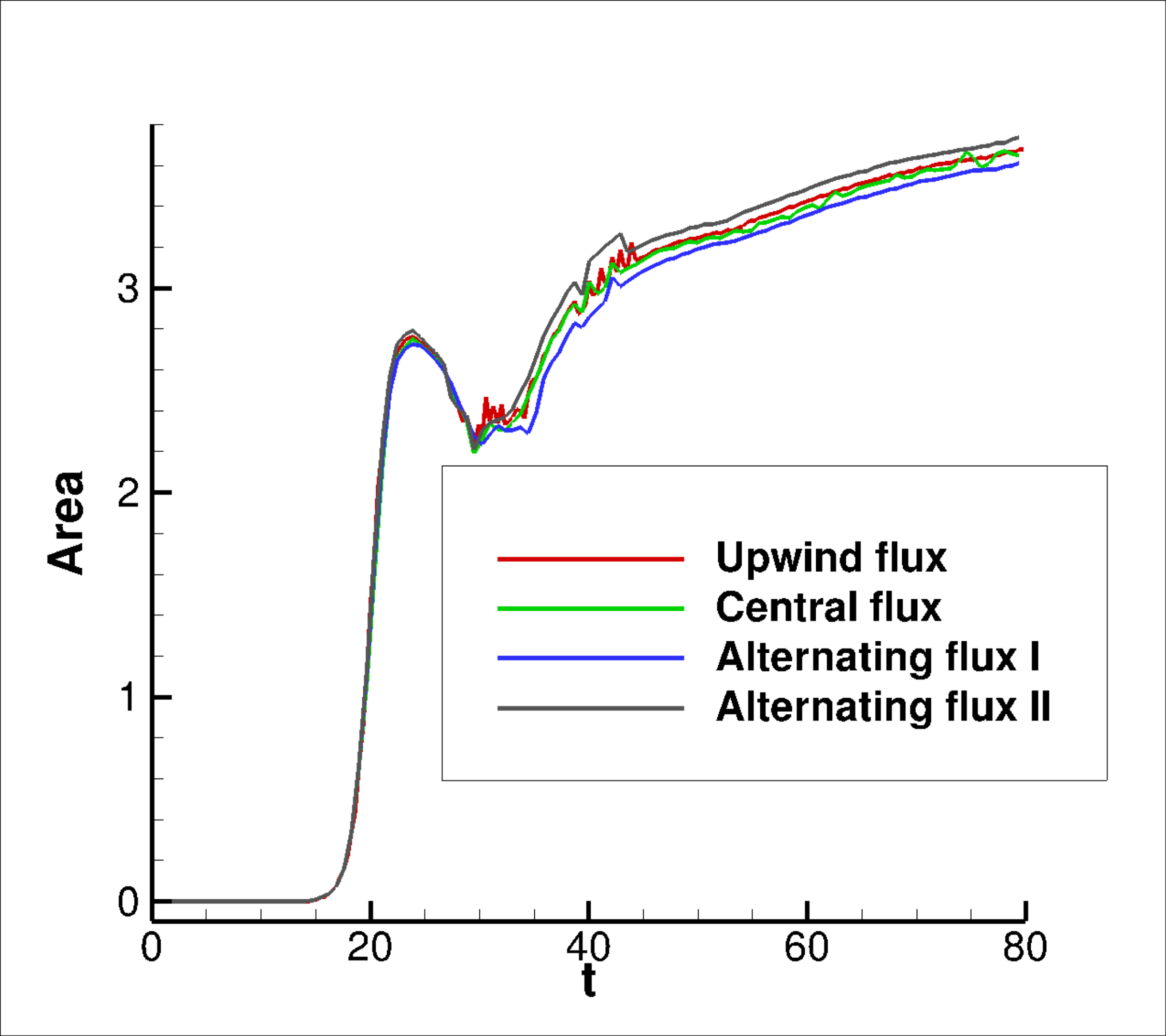}\label{Fig10.2}}
 	\subfigure{
 		\includegraphics[width=0.3\textwidth]{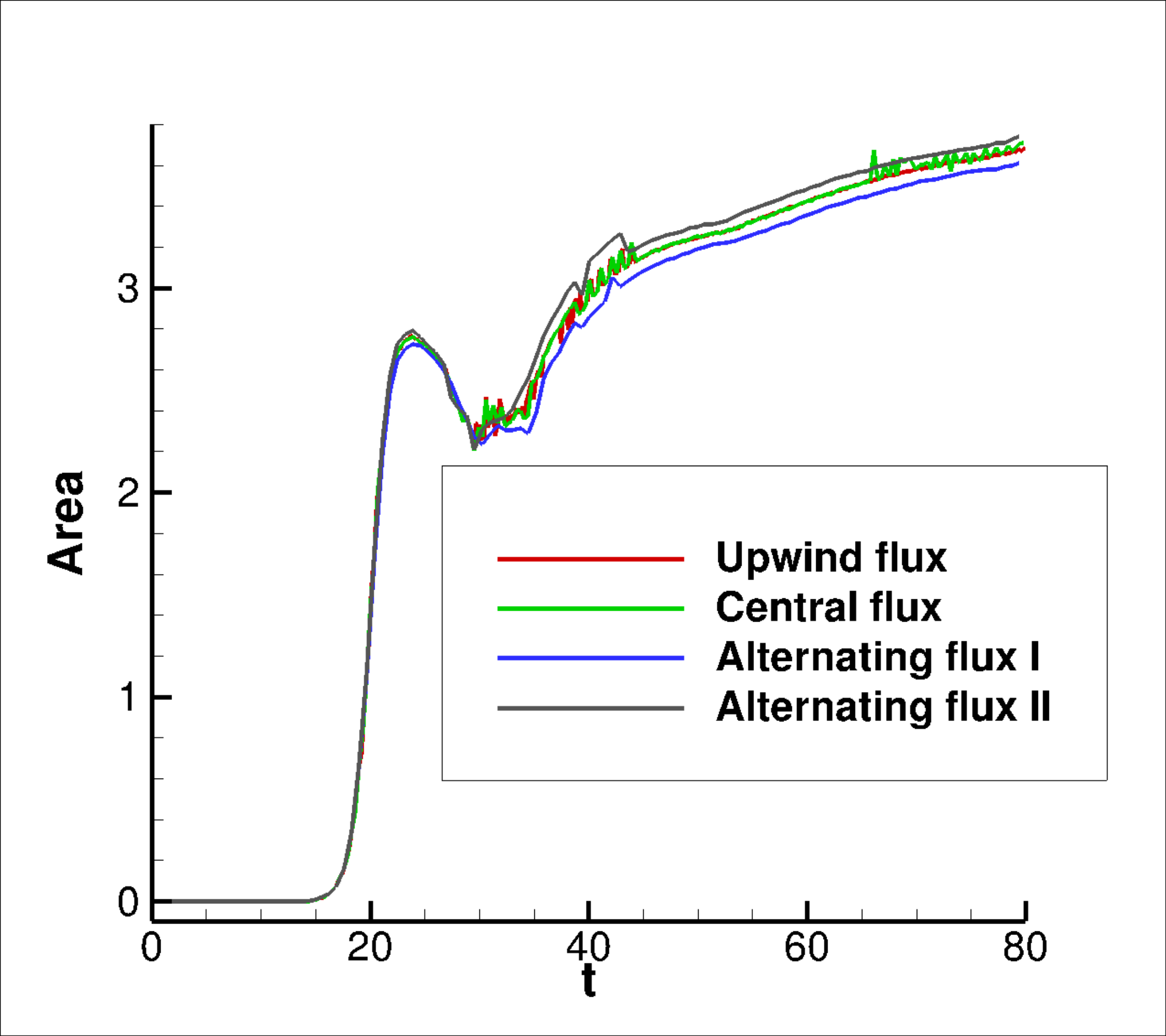}\label{Fig10.3}}
 	 \subfigure{
 		\includegraphics[width=0.3\textwidth]{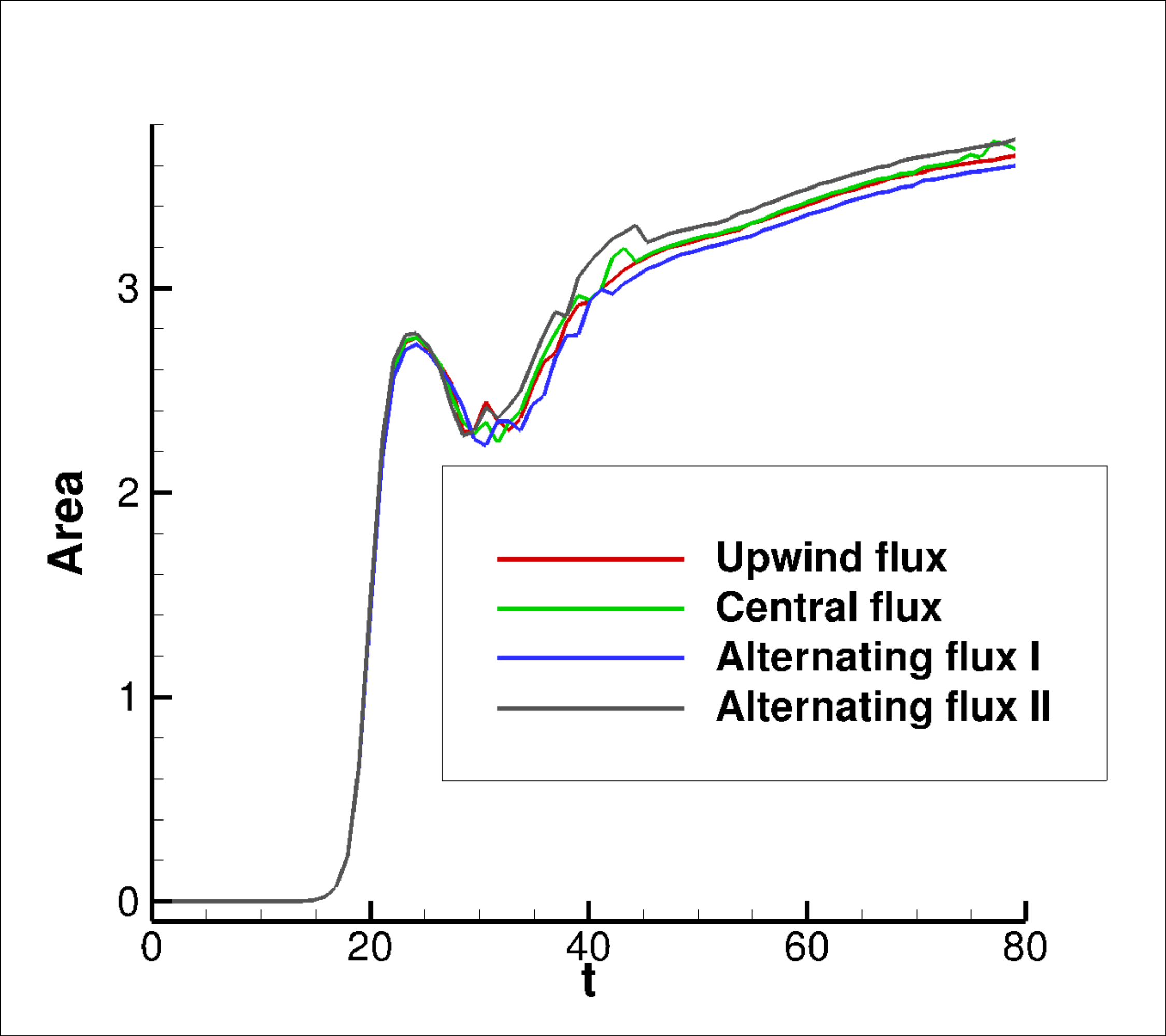}\label{Fig10.4}}
 	 \subfigure{
 	 	\includegraphics[width=0.3\textwidth]{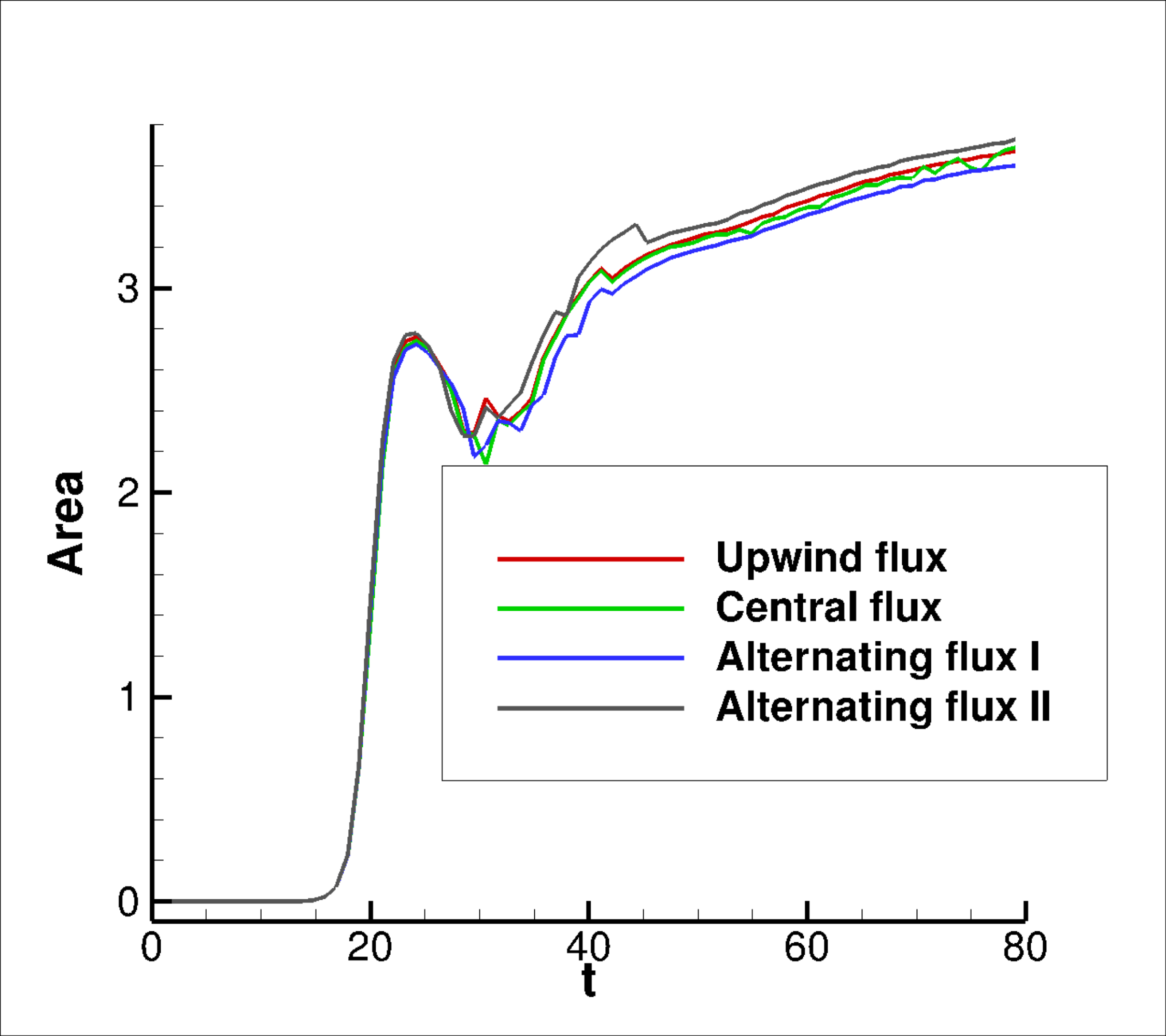}\label{Fig10.5}}
 	 \subfigure{
 	 	\includegraphics[width=0.3\textwidth]{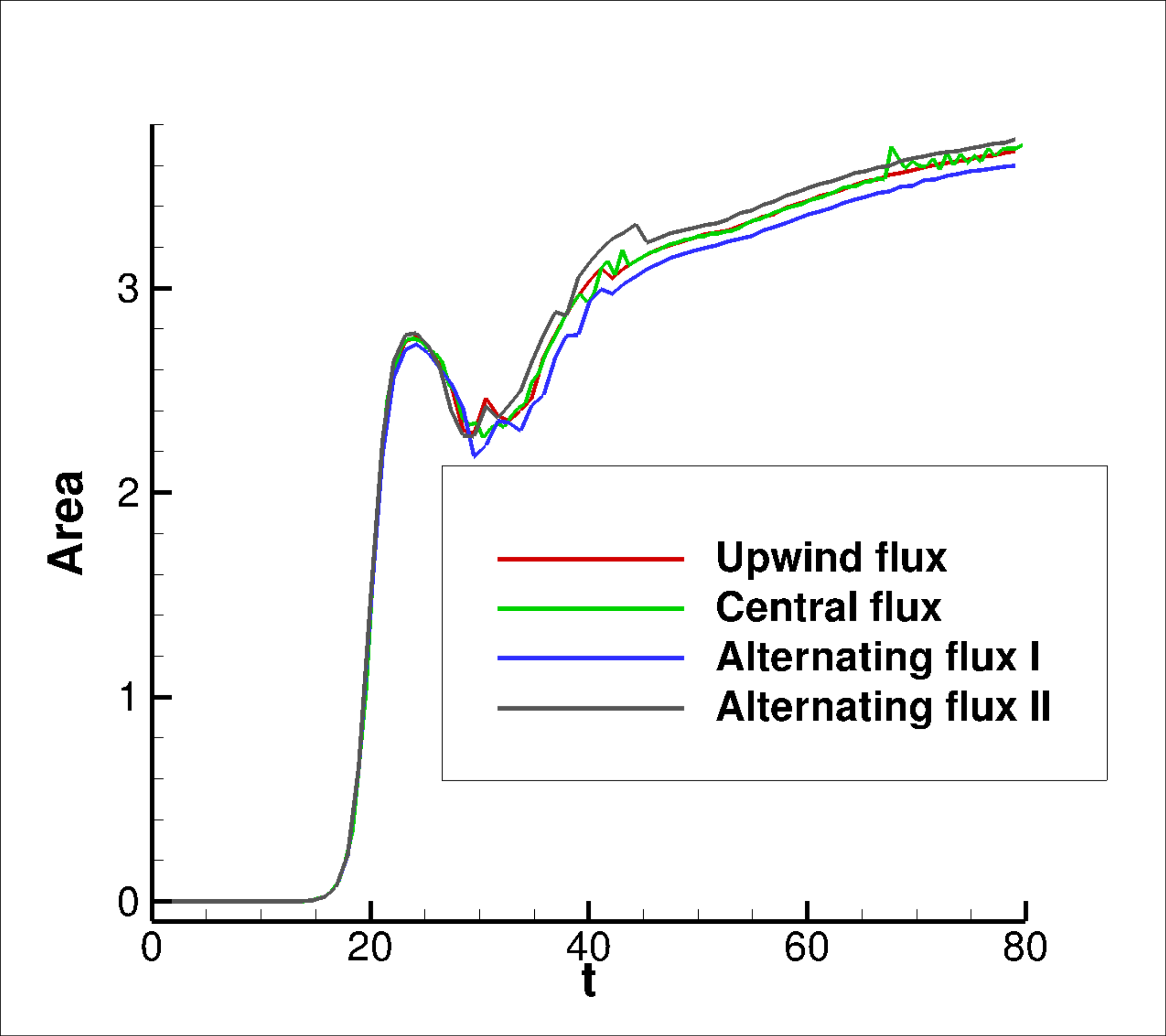}\label{Fig10.6}}
 	\caption{\em Pulse area of transient second-order ($M=2$) temporal soliton propagation with the leapfrog scheme and the fully implicit scheme. $N=6400$ grid points. First column: $k=1$; second column: $k=2$; third column: $k=3$. First row: the leapfrog scheme; second row: the fully implicit scheme.  }
 	\label{Fig10}
 \end{figure}

\section{Conclusions}
\label{sec:conclusion}

In this paper, we propose fully discrete energy stable schemes for 1D Maxwell's equations in nonlinear optics. The schemes use novel treatments in temporal discretizations and discontinous Galerkin schemes in space with various choices of fluxes. We prove semi-discrete and fully discrete energy stability of the proposed methods, and provide error estimates of the semi-discrete schemes with conditions on the strength of the nonlinearity of the system.
Numerical results validate the theoretical predictions, which show that the fully implicit scheme allow larger CFL numbers than the leap-frog schemes. The upwind flux exhibits more dissipation, which can damp the spurious oscillations from the boundary treatment, but also in the mean time affect the effective capturing of the daughter pulse for the soliton propagation example for low order polynomial spaces. From our experience, the alternating fluxes outperform the central and upwind fluxes in the numerical examples studied in terms of accuracy and resolution of the wave profiles.
Future work includes  extensions to higher dimensions, to finite difference schemes, and Fourier analysis of the semi-discrete and fully discrete DG methods for linearized Maxwell systems in dispersive media.

\appendix
\section{Energy relation for the fully discrete schemes with non-periodic boundary conditions in Section \ref{sec:num2}}

Here, we list the  energy relation for the fully discrete schemes with boundary conditions as discussed in Section \ref{sec:num2}.

 The results with fully implicit time discretizations are very similar to the semi-discrete case, i.e. we have that
 the fully implicit scheme with alternating and central fluxes  satisfies
\begin{equation}
\mathcal{E}_h^{n+1}- \mathcal{E}_h^{n}=-\frac{\Delta t}{4 \omega_p^2 \tau} \int_\Omega (J_h^{n+1} +J_h^{n})^2 dx-\frac{a\theta \Delta t}{8\omega_v^2 \tau_v}  \int_\Omega  (\sigma_h^{n+1} +\sigma_h^{n})^2  dx -\Delta t\Theta_{in}^{n} -\Delta t \Theta^{n}_{out}\le -\Delta t\Theta^{n}_{in},
\end{equation}
and that with the upwind flux satisfies
\begin{eqnarray}
 \mathcal{E}_h^{n+1}- \mathcal{E}_h^{n} &=&
-\frac{\Delta t}{4 \omega_p^2 \tau} \int_\Omega (J_h^{n+1} +J_h^{n})^2 dx-\frac{a\theta \Delta t}{8\omega_v^2 \tau_v}  \int_\Omega  (\sigma_h^{n+1}+\sigma_h^{n})^2  dx 
-\frac{\Delta t}{8\sqrt{\epsilon_\infty}} \sum_{j=1}^{N-1}[H_h^{n}+H_h^{n+1}]_{j+1/2}^2\\
&&-\frac{\Delta t\sqrt{ \epsilon_\infty}}{8}\sum_{j=1}^{N-1}[E_h^n+E_h^{n+1}]_{j+1/2}^2 -\Delta t\Theta_{in}^{n} -\Delta t \Theta^{n}_{out}\le -\Delta t\Theta^{n}_{in},\notag
\end{eqnarray}
where 
\begin{eqnarray}
\mathcal{E}_h^n&=&\int_{\Omega}  \frac{1}{2} (H_h^{n})^2 + \frac{ \epsilon_\infty}{2} (E_h^n)^2 +  \frac{1}{2\omega_p^2} (J_h^n)^2 + \frac{\omega_0^2}{2 \omega_p^2}   (P_h^n)^2+ \frac{a\theta}{4\omega_v^2} (\sigma_h^n)^2 + \frac{a\theta}{2}  Q_h^n (E_h^n)^2  \notag\\
&& + \frac{3 a (1-\theta)}{4} (E_h^n)^4+\frac{a\theta}{4}(Q_h^n)^2  dx.\notag
\end{eqnarray}
\begin{align*}
0 \le\Theta^{n}_{out}&=&\left\{\begin{array}{ll}
\frac{1}{16\sqrt{\epsilon_{\infty}}} ((H^{n+1}_{h}+H^{n}_{h})^{-}_{N+1/2}
-\sqrt{\epsilon_{\infty}}(E^{n+1}_{h}+E^{n}_{h})^{-}_{N+1/2})^2, & \mbox{ for central and alternating fluxes},\\
\frac{1}{8\sqrt{\epsilon_{\infty}}}((H^{n+1}_{h}+H^{n}_{h})^{-}_{N+1/2})^2
+\frac{\sqrt{\epsilon_{\infty}}}{8}((E^{n+1}_{h}+E^{n}_{h})^{-}_{N+1/2})^2, & \mbox{ for upwind flux},\\
\end{array}
\right.
\end{align*}

\begin{equation*}
\Theta^{n}_{in}=\left\{\begin{array}{ll}
	\frac{1}{8} \left(E(0,t^{n+1})+E(0,t^{n})\right) \left(H^{n+1}_{h}+H^{n}_{h}\right)^{+}_{1/2}
	+\frac{1}{8} \left(H(0,t^{n+1})+H(0,t^{n})\right)\\ \left(E^{n+1}_{h}+E^{n}_{h}\right)^{+}_{1/2}, 
	& \mbox{ for central flux},\\
	\frac{1}{4} \left(H(0,t^{n+1})+H(0,t^{n})\right) \left(E^{n+1}_{h}+E^{n}_{h}\right)^{+}_{1/2}, & \mbox{ for alternating flux I},\\
	\frac{1}{4} \left(E(0,t^{n+1})+E(0,t^{n})\right) \left(H^{n+1}_{h}+H^{n}_{h}\right)^{+}_{1/2}, & \mbox{ for alternating flux II},\\
	\frac{1}{8} \left(E(0,t^{n+1})+E(0,t^{n})\right) \left(H^{n+1}_{h}+H^{n}_{h}\right)^{+}_{1/2}
	+\frac{1}{8} \left(H(0,t^{n+1})+H(0,t^{n})\right)\\ \left(E^{n+1}_{h}+E^{n}_{h}\right)^{+}_{1/2}
	+\frac{1}{8\sqrt{\epsilon_\infty}}(H^{n}_{h}+H^{n+1}_{h})^{+}_{1/2}[H^{n}_{h}+H^{n+1}_{h}]_{1/2}\\
	+\frac{\sqrt{\epsilon_\infty}}{8}(E^{n+1}_{h}+E^{n}_{h})^{+}_{1/2}[E^{n+1}_{h}+E^{n}_{h}]_{1/2},
	& \mbox{ for upwind flux}.\\
\end{array}
\right.
\end{equation*}
Moreover, for the upwind flux, we have
\begin{align*}
\Theta_{in}=& \frac{1}{16\sqrt{\epsilon_\infty}}[H^{n+1}_{h}+H^{n}_{h}]^{2}_{1/2} +\frac{\sqrt{\epsilon_\infty}}{16}[E^{n+1}_{h}+E^{n}_{h}]^{2}_{1/2}\\ 
& +\frac{1}{16\sqrt{\epsilon_\infty}}\left( (H^{n+1}_{h}+H^{n}_{h})^{+}_{1/2} +\sqrt{\epsilon_\infty} \left( E(0,t^{n+1})+E(0,t^{n}) \right) \right)^2\\ &+\frac{1}{16\sqrt{\epsilon_\infty}}\left( \left( H(t^{n+1},0)+H(t^{n},0) \right) +\sqrt{\epsilon_\infty} \left( E^{n+1}_{h}+E^{n}_{h} \right) \right)^2\\
& -\frac{1}{8\sqrt{\epsilon_\infty}} \left( H(t^{n+1},0)+H(t^{n},0) \right)^2 -\frac{\sqrt{\epsilon_\infty}}{8} \left( E(t^{n+1},0)+E(t^{n},0) \right)^2.
\end{align*}
Thus, 
\begin{align*}
 \mathcal{E}_h^{n+1}- \mathcal{E}_h^{n}\leq
\frac{1}{8\sqrt{\epsilon_\infty}}\Delta t \left( H(t^{n+1},0)+H(t^{n},0) \right)^2 +\frac{\sqrt{\epsilon_\infty}}{8} \Delta t \left( E(t^{n+1},0)+E(t^{n},0) \right)^2.
\end{align*}
%Therefore, fully discrete energy stability is obtained for the scheme with upwind flux.

On the other hand, the leap-frog scheme with alternating and central fluxes  satisfies
\begin{equation}
\mathcal{E}_h^{n+1}- \mathcal{E}_h^{n}=-\frac{\Delta t}{4 \omega_p^2 \tau} \int_\Omega (J_h^{n+1} +J_h^{n})^2 dx-\frac{a\theta \Delta t}{8\omega_v^2 \tau_v}  \int_\Omega  (\sigma_h^{n+1} +\sigma_h^{n})^2  dx -\Delta t\Theta^{n}_{in} -\Delta t \Theta^{n}_{out} 
\le  -\Delta t\Theta^{n}_{in} -\Delta t \Theta^{n}_{out},
\end{equation}
where 
\begin{eqnarray}
\mathcal{E}_h^n&=&\int_{\Omega}  \frac{1}{2} H_h^{n+1/2} H_h^{n-1/2} + \frac{\epsilon_\infty}{2} (E_h^n)^2 +  \frac{1}{2\omega_p^2} (J_h^n)^2 + \frac{\omega_0^2}{2 \omega_p^2}   (P_h^n)^2 + \frac{a\theta}{4\omega_v^2} (\sigma_h^n)^2 + \frac{a\theta}{2}  Q_h^n (E_h^n)^2,  \notag \\
&& + \frac{3a (1-\theta)}{4} (E_h^n)^4+\frac{a\theta}{4}(Q_h^n)^2  dx, \notag\\
\Theta^{n}_{out}&=& \frac{\sqrt{\epsilon_\infty}}{16} \left( \left(E^{n}_{h}+E^{n+1}_{h}\right)^{-}_{N+1/2} - \frac{2}{\sqrt{\epsilon_\infty}}\left(H^{n+1/2}_{h}\right)^{-}_{N+1/2}\right)^2 +\frac{1}{16\sqrt{\epsilon_\infty}}\left(H^{n+1/2}_{h}\right)^{-}_{N+1/2}\notag\\ &&\left( H^{n-1/2}_{h}-2H^{n+1/2}_{h}+H^{n+3/2}_{h}\right)^{-}_{N+1/2}, \label{eq:nosign}
\end{eqnarray}
\begin{align*}
\Theta^{n}_{in}&=&\left\{\begin{array}{ll}
\frac{1}{4}\left(E(0,t^{n})+E(0,t^{n+1})\right) \left(H^{n+1/2}_{h}\right)^{+}_{1/2} + \frac{1}{4}H(0,t^{n+1/2}) \left(E^{n}_{h}+E^{n+1}_{h}\right)^{+}_{1/2}, & \mbox{ for central flux},\\
\frac{1}{2}H(0,t^{n+1/2}) \left(E^{n}_{h}+E^{n+1}_{h}\right)^{+}_{1/2},& \mbox{ for alternating flux I},\\
 \frac{1}{2}\left(E(0,t^{n})+E(0,t^{n+1})\right) \left(H^{n+1/2}_{h}\right)^{+}_{1/2}, & \mbox{ for alternating flux II}.\\
\end{array}
\right.
\end{align*}
Unlike the previous cases, \eqref{eq:nosign} cannot be shown as non-negative, which means some energy may be injected at the right boundary in this case.

The leap-frog scheme with the upwind flux satisfies
\begin{eqnarray}
 \mathcal{E}_h^{n+1}- \mathcal{E}_h^{n} &=&
-\frac{\Delta t}{4 \omega_p^2 \tau} \int_\Omega (J_h^{n+1} +J_h^{n})^2 dx-\frac{a\theta \Delta t}{8\omega_v^2 \tau_v}  \int_\Omega  (\sigma_h^{n+1}+\sigma_h^{n})^2  dx \\
&& -\frac{\Delta t}{8\sqrt{\epsilon_\infty}} \sum_{j=1}^{N-1}[H_h^{n-1/2}+H_h^{n+1/2}]_{j+1/2}^2
-\frac{\Delta t\sqrt{ \epsilon_\infty}}{8} \sum_{j=1}^{N-1}[E_h^n+E_h^{n+1}]_{j+1/2}^2 \notag\\
&&-\Delta t\Theta_{out}^{n} -\Delta t\Theta_{in}^{n},\notag
%&\le& -\Delta  t\Theta_{in}^{n} 
\end{eqnarray}
where $\Theta_{in}$, $\Theta_{out}$ and the discrete energy $\mathcal{E}_{h}^{n}$ are
\begin{eqnarray}
\mathcal{E}_h^n&=&\int_{\Omega}  \frac{1}{2} H_h^{n+1/2} H_h^{n-1/2} + \frac{\epsilon_\infty}{2} (E_h^n)^2 +  \frac{1}{2\omega_p^2} (J_h^n)^2 + \frac{\omega_0^2}{2 \omega_p^2}   (P_h^n)^2+ \frac{a\theta}{4\omega_v^2} (\sigma_h^n)^2 \notag \\
&& + \frac{a\theta}{2}  Q_h^n (E_h^n)^2  + \frac{3 a (1-\theta)}{4} (E_h^n)^4+\frac{a\theta}{4}(Q_h^n)^2  dx\notag\\
&&+\frac{\Delta t}{8\sqrt{\epsilon_\infty}}\sum_{j=1}^{N-1} ([H_h^{n-1/2}] [H_h^{n-1/2}+H_h^{n+1/2}])_{j+1/2} \notag \\
&&+\frac{\Delta t}{8\sqrt{\epsilon_\infty}} (H_h^{n-1/2})^{-}_{N+1/2} (H_h^{n-1/2}+H_h^{n+1/2})|^{-}_{N+1/2}, \notag \\
\Theta_{out}^{n}&=& \frac{1}{8\sqrt{\epsilon_\infty}}\left( (H_h^{n-1/2}+H_h^{n+1/2})^{-}_{N+1/2} \right)^2
+\frac{\sqrt{ \epsilon_\infty}}{8} \left( (E_h^n+E_h^{n+1})^{-}_{N+1/2} \right)^2,\notag\\
\Theta_{in}^{n}&=& \frac{1}{4}\left(E(0,t^{n})+E(0,t^{n+1})\right) \left(H^{n+1/2}_{h}\right)^{+}_{1/2} + \frac{1}{4}H(0,t^{n+1/2}) \left(E^{n}_{h}+E^{n+1}_{h}\right)^{+}_{1/2}\notag\\
&&+\frac{1}{8\sqrt{\epsilon_\infty}}\left(H^{n+1/2}_{h}\right)^{+}_{1/2}[H^{n-1/2}_{h}+2H^{n+1/2}_{h}+H^{n+3/2}_{h}]_{1/2}
+\frac{\sqrt{\epsilon_\infty}}{8}(E^{n}_{h}+E^{n+1}_{h})^{+}_{1/2}[E^{n}_{h}+E^{n+1}_{h}]_{1/2}\notag\\
&=&\frac{1}{4\sqrt{\epsilon_\infty}}[H^{n+1/2}_{h}]^{2}_{1/2} +\frac{\sqrt{\epsilon_\infty}}{16}[E^{n+1}_{h}+E^{n}_{h}]^{2}_{1/2} +\frac{1}{4\sqrt{\epsilon_\infty}}\left( (H^{n+1/2}_{h})^{+}_{1/2} +\sqrt{\epsilon_\infty} \frac{E(0,t^{n+1})+E(0,t^{n})}{2} \right)^2\notag\\ &&+\frac{1}{4\sqrt{\epsilon_\infty}}\left( H(t^{n+1/2},0) +\sqrt{\epsilon_\infty} \frac{ E^{n+1}_{h}+E^{n}_{h}}{2}  \right)^2
 -\frac{1}{2\sqrt{\epsilon_\infty}} \left( H(t^{n+1/2},0) \right)^2 -\frac{\sqrt{\epsilon_\infty}}{8} \left( E(t^{n+1},0)+E(t^{n},0) \right)^2\notag\\
 &&+\frac{1}{8\sqrt{\epsilon_\infty}}\left(H^{n+1/2}_{h}\right)^{+}_{1/2}[H^{n-1/2}_{h}-2H^{n+1/2}_{h}+H^{n+3/2}_{h}]_{1/2}.\label{eq:nosign2}
\end{eqnarray}
%Notice that $\Theta_{out}^{n} \ge 0,$ which shows that the scheme is energy stable.

Note that at fully discrete level, we can only prove energy stability for fully implicit scheme with upwind flux.

\bibliographystyle{siam}
\bibliography{bibfile,bib_Li,Bokil,cheng,cheng_papers,bokil_papers,mimetic}

\end{document}